\newcommand{\Z}{{\mathbb Z}}
\newcommand{\Q}{{\mathbb Q}}
\newcommand{\PP}{{\mathbb P}}
\DeclareSymbolFont{cyrletters}{OT2}{wncyr}{m}{n}
\DeclareMathSymbol{\Sha}{\mathalpha}{cyrletters}{"58}
\newtheorem{theorem}{Theorem}[section]
\newtheorem{corollary}[theorem]{Corollary}
\newtheorem{proposition}[theorem]{Proposition}
\newtheorem{lemma}[theorem]{Lemma}
\theoremstyle{definition}
\newtheorem{definition}[theorem]{Definition}
\newtheorem{remark}[theorem]{Remark}
\newtheorem{example}[theorem]{Example}
\providecommand{\mb}[1]{\mathbb{ #1} }
\providecommand{\mc}[1]{\mathcal{ #1} }
\providecommand{\mr}[1]{\mathrm{ #1} }
\newcommand{\bu}{\bullet}
\begin{document}
\title{Capitulation discriminants of genus one curves}

\author{Lazar~Radi\v{c}evi\'{c}}
\email{lazaradicevic@gmail.com}
\maketitle

\begin{abstract}
 In this paper we study the arithmetic and invariant theory of genus one normal curves embedded in $\PP^{n-1}$. We generalize the notion of genus one model of degree $n$, introduced by Cremona, Fisher and Stoll for $n 
\leq 5$, to arbitrary odd $n$, and describe the invariant theory of a genus one curve of degree $n$ embedded in $
\PP^{n-1}$ in terms of the minimal graded free resolution of its homogeneous ideal. We prove that everywhere locally soluble genus one curves over $
\Q$ admit minimal integral models, with the same invariants as those of the minimal model of their Jacobian elliptic curve.

We then apply these results to study the capitulation problem for the Tate-Shafarevich group $\Sha(E/\Q)$ of an elliptic curve $E/\Q$. We prove that every element of $\Sha(E/\Q)[n]$ of odd index $n$ splits over a degree $n$ number field $K$, of absolute discriminant at most $c(n) H_E^{2n-2}$, where $H_E$ is the naive height of $E$ and $c(n)$ is a constant only depending on $n$. 
\end{abstract}

\tableofcontents
\section{Introduction} \label{intro}
Let $E$ be an elliptic curve defined over $\mathbb{Q}$, $n 
\geq 2$ an integer. The short exact sequence of $n$-descent relates the Mordell-Weil group $E(\Q)/nE(\Q)$, $n$-Selmer group $\mathrm{Sel}^{(n)}(E/\mb{Q})$ and the $n$-torsion of the Tate-Shafarevich group $\Sha(E/\mb{Q})$.

\[
0 \xrightarrow{} E(\mb{Q})/nE(\mb{Q}) \xrightarrow{} \mathrm{Sel}^{(n)}(E/\mb{Q}) \xrightarrow{} \Sha(E/\mb{Q})[n] \xrightarrow{} 0   .
\]
These groups have well-known geometric interpretations. As a set, $\Sha(E/\mb{Q})$ is defined to be the set of (isomorphism classes of) torsors under $E$ which have points everywhere locally. Non-trivial elements of $\Sha(E/\mb{Q})$ are counter-examples to the Hasse principle, that is, they have points everywhere locally, but no global points. 

By a result of Cassels (\cite{cassels1962arithmetic}), a torsor $C$ that represents an element of $\Sha(E/\Q)[n]$ admits a $\Q$-rational divisor $D$ of degree $n$, and so can be embedded into $\PP^{n-1}$ via the complete linear system $|D|$. We call such an embedding $C \xrightarrow{} \PP^{n-1}$ an $n$\textit{-diagram}. The $n$-Selmer group can be identified with the set of isomorphism classes of everywhere locally soluble $n$-diagrams. The image $C 
\subset \PP^{n-1}$ is a smooth, projectively normal curve of genus one and degree $n$.

A curve $C\subset \PP^{n-1}$ that represents a non-trivial element of $ \Sha(E/\Q)[n]$ does not have a $\Q$-rational point, but it has plenty of points defined over degree $n$ extensions of $\Q$, as we see by considering hyperplane sections of $C$. We say $C$ \textit{capitulates} over a number field $L$ if it  admits an $L$-rational point.

\begin{example}
Consider the following well known counterexample to the Hasse principle, due to Selmer. Let $C \subset \mb{P}^{2}$ be the plane curve defined by the equation
\[
3x^3+4y^3+5z^3=0
\]
The curve $C$ is smooth and of genus one, has points defined over every completion of $\mb{Q}$, but no points defined over $\Q$.  It capitulates over $\mb{Q}(\sqrt[3]{6})$, as we see by setting $z=0$, $y=1$, and solving for $x$ by extracting a cube root.
\end{example}

We define the \textit{capitulation discriminant} of $C$ to be the least absolute discriminant of a degree $n$ number field over which $C$ capitulates. The main result of the paper is a bound on the capitulation discriminant of $C$ in terms of its Jacobian. For an elliptic curve $E/\Q$, let $c_4(E)$ and $c_6(E)$ be the invariants of a minimal Weierstrass equation of $E$, and define the \textit{naive height} of $E$ as $H_E=\mathrm{max}(|c_4(E)|^{1/4},|c_6(E)|^{1/6})$. The following theorem is the main result of this paper. 

 \begin{theorem} \label{Main theorem I}
 	 Let $n\geq 3$ be an odd integer, and let $[C \xrightarrow{} \mb{P}^{n-1}]$ be an everywhere locally soluble $n$-diagram representing an element of the $n$-Selmer group of the elliptic curve $E$. There exists a constant $c(n)$, depending only on the integer $n$, and an order $\mc{O}$ in an $n$-dimensional commutative $\mb{Q}$-algebra $K$, such that the set of $K$-points $C(K)$ is non-empty, and we have $|\mathrm{disc}(\mc{O})| <c(n) H^{2n-2}_E$.
 \end{theorem}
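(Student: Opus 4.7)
The strategy is to exhibit $\mathcal{O}$ as the integral coordinate ring of a suitably chosen hyperplane section of a minimal integral model of $C$. The essential input is the preceding result of the paper: $C$ admits an embedding in $\PP^{n-1}_{\Z}$ by equations with integer coefficients whose invariants $c_4,c_6$ coincide with those of the minimal Weierstrass equation of $E$, so in particular $|c_4|\le H_E^4$ and $|c_6|\le H_E^6$.

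For $\mathbf{a}=(a_0,\ldots,a_{n-1})\in\Z^n$, the hyperplane $H_{\mathbf{a}}=\{\sum_i a_i x_i=0\}$ meets $C$ in a $0$-dimensional scheme of degree $n$, which is reduced precisely when the defining polynomial $\Delta(\mathbf{a})$ of the dual variety $C^*\subset(\PP^{n-1})^*$ is nonzero. By a standard Plücker-type calculation (Riemann--Hurwitz applied to the projection $\pi:C\to\PP^1$ from a generic codimension-$2$ plane, which has degree $n$ and ramification divisor of degree $2g-2+2n=2n$), the dual $C^*$ is a hypersurface of degree $2n$. Hence $\Delta\in\Z[a_0,\ldots,a_{n-1}]$ is a nonzero integral form of degree $2n$.

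A pigeonhole argument then produces $\mathbf{a}\in\Z^n$ with $\|\mathbf{a}\|_\infty\le c_1(n)$ and $\Delta(\mathbf{a})\ne 0$. For this $\mathbf{a}$ one sets $K=H^0(C\cap H_{\mathbf{a}},\mathcal{O}_C)$, an étale $\Q$-algebra of dimension $n$, with $C(K)\ne\emptyset$ by the tautological inclusion of the intersection scheme. The order $\mathcal{O}\subset K$ is defined as the image of the integral projective coordinate ring of $C\cap H_{\mathbf{a}}$ in a suitable affine chart (chosen so that all $n$ intersection points lie at finite distance; this may require a preliminary integral change of variables of bounded size).

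The crux is the bound on $|\mathrm{disc}(\mathcal{O})|$. Since the discriminant of a monic degree-$n$ polynomial is a universal polynomial of degree $2n-2$ in its coefficients, the target bound $O(H_E^{2n-2})$ will follow once one shows that the coefficients of the minimal polynomial of an affine coordinate of an intersection point are bounded by $O(H_E)$ in absolute value. This is the step I expect to be the main obstacle: it amounts to weight bookkeeping that relates the coefficients of the defining equations of the minimal model (controlled by $c_4,c_6$ of weights $4,6$) to the symmetric functions of the $n$ coordinates cut out by $H_{\mathbf{a}}$, and presumably makes essential use of the invariant-theoretic description of minimal integral models developed earlier in the paper. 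Once carried out, combining the pigeonhole choice of $\mathbf{a}$ with this weight analysis yields the claimed bound $|\mathrm{disc}(\mathcal{O})|<c(n)H_E^{2n-2}$.
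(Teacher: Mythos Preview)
Your proposal has the right overall shape --- take a minimal integral model, intersect with a hyperplane, and bound the discriminant of the resulting order --- but there is a genuine gap at the step you yourself flag as the main obstacle, and the gap is not just bookkeeping.

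The problem is that a minimal integral model is only determined up to the action of $\mathrm{GL}_n(\Z)$ on $\PP^{n-1}$, and this action does \emph{not} preserve the size of the coefficients of the defining equations. Given any minimal model, one can act by a large element of $\mathrm{SL}_n(\Z)$ and obtain another minimal model with the same invariants $c_4,c_6$ but with arbitrarily large coefficients. Consequently, for a fixed hyperplane $H_{\mathbf a}$ with $\|\mathbf a\|_\infty$ bounded only in terms of $n$, the discriminant of the intersection $C\cap H_{\mathbf a}$ is \emph{not} bounded in terms of $H_E$ alone. Your proposed ``weight bookkeeping'' cannot succeed, because the coefficients of the defining equations are simply not controlled by $c_4$ and $c_6$; only certain invariant combinations of them are. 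So the claim that the minimal-polynomial coefficients are $O(H_E)$ is false as stated.

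The paper resolves this by a reduction step that your proposal is missing entirely. Rather than bounding coefficients of the given $\Z$-model, one makes an $\mathrm{SL}_n(\mathbb R)$-change of coordinates putting $C$ into \emph{Heisenberg normal form}. In that form the coefficients of the $\Omega$-matrix are modular forms with explicit $q$-expansions, and one can prove directly (Theorem~\ref{compact_inequality}) that the discriminant form is bounded by $c(n,K)H_E^{2n-2}$ on any compact set $K\subset\mathbb R^n$. The integral hyperplanes now correspond to a covolume-one lattice $g^{-T}\Z^n\subset\mathbb R^n$, and Minkowski's theorem produces a lattice point inside a fixed ball, yielding the desired integral $\mathbf a$ with small discriminant. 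In other words, the correct argument is not ``pigeonhole on $\mathbf a$, then weight analysis'' but ``real reduction to Heisenberg form, then geometry of numbers''. The pigeonhole step you propose is too weak to substitute for this.
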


The index of a smooth curve $C$ defined over a perfect field $k$, is the smallest degree a positive $k$-rational divisor $D$ on $C$ can take. The next result is a corollary of the previous one.

\begin{theorem} \label{Main theorem II}
	 Let $n\geq 3$ be an odd integer, and let $C$ be a torsor under $E$ that represents an element of $ \Sha(E/\mb{Q})[n]$. Suppose that the index of $C$ is equal to $n$. There exists a constant $c(n)$, depending only on $n$, and a degree $n$ number field $K$ of discriminant at most $c(n) H^{2n-2}_E$, such that $C$ admits a $K$-rational point.
\end{theorem}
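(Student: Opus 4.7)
The plan is to deduce Theorem \ref{Main theorem II} from Theorem \ref{Main theorem I} by showing that the index hypothesis forces the commutative $\Q$-algebra produced by the latter to in fact be a number field.

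Since $C$ represents an element of $\Sha(E/\Q)[n]$, it is everywhere locally soluble and, by Cassels' theorem cited in the introduction, admits a $\Q$-rational divisor of degree $n$. The associated embedding $[C \to \PP^{n-1}]$ is therefore an everywhere locally soluble $n$-diagram, to which Theorem \ref{Main theorem I} applies, producing an order $\mathcal{O}$ in an $n$-dimensional commutative $\Q$-algebra $A$, an $A$-point of $C$, and the bound $|\mathrm{disc}(\mathcal{O})| < c(n) H_E^{2n-2}$.

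Now I would decompose $A = K_1 \times \cdots \times K_r$ into a product of number fields with $\sum_{i=1}^{r} [K_i : \Q] = n$. The $A$-point of $C$ corresponds to a tuple $(P_i)$, where $P_i \in C(K_i)$, and each $P_i$ is supported at a closed point $x_i \in C$ whose residue field $k(x_i)$ embeds into $K_i$. In particular, each $x_i$ is an effective $\Q$-rational divisor on $C$ of degree $[k(x_i) : \Q] \leq [K_i : \Q]$. Since the index of $C$ equals $n$, every such divisor has degree at least $n$, hence $[k(x_i) : \Q] \geq n$ for every $i$. Combined with $\sum_i [K_i : \Q] = n$, this forces $r = 1$, $K_1 = A$ to be a degree $n$ number field, and $k(x_1) = K_1$, so that $C$ has a rational point over the number field $K := K_1$.

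Finally, to pass from the discriminant of the order $\mathcal{O}$ to the discriminant of $K$, I would use the standard relation $\mathrm{disc}(\mathcal{O}) = [\mathcal{O}_K : \mathcal{O}]^2 \, \mathrm{disc}(\mathcal{O}_K)$, which gives $|\mathrm{disc}(K)| \leq |\mathrm{disc}(\mathcal{O})| < c(n) H_E^{2n-2}$. All the substantive work lies in Theorem \ref{Main theorem I}; once its conclusion is granted, the only role of the index hypothesis is to collapse the possibly reducible étale $\Q$-algebra to a single number field, so there is no real obstacle beyond what is already handled in the main theorem.
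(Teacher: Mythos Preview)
Your approach matches the paper's: reduce to Theorem \ref{Main theorem I}, then use the index hypothesis to force the algebra to be a single number field. There is, however, one step you skip that the paper does not. You write ``decompose $A = K_1 \times \cdots \times K_r$ into a product of number fields'', but Theorem \ref{Main theorem I} only produces a commutative $\Q$-algebra, not an \'etale one. The algebra $A$ arises as the ring of functions on a hyperplane section $C\cap H$, and if $H$ meets $C$ tangentially then $A$ has nilpotents and no such decomposition exists. The paper disposes of this first: a non-reduced section would have $(C\cap H)_{\mathrm{red}}$ a $\Q$-rational effective divisor of degree strictly less than $n$, contradicting the index hypothesis, so $A$ is \'etale. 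Your closed-point argument can be patched to absorb this case as well---work with $A_{\mathrm{red}}=\prod K_i$, note $\sum_i [K_i:\Q]=\dim_\Q A_{\mathrm{red}}\le n$, and observe that any single factor already gives a closed point of degree $\le [K_i:\Q]$, forcing $[K_i:\Q]\ge n$ and hence $r=1$ and $A=A_{\mathrm{red}}$---but as written the decomposition is unjustified.
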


\begin{remark}
The condition on the index of $C$ is a mild technical condition, which could probably be removed with more work. It is automatically satisfied when $n=p$ is prime and the class $c\in \Sha(E/\mb{Q})$ is non-zero---this follows from the well known fact the period and the index of an element of $\Sha(E/\mb{Q})[p]$ are equal. We also expect that the theorem is true for $n$ even as well, but the proof will require more work. However, the assumption that $C$ is everywhere locally soluble is essential, and we do not expect the theorem to hold if it is removed. 
\end{remark}

\begin{remark}	
We have computed explicit values for the constant $c(n)$ for $n=3$ and $n=5$, though for reasons of space we have not included the details here. See  Chapter 6 of \cite{LazarThesis}. These bounds imply that $\Sha(E/\Q)[n]$ is trivial for a few elliptic curves with small height. Geometry of numbers methods for proving the vanishing of $\Sha(E/\mb{Q})[3]$, similar in spirit to this one, were studied before in \cite{ncovbds}.
\end{remark}

\begin{remark}
In future work, we hope to investigate the asymptotics of the constants $c(n)$ as $n \xrightarrow{} \infty$. Here we should mention that capitulation discriminants were studied before in the work of Hriljac \cite{hriljac1987splitting}, \cite{van2015arakelov}. Hriljac shows, using Arakelov theory, that for a fixed elliptic curve $E$, the capitulation discriminant of a torsor $C$ that represents an element of $\Sha(E/\Q)[n]$ is bounded by $e^{O(n)} n^n$, where the implied constant depends on the curve $E$.   
\end{remark}

\subsection{Genus one models} The first step in the proof of Theorem \ref{Main theorem I} is to develop the invariant theory of algebraic models of genus one curves $C \subset \PP^{n-1}$. This has already been carried out when $n\leq 5$, see \cite{g1inv} and \cite{an2001jacobians}. We briefly summarise the relevant facts. For the proofs, see above references. There is a parametrization of the set of equations that define an $n$-diagram. A genus one model of degree $n \leq 5$ is

\begin{enumerate}
    \item if $n=2$ a binary quartic form,
    \item if $n=3$ a ternary cubic,
    \item if $n=4$ a pair of quadratic forms in four variables,
    \item if $n=5$ a $5\times5$ alternating matrix with entries linear forms in five variables.
    \end{enumerate}

A  generic set of equations as above defines a genus one curve $C$ equipped with a $K$-rational divisor class $[D]$ of degree $n$. In the $n=5$ case,  the equations of the curve in $\PP^4$ are the five $4 \times 4$ Pfaffians of the matrix. 

Let $X_n$ denote the (affine) space of genus one models of degree $n$. There is an action of a group $\mc{G}_n$ on the space $X_n$ such that the non-degenerate orbits in $\mc{G}_n \backslash X_n$ correspond bijectively to isomorphism classes of $n$-diagrams. The group $\mc{G}_n$ is a product of several linear groups, and acts on $X_n$ either by linear changes of coordinates on the ambient $\PP^{n-1}$, or by changing the basis of the equations that define the curve. 

By an invariant of this action we mean a homogeneous polynomial $F$ in the coordinate ring $\Q[X_n]$, such that $F(g \cdot w)=\chi(g) F(w)$ for all $g \in \mc{G}_n(\mb{C})$ and $w \in X_n(\mb{C})$, where $\chi$ is a rational character on $\mc{G}_n$.  The study of these invariants goes back to 19th century and classical invariant theory. Weil (\cite{weil1}, \cite{weil2}), observed that the invariants of a binary quartic and ternary cubic can be used to write down a Weierstrass equation for the Jacobian elliptic curve of the $n$-diagram, and this was extended to genus one models of degree $4$ and $5$ in \cite{an2001jacobians} and \cite{g1inv}. More precisely, the ring of invariants is generated by two  polynomials $c_4, c_6  \in \Z[X_n]$, with the property that the Jacobian of the curve defined by a genus one model $\Phi \in X_n$ has the Weierstrass equation:
\[
y^2=x^3-27c_4(\Phi)-54 c_6 (\Phi)
\]

Within this setup, the $n$-Selmer group of an elliptic curve $E$ can be given an invariant theoretic description, as the subset of those orbits in $\mc{G}_n(\Q) \backslash X_n(\Q)$ that define $n$-diagrams that have Jacobian $E$, and are everywhere locally soluble.

Moreover elements of the Selmer group can be represented by especially nice genus one models, with integer coefficients that are small relative to the naive height of the Jacobian. A genus one model $
\Phi$, that represents an $n$-diagram $[C \xrightarrow{} \PP^{n-1}]$, is said to be \textit{minimal} if
$\Phi$ has integer coefficients, and the invariants $c_4(\Phi)$ and $c_6(\Phi)$ are equal to invariants $c_4$ and $c_6$ of the minimal Weierstrass model of the Jacobian of $C$. In \cite{minred234} and \cite{minred5}, it is shown that if $[C \xrightarrow{} \PP^{n-1}]$ represents an element of the $n$-Selmer group of its Jacobian, i.e. $C$ is everywhere locally soluble, then $[C \xrightarrow{} \PP^{n-1}]$ admits a minimal genus one model.

The case $n=2$ of this result goes back to the work of Birch and Swinnerton-Dyer 
\cite{birch1963notes}. The existence of minimal models, and algorithms for constructing them, has applications to finding large generators of Mordell-Weil groups of elliptic curves \cite{minred234}, and in Bhargava and Shankar's work on average size of $n$-Selmer groups \cite{bhargava2013average}, \cite{bhargava2015binary}.

In this paper we generalize these results to $n>5$. The first step is to extend the definition of a genus one model of degree $n$. For $n \geq 5$, an $n$-diagram $C \subset \PP^{n-1}$ is defined by $n(n-3)/2$ quadrics, and so is not a complete intersection. A generic set of $n(n-3)/2$ quadrics does not even define a curve in $\PP^{n-1}$, and unlike the case $n \leq 5$, there is no simple parametrization of those quadrics that define an $n$-diagram. 

Nevertheless, we can generalize the invariant theory constructions to this setting, if we change our viewpoint on what a genus one model is. A common feature of the above definitions of genus one model in each of the cases $n=2,3,4,5$ is that a genus one model specifies a minimal graded free resolution of the ideal that defines the curve. Informally, the resolution contains the data of the generators of the ideal defining curve, the data of the relations that these generators satisfy, then the data of the relations that these relations satisfy, and so on.

In Section \ref{resolution model section} we define the space of resolution models as a space of chain complexes satisfying certain technical conditions. A resolution model (of degree $n$) essentially consists of a collection of matrices, with entries homogeneous polynomials, that represent the differentials of a chain complex. We extend a few well known properties of genus one models to resolution models. 

Following an idea of Fisher, we attach an alternating $n\times n$-matrix $\Omega$ to a resolution model $F_{\bu}$. The entries of $\Omega$ are quadratic forms, and the coefficients of these forms are polynomials in the coefficients of the resolution model $F_{\bu}$.  We show that there exist invariants $c_4(F_{\bu})$ and $c_6(F_{\bu})$, polynomials in the coefficients of the entries of $\Omega$. For odd $n$, we prove that the Jacobian $E$ of the curve defined by $F_{\bu}$ is given by the Weierstrass equation 

\[
y^2=x^3-27c_4(F_{\bu})-54c_6(F_{\bu}).
\]

We also construct minimal models for elements of the Selmer group. An everywhere locally soluble $n$-diagram can be represented by a resolution model $F_{\bu}$ with integer coefficients, with invariants $c_4(F_{\bu}),c_6(F_{\bu})$ equal to the invariants $c_4$, $c_6$ of the minimal Weierstrass equation of the Jacobian of $C$. 

\subsection{The discriminant form}

To bound the capitulation discriminant of $C$, we make use of the observation that the intersection of $C \subset \PP^{n-1}$ with a generic hyperplane $U$ in $\mb{P}^{n-1}$ is a set  $X_U$ of $n$ points in $\mb{P}^{n-2}$. Let $A_U$ denote the algebra of global functions on $U$. By definition $C$ capitulates over $A_U$, and the strategy for the proof of Theorem \ref{Main theorem I} will be to show that we can find a hyperplane $U$ so that the conclusions of the theorem are satisfied.

Let $x_1,\ldots,x_n$ be the coordinates on $\PP^{n-1}$ and suppose $U$ is defined by an equation $u_1x_1+\ldots+u_nx_n$. To a resolution model $F_{\bu}$ of $C$ we attach a homogeneous polynomial $D$, of degree $2n$ in $n$ variables $u_1,\ldots,u_n$, with the property that $D(u_1,\ldots,u_n)$ is the determinant of trace form on $A_U$ with respect to a certain basis. We say $D$ is the \textit{discriminant form} attached to $F_{\bu}$. Its coefficients are homogeneous polynomials in the coefficients of $F_{\bu}$. Moreover if $u_1,\ldots,u_n$ are integers, then this basis is in fact a basis of an order in $A_U$, and so $D(u_1,\ldots,u_n)$ is the discriminant of this order, and thus an upper bound on the discriminant of $A_U$.

The discriminant form is a classical object, and can also be viewed as an equation of the projective dual of $C$, as $D(u_1,\ldots,u_n)$ vanishes if and only if the hyperplane $u_1x_1+\ldots+u_nx_n$ meets some point of $C$ with multiplicity greater than one. To establish the above interpretation of the integer values of the discriminant form, a key ingredient is the main result of \cite{radfis}, which gives a construction of a multiplication table for the algebra $A_U$ from the minimal graded free resolution of the homogeneous ideal of the set $X_U$. This result is an analogue of the invariant theory for genus one models described above, and for $n \leq 5$, it recovers the well-known parametrizations of rings of rank $n$ due to Levi-Delone-Faddeev and Bhargava \cite{delone1964theory}, \cite{bhargava2004higher}, \cite{bhargava2008higher}.

\subsection{Heisenberg invariant forms and geometry of numbers}
Proving Theorem \ref{Main theorem I} now amounts to showing that there exists a constant $c(n)$ satisfying the following. Let $[C \xrightarrow{} \PP^{n-1}]$ be any everywhere locally soluble $n$-diagram, represented by a minimal resolution model $F_{\bu}$. Then there exits integers $u_1,\ldots,u_n$, not all zero, such that for the associated discriminant form $D$ we have $|D(u_1,\ldots,u_n)| \leq c(n) H^{2n-2}_E$.

To prove this, we do not work directly with the model $F_{\bu}$. Instead, we make use of the fact that the discriminant form transforms naturally under changes of coordinates on $\PP^{n-1}$, and make a $\mathrm{SL}_n(\mathbb{R})$-change of coordinates, which preserves the invariants of the resolution model that represents $C$, and put $C$ into \textit{Heisenberg normal form}. 

A curve in Heisenberg normal form is characterised by having an especially simple description of the action of the $n$-torsion of its Jacobian, and can be identified with the embedding of the Jacobian in $\PP^{n-1}$ by theta functions. This last fact allows us to view coefficients of its $\Omega$-matrix as modular forms, and by computing their $q$-expansions, we are able to bound the coefficients of the discriminant form in terms of the naive height of $E$. The final step, proving that the discriminant form takes on a small value, follows immediately from Minkowski's theorem on lattice points.

\begin{example}
Let us look at the construction of the discriminant form in the case $n=3$, where the formulas are still reasonably simple. Let $C \subset \mb{P}^2$ be a smooth curve of genus one and degree 3, defined over $\Q$, and let $F(x,y,z) \in \Q[x,y,z]$ be a ternary cubic that defines $C$. The discriminant form $D$ can be computed by the following recipe. Consider a hyperplane $H$ in $\mb{P}^2$, defined by an equation $ux+vy+wz=0$  for some $u,v,w \in K$. At least one of $u,v,w$ is non-zero---assume it is $w$. We then have the parametrization $\phi:\mb{P}^1 \xrightarrow{} H$:
\[
(x:y) \mapsto (wx:wy:-ux-vy).
\]
Composing $F$ and $\phi$, we obtain a binary cubic form $f(x,y)=F(wx,wy,-ux-vy)$. The three zeros of $f$ correspond to the three points of the intersection of $C$ and $H$. Now recall that the discriminant of a binary cubic form \[ax^3+bx^2y+cxy^2+dy^3,\] is given by \[
b^2c^2-4ac^3-4b^3d-27a^2d^2+18abcd.\]
The discriminant of the form $f$ is a homogeneous polynomial in $u,v$ and $w$, of degree 12, that is divisible by $w^6$. The discriminant form $D$ turns out to be equal to $\mathrm{disc}(f)/w^6$. If $F \in \mb{Z}[x,y,z]$ has integer coefficients, and the cubic $f$ is irreducible, it follows from the Delone-Faddeev correspondence \cite{delone1964theory} that $D(u,v,w)$ is the discriminant of an order in the number field defined by $f$. 

A general cubic form $F$ depends on 10 coefficients, and the formula for the form $D$ is fairly intimidating. However,  by making a linear change of variables via $ 
\gamma \in \mathrm{SL}_3(\mb{R})$, we may replace $F$ by the cubic $G=F \circ \gamma$, where $G$ has the simple form
\[
G=a(x^3+y^3+z^3)-3bxyz.
\]
where $a$ and $b$ are real numbers. This is known as the Hesse normal form of $F$. The discriminant form of $G$ is equal to
\begin{align*}
D_G(u,v,w)=&-27a^4(u^6+v^6+w^6) + 162a^2b^2(u^4vw+uv^4w+uvw^4)\\& + (54a^4 - 108ab^3)(u^3v^3+v^3w^3+w^3u^3) + (-324a^3b + 81b^4)u^2v^2w^2 .
\end{align*}
The discriminant form $D_F$ of $F$ is also related to $D_G$ through a change of variables corresponding to $\mathrm{SL}_3(\mb{R})$ acting on the projective space that parametrizes hyperplanes in $\PP^2$. This reduces the problem of proving that $D_F$ takes a small value at some point in $(u,v,w)$ for some $(u,v,w) \in \mb{Z}^3$, to the problem of proving that $D_G$ takes a small value at some point $(u,v,w) \in \Lambda$, where $\Lambda$ is a lattice in $\mb{R}^3$ of covolume one. This problem can now be tackled using methods from the geometry of numbers. 
\end{example}

\subsection{Organization of the paper.} In Section \ref{prelim chapter} we briefly recall basic results about minimal raded free resolutions and twists of elliptic curves we need. In Section \ref{resolution model section} we introduce the notion of \textit{resolution model} of a genus one curve $C \subset \PP^{n-1}$, generalizing the notion of \textit{genus one model}. We define the $\Omega$-quadrics associated to a resolution model, prove that they transform naturally under linear changes of coordinates on $\PP^{n-1}$, and use them to construct the discriminant form and an invariant differential on $C$. 

The main results of Section \ref{resolution model section}, Theorem \ref{formula for Jacobian} and Theorem \ref{global minimization theorem}, are proved in Section \ref{unprojection chapter} and Section \ref{analytic chapter}. In Section \ref{unprojection chapter} we explain the method of unprojection, which allows us to construct free resolutions inductively, and is the key technical tool in our proofs. The main result of Section \ref{unprojection chapter} is Theorem \ref{global minimization theorem}, generalizing the minimization theorem of \cite{minred234} to resolution models. The theorem is deduced from the local minimization result, Theorem \ref{local minimization theorem}, and Theorem \ref{formula for Jacobian}, using strong approximation. Theorem \ref{local minimization theorem} is proven by an induction, using unprojection.

 The method of unprojection was developed by Kustin and Miller in \cite{kustin1983constructing}, and used by Fisher in \cite{fisher2006higher} to compute minimal free resolutions of genus one normal curves. We refine and develop this method further in Section \ref{mapping cone section} and Section \ref{construction of the unprojection data}. Two key points for the proof of Theorem \ref{global minimization theorem} is that the method of unprojection works over base rings more general than fields, most notably $\mb{Z}$, and that it is possible to compute $\Omega$-quadrics using unprojection.

In Section \ref{analytic chapter} we recall the classical theory of embedding elliptic curves into projective space by means of theta functions, and finish the proof of Theorem \ref{formula for Jacobian}. The key point is that we are able to obtain $q$-expansions for the coefficients of the associated $\Omega$-matrix, when the $n$-diagram is in the Heisenberg normal form. This reduces the proof of the formula for Jacobian to a calculation already done in \cite{jacobians}. These $q$-expansions also play a role in Section \ref{explicit bounds chapter}, where we put everything together, and finish the proofs of Theorem \ref{Main theorem I}.

\textbf{Acknowledgements.} This paper is based on the author's PhD thesis \cite{LazarThesis}. I am deeply grateful to my PhD advisor Tom Fisher, for suggesting the problem to me, for many helpful conversations and for his patient guidance along the way. I would also like to thank Jack Thorne, Vladimir Dokchitser, Stevan Gajovic and Michele Fornea for helpful comments. Finally, I thank Trinity College and Max
Planck Institute for Mathematics for their financial support.

\section{Preliminaries} \label{prelim chapter}
\subsection{Galois cohomology and $n$-diagrams.} \label{n-diagrams}
		Let $F$ be a number field, $E/F$ an elliptic curve and $n \geq 1$ an integer. We briefly recall a few standard facts about the Galois cohomology groups $H^1(F,E)$ and $H^1(F,E[n])$, see for example \cite{descentpaper}. 
		
	A torsor under $E$ is a smooth projective curve $T/F$, together with a regular simply transitive action of $E$ on $T$.	An isomorphism of torsors $C_1$ and $C_2$ is an isomorphism of curves $C_1$ and $C_2$ that respects the action of $E$. The left action of $E$ on itself by translations makes $E$ a torsor, which we call the trivial torsor. There is a natural identification of the group $H^1(F,E)$ with the set of isomorphism classes of torsors defined over $F$, and the trivial torsor $E$ corresponds to the identity element.
		
		We will also need the following interpretation of the group $H^1(F,E[n])$. We define a diagram $[C \xrightarrow[]{} S]$ to be a morphism from a torsor $C$ to a variety $S$. An isomorphism of diagrams $[C_1 \xrightarrow{} S_1] \sim [C_2 \xrightarrow{} S_2]$ is an isomorphism
		of torsors $\phi : C_1 \cong C_2$ together with an isomorphism of varieties $\psi : S_1 \cong S_2$ making the diagram
		
		\[ \begin{tikzcd}
			C_1 \arrow{r}{} \arrow[swap]{d}{\phi} & S_1 \arrow{d}{\psi} \\%
			C_2 \arrow{r}{}& S_2
		\end{tikzcd}
		\]
		commute. We define the trivial $n$-diagram to be the diagram $[E \xrightarrow[]{} \mb{P}^{n-1}]$ where the morphism is induced by the complete linear system of the divisor $n \cdot 0_E$, and in general, we say a diagram $[C \xrightarrow[]{} S]$ is an $n$-diagram if it is defined over $F$, but isomorphic to the trivial diagram over the algebraic closure $\bar{F}$, i.e. a twist of the trivial diagram. The set of isomorphism classes is also naturally identified with $H^1(F,E[n])$.
		
        The group law on $E$ induces a summation map $\mathrm{sum} : \mathrm{Div} E \xrightarrow[]{} E$, given by $\sum n_p \cdot (P) \mapsto \sum n_pP.$ Two divisors $D$ and $D'$ of the same degree are linearly equivalent if and only $\mathrm{sum}(D)=\mathrm{sum}(D')$. The Kummer map $\delta : E(F)/nE(F) \xrightarrow{}H^1(F,E[n])$ sends a class $[P] \in E(F)/nE(F)$ to the isomorphism class of the $n$-diagram $[E \xrightarrow[]{} \mb{P}^{n-1}]$, where the map is induced by the complete linear system of any degree $n$ divisor $D$ with $\mathrm{sum}(D)=P$.
	
		In this article we consider only $n$-diagrams of the form $[C \xrightarrow[]{} \PP^{n-1}]$. When $n \geq 3$, such an $n$-diagram is a closed embedding, and its image is a smooth projectively normal curve $C$ of genus one and degree $n$. If $n=3$, $C$ is a plane cubic. For $n \geq 4$, the ideal defining $C$ is generated by $n(n-3)/2$ quadrics, see \cite[Prop.~5.3]{g1inv}. Finally, as a consequence of class field theory, under the above identification, the elements of the $n$-Selmer group of $E$ can be represented by $n$-diagrams of the form $[C \xrightarrow[]{} \mb{P}^{n-1}]$.
\subsection{Free resolutions}		
Next, we recall here some basic results from commutative algebra. The main references for this section are \cite{eisenbud}, \cite{bruns1998cohen} and \cite{kustin1983constructing}.

Let $R=k[x_0,\ldots,x_m]$, where $k$ is a field. For $M=\oplus M_d$ a graded $R$-module, let $M(c)=\oplus M_{c+d}$ be the graded $R$-module with grading shifted by $c$. We say that the direct sums of the modules $R(c)$ are the free graded $R$-modules. The notion of a free resolution of a graded module will play a key role in the rest of the paper.
 
\begin{definition}\label{def minimal free res}
	A graded free resolution of a graded $R$-module $M$ is a chain complex $F_{\bullet}$ of graded free $R$-modules
	
	\[
	F_r \xrightarrow{\phi_{r}} F_{r-1} \xrightarrow{\phi_{r-1}} \ldots \xrightarrow{\phi_2} F_1 \xrightarrow{\phi_1} F_0,
	\] 
	that is exact in degree $>0$, and has $H_0(F_{\bullet})=F_0/\phi_0(F_1) \cong M$. 
	
	Let $\mathfrak{m}=(x_1,\ldots,x_{m})$ be the maximal homogeneous ideal of $R$. We say a resolution $F_{\bullet}$ is minimal if we have $\phi_{k}(F_k) \subset \mathfrak{m} F_{k-1}$ for every $k \geq 1$. 	
\end{definition}

\subsection{Koszul resolutions.} \label{koszul def} The basic example of a minimal free resolution is the Koszul complex. We briefly recall its definition and basic properties, following \cite{eisenbud} and \cite{matsumura1989commutative}. Let $R$ be a commutative ring and let $E$ be a free $R$-module of finite rank $r$. Given an $R$-linear map $f : E \xrightarrow{} R$, the Koszul complex associated to $f$ is the chain complex of $R$-modules

\begin{align*}
	K_{\bu}(f) : 0 \xrightarrow{} \Lambda^r E \xrightarrow{d_r} \Lambda^{r-1} E \xrightarrow{} \ldots \xrightarrow{} \Lambda^{1} E \xrightarrow{d_1} R
\end{align*}
where the differential $d_k$ is given by, for any $e_1,e_2,\ldots,e_k \in E$ 
\[
d_k(e_1\wedge \ldots \wedge e_k)=\sum^{k}_{i=1} (-1)^{i+1} f(e_i) e_1 \wedge \ldots \wedge \widehat{e_i} \wedge \ldots e_k.
\]
Now let $e_i$ be a basis $E$ of and let $x_i=f(e_i)$.  
\begin{theorem} \label{Koszul theorem}
	If $x_1,\ldots,x_r$ is a regular sequence in $R$, meaning that $x_i$ is not a zero-divisor on $R/(x_1,\ldots,x_{i-1})$ for all $i$, then $K_{\bu}(f)$ is a free resolution of $R/(x_1,\ldots,x_{r})$.
\end{theorem}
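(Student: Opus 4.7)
The plan is to proceed by induction on $r$. The base case $r=1$ is immediate: the Koszul complex reduces to $0 \to R \xrightarrow{\cdot x_1} R$, and the hypothesis that $x_1$ is a non-zero-divisor is exactly the statement that this two-term complex is a free resolution of $R/(x_1)$.

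For the inductive step I would exploit the tensor product structure of the Koszul complex. Decomposing $E = E' \oplus R e_r$, let $f' : E' \to R$ be the restriction of $f$ and let $f'' : R e_r \to R$ be determined by $e_r \mapsto x_r$. Then there is a canonical identification
\[
K_\bu(f) \;\cong\; K_\bu(f') \otimes_R K_\bu(f''),
\]
compatible with differentials, which is a formal consequence of the behavior of exterior algebra under direct sums. Since $K_\bu(f'')$ is the two-term complex $0 \to R \xrightarrow{\cdot x_r} R \to 0$, tensoring with it yields a short exact sequence of complexes
\[
0 \longrightarrow K_\bu(f') \longrightarrow K_\bu(f) \longrightarrow K_\bu(f')[-1] \longrightarrow 0,
\]
and a direct unwinding of the definition shows that the connecting homomorphism in the associated long exact sequence in homology is, up to sign, multiplication by $x_r$.

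Now apply the induction hypothesis to the truncated sequence $x_1, \ldots, x_{r-1}$, which is regular as an initial segment of a regular sequence. This gives $H_i(K_\bu(f')) = 0$ for $i \geq 1$ and $H_0(K_\bu(f')) = R/(x_1, \ldots, x_{r-1})$. Feeding this into the long exact sequence forces $H_i(K_\bu(f)) = 0$ for all $i \geq 2$, and leaves the four-term exact sequence
\[
0 \longrightarrow H_1(K_\bu(f)) \longrightarrow R/(x_1, \ldots, x_{r-1}) \xrightarrow{\cdot x_r} R/(x_1, \ldots, x_{r-1}) \longrightarrow H_0(K_\bu(f)) \longrightarrow 0.
\]
The regularity hypothesis is precisely that $x_r$ is a non-zero-divisor on $R/(x_1, \ldots, x_{r-1})$, so the middle arrow is injective; this forces $H_1(K_\bu(f)) = 0$ and identifies $H_0(K_\bu(f))$ with $R/(x_1, \ldots, x_r)$, completing the induction.

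The only point requiring real care is the verification of the isomorphism $K_\bu(f) \cong K_\bu(f') \otimes_R K_\bu(f'')$ and the identification of the connecting map with multiplication by $x_r$; both come down to tracking signs from the wedge-product structure on each piece, and are standard but a little tedious. Everything else is formal homological algebra, and the induction closes cleanly because the regular sequence hypothesis is designed to provide exactly the injectivity needed to kill the obstruction in the long exact sequence at each stage.
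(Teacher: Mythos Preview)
Your argument is correct and is precisely the standard textbook proof; the paper itself does not give an argument but simply cites Theorem~16.5(i) of Matsumura's \emph{Commutative Ring Theory}, whose proof is essentially the induction via the short exact sequence of Koszul complexes that you wrote out.
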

\begin{proof}
	This is Theorem 16.5(i) of \cite{matsumura1989commutative} 
\end{proof}
The case of the theorem we will be using is when $R=S[x_1,\ldots,x_m]$, for an arbitrary base ring $S$ and $m \geq r$. In this case $K_{\bu}(f)$ is a graded free resolution of the ideal $(x_1,\ldots,x_r)$, with the grading given by $K_{\bu}(f)_k=(\Lambda^k E)(-k)$.

It is easy to verify that the pairings $\Lambda^i E \times \Lambda^j E\xrightarrow{} \Lambda^{i+j} E$ satisfy the Leibniz rule: for $f_i \in \Lambda^iE$ and $f_j \in \Lambda^j E$ we have
\[
d_{i+j}(f_i \wedge f_j)=d_i(f_i)\wedge f_j +(-1)^{i} f_i \wedge d_j(f_j).
\]
Furthermore $f_i \wedge f_j =(-1)^{ij} f_j \wedge f_j$. Thus the wedge product makes the Koszul complex the simplest example of differential graded commutative algebra, which we will define in Section \ref{DGCA section}.

 The Koszul complex is self-dual: we can identify $\Lambda^r E=R$, using, for example, the generator $e_1 \wedge \ldots \wedge e_r$. Then the pairings $\Lambda^i E \times \Lambda^{r-i}E \xrightarrow{} \Lambda^{r} E=R$ induces a map $\eta : K_{\bu} \xrightarrow{} K_{\bu}^{*}$, which is a map of chain complexes by the Leibniz rule, and an isomorphism as the pairings are perfect.

\subsection{Sign conventions for the dual complex.} For a graded module $M$, we  denote by $M^*$ the dual module $\mathrm{Hom}(M,R)$. Note that $R(n)^* \cong R(-n)$ as a graded module. The dual of a finite chain complex $(F_{\bullet},\phi_i)$, of length $n$, of graded $R$-modules $M$ will for us be a chain complex (as opposed to a cochain complex) of graded free $R$-modules, denoted by $F_{\bullet}^*$,  with $F^{*}_{i}=\mathrm{Hom}(F_{n-i},R)$. We adopt the sign convention that  the differential $F^{*}_{i} \xrightarrow{} F^{*}_{i+1}$ is given by $(-1)^{i}\phi^{*}_{n-i}$. 

The length of a minimal free resolution is called the projective dimension of $M$ and is denoted by $\textrm{proj dim } M$. We write $\text{codim }I$ for the codimension of a homogeneous	 ideal $I \subset R$. By the Auslander-Buchsbaum formula, we have the inequality $\textrm{codim } I \leq \textrm{proj dim } I$. We say a homogeneous ideal $I$ is perfect if $\textrm{codim } I = \textrm{proj dim } I$.

\begin{definition}
Let $I \subset R$ be a perfect ideal, and let $F_{\bullet}$ be the minimal graded free resolution of $I$. The ideal $I$ is \textit{Gorenstein} if $F_{\bullet}$ is isomorphic to its dual, up to a shift in the degree, i.e. if $F_{\bullet}\cong F^{*}(c)$, for some integer $c$.  
\end{definition}

\subsection{Algebra structures on resolutions} \label{DGCA section}
In this section we will consider graded modules over the ring $R=S[x_0,\ldots,x_m]$. In our applications, the ring of scalars $S$ will either be a field, or a ring of $p$-adic integers $\mb{Z}_p$, for some prime $p$.

Let $(C_{\bullet},d)$ be a chain complex of $R$-modules with $C_0=R$. A differential graded algebra structure on $C_{\bullet}$  is a collection of maps $C_i \otimes C_j \xrightarrow{} C_{i+j}$ that satisfies the Leibniz rule. In other words, a multiplication rule on $C_{\bullet}$ such that, if $z_i \in C_i$ and $z_j \in C_j$, then $z_i \cdot z_j \in C_{i+j}$, and
\[ \label{Leibniz rule}
d_{i+j}(z_i \cdot z_j)=d_i(z_i) \cdot z_j+(-1)^{i} z_i \cdot d_j(z_j).
\]
We also require that the map $C_0 \otimes C_i \xrightarrow{} C_i$ is just the usual multiplication $R \otimes C_i \xrightarrow{} C_i$, when we identify $C_0=R$. In other words, $1 \in C_0$ is the identity element. 

We omit the subscripts on the differential maps when there is no danger of confusion. We say the algebra is commutative if we have $z_i \cdot z_j=(-1)^{ij} z_j \cdot z_i$ for $i$ and $j$, and $z_i \cdot z_i=0$ for all $i$ odd. In this case, we say $C_{\bu}$ has a DGCA (differential graded commutative algebra) structure.

\subsection{Buchsbaum-Eisenbud construction.} Now let $F_{\bullet}$ be a finite graded free resolution of an ideal $I \subset R$, so that $F_0=R$, and $H_i(F_{\bullet})=0$ for $i>0$. There is a natural way to define a DGCA structure on $F_{\bullet}$, due to Buchsbaum and Eisenbud \cite{buchsbaum1982gorenstein}.  Define the symmetric square $S^2(F_{\bullet})$ to be 
\[
S^2(F_{\bullet})=(F_{\bullet} \otimes F_{\bullet})/M
\]
where $M$ is the submodule of $F_{\bullet} \otimes F_{\bullet}$ generated by 
\[
\{z_i \otimes z_j- (-1)^{ij} z_j \otimes z_i\ | \ z_k \in F_k\} \cup \{z_i \otimes z_i \ | \ z_i \in F_i, i \text{ odd} \}
\]
Then $S^2(F_{\bullet})$ is a chain complex, with the differential induced from the differential on $F_{\bullet} \otimes F_{\bullet}$
\[
d(z_i \otimes z_j)=dz_i \otimes z_j +(-1)^{i} z_i \otimes dz_j.
\]
The degree $k$ term can be identified with
\[
S^2(F)_k=\left( \sum_{i+j=k, \ i<j} F_i \otimes F_j \right) 	+ T_k,
\]
where
\begin{equation*}
	T_k=\begin{cases}
		0,  &\text{if}\ k  \ \text{is odd},\\
		\Lambda^2 F_{k/2}, &\text{if}\ k  \ \text{is of the form } 4n+2,\\
		S^2 F_{k/2}, &\text{if}\ k  \ \text{is of the form } 4n.\\
	\end{cases}
\end{equation*}
In particular, note that $S^2(F_{\bullet})$ is a complex of free modules. We have $S^2(F)_0=F_0 \otimes F_0$ and $S^2(F)_1=F_0 \otimes F_1$. Identifying the left $F_0$ with $R$, we obtain maps $S^2(F)_0=R \otimes F_0 \xrightarrow{} F_0$ and $S^2(F)_1=R \otimes F_1 \xrightarrow{} F_1$. These maps commute with differentials on $S^2(F_{\bullet})$ and $F_{\bullet}$. As $H_i(F_{\bullet})=0$ for $i \geq 1$, they can be extended to a map of chain complexes $\alpha: S^2(F_{\bullet}) \xrightarrow{} F_{\bullet}$. For homogeneous elements $z_i$ and $z_j$, we put $z_i \cdot z_j=\alpha(z_i \otimes z_j)$. It is clear that this defines a DGCA structure on $F_{\bullet}$.

\subsection{Construction of a null homotopy.}
Suppose that $F_{\bullet}$ and $G_{\bullet}$ are DGCA algebras, and that $\beta :F_{\bullet} \xrightarrow{} G_{\bullet}$ is a map of chain complexes. The map $\beta$ does not need to respect the DGCA structure on the nose, but it does so up to homotopy. The following result, Lemma 1.2 of \cite{kustin1983constructing}, makes this precise.

\begin{lemma} \label{null homotopy lemma}
Let $\beta : F_{\bullet} \xrightarrow{} G_{\bullet}$ be a map of complexes of free $R$-modules.
\begin{equation*} 
	\begin{tikzcd}
		\ldots \ar[r,"\phi_{n+1}"]& F_{n}\ar[r,"\phi_n"]  \ar[d,"\beta_{n}"] &\ar[r,"\phi_2"] \ldots & F_{1} \ar[r,"\phi_1"]  \ar[d,"\beta_1"] & F_0 \ar[d,"\beta_0"] \\
		\ldots \ar[r,"\psi_{n+1}"] & G_{n} \ar[r,"\psi_n"] &\ldots\ar[r,"\psi_2"] & G_1 \ar[r,"\psi_1"] & G_0
	\end{tikzcd} 
\end{equation*}
Suppose that $F_{\bullet}$ and $G_{\bullet}$ are both DGCA. If $H_i(G_{\bullet})=0$ for $i\geq 1$ , then there exists a collection of maps $\xi : F_i \otimes F_j \xrightarrow{} G_{i+j+1}$, defined for $i,j \geq 0$, such that:
\begin{enumerate}[label=(\roman*)]
	\item $\xi(z_i 	\otimes z_j)=(-1)^{ij} \xi (z_j \otimes z_i)$,
	\item $\xi(z_i \otimes z_i)=0$ if $i$ is odd,
	\item $\xi(z_0 \otimes z_i)=0$,
	\item $\beta_{i+j}(z_i \cdot z_j)-\beta_i(z_i)\cdot\beta_j(z_j)=\xi(\phi_i(z_i) \otimes z_j)+(-1)^{j}\xi(z_i \otimes \phi_j(z_j))+\psi_{i+j+1} (\xi(z_i \otimes z_j))$
	for $z_i \in F_i$.
\end{enumerate}
\end{lemma}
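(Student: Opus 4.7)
The plan is to build $\xi$ by induction on the total degree $n=i+j$, using the acyclicity of $G_\bullet$ in positive degrees to successively lift cycles, and building in the symmetry and alternating constraints by descending to symmetric or exterior squares of the $F_i$. Property (iii) forces $\xi$ to vanish whenever an argument lies in $F_0$; this is consistent with (iv) in total degrees $0$ and $1$ since $\beta_0$ is the identity on $R$ and the products $z_0\cdot z_k$ in both $F_\bullet$ and $G_\bullet$ are ordinary scalar multiplication.

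For the inductive step, assume $\xi$ satisfying (i)--(iv) has been defined on all $F_a\otimes F_b$ with $a+b<n$, and fix a pair $(i,j)$ with $i+j=n$. Set
\[
B(z_i,z_j) := \beta_n(z_i\cdot z_j)-\beta_i(z_i)\cdot\beta_j(z_j)-\xi(\phi_i z_i\otimes z_j)-(-1)^j\xi(z_i\otimes\phi_j z_j)\in G_n.
\]
I want to define $\xi(z_i\otimes z_j)\in G_{n+1}$ so that $\psi_{n+1}\xi(z_i\otimes z_j)=B(z_i,z_j)$. Because $H_n(G_\bullet)=0$, such a lift exists as soon as $B(z_i,z_j)\in\ker\psi_n$, so the core computation is to verify $\psi_nB=0$. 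Expanding $\psi_n$ term by term and using (a) the chain-map property of $\beta$ and the Leibniz rule on $F_\bullet$ in the first summand, (b) the Leibniz rule on $G_\bullet$ in the second summand, and (c) the inductive instance of (iv) applied to the lower-degree pairs $(\phi_i z_i,z_j)$ and $(z_i,\phi_j z_j)$ in the last two summands, one finds that every $\xi$-term produced contains a double differential $\phi_{i-1}\phi_i$ or $\phi_{j-1}\phi_j$ and hence vanishes, while the $\beta$-terms pair up and cancel. This gives $\psi_nB=0$.

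The main obstacle is to make these lifts compatible with the symmetry and diagonal constraints (i) and (ii). For $i<j$ I would pick a free lift of $B$ on a basis of $F_i\otimes F_j$ to define $\xi(z_i\otimes z_j)$, and then \emph{set} $\xi(z_j\otimes z_i):=(-1)^{ij}\xi(z_i\otimes z_j)$; the instance of (iv) on the flipped pair then reduces to (iv) on $(z_i,z_j)$ by a sign chase using graded commutativity of the DGCAs. When $i=j$ is even, symmetry forces $\xi$ to descend to the symmetric square $S^2 F_i$, and one lifts $B$ there. When $i=j$ is odd, (i) and (ii) together force $\xi$ to descend to the exterior square $\Lambda^2 F_i$, which is possible provided $B(z_i,z_i)=0$. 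For $i$ odd, $z_i\cdot z_i=0$ in both DGCAs by graded commutativity, so the first two summands of $B(z_i,z_i)$ vanish; and by the inductive symmetry $\xi(z_i\otimes\phi_i z_i)=(-1)^{i(i-1)}\xi(\phi_i z_i\otimes z_i)=\xi(\phi_i z_i\otimes z_i)$, the remaining summands $-\xi(\phi_i z_i\otimes z_i)-(-1)^i\xi(z_i\otimes\phi_i z_i)$ cancel since $(-1)^i=-1$. Freeness of $S^2 F_i$ and $\Lambda^2 F_i$ then allows the lifts to be extended additively to all of $F_i\otimes F_i$, completing the induction.
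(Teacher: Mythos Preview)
Your proof is correct and follows essentially the same idea as the paper's, just unpacked by hand. The paper observes that the ``failure of multiplicativity'' map
\[
z_i\otimes z_j\longmapsto \beta_{i+j}(z_i\cdot z_j)-\beta_i(z_i)\cdot\beta_j(z_j)
\]
descends to a chain map $S^2(F_\bullet)\to G_\bullet$ (with $S^2(F_\bullet)$ as defined in the preliminaries), which is zero in degrees $0$ and $1$; since $S^2(F_\bullet)$ is a complex of free modules and $G_\bullet$ is acyclic in positive degrees, this map is null-homotopic, and any null homotopy on $S^2(F_\bullet)$ automatically satisfies (i)--(iii) by virtue of the quotient defining $S^2$. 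Your inductive lift is exactly the standard construction of that null homotopy, and your case split on the diagonal into $S^2F_i$ versus $\Lambda^2F_i$ is precisely the structure of the graded pieces $S^2(F_\bullet)_{2i}$. The paper's packaging via $S^2(F_\bullet)$ buys brevity; your version makes the mechanics of the lift explicit and verifies directly that the symmetry and alternating constraints are compatible with the inductive step.

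One small remark: both your argument and the paper's silently use that the degree-$0$ and degree-$1$ pieces of the failure map vanish, which amounts to $\beta_0$ being multiplicative (in the applications $\beta_0=\mathrm{id}_R$, so this is harmless). You state this explicitly; the paper leaves it implicit.
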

\begin{proof}
Consider the maps $F_i \otimes F_j \xrightarrow{} G_{i+j+1}$ defined by $z_i \otimes z_j \mapsto \beta_{i+j}(z_i \cdot z_j)-\beta(z_i)\cdot\beta(z_j)$. It is easy to check, using the Leibniz rule, that these maps induce a map of chain complexes $S_2(F_{\bullet}) \xrightarrow{} G_{\bullet}$, that is zero in degrees zero and one. Hence there is a null homotopy $\xi : F_i \otimes F_j \xrightarrow{} G_{i+j+1}$, and it is clear that $\xi$ satisfies properties (i)-(iv).
\end{proof}

\subsection{DGCA structures on self-dual resolutions.} Now suppose that $F_{\bullet}$ is of length $n-1$, so that the last non-zero module in $F$ is $F_{n-2}$, and suppose that the resolution $F_{\bullet}$ is self-dual, so that $F_{\bullet} \cong F_{\bullet}^{*}$, up to a shift in grading. We identify $R=F_0=F_{n-2}^{*}$. For each $i$, algebra multiplication $F_{i} \otimes F_{n-2-i} \xrightarrow{} R$ induces a map $s_i : F_{i} \xrightarrow{} F_{n-2-i}^{*}$. 

We then have the following statement, based on Proposition 3.4.4 of \cite{bruns1998cohen}, with a slight difference arising from our sign convention for the dual complex.
\begin{proposition} \label{DGC induced duality}
	The maps $s_i : F_i \xrightarrow{} F_{n-2-i}^{*}$ define a homomorphism of chain complexes $s: F_{\bullet} \xrightarrow{} F_{\bullet}^{*}$, lifting the isomorphism $F_0 \xrightarrow{} F_{n-2}^{*}$. This map is symmetric, meaning that for any $f_i \in F_i$ and $f_{n-2-i} \in F_{n-2-i}$ we have
	\[
	s_i(f_i)(f_{n-2-i})=(-1)^{i(n-2-i)} s_{n-2-i}(f_{n-2-i})(f_i).
	\]
	  If $R=k[x_1,\ldots,x_m]$ is the graded polynomial ring over a field $k$, and $F_{\bu}$ is a minimal free resolution, then this homomorphism is an isomorphism.
\end{proposition}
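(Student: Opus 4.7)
The plan is to verify the four assertions---chain map, symmetry, lifting of the identification at the top, and isomorphism in the minimal polynomial-ring case---in turn, using respectively the Leibniz rule, graded commutativity, the definition of $s_0$, and uniqueness of minimal graded free resolutions.

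For the chain-map property, I would take $f_i \in F_i$ and an arbitrary $g \in F_{n-1-i}$. Since $F_{\bu}$ has length $n-1$, the product $f_i \cdot g$ lies in $F_{n-1} = 0$, so the Leibniz rule gives $0 = d(f_i \cdot g) = \phi_i(f_i)\cdot g + (-1)^i f_i \cdot \phi_{n-1-i}(g)$, i.e.\ $\phi_i(f_i) \cdot g = (-1)^{i+1} f_i \cdot \phi_{n-1-i}(g)$. The left side is $s_{i-1}(\phi_i(f_i))(g)$, and after unpacking the sign convention $d^{*}_i = (-1)^{i-1}\phi^{*}_{n-1-i}$ for the dual complex, the right side is $(d^{*}_i \circ s_i)(f_i)(g)$, which is exactly the chain-map relation. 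Symmetry is immediate from graded commutativity $f_i \cdot f_{n-2-i} = (-1)^{i(n-2-i)} f_{n-2-i} \cdot f_i$, which unpacks verbatim into the stated identity on $s_i(f_i)(f_{n-2-i})$ and $s_{n-2-i}(f_{n-2-i})(f_i)$. The lifting claim at degree $0$ is also automatic: because $1 \in F_0$ is the unit of the DGCA structure, $s_0(1)$ sends $f_{n-2}$ to $f_{n-2}$, which under the self-dual identification $F_{n-2} \cong R$ is the identity on $R$.

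For the isomorphism statement, the self-duality hypothesis gives $F_{\bu} \cong F^{*}_{\bu}(c)$ for some shift $c$, so $F^{*}_{\bu}$ is itself a minimal graded free resolution of a shift of $H_0(F_{\bu})$. The map $s$ is then a chain map between two minimal graded free resolutions lifting an isomorphism on $H_0$, so it is an isomorphism by the standard argument: tensoring with the residue field $k = R/\mathfrak{m}$ kills all differentials by minimality, identifying $s_i \otimes k$ with the induced canonical map on $\operatorname{Tor}^{R}_{i}(M,k)$ (which is an isomorphism), after which graded Nakayama promotes each $s_i$ to an isomorphism of free modules, and hence $s$ to an isomorphism of chain complexes.

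The only delicate point is tracking signs in the chain-map verification; once the dual-complex sign convention fixed earlier is carefully applied, the remaining assertions are essentially formal consequences of the DGCA axioms and the uniqueness of minimal free resolutions over a graded polynomial ring.
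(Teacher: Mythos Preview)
Your proposal is correct and follows the standard approach that the paper itself invokes (it cites Proposition 3.4.4 of Bruns--Herzog rather than writing out a proof): the chain-map check via the Leibniz rule applied to products landing in $F_{n-1}=0$, symmetry from graded commutativity, and the isomorphism from uniqueness of minimal free resolutions together with graded Nakayama. The only thing to be careful about, as you note, is matching the paper's dual-complex sign convention when unpacking the chain-map identity, but your outline handles this correctly.
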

	
	\section{Resolution models and $\Omega$-matrices} \label{resolution model section}
We will be concerned with minimal graded free resolutions of the following two types of varieties. Throughout this section, let $k$ be a field.
\begin{definition}
	A genus one normal curve of degree $n$ is a smooth curve of genus one and degree $n$ that spans $\mathbb{P}^{n-1}$.
\end{definition}
In the literature, these curves are also known as elliptic normal curves.
	 
	 \begin{definition}
	 	We say a zero dimensional variety $X \subset \mb{P}^{n-2}$, defined over $k$, is a set of $n$ points in general position in $\mb{P}^{n-2}$, if $X$ is of degree $n$ and  the set of geometric points $X(\bar{k})$ consists of $n$ points in general position, meaning that no subset of $X$ of size $n-1$ is contained in a hyperplane. 
	 \end{definition}
Notably, such sets arise as hyperplane sections of genus one normal curves of degree $n$.	 
	 \begin{theorem} \label{min res theorem}
	 Let $n \geq 4$, and let $R=A[x_1,\ldots,x_m]$ be a polynomial ring over a base ring $A$. We consider chain complexes of length $n-1$ and of the form
	 	\begin{equation} \label{free res}
	 \begin{split}
	 0 \xrightarrow{} R(-n) \xrightarrow{\phi_{n-2}} R(-n+2)^{b_{n-3}}& \xrightarrow{\phi_{n-3}} R(-n+3)^{b_{n-4}} \xrightarrow{\phi_{n-4}}\ldots\\&
	 \ldots\xrightarrow{\phi_{3}} R(-3)^{b_{2}} \xrightarrow{\phi_{2}} R(-2)^{b_{1}} \xrightarrow{\phi_{1}} R \xrightarrow{} 0,
	 \end{split} \tag{$\ast$}
	 \end{equation}
	 where $b_i=n\binom{n-2}{i}-\binom{n}{i+1}$ for $1 \leq i \leq n-3$.
	 \begin{enumerate}[label=(\roman*)]
	 \item Let $C\subset \mb{P}^{n-1}$ be a genus one normal curve of degree $n$, defined over $k$, and let $I:=I(C)$ be the homogeneous ideal defining $C$.  Then $I$ is Gorenstein, and the minimal free resolution $F_{\bullet}$ of $A/I$ is of the form (\ref{free res}), with $R=k[x_1,\ldots,x_{n}]$.
	 
	\item  Let $X \subset \mb{P}^{n-2}$ be a set of $n$ points in general position. Let $I:=I(X)$ be the homogeneous ideal of $k[x_1,\ldots,x_{n-1}]$ defining $X$. Then $I$ is Gorenstein, and the minimal free resolution $F_{\bullet}$ of $A/I$ is of the form (\ref{free res}), with $R=k[x_1,\ldots,x_{n-1}]$.

\end{enumerate}
	 \end{theorem}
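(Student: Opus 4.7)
The plan is to prove parts (i) and (ii) in parallel by establishing three facts for each of the coordinate rings $R/I$: that it is Cohen--Macaulay, that it is Gorenstein, and that its minimal free resolution has the shape and Betti numbers claimed.

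For Cohen--Macaulayness: in (i), an elliptic normal curve is projectively normal because $\deg \mathcal{O}_C(1) = n \geq 2g+1 = 3$, so $R/I(C)$ is Cohen--Macaulay of Krull dimension $2$. In (ii), the coordinate ring of any zero-dimensional projective scheme is Cohen--Macaulay of Krull dimension $1$. By the Auslander--Buchsbaum formula, the projective dimension of $R/I$ equals $n-2$ in both cases, matching the length of the complex (\ref{free res}). For Gorenstein-ness in (i), triviality of the canonical bundle $\omega_C \cong \mathcal{O}_C$ combined with projective normality yields an isomorphism of graded $R$-modules $\omega_{R/I(C)} \cong (R/I(C))(c)$ for some shift $c$, which is equivalent to $I(C)$ being Gorenstein. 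For (ii), the Hilbert function of $R/I(X)$ is $(1, n-1, n, n, \ldots)$, so modding out by a general linear form (a non-zero-divisor by Cohen--Macaulayness) produces an Artinian quotient $A$ with Hilbert function $(1, n-2, 1)$; this is symmetric with one-dimensional socle concentrated in top degree, and the general-position hypothesis ensures that the multiplication pairing $A_1 \otimes A_1 \to A_2$ is perfect, so $A$---and hence $R/I(X)$---is Gorenstein.

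Once Gorenstein-ness is established, the minimal resolution is self-dual up to a twist: $F_\bu \cong F_\bu^*(-n)$, with $F_{n-2} \cong R(-n)$, the shift being fixed by matching Hilbert series. The intermediate linearity---that $F_i = R(-i-1)^{b_i}$ for $1 \leq i \leq n-3$---follows from Green's $N_p$-theorem: for an elliptic curve with a line bundle of degree $n = 2g + 1 + (n-3)$, property $N_{n-3}$ holds, which is exactly the required linearity of syzygies in that range. The analogous statement for $n$ points in general position follows either from parallel Koszul cohomology vanishing, or by realizing them as the hyperplane section of a suitable elliptic normal curve and invoking that Gorenstein-ness, and the Betti numbers of the minimal resolution, are preserved under quotient by a regular linear form. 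With the shape of the resolution in hand, the Betti numbers are pinned down by comparing the alternating sum of shifted Hilbert series of the modules in (\ref{free res}) with the known Hilbert series of $R/I$; a routine calculation yields $b_i = n\binom{n-2}{i} - \binom{n}{i+1}$, and the self-dual symmetry $b_i = b_{n-2-i}$ provides a useful consistency check.

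The main obstacle is securing the intermediate linearity of the resolution, i.e.\ the $N_{n-3}$ property, which depends on Green's syzygy theorem or an equivalent refined analysis of Koszul cohomology; transferring this cleanly to case (ii) requires either the hyperplane-section reduction or a separate direct argument. The remaining ingredients---projective normality of elliptic normal curves, triviality of $\omega_C$, Artinian-Gorenstein recognition via the symmetric $(1,n-2,1)$ Hilbert function, and the Hilbert-series bookkeeping that determines the Betti numbers---are all classical.
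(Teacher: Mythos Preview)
Your proposal is correct, and it follows the standard syzygy-theoretic route: Cohen--Macaulayness via projective normality, Gorenstein-ness via the trivial canonical bundle (or the symmetric Artinian reduction for points), the linear shape of the resolution via Green's $N_p$-theorem, and finally Betti numbers from the Hilbert series. Your remark that (ii) can be deduced from (i) by a hyperplane section is exactly how the paper reduces (ii) to (i) as well.

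However, the paper's own re-proof (in Section~\ref{unprojection chapter}) takes a genuinely different route. Rather than invoking Green's theorem, it constructs the resolution inductively by \emph{unprojection}: given the minimal resolution $F_\bullet$ of a genus one normal curve $C_{n-1}\subset\mathbb{P}^{n-2}$ and the Koszul resolution $G_\bullet$ of a point on it, the Kustin--Miller mapping-cone construction produces the minimal resolution $\mathcal{H}_{\min}$ of the unprojected curve $C_n\subset\mathbb{P}^{n-1}$, with the correct shape and self-duality verified directly. The base case is $n=3$ or $n=4$, where the resolution is a single cubic or a Koszul complex.

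The tradeoffs are as follows. Your approach is conceptually cleaner for the bare existence statement over a field, but it imports Green's theorem as a black box, and the argument is non-constructive. The paper's unprojection approach is more hands-on, but it is constructive, it works over base rings such as $\mathbb{Z}_p$ (crucial for the minimization theorem), and---most importantly for the paper---it yields an inductive formula for the $\Omega$-quadrics (Proposition~\ref{quadrics and unprojection}), which is what drives the proofs of regularity of the invariant differential and of the local minimization theorem. So while your argument suffices for Theorem~\ref{min res theorem} in isolation, the paper needs the unprojection machinery anyway for its downstream applications.
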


\begin{proof}
	Both of these results are well known, and in fact (ii) follows from (i). For a proof of (i) see \cite{bothmer2003geometric}, or Theorem 1.1 of \cite{fisher2006higher}, and for (ii), see the discussion of Theorem 138 in \cite{wilson}. As a byproduct of our work in Section \ref{unprojection chapter}, we reprove the theorem along the lines of the method used in \cite{fisher2006higher}.  
\end{proof}	
We will consider only rings $R=A[x_1,\ldots,x_m]$ with $m$ equal to either $n$, the case of curves, or $n-1$, the case of points.

\begin{definition} \label{resolution model}
	Let $S$ be an arbitrary commutative ring, $n \geq 3$ and $m \geq 2$ integers, and let $R=S[x_1,\ldots,x_{m}]$. A \textit{resolution model} of degree $n$, defined over $S$, is a collection of maps $(\phi_i)^{n-2}_{i=1}$ of graded $R$-modules, that fit together into a chain complex $F_{\bullet}$ of the form (\ref{free res})
	\begin{equation*} 
		\begin{split}
			0 \xrightarrow{} R(-n) \xrightarrow{\phi_{n-2}} R(-n+2)^{b_{n-3}}& \xrightarrow{\phi_{n-3}} R(-n+3)^{b_{n-4}} \xrightarrow{\phi_{n-4}}\ldots\\&
			\ldots\xrightarrow{\phi_{3}} R(-3)^{b_{2}} \xrightarrow{\phi_{2}} R(-2)^{b_{1}} \xrightarrow{\phi_{1}} R \xrightarrow{} 0,
		\end{split}
	\end{equation*}
	for which there exists an isomorphism $\eta : F_{\bullet} \xrightarrow{} \mathrm{Hom}(F_{\bullet},R(-n))$ of graded chain complexes, that is \textit{symmetric}, in the following sense. We have, for all $f_i \in F_i$ and $f_{n-2-i} \in F_{n-2-i}$
	\[
	\eta_i(f_i)(f_{n-2-i})=(-1)^{i(n-2-i)} \eta_{n-2-i}(f_{n-2-i})(f_i).
	\]
  When $m=n-1$, we also say $F_{\bullet}$ is a \textit{genus one model} of degree $n$.
\end{definition}

\begin{definition}
	We say that a resolution model $F_{\bullet}$ is \textit{non-degenerate} when the homology modules $H_i(F_{\bullet})$ vanish for $i>0$, so that $F_{\bullet}$ is a free resolution of an ideal $I$.  
\end{definition}

\begin{example}
	Let $C\subset \mb{P}^{n-1}$ be a genus one normal curve, defined over a field $k$. By Proposition \ref{DGC induced duality}, a minimal free resolution $F_{\bullet}$ of the coordinate ring of $C$, with a choice of basis of each module $F_i$, is a resolution model of degree $n$. We say that $F_{\bu}$ represents the $n$-diagram $[C \xrightarrow{} \mb{P}^{n-1}]$.
\end{example}

\begin{lemma} \label{uniqueness of duality lemma}
Let $F_{\bullet}$ be a chain complex of the form (\ref{free res}) that is a resolution of an ideal $I \subset S[x_1,\ldots,x_m]$. Suppose that there exists an isomorphism $\eta : F_{\bullet} \xrightarrow{} \mathrm{Hom}(F_{\bullet},R(-n))$. Then $\eta$ is unique up to scaling by the invertible elements of $S$, and $\eta$ is symmetric.
\end{lemma}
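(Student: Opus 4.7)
The plan is to establish uniqueness first, and then deduce symmetry by comparing $\eta$ to the canonical symmetric chain map furnished by the Buchsbaum--Eisenbud differential graded algebra structure on $F_{\bullet}$.

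The crucial observation for uniqueness is that any chain complex of the shape (\ref{free res}) is automatically minimal. Indeed, the shifts $a_0=0<a_1=2<a_2=3<\ldots<a_{n-3}=n-2<a_{n-2}=n$ are strictly increasing, so every entry of each $\phi_i$ has positive $x$-degree and hence lies in $\mathfrak{m}=(x_1,\ldots,x_m)$. Given two isomorphisms $\eta$ and $\eta'$, I would set $\alpha=\eta^{-1}\eta'\colon F_{\bullet}\to F_{\bullet}$. Since $F_0=R$ has rank one, $\alpha_0$ is multiplication by some unit $c\in S^{*}$. I would then show by induction on $i$ that $\alpha_i=c\cdot\mathrm{id}_{F_i}$: assuming the statement for $i-1$, the chain relation $\alpha_{i-1}\phi_i=\phi_i\alpha_i$ gives $\phi_i(\alpha_i-c\cdot\mathrm{id})=0$, so the image of $\alpha_i-c\cdot\mathrm{id}$ lies in $\ker\phi_i=\operatorname{im}\phi_{i+1}\subset\mathfrak{m}F_i$. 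But $\alpha_i-c\cdot\mathrm{id}$ acts on $F_i=R(-a_i)^{b_i}$ as a matrix with entries in $R_0=S$, and $S\cap\mathfrak{m}=0$, so the matrix vanishes. This yields $\eta'=c\eta$. I emphasize that the induction never used that $\alpha$ is an automorphism, only that $\alpha_0$ is a scalar; so the same argument applies to any chain endomorphism whose action on $F_0$ is multiplication by some $c\in S$.

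For symmetry, I would apply the Buchsbaum--Eisenbud construction of Section \ref{DGCA section} to equip $F_{\bullet}$ with a DGCA structure, which works over any base $S$. By Proposition \ref{DGC induced duality}, the pairings $F_i\otimes F_{n-2-i}\to F_{n-2}=R(-n)$ assemble into a graded chain map $s\colon F_{\bullet}\to\mathrm{Hom}(F_{\bullet},R(-n))$ that is symmetric in precisely the required sense, and $s_0\colon R\to R$ is the identity since the pairing $1\otimes f_{n-2}\mapsto f_{n-2}$ is just multiplication by the algebra unit. I would then apply the endomorphism principle of the previous paragraph to $\eta^{-1}s$, whose degree-zero component is multiplication by $a_0^{-1}\in S^{*}$ (where $a_0$ is the unit scalar corresponding to $\eta_0$). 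This forces $\eta^{-1}s=a_0^{-1}\cdot\mathrm{id}_{F_{\bullet}}$, so $s=a_0^{-1}\eta$ and hence $\eta=a_0 s$. Since symmetry of the pairing $F_i\otimes F_{n-2-i}\to R(-n)$ is preserved under scaling by an element of $S^{*}$, the isomorphism $\eta$ inherits the symmetry of $s$.

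The main obstacle is sorting out that the Buchsbaum--Eisenbud construction and the resulting symmetric chain map $s$ go through over an arbitrary commutative ring $S$, not just over a field as in the usual statement of Proposition \ref{DGC induced duality}. The construction itself only requires lifting of chain maps out of $S^2(F_{\bullet})$, which is available whenever $F_{\bullet}$ is a resolution; it is only the \emph{invertibility} of $s$ that genuinely needs a field in general, and here the existence of one isomorphism $\eta$ lets us bypass that issue by deducing $s=a_0^{-1}\eta$ directly from the endomorphism principle.
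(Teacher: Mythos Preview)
Your proof is correct and follows the same overall strategy as the paper: establish uniqueness first, then deduce symmetry by comparing $\eta$ with the symmetric map $s$ coming from the DGCA structure (Proposition \ref{DGC induced duality}). The only difference is in how uniqueness is argued. The paper observes that any two chain maps lifting the same $\eta_0$ differ by a chain homotopy $\gamma$, and then notes that each component $\gamma_i\colon F_i\to(\text{dual})_{i+1}$ must vanish because the target module is generated in strictly higher degree than the source. You instead run a direct induction on $i$, using $\phi_i(\alpha_i-c\cdot\mathrm{id})=0$ together with exactness and minimality to force $\alpha_i=c\cdot\mathrm{id}$. These are two standard and essentially equivalent ways of proving that a graded chain endomorphism of a minimal complex is determined by its degree-zero component; your version is slightly more self-contained since it avoids invoking the comparison theorem. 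Your remark that the induction needs only that $\alpha_0$ is a scalar (not a unit) is exactly what is required to apply the argument to $\eta^{-1}s$ in the symmetry step, and your closing observation about why the invertibility of $s$ is not needed over a general base $S$ is a useful clarification that the paper leaves implicit.
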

\begin{proof}
As $F_0 \cong F^{*}_0=R$, and $\eta_0$ is an isomorphism of graded modules, we see that $\eta_0$ is multiplication by an element of $S^{\times}$.  The map $\eta_0$ determines the map $\eta$ up to a chain homotopy $\gamma$. Looking at the grading of the modules $F_i$, we see that such a $\gamma$ is necessarily zero. 

Now note that the map $s$ constructed in Proposition \ref{DGC induced duality} is symmetric, and also a chain map lifting an isomorphism $R \xrightarrow{} \mathrm{Hom}(R(-n),R(-n))$. Thus the map $\eta$ is chain homotopic to an $S^{\times}$-multiple of $s$. Arguing as before, the homotopy must be zero, and $\eta$ is equal to an $S^{\times}$-multiple of $s$, and is thus symmetric.
\end{proof}

Denote the set of all resolution models defined over a ring $S$ by $X_n(S)$. A ring homomorphism  $S \xrightarrow{} S'$ induces a natural map $X_n(S) \xrightarrow{} X_n(S')$.
\begin{definition} \label{integral model definition}
Let $k$ be the field of fractions of an integral domain $S$, and let $C$ be a genus one normal curve of degree $n$, defined over $k$.  An \textit{$S$-integral model} of $C$  is a non-degenerate resolution model $F_{\bu} \in X_n(S)$ that maps into a genus one model of $C$ under the map $X_n(S) \xrightarrow{} X_n(k)$.
\end{definition}

\subsection{Equivalence of resolution models.} A resolution model includes a choice of basis for each module in the resolution, and so the group $\mc{G}_n^{res}=\mathrm{GL}_{b_{n-2}} \times \ldots \times \mathrm{GL}_{b_{0}}$ acts on the space $X_n$ of genus one models. We give an explicit description of this action. Let $F_{\bullet}$ be a resolution model defined over $S$, with the differentials represented by matrices  $\phi_{1},\ldots,\phi_{n-2}$.  For $0 \leq i \leq n-2$, an element $g_i \in \mathrm{GL}_{b_i}(S)$ acts on $F_{\bullet}$ by replacing $\phi_i$ and $\phi_{i+1}$ with $g_i \phi_i$ and $\phi_{i+1} g_i^{-1}$ respectively, with $\phi_0$ and $\phi_{n-1}$ understood to be zero.

We also have a lift of the standard action of the group $\mathrm{GL}_{m}$ on $\mathbb{P}^{m-1}$ to an action on the space $X_n$. Let $g=(g_{ij}) \in \mathrm{GL}_{m+1}$, and put $x'_j=\sum^{m}_{i=1} g_{ij} x_i $. Regard matrices $\phi_r=\phi_r(x_1,\ldots,x_{m})$ as functions of the variables $x_1,\ldots,x_{m}$, and put, for $1 \leq r \leq n-2$ 
  \[
  \phi'_r(x_1,\ldots,x_m)=\phi_r(x'_1,\ldots,x'_m)
  \]
  We define $g \cdot F_{\bullet}$ to be the resolution model specified by the differentials $\phi'_1,\ldots,\phi'_{n-2}$. We define $\mc{G}_n=\mc{G}^{res}_n \times \mathrm{GL}_{m}$, and say two models $F_{\bullet}$, $F_{\bullet}' \in X_n(S)$ are equivalent over $S$ if there exists a $g \in \mc{G}_n(S)$ with $g \cdot F_{\bullet}=F_{\bullet}'$.
  
\begin{proposition} \label{res model equiv}
	Let $F_{\bullet}$ and $F'_{\bullet}$ be resolution models that represent genus one normal curves $C \subset  \mb{P}^{n-1}$ and $C' \xrightarrow{} \mb{P}^{n-1}$, defined over a field $K$. The models $F_{\bullet}$ and $F'_{\bullet}$ are equivalent over $K$ if and only if there exists an element of $\mathrm{PGL}_{n}(K)$ that takes $C$ to $C'$.
\end{proposition}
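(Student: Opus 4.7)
The plan is to handle the two implications separately, with the forward direction being a routine unwinding of definitions and the reverse direction reducing to the uniqueness of minimal graded free resolutions.

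For the forward direction, suppose $g \cdot F_{\bu} = F'_{\bu}$ for some $g \in \mathcal{G}_n(K)$, and write $g = (g^{res}, \gamma)$ with $g^{res} \in \mathcal{G}_n^{res}(K)$ and $\gamma \in \mathrm{GL}_n(K)$. The action of $\mathcal{G}_n^{res}$ only changes the chosen bases of the free modules $F_i$, so it leaves the image $\phi_1(F_1) \subset R$, hence the ideal $I(C)$, invariant. The action of $\gamma$ is via the substitution $x_j \mapsto \sum g_{ij} x_i$ on $R$, and this is precisely the pullback by the projective linear automorphism of $\mathbb{P}^{n-1}$ associated with $\gamma$. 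Thus the image of $C$ under $\gamma$ equals the curve cut out by the new ideal, namely $C'$.

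For the reverse direction, lift any element of $\mathrm{PGL}_n(K)$ taking $C$ to $C'$ to an element $\gamma \in \mathrm{GL}_n(K)$, and replace $F_{\bu}$ by $\gamma \cdot F_{\bu}$. This is an equivalent resolution model representing $\gamma(C) = C'$, so we may assume $C = C'$ and $I(C) = I(C')$. Both $F_{\bu}$ and $F'_{\bu}$ are then free resolutions of $R/I(C)$ of the form $(\ast)$ with Betti numbers $b_i$. By Theorem~\ref{min res theorem}(i), the minimal free resolution of $R/I(C)$ also has exactly this shape; since any non-minimal free resolution contains a trivial summand and therefore has strictly larger ranks than the minimal one, both $F_{\bu}$ and $F'_{\bu}$ must themselves be minimal.

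By the uniqueness of minimal graded free resolutions, there is an isomorphism of chain complexes $\psi: F_{\bu} \to F'_{\bu}$ lifting the identity on $R/I(C)$. Each $\psi_i : F_i \to F'_i$ is an isomorphism between free graded modules with the same grading shift, hence, with respect to the given bases, is represented by a matrix $g_i \in \mathrm{GL}_{b_i}(K)$. The chain-map relations $\psi_{i-1} \phi_i = \phi'_i \psi_i$ rewrite as $\phi'_i = g_{i-1} \phi_i g_i^{-1}$, which is exactly the condition that the element $g^{res} := (g_0, g_1, \ldots, g_{n-2}) \in \mathcal{G}_n^{res}(K)$ takes $F_{\bu}$ to $F'_{\bu}$. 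Composing with $\gamma$ yields the required element of $\mathcal{G}_n(K)$.

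The only step requiring care is justifying minimality of $F_{\bu}$ and $F'_{\bu}$, which is where the precise form $(\ast)$ in Definition~\ref{resolution model}, combined with Theorem~\ref{min res theorem}, does all the work. A secondary remark is that we do not need to verify that $\psi$ respects the symmetric self-dualities of $F_{\bu}$ and $F'_{\bu}$: by Lemma~\ref{uniqueness of duality lemma} these dualities are determined up to a unit scalar, so compatibility is automatic up to rescaling one of the $g_i$, which still lies in $\mathcal{G}_n^{res}(K)$.
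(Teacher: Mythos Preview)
Your proof is correct and follows the same approach as the paper, which simply states ``This follows from uniqueness of minimal free resolutions.'' You have carefully filled in the details: the forward direction by unwinding the action, and the reverse by reducing to $C=C'$ and invoking uniqueness of minimal resolutions, together with the observation that the rank constraints in $(\ast)$ force both models to be minimal.
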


\begin{proof}
This follows from uniqueness of minimal free resolutions.
\end{proof}

\begin{remark}
	For $n \leq 5$, our definition of a genus one model specializes to the existing definition of a genus one model given in \cite{g1inv}, i.e. the data . 
\end{remark}

	\subsection{$\Omega$-quadrics and the invariant theory of resolution models} \label{Omega quad section} 
	 Let $F_{\bullet}$ be a resolution model of degree $n$, for $R=S[x_1,\ldots,x_m]$, where $n\geq 3, m \geq 1$ are integers.  We represent the differentials $\phi_r$ of $F_{\bu}$ by matrices of homogeneous polynomials.  For any tuple $1\leq a_1,a_2,\ldots,a_{n-2} \leq m$, we define
	
	\[
	[a_1,a_2,\ldots,a_{n-2}]_{F }=\frac{\partial \phi_{1}}{\partial x_{a_1}} \frac{\partial \phi_{2}}{\partial x_{a_2}} \cdot \cdot \cdot \frac{\partial \phi_{n-2}}{\partial x_{a_{n-2}}},
	\]
	where a partial derivative of a matrix is the matrix of the partial derivatives of the entries, and the product is matrix multiplication. Note that $\phi_{n-2}$ and $\phi_{1}$ are represented by a matrices consisting of quadratic forms, while the maps  $\phi_i$, for $1<i<n-2$, are represented by matrices of linear forms, and so $[a_1,a_2,\ldots,a_{n-2}]_{F }$ is a quadratic form in $x_1,\ldots,x_n$. Let  $\sigma$ be the $(n-2)$-cycle $(12\ldots n-2)$ in $S_{n-2}$, and then define  
	\[
	[[a_1,a_2,\ldots,a_{n-2}]]_{F_{\bu} }=\sum_{k=1}^{n-2} [a_{\sigma^{2k}(1)},a_{\sigma^{2k}(2)},\ldots,a_{\sigma^{2k}(n-2)}].
	\]
	When there is no danger of confusion we will omit the subscript $F_{\bu}$. For each tuple $ (b_1,b_2,\ldots,b_{m-n+2})$, where $1 \leq b_i \leq m$ are distinct integers,  let $a_1,a_2,\ldots,a_{n-2}$ be the complement of $\{b_1,\ldots,b_{m-n+2}\}$ in $\{1,2,\ldots,m\}$, ordered so that \[(b_1,\ldots,b_{m-n+2},a_1\ldots,a_{n-2})\] is an even permutation of $\{1,\ldots,m\}$.
	\begin{definition}
	With notation as above, for each tuple $b_1,\ldots,b_{m-n+2}$ as above, we define a quadratic form
		$\Omega_{b_1,\ldots,b_{m-n+2}}=[[a_1,\ldots,a_{n-2}]]_{F_{\bu} }$.
	\end{definition}

	Let $V=\langle x_1,\ldots,x_{m} \rangle$ be the space of linear forms on $\mathbb{P}^{m-1}$, and denote the dual basis of $V^{*}$ by $x^*_1,\ldots,x_{m}^{*}$. To the collection of $\Omega$-quadrics we associate an element $\Omega$ of $\wedge^{m-n+2} V^* \otimes_{k} S^2 V $ via the formula
	\[
\Omega:=	\sum x^*_{b_1} \wedge x^*_{b_2} \wedge \ldots \wedge x^*_{b_{m-n+2}} \otimes \Omega_{b_1,\ldots,b_{m-n+2}}.
	\]
	where the sum is over all tuples $(b_1,\ldots,b_m)$ as above. The proposition below shows that this construction is well-defined, i.e. invariant under change of basis of $V$.
	
We are mainly interested in the cases $m=n$ and $m=n-1$, corresponding to resolution models of genus one curves and resolution models of sets of points respectively. In the first case, $\Omega \in \wedge^{2} V^* \otimes_{k} S^2 V $ can be represented by an alternating $n\times n$-matrix of quadratic forms in $x_1,\ldots,x_{n}$. In the second case $\Omega \in V^* \otimes_{k} S^2 V$ is represented by $n$ quadratic forms in $x_1,\ldots,x_{n-1}$. Thus according to which case we are in, $\Omega$ has either one or two subscripts. 

	\begin{remark}\label{defined up to scalar}
	The construction of $\Omega$-quadrics is independent of the choice of basis of the free $R$-modules in the resolution, except for the leftmost module $R(-n)$, where change of basis has the effect of multiplying all the $\Omega$-quadrics by the same constant. 
	
	To see this, observe that changing the basis of $F_i$ is the same as replacing $\phi_i$ by $\phi_i A$ and $\phi_{i+1}$ by $A^{-1}\phi_{i+1}$, where $A$ is a matrix with entries in $S$. Applying the product rule, we see that this does not change the expression $[a_1,a_2,\ldots,a_{n-2}]$.
\end{remark}
The following result of Tom Fisher describes how the matrix $\Omega$ transforms under the linear changes of coordinates. 
	\begin{proposition} \label{Omega quad change of coordinates}
	Let $F_{\bullet}$ be a degree $n$ resolution model. Let $x'_j=\sum_{i=1}^{m} g_{ij}x_i$ for some $g=(g_{ij}) \in \mathrm{GL}_m$. With respect to the new coordinates given by $x_j'$, $R/I$ has a free resolution $F'_{\bullet}$, again of the form (\ref{free res}), with the maps given by matrices
		$\phi'_1,\ldots,\phi'_{n-2}$ where
		\[
		\phi'_r(x_1,\ldots,x_{m})=\phi_r(x'_1,\ldots,x'_m).
		\]
		Let $\Omega$, $\Omega'$ be the elements of $\wedge^{n-m+2} V^* \otimes S^2 V $ associated to resolutions $\phi$, $\phi'$ respectively. Then we have
		\[
		\Omega'=\mathrm{det}(g) g \cdot \Omega,
		\]
		where the action of $g$ on $\Omega$ is the standard action of $\mathrm{GL}_m$ on  $\wedge^{m-n+2} V^* \otimes S^2 V $.
	\end{proposition}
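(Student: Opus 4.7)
The plan is to verify the transformation law by a direct calculation in three steps: the chain rule for the constituent derivatives $\partial\phi_r/\partial x_a$; propagation of that rule through the cyclic symmetrisation defining $[[\,\cdot\,]]$; and a Laplace-expansion identity converting the result into $\det(g)\cdot g\cdot\Omega$.

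Since $\partial x'_j/\partial x_a=g_{aj}$, applying the chain rule entrywise to $\phi'_r(x)=\phi_r(x')$ gives
\[
\frac{\partial \phi'_r}{\partial x_a}(x)=\sum_{j=1}^m g_{aj}\,\frac{\partial \phi_r}{\partial x_j}(x').
\]
Multilinearity of the matrix product in the definition of $[a_1,\ldots,a_{n-2}]$ then yields
\[
[a_1,\ldots,a_{n-2}]_{F'_\bu}(x)=\sum_{j_1,\ldots,j_{n-2}} g_{a_1 j_1}\cdots g_{a_{n-2} j_{n-2}}\,[j_1,\ldots,j_{n-2}]_{F_\bu}(x').
\]
Summing over the cyclic shifts prescribed by $[[\,\cdot\,]]$ and relabelling the summation variables $j_i$ by the inverse cyclic shift in each summand merely rearranges the scalar factors $g_{a_i j_i}$ while simultaneously cyclically shifting the $j$-arguments of $[\,\cdot\,]_{F_\bu}$, so one arrives at
\[
[[a_1,\ldots,a_{n-2}]]_{F'_\bu}(x)=\sum_{j_1,\ldots,j_{n-2}} g_{a_1 j_1}\cdots g_{a_{n-2} j_{n-2}}\,[[j_1,\ldots,j_{n-2}]]_{F_\bu}(x').
\]
In particular $[[\,\cdot\,]]$ transforms covariantly by $g$ in its indices, and by the standard action $x\mapsto x'$ on its $S^2V$-values.

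Plugging this into $\Omega'=\sum_{(b)} x^*_{b_1}\wedge\cdots\wedge x^*_{b_{m-n+2}}\otimes \Omega'_{b}$ and regrouping the double sum by $(j_1,\ldots,j_{n-2})$, the remaining step is combinatorial. I would use the Laplace-expansion identity
\[
\sum_{i_1,\ldots,i_m}\epsilon_{i_1\ldots i_m}\,g_{i_1 k_1}\cdots g_{i_m k_m}=\det(g)\,\epsilon_{k_1\ldots k_m}
\]
to contract the ``even permutation'' sign $\epsilon(b_1,\ldots,b_{m-n+2},a_1,\ldots,a_{n-2})$ implicit in $\Omega_{b}$ against the factors $g_{a_1 j_1}\cdots g_{a_{n-2} j_{n-2}}$; this produces $\det(g)$ times the image of $x^*_{b_1}\wedge\cdots\wedge x^*_{b_{m-n+2}}$ under $(g^{-T})^{\wedge(m-n+2)}$, which is exactly the standard $\mathrm{GL}_m$-action on $\wedge^{m-n+2}V^*$. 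A more conceptual way to package this step is to view $\Omega$ as the image of the ``raw'' tensor $T=\sum x^*_{a_1}\otimes\cdots\otimes x^*_{a_{n-2}}\otimes [[a_1,\ldots,a_{n-2}]]_{F_\bu}$ under contraction with the volume form $x^*_1\wedge\cdots\wedge x^*_m\in\wedge^m V^*$; this contraction is $\mathrm{GL}_m$-equivariant up to the scalar $\det(g)$, because $\wedge^m V^*$ is the inverse determinant character. The main obstacle is the sign bookkeeping: the cyclic-shift-only symmetry of $[[\,\cdot\,]]$ must be reconciled with the full antisymmetry of the wedge so that exactly one factor of $\det(g)$ emerges. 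As a sanity check, for $g=\lambda I_m$ the bracket is homogeneous of degree $n$ under scaling (since the matrix product of derivatives has total degree $\sum_r d_r=n$), which agrees with $\det(\lambda I_m)\cdot\lambda^{n-m}=\lambda^n$ for the standard action on $\wedge^{m-n+2}V^*\otimes S^2 V$.
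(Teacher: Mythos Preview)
Your overall strategy is sound and is essentially the direct approach one would take. The chain-rule and multilinearity steps are correct, and the relabelling that passes the cyclic sum through to $[[\,\cdot\,]]$ works just as you say.

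The gap you flag as ``sign bookkeeping'' is, however, a genuine missing ingredient rather than a routine exercise. After the chain-rule expansion, your sum over $(j_1,\ldots,j_{n-2})$ runs over \emph{all} tuples, including those with repeated entries and those not complementary to any $b$-tuple. For the Laplace identity to apply (and for your volume-form contraction to make sense), you need $[[j_1,\ldots,j_{n-2}]]$ to be fully alternating in its arguments, so that repeated indices give zero and the remaining terms assemble into $\det(g)$ times a wedge. The cyclic averaging built into the definition of $[[\,\cdot\,]]$ does not by itself give this.

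What does give it is the combination of Lemmas~\ref{swaplemma} and~\ref{reverselemma}. Differentiating $\phi_r\phi_{r+1}=0$ for $2\le r\le n-3$ shows $[\,\cdot\,]$ changes sign under any transposition of adjacent \emph{interior} indices; this uses only the chain-complex property. The self-duality hypothesis on a resolution model then supplies $[a_1,a_2,\ldots,a_{n-3},a_{n-2}]=-[a_{n-2},a_2,\ldots,a_{n-3},a_1]$, i.e.\ antisymmetry under swapping the two end indices. These transpositions together generate all of $S_{n-2}$, so $[\,\cdot\,]$, and hence $[[\,\cdot\,]]$, is fully alternating. With this in hand your Laplace-expansion step goes through verbatim. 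You should invoke these two lemmas explicitly; in particular, the self-duality of $F_\bullet$ is the only place the argument uses that $F_\bullet$ is a resolution \emph{model} rather than an arbitrary complex of shape~(\ref{free res}).
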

We omit the proof here, since a  proof in the points case, which is exactly the same as the general case, is given in \cite{radfis}. A complete proof can be found in Section 3.4 of \cite{LazarThesis}.

	 \subsection{$\Omega$-matrices and genus one normal curves} \label{omega and curves section}
	  We now focus on the $\Omega$-quadrics associated to resolution models of genus one normal curves. In this section, we restrict to the case when the field $k$ is of characteristic 0.
	 \begin{example} \label{example omega}
	 	Let $F_{\bullet}$ be a genus one model of a genus one normal curve $C_n \subset \mb{P}^{n-1}$. Taking $m=n$, the above construction yields an element of $\wedge^2 V^{*} \otimes S^2 V$, which can be represented by an alternating matrix of quadratic forms, called the $\Omega$-matrix.
	 	
	 	\begin{enumerate}[label=(\roman*)]
	 		\item $n=3$. Suppose $C_3$ is defined by a ternary cubic $F$. The $\Omega$-matrix attached to the resolution $0\xrightarrow{} R(-3) \xrightarrow{\cdot F} R \xrightarrow{}0$ is 
	 		\[
	 		\Omega_3=\begin{pmatrix}
	 		0 & \frac{\partial F}{\partial x_3} & -\frac{\partial F}{\partial x_2} \\
	 		-\frac{\partial F}{\partial x_3} & 0 & \frac{\partial F}{\partial x_1}\\
	 		\frac{\partial F}{\partial x_2} & -\frac{\partial F}{\partial x_1} &0  
	 		\end{pmatrix}
	 		\]
	 		\item $n=4$. Suppose $C_4$ is defined by a pair of quadratic forms $F_1$ and $F_2$, and consider the free resolution given by the Koszul complex: 
	 			\[
			0 \xrightarrow{} R(-4) \xrightarrow{\phi_2} R(-2)^2 \xrightarrow{\phi_1} R \xrightarrow{} 0
			\]
			where $\phi_2=(F_2,-F_1)^T$ and $\phi_1=(F_1,F_2)$, and $F_1,F_2$.	 		Then the entries of $\Omega_4$ are given by
	 		\[
	 		(\Omega_4)_{i,j}=\frac{\partial F_1}{\partial x_k}\frac{\partial F_2}{\partial x_l}-\frac{\partial F_1}{\partial x_l}\frac{\partial F_2}{\partial x_k}
	 		\]
	 		where $(i,j,k,l)$ is an even permutation of $(1,2,3,4)$.
	 	\end{enumerate}
	 \end{example}
	
  Following the convention used in \cite{jacobians}, we denote the standard action of $\mathrm{GL}_n$ on the space $\wedge^2 V^{*} \otimes S^2 V$ by the symbol $\star$. Explicitly, it is given by
    \[
    g \star \Omega(x_1,\ldots,x_{n}) = g^{-T}\left( \Omega(\sum^{n}_{i=1} g_{i1} x_i,\ldots,\sum^{n}_{i=1} g_{in} x_i) \right) g^{-1}
    \]
 	where $g^{-T}$ is the inverse of the transpose of $g$. Note that this action differs from the action of Proposition \ref{Omega quad change of coordinates} by a factor of $\mathrm{det}(g)$. Under the $\star$-action, scalar matrices act trivially, and so $\star$ induces an action of $\mathrm{PGL}_n$ on $\wedge^2 V^{*} \otimes S^2 V$ .

 	\subsection{Invariant differentials.} Let $F_{\bu}$ be a resolution model of a genus one normal curve $C \subset \mb{P}^{n-1}$, and let $\Omega$ be the associated $\Omega$-matrix. We define the invariant differential associated to the matrix $\Omega$ to be a rational differential form on $C$, given by the expression
 	\[
 	\omega_{ij}=
 	(n-2)\frac{x_j^2 d(x_i/x_j)}{\Omega_{ij}(x_1,\ldots,x_{n})},
 	\]  
 	for any distinct $i,j$. The following Proposition is proved in Section \ref{unprojection chapter}.
 	
 	\begin{proposition}\label{first lemma differential}
 		Let $\Omega$ be an $\Omega$-matrix associated to a resolution model $F_{\bullet}$ of a curve $C \subset \mb{P}^{n-1}$.
 		\begin{enumerate}[label=(\roman*)]
 		\item The differential $\omega_{ij}$ is independent of indices $i$ and $j$, and we denote it just by $\omega$.
 		\item The differential $\omega$ is a regular differential form on $C$.
 	\end{enumerate}
 We say that the model $F_{\bullet}$ represents the pair $(C,\omega)$.
 	\end{proposition}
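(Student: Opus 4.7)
The plan is to split the argument into (i) independence of $\omega_{ij}$ from the choice of pair $(i,j)$ and (ii) regularity of the resulting differential, and to reduce (i) to a Plücker-type syzygy among the $\Omega$-quadrics modulo $I(C)$. Rewriting the claim, $\omega_{ij} = \omega_{kl}$ on $C$ is equivalent to the identity
\[
\Omega_{ij}\,(x_l\,dx_k - x_k\,dx_l) \;\equiv\; \Omega_{kl}\,(x_j\,dx_i - x_i\,dx_j) \pmod{I(C) \cdot \Omega^1_{\mathbb{P}^{n-1}}}.
\]
The alternation $\Omega_{ji} = -\Omega_{ij}$, combined with the matching anti-symmetry of $x_j\,dx_i - x_i\,dx_j$, reduces this to a small number of independent identities, after which one need only verify it for, say, $(i,j,k,l) = (1,2,1,3)$.

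To prove the syzygy, the direct strategy is to expand both sides using the definition of $\Omega_{ij}$ as a cyclic sum of matrix products $[a_1,\ldots,a_{n-2}]$, and to apply the Leibniz identities obtained by differentiating $\phi_k\phi_{k+1} = 0$, namely $(\partial_a \phi_k)\phi_{k+1} + \phi_k(\partial_a \phi_{k+1}) = 0$, together with the Euler relations $\sum_a x_a\,\partial_a \phi_k = d_k\,\phi_k$. Modulo $I(C)$ one can suppress terms in which the leftmost factor is $\phi_1$, and the Buchsbaum--Eisenbud self-duality of Proposition \ref{DGC induced duality} supplies the matching compatibility at the opposite end of the resolution. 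A cleaner alternative, which the organization of the paper suggests, is to invoke the $\mathrm{GL}_n$-equivariance of $\Omega$ from Proposition \ref{Omega quad change of coordinates}: since the ratio defining $\omega$ is invariant under change of basis of $V$, the identity needs only to be checked on a single representative per $\mathrm{PGL}_n$-orbit, which may be taken in the Heisenberg normal form of Section \ref{analytic chapter} and verified by $q$-expansions. Alternatively, one can induct on $n$ via the unprojection construction of Section \ref{unprojection chapter}, with the base case $n = 3$ being the classical invariant differential $(x_1\,dx_2 - x_2\,dx_1)/(\partial F/\partial x_3)$ on a plane cubic.

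For (ii), once $\omega$ is known to be a well-defined rational differential, regularity is a pointwise matter. At each $p \in C$, nondegeneracy of the embedding ensures some $x_j(p) \ne 0$, and by (i) one may use any pair $(i,j)$ with $\Omega_{ij}(p) \ne 0$ to compute $\omega$ at $p$. Hence it suffices to rule out common vanishing: no point of $C$ satisfies $\Omega_{ij}(p) = 0$ for all $i,j$. This I would prove by contradiction, using that a hypothetical pole of $\omega$ at such a point would contradict the one-dimensionality of $H^0(C,\Omega^1_C)$ on a genus one curve once a degree count of the zero locus of the entries of $\Omega$, viewed as quadrics on $\mathbb{P}^{n-1}$, is carried out.

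The hard part, and the main obstacle, is establishing the algebraic syzygy in (i) cleanly for arbitrary $n$: the combinatorics of Leibniz and Euler relations along a length-$(n-2)$ resolution is exactly what the unprojection framework of Section \ref{unprojection chapter} is designed to manage by induction, which is why I would route the proof through that machinery rather than through a direct expansion of the matrix products.
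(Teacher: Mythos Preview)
Your first approach to (i)---expanding via the Leibniz identities from $\phi_k\phi_{k+1}=0$, the Euler relation, and self-duality---is exactly what the paper does. The paper packages this as Lemma~\ref{single bracket differential}, which checks the independence on the generating transpositions of $S_n$; the key reductions are Lemma~\ref{swaplemma} (swapping middle indices) and Lemma~\ref{reverselemma} (reversal from self-duality), together with Euler's identity for the end transposition. So your instinct that a direct expansion is ``the hard part'' is a bit pessimistic: once one works with the single-bracket quadrics $[a_1,\ldots,a_{n-2}]$ rather than the symmetrised $\Omega_{ij}$, it goes through in two pages.

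Your Heisenberg-normal-form alternative for (i) is circular: the analytic Section~\ref{analytic chapter} explicitly takes Proposition~\ref{first lemma differential} as input (see the remark at the start of Section~\ref{unprojection chapter}), so you cannot appeal to $q$-expansions here.

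The real divergence is in (ii). The paper does \emph{not} argue pointwise non-vanishing of the $\Omega_{ij}$; it proves regularity by induction on $n$ via unprojection (Lemma~\ref{unprojecting differential lemma}), using Proposition~\ref{quadrics and unprojection} to identify $\omega_{n}$ with $\pm\pi^*\omega_{n-1}$ under projection from a point, bottoming out at the classical plane-cubic case $n=3$. Note that this inductive proof of (ii) \emph{uses} part (i) as a tool, to reorder indices so that the new coordinate $x_n$ lies among the $a_k$. So the logical flow is the reverse of what you propose: direct algebra for (i), then unprojection for (ii).

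Your sketch for (ii) has a genuine gap. The sentence ``a hypothetical pole of $\omega$ would contradict the one-dimensionality of $H^0(C,\Omega^1_C)$'' does not work: $\dim H^0(C,\Omega^1_C)=1$ says nothing about rational differentials with poles, and on a genus one curve the divisor of any nonzero rational differential has degree zero, so a pole would simply be balanced by a zero without contradiction. You would need an independent argument that the $\Omega_{ij}$ do not all vanish at any point of $C$, and the vague ``degree count'' you allude to is not a proof. The paper sidesteps this entirely by pulling regularity back from the $n=3$ base case.
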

 
 	The differential $\omega$ transforms in  a natural way under the action of $\mathrm{GL}_n$, as we will see in Lemma \ref{invariant differential change of coords}. Now, following \cite{jacobians}, we define certain polynomials in the coefficients of the $\Omega$-matrix that are invariant under the action of $\mathrm{GL}_n$.  
 	
 	We first define, for any $1\leq i,j,k \leq n$,
 	\[
 	M_{ij}=\sum_{r,s=1}^{n}\frac{\partial \Omega_{ir}}{ \partial x_s} \frac{\partial \Omega_{js}}{\partial x_r},
 	\]
 	\[
 	N_{ijk}=\sum_{r=1}^n \frac{\partial M_{ij}}{\partial x_r} \Omega_{rk}.
 	\]
 	We then define the polynomials $c_4(\Omega)$ and $c_6(\Omega)$ as
 	\[
 	c_4(\Omega)=\frac{3}{2^4n(n-2)^2 {n+3 \choose 5}} \sum_{i,j,r,s=1}^n \frac{\partial^2 M_{ij}}{\partial x_r \partial x_s } \frac{\partial^2 M_{rs}}{\partial x_i \partial x_j}
 	\]
 	and 
 	\[
 	c_6(\Omega)=-\frac{1}{2^4n(n-2)^3 {n+5 \choose 7}} \sum_{i,j,k,r,s,t=1}^n \frac{\partial^3 N_{ijk}}{\partial x_r \partial x_s \partial x_t} \frac{\partial^3 N_{rst}}{\partial x_i \partial x_j \partial x_k}.
 	\]
 	One can show that these polynomials are invariants of $\Omega$, in the following sense:
 	\begin{proposition} \label{omega invariants}
 		For $g \in \mathrm{GL}_n(k)$ and $\Omega \in \wedge^2 V \otimes S^2 V$ , we have $c_k(g \star \Omega)=c_k(\Omega)$ for $k=4,6$.
 		
 	\end{proposition}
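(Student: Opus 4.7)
The plan is to prove this by a direct transformation calculation: I would show that the auxiliary tensors $M_{ij}$ and $N_{ijk}$ transform covariantly under the $\star$-action, and then observe that the sums defining $c_4$ and $c_6$ are complete tensor contractions in which all factors of $g$ and $g^{-1}$ cancel against one another. As a preliminary remark, the center of $\mathrm{GL}_n$ acts trivially, since for $g=\lambda I$ one has $(g\star\Omega)(x)=\lambda^{-2}\Omega(\lambda x)=\Omega(x)$ because $\Omega$ has $x$-degree $2$; this is consistent with $c_4$ and $c_6$ being scalar-valued polynomials in the coefficients of $\Omega$.

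Write $y = g^T x$, so that $y_t = \sum_i g_{it} x_i$ and the chain rule reads $\partial/\partial x_s = \sum_t g_{st}\,\partial/\partial y_t$. Starting from $(g\star\Omega)_{ij}(x) = \sum_{k,l}(g^{-1})_{ki}(g^{-1})_{lj}\,\Omega_{kl}(y)$ and using the contraction $\sum_r (g^{-1})_{lr}\,g_{rt}=\delta_{lt}$, a short computation from the definition $M_{ij}=\sum_{r,s}\frac{\partial\Omega_{ir}}{\partial x_s}\frac{\partial\Omega_{js}}{\partial x_r}$ gives
\[
M(g\star\Omega)_{ij}(x) \;=\; \sum_{a,b}(g^{-1})_{ai}(g^{-1})_{bj}\,M(\Omega)_{ab}(y),
\]
so $M$ transforms as a symmetric $2$-tensor under $g$. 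Substituting this together with the transformation law for $\Omega$ into $N_{ijk}=\sum_r(\partial M_{ij}/\partial x_r)\Omega_{rk}$, and applying the chain rule once more, yields
\[
N(g\star\Omega)_{ijk}(x) \;=\; \sum_{a,b,c}(g^{-1})_{ai}(g^{-1})_{bj}(g^{-1})_{ck}\,N(\Omega)_{abc}(y).
\]

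Now consider $c_4$. Because $M$ has $x$-degree $2$, the second partial $\partial^2 M_{ij}/\partial x_r\partial x_s$ is constant in $x$, and each application of $\partial/\partial x$ via the chain rule contributes one factor $g_{\bullet\bullet}$:
\[
\frac{\partial^2 M(g\star\Omega)_{ij}}{\partial x_r\partial x_s}
\;=\;\sum_{a,b,u,v}(g^{-1})_{ai}(g^{-1})_{bj}\,g_{ru}g_{sv}\,\frac{\partial^2 M_{ab}}{\partial y_u\partial y_v}.
\]
When we form the sum $\sum_{i,j,r,s}$ of the product of two such expressions, each of the four indices $i,j,r,s$ appears exactly once paired with a $g^{-1}$ factor (from the transformation of $M$) and once paired with a $g$ factor (from a derivative); the contractions $\sum_i(g^{-1})_{ai}g_{ia'}=\delta_{aa'}$ collapse everything, leaving precisely the expression defining $c_4(\Omega)$. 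The argument for $c_6$ is the same, with three $g^{-1}$ factors from the transformation of $N$ and three $g$ factors from the three derivatives per occurrence of $N$, again pairing off into Kronecker deltas.

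The main obstacle is purely bookkeeping: keeping the six or eight indices straight in each contraction and verifying that every $g$ is paired with a $g^{-1}$ in exactly the claimed way. No further invariant-theoretic input is required; granted the covariant transformation laws for $M$ and $N$, the invariance of $c_4$ and $c_6$ is formal, essentially identical in spirit to the analogous calculations in \cite{jacobians} for the case $n\le 5$.
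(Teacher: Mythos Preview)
Your proof is correct. The paper does not give its own argument here but simply cites Lemma~2.2 of \cite{jacobians}; what you have written is essentially the content of that reference, namely the direct verification that $M_{ij}$ and $N_{ijk}$ transform as covariant tensors under the $\star$-action and that the full contractions defining $c_4$ and $c_6$ are therefore invariant.
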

 	\begin{proof}
 		See Lemma 2.2 of \cite{jacobians}.
 	\end{proof}

	The reason we are interested in $\Omega$-matrices is that they can be used to give a formula for the Jacobian of the curve $C$. More precisely, we have the following.	
	
	\begin{theorem}[The formula for the Jacobian]\label{formula for Jacobian}
		Let $C/k$ be a genus one normal curve of degree $n$, where $n\geq 3$ is an odd integer, and let $\omega$ be an invariant differential of $C$. Let $F_{\bullet}$ be a genus one model of $I(C)$, and let $\Omega$ be the $\Omega$-matrix associated to $F_{\bullet}$. Then the Jacobian $E$ of $C$ is defined by the Weierstrass equation $W$
		\[
		y^2=x^3-27c_4(\Omega)x+54c_6(\Omega),
		\]
		where $c_4(\Omega)$ and $c_6(\Omega)$ are defined as above.  Moreover, there is a $\Bar{k}-$isomorphism  $\gamma:C\xrightarrow{}E$ such that,  for each $i \neq j$, 
		\[
		\gamma^*(3dx/y)= (n-2)\frac{x_j^2 d(x_i/x_j)}{\Omega_{ij}}.
		\]
	\end{theorem}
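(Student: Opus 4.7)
The plan is to reduce the theorem to a $q$-expansion calculation in Heisenberg normal form, extending to general odd $n$ the approach used in \cite{jacobians} for $n\leq 5$. Both $c_4(\Omega)$ and $c_6(\Omega)$ are polynomial in the coefficients of the resolution model $F_{\bullet}$, so it suffices to verify the Weierstrass equation identity on a Zariski-open dense subset of the space of non-degenerate resolution models, and I may base change freely to $\mathbb{C}$. By Proposition \ref{omega invariants} and Proposition \ref{Omega quad change of coordinates}, the pair $(c_4(\Omega), c_6(\Omega))$ is invariant under the $\star$-action of $\PGL_n$, while the Jacobian $E$ and the invariant differential $\omega$ are also preserved by linear changes of coordinates on $\PP^{n-1}$. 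Hence I am free to replace $C$ by any projectively equivalent curve over $\mathbb{C}$.

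First, I would put $C$ into Heisenberg normal form. Over $\mathbb{C}$, after a suitable $\mathrm{SL}_n(\mathbb{C})$-change of coordinates, $C$ becomes the image of its Jacobian $E=\mathbb{C}/(\mathbb{Z}+\tau\mathbb{Z})$ under the complete linear system $|n\cdot O_E|$, realised concretely by Jacobi theta functions with characteristics, and the $n$-torsion $E[n]$ acts on the ambient $\PP^{n-1}$ via the standard Heisenberg representation by coordinate permutations and $n$-th roots of unity. The oddness of $n$ enters here in ensuring that the Heisenberg extension of $E[n]$ admits a canonical symmetric splitting and that the attached theta functions do not require half-characteristic shifts.

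Next, I would compute the minimal free resolution $F_{\bullet}$ of the Heisenberg-form curve from theta functions and extract its $\Omega$-matrix. By Remark \ref{defined up to scalar}, $\Omega$ depends only on the projective curve up to an overall scalar, so its entries are determined by the theta embedding and become explicit modular forms on the upper half-plane, for a congruence subgroup preserving the Heisenberg level structure. Writing these entries as $q$-expansions in $q=e^{2\pi i\tau}$ and substituting into the defining formulas for $c_4(\Omega)$ and $c_6(\Omega)$, I would identify the resulting series with the $q$-expansions of the usual weight $4$ and weight $6$ Eisenstein series attached to $E$; once the modular weights and levels are pinned down, only finitely many $q$-coefficients need to be checked, and the verification reduces to the analogous one carried out in \cite{jacobians}. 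The differential identity then follows by a direct check in Heisenberg coordinates: both $(n-2)x_j^2 \, d(x_i/x_j)/\Omega_{ij}$ and $3\,dx/y$ pull back to nonzero constant multiples of $dz$ on $E$, and matching leading $q$-coefficients fixes the constant to $3$, which simultaneously produces the $\bar{k}$-isomorphism $\gamma$.

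The principal obstacle will be the bookkeeping required to produce the $q$-expansions of the entries of $\Omega$: the definition of $\Omega_{b_1,\ldots,b_{m-n+2}}$ involves a sum, indexed by $(n-2)$-tuples, of products of partial derivatives of the differentials $\phi_1,\ldots,\phi_{n-2}$, whose combinatorial size grows rapidly with $n$. The saving grace is that the end result is a polynomial identity between modular forms, so the $q$-expansion matching is a finite check amounting to a weight-and-level count together with verification of a leading coefficient; this is precisely the computation carried out in Section \ref{analytic chapter}.
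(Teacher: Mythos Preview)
Your outline matches the paper's approach: reduce to $\mathbb{C}$, pass to Heisenberg normal form, show that $c_k(\Omega_\tau)$ is a level-one modular form of weight $k$ (via $\PGL_n$-invariance of $c_k$ and the fact that all $n$-diagrams with the same Jacobian are $\mathbb{C}$-isomorphic), and then identify it with $(2\pi)^k E_k(\tau)$ by computing the constant term of its $q$-expansion using Lemma~8.4 of \cite{jacobians}.

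The one substantive point you gloss over is exactly what you flag as ``the principal obstacle'': obtaining the $q$-expansions of the entries of $\Omega_\tau$. The paper does \emph{not} attack this by direct bookkeeping on the resolution differentials. Instead it slices $C_\tau$ with the hyperplane $X_0=0$, obtaining a set of $n$ points whose $\Omega$-quadrics are computable by an explicit $\PGL_{n-1}$ change of coordinates from a reference configuration (Lemma~\ref{analytic set pts}); by Lemma~\ref{hyperplane slice} this determines all coefficients of $(\Omega_\tau)_{0,k}$ except that of $X_0X_k$, and Heisenberg invariance then propagates this to the full matrix. The remaining coefficient $u_k(\tau)$ is pinned down by the differential identity $\phi^*\omega=dz$ itself. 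Without this hyperplane-section trick, the ``finite check'' you propose is not obviously feasible for general $n$, since you would need the resolution $F_\bullet$ explicitly.
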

	The proof of this result will require complex analytic methods, and will be given in Section \ref{analytic chapter}.

	\begin{remark}
	In \cite{jacobians}, a different definition of the $\Omega$-matrix associated to a genus one curve is given, and the analogue of Theorem \ref{formula for Jacobian} is shown to hold for all values of $n$, without the restriction that $n$ is odd. We expect that the two definitions are equivalent. It is possible to check this is true for small values of $n$ by a generic calculation, but we still haven't found a proof that works for all $n$. We also expect that, with more work, one should be able to extend our proof of Theorem \ref{formula for Jacobian} to even values of $n$.
	\end{remark}
	
		 	\begin{remark}
 	    We have chosen a different normalisation for the differential $\omega$ and the invariants $c_4(\Omega)$ than the one used in \cite{jacobians} and \cite{LazarThesis}. We've done this because, for $n \leq 5$, with this normalisation, for an $\Omega$-matrix associated to a genus one model of degree $n$ the invariants $c_4(\Omega)$ and $c_6(\Omega)$ agree with the invariants $c_4$ and $c_6$ of the genus one model.

 		Furthermore, note that there are denominators appearing in the formulas for $c_4(\Omega)$ and $c_6(\Omega)$---they are the reason we restrict to characteristic 0 in this section. For $n \leq 5$, the invariants $c_4$ and $c_6$, viewed as functions in the coefficients of the genus one model instead of as functions of the coefficients of the matrix $\Omega$, have primitive integer coefficients.  This is another indicator that the scaling we choose is the correct one. It would be interesting to find denominator-free formulas for $c_4$ and $c_6$ for $n>5$, and we hope to investigate this in future work.
 	\end{remark}
	\subsection{The invariant differential under changes of coordinates}
	
	In this section we study how the differential $\omega$ associated to an $\Omega$-matrix behaves under changes of coordinates. The following lemma is a generalization of \cite[Proposition 5.19]{g1inv}, and immediately implies part (i) of Lemma \ref{first lemma differential}.
	
	\begin{lemma} \label{single bracket differential}
		Let $C$ be a genus one normal curve of degree $n$, let $F_{\bullet}$ be a free resolution of $R/I(C)$, and consider the associated quadrics $[a_1,\ldots,a_{n-2}]$. Then, for $i,j,a_1,\ldots,a_{n-2}$ an even permutation of $1,2,\ldots,n$ and $\tau \in S_n$, we have the following equality of rational differential forms on $C$
		\[
		\frac{x_j^2 d(x_i/x_j)}{[a_1,..,a_{n-2}]}=\mathrm{sgn}(\tau)
		\frac{x_{\tau(j)}^2 d(x_{\tau(i)}/x_{\tau (j)}) }{[\tau(a_1),\ldots,\tau(a_{n-2})]}.
		\]
	\end{lemma}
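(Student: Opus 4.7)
The plan is to show that $\omega_{i,j,a_1,\ldots,a_{n-2}} := x_j^2\, d(x_i/x_j)/[a_1,\ldots,a_{n-2}]$, viewed as a function of the tuple of $n$ distinct indices, is an alternating function of its arguments modulo the defining ideal $I(C)$. Since $S_n$ is generated by adjacent position-transpositions of the tuple, it suffices to verify the sign change under (i) the swap $i\leftrightarrow j$, (ii) the swap $j\leftrightarrow a_1$, and (iii) the swaps $a_k\leftrightarrow a_{k+1}$ for $1\le k\le n-3$.

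Case (i) is immediate from $x_i^2\,d(x_j/x_i)=-x_j^2\,d(x_i/x_j)$. For Case (iii), I differentiate the chain-complex relation $\phi_k\phi_{k+1}=0$ twice with respect to $x_{a_k}$ and $x_{a_{k+1}}$ to obtain
\[
\tfrac{\partial^2\phi_k}{\partial x_{a_k}\partial x_{a_{k+1}}}\phi_{k+1}+\tfrac{\partial\phi_k}{\partial x_{a_k}}\tfrac{\partial\phi_{k+1}}{\partial x_{a_{k+1}}}+\tfrac{\partial\phi_k}{\partial x_{a_{k+1}}}\tfrac{\partial\phi_{k+1}}{\partial x_{a_k}}+\phi_k\tfrac{\partial^2\phi_{k+1}}{\partial x_{a_k}\partial x_{a_{k+1}}}=0.
\]
For middle $k$, where both $\phi_k$ and $\phi_{k+1}$ are linear in $x_\bullet$, the outer two terms vanish identically, giving exact antisymmetry. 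For the boundary cases $k=1$ and $k=n-3$, one outer term survives. I multiply the resulting identity by the remaining derivative factors and commute the surviving $\phi_2$ (resp.\ $\phi_{n-3}$) rightward via $\phi_j\,\tfrac{\partial\phi_{j+1}}{\partial x}=-\tfrac{\partial\phi_j}{\partial x}\,\phi_{j+1}$, which produces a factor of $\phi_{n-2}$ (resp.\ $\phi_1$) at the end of the product. By the Gorenstein self-duality of $F_\bullet$ (Proposition \ref{DGC induced duality}), the entries of $\phi_{n-2}$ coincide with those of $\phi_1$ up to signs and change of basis, so they lie in $I(C)$; the correction term therefore vanishes on $C$.

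An immediate consequence of (iii), valid in characteristic $0$, is that $[a_1,\ldots,a_{n-2}]\equiv 0\pmod{I(C)}$ whenever two subscripts coincide: one uses (iii) to push the two equal subscripts into adjacent positions, and then $2[\cdots,a,a,\cdots]\equiv 0$. For Case (ii), I write $Q_k:=[k,a_2,\ldots,a_{n-2}]=\tfrac{\partial\phi_1}{\partial x_k}\cdot M$ with $M:=\tfrac{\partial\phi_2}{\partial x_{a_2}}\cdots\tfrac{\partial\phi_{n-2}}{\partial x_{a_{n-2}}}$. The Euler identity gives $\sum_k x_k Q_k=2\phi_1 M\equiv 0\pmod{I(C)}$ (here $\phi_1 M\equiv 0$ by the same cascade argument as in (iii)), while differentiating $\phi_1\equiv 0$ on $C$ and multiplying by $M$ gives $\sum_k dx_k\, Q_k\equiv 0$ on $C$. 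By the vanishing just established, $Q_k\equiv 0$ on $C$ for $k\in\{a_2,\ldots,a_{n-2}\}$, so both identities collapse to three-term relations involving only $Q_i,Q_j,Q_{a_1}$. Substituting into $(x_j\,dx_i-x_i\,dx_j)Q_j+(x_{a_1}\,dx_i-x_i\,dx_{a_1})Q_{a_1}$ and collecting the $dx_i$-coefficient from one identity and the remaining terms from the other, the expression telescopes to $-x_iQ_i\,dx_i+x_iQ_i\,dx_i=0$ on $C$, completing (ii).

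The main obstacle I anticipate is the careful bookkeeping in the boundary cases of (iii): one must correctly track the accumulated signs in the cascade commuting $\phi_2$ or $\phi_{n-3}$ past a string of partial-derivative matrices over $n-4$ steps, and then apply Gorenstein self-duality with enough precision to conclude that the resulting boundary factor has entries in $I(C)$. The entire argument requires characteristic zero, consistent with the standing hypothesis in Section \ref{resolution model section}.
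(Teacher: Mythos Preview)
Your proposal is correct and follows essentially the same route as the paper: both reduce to adjacent-position transpositions, handle the $i\leftrightarrow j$ swap trivially, the middle $a$-swaps exactly via differentiating $\phi_k\phi_{k+1}=0$, the boundary $a$-swaps modulo $I(C)$ by cascading the leftover $\phi$-factor to an end of the product (where self-duality forces its entries into $I(C)$), and the $j\leftrightarrow a_1$ swap via Euler's identity together with $d\phi_1|_C=0$. The only cosmetic differences are that the paper deduces one boundary $a$-swap from the other using the reversal lemma (Lemma~\ref{reverselemma}) rather than cascading both directly, and that it packages your two three-term identities into the single relation $\sum_k d(x_k/x_j)\,\tfrac{\partial\phi_1}{\partial x_k}\equiv 0$ before multiplying through by $M$. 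One small slip in your write-up: for the boundary $k=n-3$ the surviving factor $\phi_{n-3}$ must be commuted \emph{leftward} (past $\tfrac{\partial\phi_{n-4}}{\partial x_{a_{n-4}}},\ldots,\tfrac{\partial\phi_1}{\partial x_{a_1}}$) to produce $\phi_1$, not rightward.
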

	
	We need two preliminary lemmas.
		\begin{lemma} \label{swaplemma}
		If $2\leq r \leq n-3$ then 
		\[
		[a_1,\ldots,a_{r},a_{r+1},\ldots,a_{n-2}]=-[a_1,\ldots,a_{r+1},a_{r},\ldots,a_{n-2}].
		\]
	\end{lemma}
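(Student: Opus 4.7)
The plan is to prove Lemma \ref{swaplemma} by differentiating the chain-complex identities $\phi_i\phi_{i+1}=0$ twice and exploiting the degrees of the entries of each $\phi_i$. Recall from the shape of the resolution (\ref{free res}) that $\phi_1$ and $\phi_{n-2}$ have quadratic entries while the interior differentials $\phi_2,\ldots,\phi_{n-3}$ have linear entries. The hypothesis $2\le r\le n-3$ guarantees that $\phi_r$ is linear; the only situation in which $\phi_{r+1}$ fails to be linear is the boundary case $r=n-3$, where $\phi_{r+1}=\phi_{n-2}$ is quadratic, and this case will require a separate argument.

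First I would handle the main case $2\le r\le n-4$, where both $\phi_r$ and $\phi_{r+1}$ have linear entries so that all their second partial derivatives vanish. Applying $\partial_{a_r}\partial_{a_{r+1}}$ to the matrix identity $\phi_r\phi_{r+1}=0$ and expanding by the Leibniz rule leaves only the cross terms:
\[
(\partial_{a_r}\phi_r)(\partial_{a_{r+1}}\phi_{r+1}) + (\partial_{a_{r+1}}\phi_r)(\partial_{a_r}\phi_{r+1}) = 0.
\]
Inserting the untouched factors $(\partial_{a_1}\phi_1)\cdots(\partial_{a_{r-1}}\phi_{r-1})$ on the left and $(\partial_{a_{r+2}}\phi_{r+2})\cdots(\partial_{a_{n-2}}\phi_{n-2})$ on the right yields exactly $[a_1,\ldots,a_r,a_{r+1},\ldots]=-[a_1,\ldots,a_{r+1},a_r,\ldots]$ as polynomials in $R$.

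For the boundary case $r=n-3$, the second-derivative computation on $\phi_{n-3}\phi_{n-2}=0$ now produces an extra term,
\[
(\partial_{a_{n-3}}\phi_{n-3})(\partial_{a_{n-2}}\phi_{n-2}) + (\partial_{a_{n-2}}\phi_{n-3})(\partial_{a_{n-3}}\phi_{n-2}) = -\phi_{n-3}\,\partial_{a_{n-3}}\partial_{a_{n-2}}\phi_{n-2},
\]
because $\partial^2\phi_{n-2}$ no longer vanishes. To kill this residue I would differentiate each lower-index identity $\phi_i\phi_{i+1}=0$ once to get $(\partial_{a_i}\phi_i)\phi_{i+1}=-\phi_i(\partial_{a_i}\phi_{i+1})$, and then iteratively commute the factor $\phi_{n-3}$ leftward through the product $(\partial_{a_1}\phi_1)\cdots(\partial_{a_{n-4}}\phi_{n-4})$, picking up a sign at each step. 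After $n-4$ such applications the factor $\phi_1$ stands on the far left; since the entries of $\phi_1$ generate $I(C)$, the extra term vanishes modulo $I(C)$, and the asserted swap identity holds as an equality of quadrics on $C$---precisely the form in which Lemma \ref{single bracket differential} will use it. The only real obstacle is the bookkeeping in this boundary case: tracking the cumulative sign $(-1)^{n-4}$ and keeping the matrix dimensions aligned as $\phi_{n-3}$ cascades leftward; conceptually, the entire argument rests on the chain-complex property together with the explicit degree pattern of the differentials in (\ref{free res}).
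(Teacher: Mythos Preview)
Your argument for $2\le r\le n-4$ is exactly the paper's proof: differentiate $\phi_r\phi_{r+1}=0$ twice, use that both matrices have linear entries so the pure second derivatives vanish, and sandwich by the remaining factors.

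You are right to flag the boundary case $r=n-3$. In fact the paper's own proof asserts that ``for this range of $r$, both $\phi_r$ and $\phi_{r+1}$ are matrices of linear forms,'' which is false at $r=n-3$ since $\phi_{n-2}$ is quadratic; so the paper's proof as written has a small slip at this endpoint, and the exact polynomial identity actually fails there (for $n=5$ the only case is $r=2=n-3$, and one can see the discrepancy directly). Your remedy---commuting the stray $\phi_{n-3}$ leftward via $(\partial_{a_i}\phi_i)\phi_{i+1}=-\phi_i(\partial_{a_i}\phi_{i+1})$ until $\phi_1$ lands on the far left---correctly establishes the identity modulo $I(C)$. This is precisely the argument the paper gives, but not in the proof of this lemma: it appears instead in the proof of Lemma~\ref{single bracket differential}, where the transposition $(n-1,n)$ is treated separately by the same mod-$I(C)$ cascade. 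The paper's actual uses of Lemma~\ref{swaplemma} (in Lemma~\ref{reverselemma}(ii) and for the transpositions $(k,k+1)$ with $4\le k\le n-2$ in Lemma~\ref{single bracket differential}) only ever invoke positions $2\le r\le n-4$, so the endpoint slip is harmless.

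In short, your proof is correct and in fact more careful than the paper's at this point; just be aware that what you have proved at $r=n-3$ is the mod-$I(C)$ statement, which is both the best possible and all that is needed downstream.
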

	\begin{proof}
		For this range of $r$, both $\phi_r$ and $\phi_{r+1}$ are matrices of linear forms. We differentiate the relation $\phi_r \phi_{r+1}=0$. By the Leibniz rule, 
		\[
		0=\frac{\partial^2 (\phi_r \phi_{r+1})}{\partial x_{a_r} \partial x_{a_{r+1}}}=\frac{\partial \phi_r}{\partial x_{a_r}}\frac{\partial \phi_{r+1}}{\partial x_{a_{r+1}}}+\frac{\partial \phi_r}{\partial x_{a_{r+1}}}\frac{\partial \phi_{r+1}}{\partial x_{a_r}},
		\]
		hence the desired relation.
	\end{proof}
	\begin{lemma} \label{reverselemma}
	    \begin{enumerate}
	
	        \item We have
	    
	 $[a_1,a_2,\ldots,a_{n-2}]=\pm[a_{n-2},a_{n-3},\ldots,a_1]$, where the sign is $+1$ if $n \equiv 2,3 \ (\mathrm{mod} \ 4)$ and $-1$ if $n \equiv 0,1 \  (\mathrm{mod} \ 4)$. 
		\item We have $[a_1,a_2,\ldots,a_{n-3}a_{n-2}]=-[a_{n-2},a_2,\ldots,a_{n-3},a_1]$.
		\end{enumerate}
	\end{lemma}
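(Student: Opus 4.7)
The plan is to obtain (1) from the self-duality of $F_\bullet$, and then deduce (2) from (1) together with Lemma \ref{swaplemma}.

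For (1), the main input is the symmetric duality pairing
\[
\langle \cdot, \cdot\rangle_i : F_i \otimes F_{n-2-i} \to F_{n-2} \cong R(-n)
\]
coming from the DGCA structure on $F_\bullet$ furnished by Proposition \ref{DGC induced duality}. A quick degree count shows that the generator degrees $d_i$ of the modules $F_i$ satisfy $d_i + d_{n-2-i} = n$ for every $i$, so in the chosen bases this pairing is represented by an invertible \emph{constant} matrix $T_i$, i.e.\ with entries in the ground ring. Applying the Leibniz rule to $f_i \cdot f_{n-1-i} \in F_{n-1} = 0$ yields the adjunction
\[
\phi_i^T \, T_{i-1} = (-1)^{i+1} T_i \, \phi_{n-1-i},
\]
and the $1\times 1$ case of the symmetry $T_i^T = (-1)^{i(n-2-i)} T_{n-2-i}$ (at $i=0$) gives $T_0 = T_{n-2}$.

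With the adjunction in hand, I transpose the bracket, apply $(AB)^T = B^T A^T$ to reverse the order of factors, differentiate the adjunction (permissible since the $T_j$ are constant) to replace each $(\partial \phi_i/\partial x_{a_i})^T$ by $(-1)^{i+1} T_i (\partial \phi_{n-1-i}/\partial x_{a_i}) T_{i-1}^{-1}$, and observe that the consecutive $T_{j-1}^{-1}$ and $T_{j-1}$ factors telescope. The result is
\[
[a_1,\ldots,a_{n-2}]^T = (-1)^{\sum_{i=1}^{n-2}(i+1)} \, T_{n-2} \, [a_{n-2}, a_{n-3}, \ldots, a_1] \, T_0^{-1}.
\]
Since $[a_1,\ldots,a_{n-2}]$ is a $1 \times 1$ matrix (as $F_0 = R$ and $F_{n-2} \cong R(-n)$ are both of rank one) and hence equal to its own transpose, and since $T_{n-2}T_0^{-1} = 1$, we obtain $[a_1,\ldots,a_{n-2}] = (-1)^{(n-2)(n+1)/2} [a_{n-2},\ldots,a_1]$. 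A short case-check modulo $4$ then shows that $(-1)^{(n-2)(n+1)/2}$ equals $+1$ for $n \equiv 2, 3 \pmod 4$ and $-1$ for $n \equiv 0, 1 \pmod 4$, matching (1).

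For (2), use (1) to rewrite $[a_1,\ldots,a_{n-2}]$ as $\epsilon\,[a_{n-2}, a_{n-3}, \ldots, a_2, a_1]$ with $\epsilon = (-1)^{(n-2)(n+1)/2}$, then permute the now-interior indices $(a_{n-3},\ldots,a_2)$ back to $(a_2,\ldots,a_{n-3})$ by adjacent transpositions, all of which lie strictly inside the range $2 \leq r \leq n-3$ where Lemma \ref{swaplemma} applies. Reversing a length $n-4$ sequence contributes a sign $(-1)^{(n-4)(n-5)/2}$, and the product $(-1)^{(n-2)(n+1)/2 + (n-4)(n-5)/2} = (-1)^{n^2 - 5n + 9}$ is always $-1$, since $n(n-5)$ is even for every integer $n$ and hence $n^2 - 5n + 9$ is odd. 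The main obstacle is tracking signs carefully: confirming the Leibniz-derived adjunction sign $(-1)^{i+1}$, verifying that the $T_i$ really are invertible scalar matrices so the telescoping is clean, and correctly assembling all contributions.
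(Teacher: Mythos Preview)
Your proof is correct and follows essentially the same approach as the paper, which merely sketches the argument by invoking the self-duality of $F_\bullet$ (via Proposition \ref{DGC induced duality}) and the sign convention for the dual complex, and defers the details to \cite{radfis}. You have carefully worked out exactly those details: the adjunction $\phi_i^T T_{i-1} = (-1)^{i+1} T_i \phi_{n-1-i}$ from the Leibniz rule applied to products landing in $F_{n-1}=0$, the telescoping, and the sign bookkeeping for both parts.
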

	\begin{proof}
		This follows from the self-duality of the resolution, as explained Lemma \ref{DGC induced duality}, and our sign convention for the dual complex. Part (ii) then follows from (i) and Lemma \ref{swaplemma}. See Lemma 6.3 of \cite{radfis} for more details.
	\end{proof}

	\begin{proof}[Proof of Lemma \ref{single bracket differential}]
		To make notation clearer, we assume $i=1$, $j=2$ and $a_k=k+2$ for each $k$. It suffices to prove the lemma for the transpositions $(k,k+1)$, for $1 \leq k \leq n-1$, as they generate $S_n$. We view $S_n$ as a subgroup of $\mathrm{GL}_n$, embedded via the standard representation. From Proposition \ref{Omega quad change of coordinates}, when $\tau=(k,k+1)$ and $4\leq k \leq n-2$, we obtain
		\[
		[3,\ldots,k, (k+1),\ldots,n]=-[3,\ldots,(k+1),k,\ldots,n],
		\]
		so it is clear the conclusion holds for these transpositions. We thus only have to check the symmetries $(1,2)$, $(2,3)$ and $(n-1,n)$. The symmetry $(12)$ follows from the identity
		\[
		x_1^2 d(x_2/x_1)=-x_2^2 d(x_1/x_2).
		\]
		For the transposition $(n-1,n)$, observe that by differentiating the relation $\phi_{n-3}\phi_{n-2}=0$ with respect to $x_{n-1}$ and $x_n$, we obtain
		\[
		\frac{\partial \phi_1}{\partial x_3}\frac{\partial \phi_2}{\partial x_4}\cdot \cdot \cdot \frac{\partial \phi_{n-4}}{\partial x_{n-2}}(\frac{\partial \phi_{n-3}}{\partial x_{n-1}} \frac{\partial \phi_{n-2}}{\partial x_{n}}+ \frac{\partial \phi_{n-3}}{\partial x_{n}}\frac{\partial \phi_{n-2}}{\partial x_{n-1}}),
		\]
		\[
		=-\frac{\partial \phi_1}{\partial x_3}\frac{\partial \phi_2}{\partial x_4}\cdot \cdot \cdot \frac{\partial \phi_{n-4}}{\partial x_{n-2}}\phi_{n-3}\frac{\partial^2\phi_{n-2}}{\partial x_{n-1} \partial x_{n}}.
		\]
		Using the relation $\phi_r \frac{\partial \phi_{r+1}}{\partial x_{p}}=-\frac{\partial \phi_r}{\partial x_{p}}\phi_{r+1}$, this is equal to
		\[
		\pm \phi_1 \frac{\partial \phi_2}{\partial x_3}\cdot \cdot \cdot \frac{\partial \phi_{n-4}}{\partial x_{n-2}}\frac{\partial\phi_{n-3}}{\partial x_{n-3}}\frac{\partial^2\phi_{n-2}}{\partial x_{n-1} \partial x_{n}}.
		\]
		As entries of $\phi_1$ are in $I(C)$, the above expression is also in $I(C)$ and hence vanishes on $C$. The case $\tau=(34)$ follows from this case and the Lemma \ref{reverselemma}. We are left with the case $\tau=(2,3)$. Using Euler's identity, we compute 
		\begin{align*}
		&\sum_{i=1}^{n}  d(x_i/x_1) \frac{\partial \phi_1}{\partial x_i}=\sum_{i=1}^{n}\frac{dx_ix_1-x_idx_1}{x^2_1}\frac{\partial \phi_1}{\partial x_i}
		 =\frac{d\phi_1}{x_1}-\frac{\phi_1 dx_1 }{x_1^2}=0 \ \textrm{mod} \ I(C),
		\end{align*}
		and hence
		\begin{align*}
		0&=\left( d(x_i/x_1)\sum_{i=1}^{n} \frac{\partial \phi_1}{\partial x_i}\right) \cdot	\frac{\partial \phi_2}{\partial x_4}\frac{\partial \phi_3}{\partial x_5}\cdots \frac{\partial \phi_{n-2}}{\partial x_{n}} \ \textrm{mod} \ I(C)\\&
		=\sum_{i=2}^{n} d(x_i/x_1) [i,4,5,\ldots,n	] \ \textrm{mod} \ I(C).
		\end{align*}
		From Lemma \ref{swaplemma} we deduce $[i,4,5,\ldots,n]=0$ for $4<i<n$. From Lemma \ref{reverselemma}(ii), we see that $[n,4,5,\ldots,n]=0$. Furthermore, the term with $i=4$ is zero mod $I(C)$. This is because we have, reasoning in the same way as in the case $(n-1,n)$,
		\[
		2\frac{\partial \phi_1}{\partial x_4}\frac{\partial \phi_2}{\partial x_4}\cdot \cdot \cdot \frac{\partial \phi_{n-2}}{\partial x_{n}}=\pm \frac{\partial^2 \phi_1}{\partial x_4 \partial x_4}\frac{\partial \phi_2}{\partial x_5}\cdot\cdot\cdot\frac{\partial \phi_{n-3}}{\partial x_n}\phi_{n-2},
		\]
		and the entries of $\phi_{n-2}$ are elements of $I(C)$, as the resolution is self-dual, and this holds for the entries of $\phi_1$. We are left with the identity
		\[
		\frac{\partial \phi_1}{\partial x_2}\frac{\partial \phi_2}{\partial x_4}\cdot \cdot \cdot \frac{\partial \phi_{n-2}}{\partial x_{n}} d(x_2/x_1)+\frac{\partial \phi_1}{\partial x_3}\frac{\partial \phi_2}{\partial x_4}\cdot \cdot \cdot \frac{\partial \phi_{n-2}}{\partial x_{n}} d(x_3/x_1)=0,
		\]
		concluding the proof.
	\end{proof}
	\begin{lemma} \label{single bracket double bracket rel }
		With notation as in  Lemma \ref{single bracket differential}, we have the equality of differential forms on $C$,
		\[
		\frac{x_j^2 d(x_i/x_j)}{[a_1,\ldots,a_{n-2}]}=(n-2)\frac{x_j^2 d(x_i/x_j)}{[[a_1,\ldots,a_{n-2}]]}.
		\]
	\end{lemma}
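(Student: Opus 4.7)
The plan is to reduce the lemma to showing that, as rational functions on $C$ (equivalently, modulo $I(C)$),
\[
[[a_1,\ldots,a_{n-2}]] \equiv (n-2)\,[a_1,\ldots,a_{n-2}].
\]
Once this congruence is in hand, dividing the nowhere vanishing rational differential $x_j^2\,d(x_i/x_j)$ on $C$ by the two sides immediately gives the asserted identity.

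By definition, $[[a_1,\ldots,a_{n-2}]]$ is a sum of $n-2$ terms, the $k$th of which is obtained from $[a_1,\ldots,a_{n-2}]$ by permuting the positions of the $a_l$'s via the cyclic shift $\sigma^{2k}$, where $\sigma = (1\,2\,\cdots\,n-2)$. I would therefore let $\tau_k \in S_n$ be the permutation that fixes $i$ and $j$ and acts on the complement $\{a_1,\ldots,a_{n-2}\}$ as $\sigma^{2k}$. Since $\sigma$ is a single $(n-2)$-cycle, its sign is $(-1)^{n-3}$, and hence $\mathrm{sgn}(\tau_k) = \mathrm{sgn}(\sigma)^{2k} = 1$ for every $k$, regardless of the parity of $n-2$.

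Applying Lemma \ref{single bracket differential} with $\tau = \tau_k$ — so that $\tau(i)=i$, $\tau(j)=j$, and $\tau(a_l)=a_{\sigma^{2k}(l)}$ — yields
\[
\frac{x_j^2\,d(x_i/x_j)}{[a_1,\ldots,a_{n-2}]} \;=\; \frac{x_j^2\,d(x_i/x_j)}{[a_{\sigma^{2k}(1)},\ldots,a_{\sigma^{2k}(n-2)}]}
\]
as rational differentials on $C$. Since $x_j^2\,d(x_i/x_j)$ is a nonzero rational differential on the integral curve $C$, this forces $[a_{\sigma^{2k}(1)},\ldots,a_{\sigma^{2k}(n-2)}] \equiv [a_1,\ldots,a_{n-2}] \pmod{I(C)}$. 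Summing the resulting $n-2$ congruences produces the displayed identity, and the lemma follows.

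I do not anticipate a serious obstacle: the argument is essentially a direct corollary of Lemma \ref{single bracket differential} combined with the parity observation that $\sigma^{2k}$ is always an even permutation. The only points worth checking carefully are this parity calculation and the non-vanishing of $x_j^2\,d(x_i/x_j)$ on $C$, which is needed to pass from an equality of differentials to an equality of denominators modulo $I(C)$. Both are straightforward.
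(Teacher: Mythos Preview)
Your proof is correct and follows essentially the same approach as the paper, which simply cites the definition of $[[\cdots]]$ and the previous lemma. You have just made explicit the two points the paper leaves implicit: that $\sigma^{2k}$ is always an even permutation (so the sign in Lemma~\ref{single bracket differential} is $+1$), and the passage from equal differentials to equal denominators on $C$.
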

\begin{proof}
	This follows from the definition of the symbol $[[\ldots]]$
		\[
	[[a_1,a_2,\ldots,a_{n-2}]]=\sum_{k=1}^{n-2} [a_{\sigma^{2k}(1)},a_{\sigma^{2k}(2)},\ldots,a_{\sigma^{2k}(n-2)}].
	\]
	where $\sigma$ is the $(n-2)$-cycle $(1,2,\ldots,n-1)$, and the previous lemma.
\end{proof}
	\begin{lemma} \label{invariant differential change of coords}
		Let $C$ and $C'$ be genus one normal curves in $\mathbb{P}^{n-1}$. Suppose $g \in \mathrm{GL}_{n}$ takes $C'$ to $C$, and let $x_j'=\sum_{i=1}^{n} g_{ij}x_i$. Fix a minimal free resolution $F $ of $I(C)$ and let $F' $  be the resolution of $I(C')$, such that the boundary maps $\phi_r$ and $\phi'_r$ satisfy
		\[
		\phi'_r(x_1,\ldots,x_n)=\phi_r(x'_1,\ldots,x'_n).
		\]
		Then, for $i \neq j$ we have the equality of rational differential forms on $C'$
		\[
		g^* \left( \frac{x_j^2 d(x_i/x_j)}{\Omega_{C,ij}}\right) =\mathrm{det}(g)\frac{x_j^2 d(x_i/x_j)}{\Omega_{C',ij}}.
		\]
	\end{lemma}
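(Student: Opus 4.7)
The plan is to compute the pullback directly, using Proposition~\ref{first lemma differential} applied to $F'_\bullet$ to convert the numerator into a multiple of $\omega_{C'}$, and Proposition~\ref{Omega quad change of coordinates} to extract the factor $\det(g)$. First I would expand, using $x'_j = \sum_k g_{kj}\,x_k$ and $dx'_i = \sum_l g_{li}\,dx_l$, and antisymmetrize in the dummy indices to obtain
$$x'_j\,dx'_i - x'_i\,dx'_j \;=\; \sum_{k<l}\bigl(g_{kj}g_{li} - g_{ki}g_{lj}\bigr)\bigl(x_k\,dx_l - x_l\,dx_k\bigr).$$

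Next, on $C'$, Proposition~\ref{first lemma differential} applied to $F'_\bullet$ gives $x_k\,dx_l - x_l\,dx_k = -\Omega_{C',kl}(x)\,\omega_{C'}/(n-2)$ for every pair $k\neq l$ (the sign comes from expanding $x_l^2\,d(x_k/x_l) = x_l\,dx_k - x_k\,dx_l$). Substituting turns the pullback into $-\omega_{C'}/(n-2)$ times the sum $\sum_{k<l}(g_{kj}g_{li} - g_{ki}g_{lj})\,\Omega_{C',kl}(x)$. Using the antisymmetry of $\Omega_{C'}(x)$ in $(k,l)$ to pass to a full sum over ordered pairs, a short matrix computation identifies this as $-(g^T\,\Omega_{C'}(x)\,g)_{ij}$. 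Proposition~\ref{Omega quad change of coordinates}, read as the explicit matrix identity $\Omega_{C'}(x) = \det(g)\,g^{-T}\,\Omega_C(x')\,g^{-1}$ (equivalently $g^T\,\Omega_{C'}(x)\,g = \det(g)\,\Omega_C(x')$), then gives
$$x'_j\,dx'_i - x'_i\,dx'_j \;=\; \det(g)\,\Omega_{C,ij}(x')\,\frac{\omega_{C'}}{n-2}.$$
Dividing by $\Omega_{C,ij}(x')$ and unwinding the definition of $\omega_{C'}$ yields the claim.

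The argument is essentially formal, so the main obstacle is simply translating the abstract standard action of $\mathrm{GL}_n$ on $\wedge^2 V^* \otimes S^2 V$ appearing in Proposition~\ref{Omega quad change of coordinates} correctly into the concrete matrix formula used above; one can sanity-check the formula against the $n=3$ case of Example~\ref{example omega}, where $\Omega$ is built from $\partial F/\partial x_k$ and the transformation follows directly from the chain rule. A conceptual alternative, which would bypass the matrix bookkeeping, is to observe that both $g^*\omega_C$ and $\omega_{C'}$ are regular differentials on the genus-one curve $C'$, hence proportional, and pin down the constant by evaluating at one convenient point; but the direct calculation above is cleaner and explicitly tracks the $\det(g)$ factor.
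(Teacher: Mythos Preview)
Your proof is correct. The paper takes a different organizational route: rather than computing the pullback directly for arbitrary $g$, it reduces to checking the identity on a generating set for $\mathrm{GL}_n$ (diagonal matrices, the elementary matrix $I+tE_{21}$, and permutation matrices), invoking Proposition~\ref{Omega quad change of coordinates} for the first two types and Lemma~\ref{single bracket differential} directly for permutations. Your approach instead uses Proposition~\ref{first lemma differential}(i) (itself a consequence of Lemma~\ref{single bracket differential}) to rewrite every term $x_k\,dx_l-x_l\,dx_k$ on $C'$ as $\Omega_{C',kl}\cdot\omega_{C'}/(n-2)$, and then applies Proposition~\ref{Omega quad change of coordinates} once in full generality to collapse the resulting bilinear sum. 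The two arguments draw on the same ingredients; yours is arguably cleaner since it avoids the case split, at the cost of needing the concrete matrix form $g^T\Omega_{C'}(x)g=\det(g)\,\Omega_C(x')$ of Proposition~\ref{Omega quad change of coordinates}, which (as you note) one should verify carefully from the abstract statement.
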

	\begin{proof}
		
		We wish to the prove the equality 
		\[
		(x'_j dx'_i-x'_i dx'_j) (\Omega_{C'})_{ij}(x_1,\ldots,x_n)\equiv \mathrm{det}(g)(x_j dx_i-x_i dx_j)(\Omega_{C})_{ij}(x'_1,\ldots,x'_n) 
		\]
		It suffices to check the lemma for a set of generators of $\mathrm{GL}_n$. When $g$ is a diagonal matrix or $g=I+tE_{21}$, the claim follows from Proposition \ref{Omega quad change of coordinates}. When $g$ is a permutation matrix, the claim follows from Lemma \ref{single bracket differential}.
	\end{proof}

\subsection{The discriminant form} \label{discriminant form section}
We now study the connection between genus one normal curves of degree $n$, and sets of $n$ points in general position. For a genus one curve $C \subset \mb{P}^{n-1}$, we associate a discriminant to every hyperplane $H \subset \mb{P}^{n-1}$.  

To construct these discriminants, we need the following result, proven in \cite{radfis}.

\begin{theorem} \label{algebra constructed from n points} Let
  $X \subset \PP^{n-2}$ be a set of $n$ points in general position,
  and let $\Omega_1,\ldots,\Omega_{n-1}$ be the quadratic forms
  associated to a minimal free resolution of $X$. Then there exists a
  commutative and associative $K$-algebra $A$, of dimension $n$, and a
  $K$-basis $1=\alpha_0,\alpha_1,\ldots,\alpha_{n-1}$ for $A$, such
  that for each $1\leq i,j \leq n-1$ we have
  \[
    \alpha_i \alpha_j =c^{0}_{ij}+\sum_{k=1}^{n-1} \frac{\partial^2
      \Omega_k}{\partial x_i \partial x_j}\alpha_k,
  \]
   The constants $c^{0}_{ij}$ are given by 
   \[
   \sum_{r=1}^{n-1}  \left( \frac{\partial^2 \Omega_r}{\partial x_j \partial x_k} \frac{\partial^2 \Omega_k}{\partial x_r \partial x_i}- \frac{\partial^2 \Omega_r}{\partial x_i \partial x_j} \frac{\partial^2 \Omega_k}{\partial x_r \partial x_k}\right)
   \]
 
 for any $1 \leq k \leq n$ with $k \neq i$. The algebra $A$ is isomorphic to
  the affine coordinate ring (i.e., ring of global functions) of $X$,
  and $\alpha_1,\ldots,\alpha_n$ span the trace zero subspace.
\end{theorem}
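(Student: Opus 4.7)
My plan is to construct $A$ explicitly as the affine coordinate ring of $X$, exhibit the basis $\alpha_0,\ldots,\alpha_{n-1}$ after a convenient normalization, and then identify the structure constants $\partial^2\Omega_k/\partial x_i\partial x_j$ by extracting the multiplication on $R/I(X)$ from the Buchsbaum--Eisenbud DGCA structure on its minimal free resolution.

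First, pick any hyperplane $H\subset\PP^{n-2}$ disjoint from $X$---such $H$ exists because $X$ is a finite set---and dehomogenize to view $X$ as $n$ points of $\mathbb{A}^{n-1}$ with coordinates $(p_{r,1},\ldots,p_{r,n-1})$ for $r=1,\ldots,n$ (over $\kbar$, permuted by Galois). The affine coordinate ring of $X$ is then an \'etale $K$-algebra of dimension $n$; call it $A$, and write $\bar x_i$ for the restriction of $x_i$ to $X$. General position of the $n$ points implies that the $n\times n$ matrix with $r$-th row $(1,p_{r,1},\ldots,p_{r,n-1})$ is invertible, so $1,\bar x_1,\ldots,\bar x_{n-1}$ is a $K$-basis of $A$. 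Using the $\mathrm{GL}_{n-1}$-equivariance of the $\Omega$-quadrics from Proposition~\ref{Omega quad change of coordinates}, we may further normalize coordinates so that $\sum_r p_{r,i}=0$ for every $i$, which forces $\bar x_i$ to lie in the trace-zero subspace; we then define $\alpha_i:=\bar x_i$.

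Second, writing $\alpha_i\alpha_j = c^0_{ij} + \sum_k d^k_{ij}\alpha_k$, the constant term is $c^0_{ij}=\tfrac{1}{n}\Tr(\alpha_i\alpha_j)$ and the $d^k_{ij}$ are determined by the values of $\alpha_i\alpha_j-c^0_{ij}$ at the points of $X$. The goal is to show $d^k_{ij}=\partial^2\Omega_k/\partial x_i\partial x_j$. I would extract this multiplication from the Buchsbaum--Eisenbud DGCA structure on the minimal free resolution $F_{\bu}$ of $R/I(X)$ described in Section~\ref{DGCA section}: the product $\bar x_i\cdot\bar x_j$ in $R/I(X)$ lifts to a cycle in $F_2$, which is itself a linear combination of the quadratic generators of $I(X)$, and chasing this lift through the remainder of $F_{\bu}$ using the self-duality isomorphism (Lemma~\ref{uniqueness of duality lemma}) should express the answer in terms of the $[[a_1,\ldots,a_{n-2}]]$-symbols, hence in terms of the $\Omega_k$. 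The formula for $c^0_{ij}$ then follows by taking the trace; its independence of the auxiliary index $k\neq i$ is a nontrivial identity among second partials of the $\Omega$-quadrics that should fall out of repeatedly applying the Leibniz rule to the relations $\phi_r\phi_{r+1}=0$, in the spirit of the manipulations used to prove Lemma~\ref{single bracket differential}.

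The main obstacle will be this final identification, and particularly the verification that the prescribed formula for $c^0_{ij}$ is well-defined. I would handle this by proving the statement generically: parametrize ordered $n$-tuples of points in general position by an irreducible variety, regard both sides of the multiplication law as regular functions on it, and verify the identity on a single convenient configuration---for instance $n-1$ of the points equal to the coordinate points $[e_1],\ldots,[e_{n-1}]$ and the last a generic $[p_1\!:\!\cdots\!:\!p_{n-1}]$, where the minimal free resolution of $I(X)$ admits an explicit Eagon--Northcott-type description making the $\Omega_k$ and their partials computable by hand. By irreducibility of the parameter space the identity then extends to all $X$.
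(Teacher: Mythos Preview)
The paper does not give its own proof of this result; it is quoted from \cite{radfis}. That said, your sketch has a genuine gap at the very first step. You dehomogenize $X\subset\PP^{n-2}$ and assert that $1,\bar x_1,\ldots,\bar x_{n-1}$ is a basis of $A$, but this is off by one: $\PP^{n-2}$ has only $n-1$ homogeneous coordinates, and if $\bar x_i:=x_i/L|_X$ for a linear form $L=\sum a_ix_i$ not vanishing on $X$, then the $\bar x_i$ satisfy the relation $\sum a_i\bar x_i=1$. Hence $1,\bar x_1,\ldots,\bar x_{n-1}$ span only an $(n-1)$-dimensional subspace of the $n$-dimensional algebra $A$ (equivalently, the degree-one piece $(R/I(X))_1$ has dimension $n-1$ and its image in $A$ already contains the constants). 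So the proposed identification $\alpha_i=\bar x_i$ cannot work, and no normalization by a linear change of the $x_i$ will manufacture the missing $n$-th independent element.

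That missing direction is exactly what the theorem is about: the resolution, through the $\Omega$-quadrics, supplies the extra basis element together with the full multiplication table. Your account of how to extract this from the DGCA structure (``chasing this lift \ldots\ should express the answer'') is the entire content of the result and is left schematic. Your fallback---check the identity on the configuration of $n-1$ coordinate points plus one generic point and propagate by irreducibility of the parameter space---is sound in spirit, and that configuration is indeed the one for which \cite{radfis} computes the $\Omega$-quadrics explicitly (quoted in this paper in the proof of Lemma~\ref{analytic set pts}); but the generic-verification strategy only makes sense once you have an intrinsic, algebraically-varying definition of the $\alpha_i$ coming from the resolution, not from restricted linear coordinates.
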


The key observation for the proof of Theorem \ref{Main theorem I} is that if $K=\Q$ and the resolution model $F_{\bu}$ is defined over $\Z$, then $\Omega_{ij}$ have integral coefficients, and so the multiplication table in the theorem defines an order in the algebra $A$. In particular, the discriminant $D$ associated to the basis $\alpha_0,\ldots,\alpha_{n-1}$ is an integer. Since $\alpha_i$ have trace zero for $i>0$ and $\alpha_0=1$, for the matrix $T$ representing the trace pairing we have $T_{00}=n$, $T_{0i}=0$ and $T_{i0}=0$ for $i>0$.

Let $[C \xrightarrow{} \mb{P}^{n-1}]$ be an $n$-diagram, defined over a field $K$ of characteristic 0. Fix a resolution model $F_{\bullet}$ of $[C \xrightarrow{} \mb{P}^{n-1}]$.  The intersection of $C$ and the hyperplane $x_1=0$ is a set $X$ of $n$ points in $\mb{P}^{n-2}$. A resolution model of $X$ can be obtained by setting $x_1=0$ in the differentials of $F_{\bullet}$, as we will now see.

 Let $A=R/\phi_1(F_1)$ be the homogenous coordinate ring of $C$. We identify the homogeneous coordinate ring of $X$ with $A/(x_1A)$. 
\begin{lemma} \label{hyperplane slice}
	\begin{enumerate}[label=(\roman*)]
\item	The chain complex $F^{0}_{\bu}=F_{\bu} \otimes_{R} (R/x_1R)$ is a minimal free resolution of $A/(x_1 A)$, viewed as a graded $R^{0}$-module, where $R^{0}=R/x_1R \cong K[X_1,\ldots,X_{n-1}]$. Concretely, if $\phi_i: F_i  \xrightarrow{} F_{i-1}$ are the differentials of $F_{\bu}$, viewed as matrices of homogeneous forms in $x_1,\ldots,x_{n-1}$, the differentials $\phi^{0}_i : F^0_i : \xrightarrow{} F^0_{i-1}$ are obtained by setting $\phi^{0}_i(x_1,\ldots,x_{n-1})=\phi_i(0,x_1,\ldots,x_{n-1})$.
\item Let $\Omega_{F_{\bu}}$ be the $\Omega$-matrix associated to the resolution model $F_{\bu}$. The $\Omega$-quadrics associated to the model $F^{0}_{\bu}$ are given by 
\[
\Omega_k(X_1,X_2,\ldots,X_{n-1})=\Omega_{1k}(0,X_1,X_2,\ldots,X_{n-1})
\]
for $1 \leq k \leq n-1$.
\end{enumerate}
\end{lemma}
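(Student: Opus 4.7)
The plan is to handle (i) by the standard commutative algebra principle that tensoring a free resolution of a module $M$ with $R/(x)$ remains exact when $x$ is a nonzerodivisor on $M$, and then to derive (ii) by directly unpacking the definition of the $\Omega$-quadrics.

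For (i), I would first verify that $x_1$ is a nonzerodivisor on $A = R/I(C)$. Since $C$ is smooth and irreducible, $I(C)$ is prime and $A$ is an integral domain; and because $C$ spans $\mathbb{P}^{n-1}$, no linear form vanishes identically on $C$, so in particular $x_1 \notin I(C)$, whence $x_1$ is regular on $A$. Using the Koszul resolution $0 \to R(-1) \xrightarrow{x_1} R \to R/(x_1) \to 0$, regularity of $x_1$ on $A$ yields $\mathrm{Tor}_i^R(A, R/(x_1)) = 0$ for all $i \geq 1$, and hence $F^0_{\bu} := F_{\bu} \otimes_R R/(x_1)$ is a free resolution of $A \otimes_R R/(x_1) = A/(x_1 A)$. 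Reducing a matrix of polynomials modulo $x_1$ is the same as substituting $x_1 \mapsto 0$, which gives the explicit formula for $\phi^0_i$ in the statement. Minimality persists because the entries of $\phi_i$ lie in $\mathfrak{m} = (x_1, \ldots, x_n)$, so their reductions lie in $\mathfrak{m}^0 = (X_1, \ldots, X_{n-1})$.

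For (ii), with the resolution $F^0_{\bu}$ identified, the computation is a direct calculation from the definitions. Under the identification $X_\ell = x_{\ell+1}$ for $1 \leq \ell \leq n-1$, one has $\partial \phi^0_i / \partial X_\ell = (\partial \phi_i / \partial x_{\ell+1})\big|_{x_1=0}$. Substituting into the definition of the bracket $[c_1,\ldots,c_{n-2}]_{F^0_{\bu}}$ yields the bracket $[c_1+1,\ldots,c_{n-2}+1]_{F_{\bu}}$ evaluated at $x_1=0$. The complement $(c_1, \ldots, c_{n-2})$ defining $\Omega_k$ for $F^0_{\bu}$ is the complement of $\{k\}$ in $\{1, \ldots, n-1\}$, ordered so that the tuple beginning with $k$ is an even permutation; shifting indices by one, this matches the complement of $\{1, k+1\}$ in $\{1, \ldots, n\}$ with the orientation prescribed by the curve-case definition of the relevant entry of the $\Omega$-matrix. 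Symmetrising over the $(n-2)$-cycle $\sigma$ then produces the required equality of the $[[\cdot]]$-symbols, and hence of the $\Omega$-quadrics.

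The only mildly delicate point is the orientation bookkeeping in (ii): one must confirm that the even-permutation convention on the complement tuples in the points case matches that of the curve case under the shift $X_\ell = x_{\ell+1}$, so that the prescribed orderings agree (up to the sign already built into the $[[\cdot]]$ symbol). All other steps are routine once $F^0_{\bu}$ is identified with the stated reduction.
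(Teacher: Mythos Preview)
Your proof is correct and follows essentially the same approach as the paper. Both arguments establish (i) by computing $\mathrm{Tor}^R_i(A,R/x_1R)$ via the two-term resolution $0\to R\xrightarrow{x_1}R\to R/x_1R\to 0$ and using that $x_1$ is a nonzerodivisor on $A$; for (ii) the paper simply remarks that it follows from the description of the differentials $\phi^0_i$, which is exactly the direct unwinding of the bracket definitions that you carry out (with the orientation bookkeeping you flag being the only point requiring care).
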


\begin{proof}
	As $F$ is a free resolution of $A$, in the category of $R$-modules, by definition of the functor $\mathrm{Tor}$, the $i$th homology group of $F^0$ can be identified with $\mathrm{Tor}^{R}_{i}(A,R/x_1 R)$. The short exact sequence
	\[0\xrightarrow{} R \xrightarrow{\cdot x_1} R \xrightarrow{} R/x_1R \xrightarrow{} 0\] induces a long exact sequence
	\[
	\ldots\xrightarrow{}\mathrm{Tor}^{R}_{i}(A,R) \xrightarrow{\cdot x_1} \mathrm{Tor}^{R}_{i}(A,R) \xrightarrow{} \mathrm{Tor}^{R}_{i}(A,R/x_1R) \xrightarrow{}\mathrm{Tor}^{R}_{i-1}(A,R) \xrightarrow{}\ldots
	\]
	Computing $\mathrm{Tor}$ with respect to the second variable, we find that $\mathrm{Tor}^{R}_{i}(A,R)=0$ for $i >0$, and that for $i=0$, $\mathrm{Tor}^{R}_{0}(A,R)=A$. The long exact sequence then implies that $\mathrm{Tor}^{R}_{i}(A,R/x_1R)$ is zero for $i \geq 2$. 
	
	Next, note that $x_1$ is not a zero-divisor on $A$, as $C$ is a genus one normal curve, so in particular irreducible, and so if $x_1$ was a zero divisor then $C$ would be contained in the hyperplane $x_1=0$, which is impossible. Thus $\mathrm{Tor}^{R}_{0}(A,R) \xrightarrow{\cdot x_1} \mathrm{Tor}^{R}_{0}(A,R)$ is injective, with cokernel isomorphic to $A/x_1A$, and hence $\mathrm{Tor}^{R}_{1}(A,R/x_1 R)=0$ and $\mathrm{Tor}^{R}_{0}(A,R/x_1R)=A/x_1A$, concluding the proof of (i). Part (ii) then follows from the definition of differentials $\phi^{0}$.
\end{proof}

We regard the curve $C$ as a subvariety of the projective space $\mb{P}^{n-1}$, with coordinates $x_1,\ldots,x_{n}$, corresponding to a basis of the space of linear forms on $\mb{P}^{n-1}=\mb{P}(V)$. Consider the dual projective space $\mb{P}(V^*)$, with coordinates $u_1,\ldots,u_{n}$. To a point $(u_1:\ldots:u_{n}) \in \mb{P}(V^*)$, we associate a hyperplane $\{u_1x_1+\ldots+u_{n}x_{n}=0\} \subset \mb{P}(V)$.	

We now define the discriminant form associated to a resolution model $F_{\bu}$ of $C$, first as a function $D_{F_\bu}:K^n \xrightarrow{} K$. Given $(u_1,u_2,\ldots,u_{n}) \in K^n\setminus \{0\}$,  choose a $\gamma \in \mathrm{SL}_n(K) $ with $\gamma_{1,i}=u_i$ for $i=1,\ldots,n$. Note that $\gamma$ maps the hyperplane $\{u_1x_1+\ldots+u_{n}x_{n}=0\}$ to the hyperplane $\{x_1=0\}$. Let $G_{\bu}=\gamma^{-1} \cdot F_{\bu}$, and let $G^{0}_{\bu}$ be the resolution obtained by setting $x_1=0$ in $G_{\bu}$, as in Lemma \ref{hyperplane slice}. Then $G^{0}_{\bu}$ is a resolution model of a set of $n$ points in the hyperplane $\{x_1=0\} \cong \mb{P}^{n-2}$. Let $A$ be the $n$-dimensional $K$-algebra associated to $G^{0}_{\bu}$ by Theorem \ref{algebra constructed from n points}, together with the basis  $\alpha_0,\ldots,\alpha_{n-1}$. We define $D_{F_\bu}(u_1,\ldots,u_{n})$ to be the determinant of the matrix representing the trace pairing on $A$ with respect to the basis $\alpha_0,\ldots,\alpha_{n-1}$.

\begin{lemma} \label{dform well-defined}
	The function $D_{F_\bu} :K^{n} \xrightarrow{} K$ is well-defined, i.e. independent of the choice of $\gamma \in \mathrm{SL}_n(K)$. It depends only on the $\Omega$-matrix associated to $F_{\bu}$, and for all $\lambda \in K$ we have $D_{F_\bu}(\lambda u_1,\ldots,\lambda u_{n})=\lambda^{2n}D_{F_\bu}(\lambda u_1,\ldots,\lambda u_{n})$. We say that $D_{F_{\bu}}$ is the discriminant form associated to the model $F_{\bu}$.
\end{lemma}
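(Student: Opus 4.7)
My plan is to identify $D_{F_\bu}(u)$ as a discriminant of an intrinsic trace form. By Theorem \ref{algebra constructed from n points}, the basis $(1,\alpha_1,\ldots,\alpha_{n-1})$ of $A_U$ produced by the sliced resolution model has $\mathrm{tr}(\alpha_k)=0$ and $\mathrm{tr}(1)=n$; using the multiplication table, one computes $\mathrm{tr}(\alpha_i\alpha_j)=n\,c^0_{ij}$, so the trace form matrix is the block-diagonal matrix $\mathrm{diag}(n,\,nC^0)$ with $C^0:=(c^0_{ij})_{i,j\geq 1}$. Hence $D_{F_\bu}(u)=n^n\det C^0$, reducing the lemma to the corresponding statements about $\det C^0$.

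For the independence from $\gamma$: given two choices $\gamma_1,\gamma_2\in\mathrm{SL}_n(K)$ with first row $u$, their quotient $\sigma:=\gamma_2\gamma_1^{-1}$ satisfies $e_1^T\sigma=u^T\gamma_1^{-1}=e_1^T$, so it has block form $\sigma=\begin{pmatrix}1 & 0 \\ * & \sigma'\end{pmatrix}$ with $\sigma'\in\mathrm{SL}_{n-1}(K)$. Unwinding Proposition \ref{Omega quad change of coordinates} together with the slicing formula in Lemma \ref{hyperplane slice}(i) shows that the two sliced models $G^{(1),0}_\bu$ and $G^{(2),0}_\bu$ are related by the action of $\sigma'$ on the slice coordinates --- the shear contributed by the $*$-block only affects the sliced-out coordinate and drops out. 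Since $\det\sigma'=1$, Proposition \ref{Omega quad change of coordinates} gives $\Omega'=\sigma'\cdot\Omega$ with no determinantal twist, and the multiplication table then forces the basis to transform linearly by $\alpha'_i=\sum_j(\sigma')_{ji}\alpha_j$, whence $C'^0=(\sigma')^TC^0\sigma'$ and $\det C'^0=\det C^0$. The dependence only on the $\Omega$-matrix of $F_\bu$ is immediate from the chain Proposition \ref{Omega quad change of coordinates} $\to$ Lemma \ref{hyperplane slice}(ii) $\to$ Theorem \ref{algebra constructed from n points}, each step of which passes information only through the $\Omega$-matrix.

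For the homogeneity, take $\gamma_\lambda:=\mathrm{diag}(\lambda,1,\ldots,1,\lambda^{-1})\cdot\gamma\in\mathrm{SL}_n(K)$, which has first row $\lambda u$. The same style of calculation identifies $G^{(\lambda),0}_\bu$ with the image of $G^0_\bu$ under the $\mathrm{GL}_{n-1}$-action of $g:=\mathrm{diag}(1,\ldots,1,\lambda)$, so that $\det g=\lambda$ and, by Proposition \ref{Omega quad change of coordinates}, $\Omega^{(\lambda)}=\lambda\,g\cdot\Omega$. Tracking the effect on $C^0$ through the explicit formula for $c^0_{ij}$ from Theorem \ref{algebra constructed from n points} and the concurrent transformation of the $\alpha$-basis yields $\det C^{0,(\lambda)}=\lambda^{2n}\det C^0$, and hence $D_{F_\bu}(\lambda u)=\lambda^{2n}D_{F_\bu}(u)$.

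The main obstacle is the homogeneity bookkeeping: one must verify that the $\det(g)$-twist in Proposition \ref{Omega quad change of coordinates} combined with the standard $g$-action on $V^*\otimes S^2V$ conspire, through the quadratic-in-$\Omega$ formula for $c^0_{ij}$ in Theorem \ref{algebra constructed from n points} and the induced change of basis of the $\alpha_i$, to produce exactly the exponent $\lambda^{2n}$. This is a delicate but routine calculation with second partial derivatives under the coordinate change $y'=g^Ty$ on the slice; the analogous $\mathrm{SL}_{n-1}$-calculation in the independence step is easier because the $\det(g)$-twist is trivial there.
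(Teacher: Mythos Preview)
Your approach is essentially the same as the paper's. Both arguments hinge on comparing two choices of $\gamma$ via $\sigma=\gamma_2\gamma_1^{-1}$, observing its block form with lower-right block $\sigma'\in\mathrm{GL}_{n-1}$, and then using Proposition~\ref{Omega quad change of coordinates} together with the interpretation of $\sigma'$ as the change-of-basis matrix for the $\alpha_i$ to conclude that the trace-form determinant is preserved (when $\det\sigma'=1$) or scales appropriately (for the homogeneity claim). Your explicit identification of the trace matrix as $\mathrm{diag}(n,\,nC^0)$ is a harmless bookkeeping device the paper leaves implicit.

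One small correction in your homogeneity step: with $\gamma_\lambda=\mathrm{diag}(\lambda,1,\ldots,1,\lambda^{-1})\gamma$, the relevant block matrix is $\sigma_\lambda=\gamma_\lambda\gamma^{-1}=\mathrm{diag}(\lambda,1,\ldots,1,\lambda^{-1})$, whose lower-right $(n-1)\times(n-1)$ block is $\mathrm{diag}(1,\ldots,1,\lambda^{-1})$, not $\mathrm{diag}(1,\ldots,1,\lambda)$. This does not affect the argument, since what matters is that the block has determinant $\lambda^{\pm 1}$; the paper makes the same move abstractly, writing that $\det\beta^0=1/\lambda$ and splitting $\lambda^{2n}=\lambda^{2n-2}\cdot\lambda^2$ into the contribution from an overall rescaling of the $\Omega$-quadrics and the contribution from the $\mathrm{GL}_{n-1}$ change of basis. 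Your acknowledgment that this bookkeeping is ``delicate but routine'' is accurate and matches the level of detail in the paper itself.
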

\begin{proof}
	Suppose we are given $\gamma, \gamma' \in \mathrm{SL}_n(K)$ with $\gamma_{0i}=\gamma'_{0i}=u_i$ for $0 \leq i \leq n-1$. Let $X=\gamma(C) \cap \{x_1=0\}$ and $X'=\gamma'(C) \cap \{x_1=0\}$. Then $\beta=\gamma' \gamma^{-1}$  takes the set $X$ to $X'$, and preserves the hyperplane $\{x_1=0\}$. By construction, we have $\beta_{0,i}=0$ for $i>0$, and $\beta_{0,0}=1$. Let $G_{\bu}=\gamma^{(-1)} \cdot F_{\bu}$ and $G'_{\bu}={\gamma'}^{(-1)} \cdot F_{\bu}$, and let $G^{0}_{\bu}$ and ${G'}^{0}_{\bu}$ be resolution models of $X$ and $X'$ obtained by setting $x_0=0$ in $G_{\bu}$ and $G'_{\bu}$. Then ${G}_{\bu}^{0}=\beta^{0} \cdot {G'}_{\bu}^{0}$, where $\beta^{0} \in \mathrm{SL}_{n-1}(K)$ is obtained from $\beta$ by deleting the leftmost column, and the top row.
	
	Now let $A$ and $A'$ be the algebras associated to $X$ and $X'$, with bases $1=\alpha_0,\alpha_1,\ldots,\alpha_{n-1}$ and $1=\alpha'_0,\alpha'_1,\ldots,\alpha'_{n-1}$.  The matrix $\beta^{0}$ is the change-of-basis matrix relating these bases. Since $\mathrm{det}(\beta^{0})=1$ implies that the determinants of the matrices that represent the trace pairing for these two bases are equal, and  hence $D_{F_{\bu}}$ is well-defined.
	
	We now elaborate on how to compute $D_{F_{\bu}}$ from the matrix $\Omega_{F_{\bu}}$. By definition $D_{F_\bu}(u_1,\ldots,u_{n})$ as the determinant of an $(n-1)\times (n-1)$-matrix $n \cdot c^{0}$, where the entries of $c^{0}$ are quadratic polynomials in the coefficients of the $\Omega$-quadrics associated to the resolution model $G^{0}_{ij}$. Lemma \ref{hyperplane slice}(ii) expresses these coefficients in terms of $\Omega_{G_{\bu}}$, and by Proposition \ref{Omega quad change of coordinates}, $\Omega_{G_{\bu}}=\gamma^{-1} \cdot \Omega_{F_{\bu}}$.
		
	To show that $D_{F_{\bu}}$ is homogeneous, we extend the argument we used to prove that $D_{F_\bu}$ is well-defined. Let $\gamma,\gamma' \in \mathrm{SL}_n(K)$ be the matrices used to define the forms $D_{F_{\bu}}(u_1,\ldots,u_{n})$ and $D_{F_\bu}(\lambda u_1,\ldots,\lambda u_{n})$, let $\beta=\gamma'\gamma^{-1}$, and define $\beta^{0}$ as before. This time, $\mathrm{det}(\beta_0)=\frac{1}{\lambda}$. Now the result follows from Proposition \ref{Omega quad change of coordinates}---note that this time, $\beta^{0}$ is the change of basis relating the bases $\alpha_1,\ldots,\alpha_{n-1}$ and $\alpha'_1,\ldots,\alpha'_{n-1}$ after one of the bases is rescaled by $\lambda$. This rescaling contributes the factor $\lambda^{2n-2}$, and the determinant of $\beta^{0}$ contributes the remaining factor $\lambda^2$.
\end{proof}
In fact, we have a more precise statement.
\begin{lemma} \label{d is poly}
	 $D_{F_{\bu}}$ is a homogeneous polynomial of degree $2n$ in the variables $u_1,u_2,\ldots,u_{n}$, and its coefficients are homogeneous polynomials of degree $2n-2$ in the coefficients of the $\Omega$-matrix of $C$.
\end{lemma}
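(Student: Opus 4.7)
Plan: I would derive an explicit formula for $D_{F_\bu}(u)$ on each coordinate chart of $K^n\setminus\{0\}$ by tracing through the construction of Theorem \ref{algebra constructed from n points}, then glue using the well-definedness of Lemma \ref{dform well-defined} together with a codimension-two extension, and finally read off the bidegrees.

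On the open set $U_1=\{u_1\neq 0\}\subset K^n$, I would choose $\gamma(u)\in\mathrm{SL}_n(K)$ whose first row is $(u_1,\ldots,u_n)$, whose $(2,2)$-entry is $u_1^{-1}$, whose remaining diagonal entries are $1$, and whose other entries vanish; both $\gamma(u)$ and $\gamma(u)^{-1}$ then have entries in $K[u_1,\ldots,u_n,u_1^{-1}]$. By Proposition \ref{Omega quad change of coordinates}, the entries of $\Omega_{G_\bu}$ are elements of $K[u,u_1^{-1}]$ that are linear in the coefficients of $\Omega_{F_\bu}$. By Lemma \ref{hyperplane slice}(ii), the $\Omega$-quadrics $\Omega_k^0$ of $G^0_\bu$ are obtained from the $(1,k)$-entries of $\Omega_{G_\bu}$ by setting $x_1=0$, so they inherit this dependence. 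Substituting into the formulas of Theorem \ref{algebra constructed from n points}, the structure constants $c^0_{ij}$ are homogeneous of degree $2$ in the entries of the $\Omega_k^0$, and hence lie in $K[u,u_1^{-1}]$ and are homogeneous of degree $2$ in the coefficients of $\Omega_{F_\bu}$. Finally, $D_{F_\bu}(u)=\det(n\,c^0)$ is an $(n-1)\times(n-1)$-determinant, so it is homogeneous of degree $n-1$ in the entries $c^0_{ij}$. Combining, on $U_1$ the function $D_{F_\bu}(u)$ is a regular function, given by an element of $K[u,u_1^{-1}]$ that is homogeneous of degree $2(n-1)=2n-2$ in the coefficients of $\Omega_{F_\bu}$.

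Running the symmetric argument on each $U_i=\{u_i\neq 0\}$ shows that $D_{F_\bu}$ is regular on every $U_i$. By Lemma \ref{dform well-defined}, $D_{F_\bu}$ is a single well-defined function on $K^n\setminus\{0\}=\bigcup_{i=1}^n U_i$, hence regular there. Since $n\geq 3$, the origin has codimension at least two in $\mathbb{A}^n$, and normality of $\mathbb{A}^n$ implies that $D_{F_\bu}$ extends uniquely to a regular function on $\mathbb{A}^n$, i.e.\ a polynomial in $u_1,\ldots,u_n$. The homogeneity relation $D_{F_\bu}(\lambda u)=\lambda^{2n}D_{F_\bu}(u)$ of Lemma \ref{dform well-defined} forces this polynomial to be homogeneous of degree $2n$, while the bidegree in the coefficients of $\Omega_{F_\bu}$ computed above is preserved by the extension, giving the claimed degree $2n-2$.

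The only real subtlety is that the local formula on $U_1$ carries an apparent pole along $\{u_1=0\}$ coming from the $\mathrm{SL}_n$-normalisation of $\gamma$; it is the combination of well-definedness from Lemma \ref{dform well-defined} with the codimension-two extension from $\mathbb{A}^n\setminus\{0\}$ to $\mathbb{A}^n$ that forces these poles to cancel and produces a genuine polynomial. An alternative to the Hartogs-type argument, if a more formula-driven proof is preferred, is to work universally in $\mathbb{Z}[u_1,\ldots,u_n,(c_{F_\bu}))]$ with $\gamma$ treated as an indeterminate $\mathrm{SL}_n$-matrix and use the independence of $D_{F_\bu}$ from the auxiliary entries to descend the rational expression to a polynomial one.
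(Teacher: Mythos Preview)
Your proof is correct and follows essentially the same approach as the paper: choose an explicit $\gamma$ with first row $(u_1,\ldots,u_n)$ on the chart $u_1\neq 0$, observe that $D_{F_\bu}$ is then a rational function with denominator a power of $u_1$, and use symmetry in the index together with well-definedness (Lemma~\ref{dform well-defined}) to conclude it is a polynomial; the degree claims follow exactly as you trace them.

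The only stylistic difference is in how you pass from ``regular on each $U_i$'' to ``polynomial'': you invoke normality of $\mathbb{A}^n$ and codimension-two extension, whereas the paper simply observes that a rational function whose denominator is simultaneously a power of each of the coprime elements $u_1,\ldots,u_n$ must have trivial denominator, i.e.\ $\bigcap_i K[u_1,\ldots,u_n,u_i^{-1}]=K[u_1,\ldots,u_n]$. These are equivalent and equally valid; the paper's phrasing is marginally more elementary, yours is marginally more geometric.
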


\begin{proof}
Observe that if $u_1$ is non-zero, we can take $\gamma$ to be the matrix
	\[
	\begin{pmatrix}
		u_1 & 0 & 0&0&\ldots&0 &0\\
		u_2 & 1/u_1 &0&0& \ldots&0 &0\\
		u_3 &0 & 1 &0 &\ldots&0& 0\\
		\vdots&\vdots&\vdots&\vdots&\ddots&\vdots&\vdots \\
		u_{n}&0 &0 &0&\ldots&0 &1
	\end{pmatrix}
	\]
	Thus $D_{F_{\bu}}(u_1,\ldots,u_{n})$ is a rational function in $u_1,\ldots,u_{n}$, with denominator a power of $u_1$. But the choice of the hyperplane $x_1=0$ was arbitrary. An argument  similar to the one we used to show that $D$ is well-defined shows that defining $D_{F_{\bu}}$  by requiring that the $i$-th column of $\gamma$ is $(u_1,\ldots,u_{n})^{T}$ gives the same  function $D_{F_{\bu}} : K^{n} \xrightarrow{} K$. Thus, arguing by symmetry, we conclude that $D$ is a polynomial, and by Lemma \ref{dform well-defined}, it must be a homogeneous polynomial of degree $2n$.  Going through the description of the various group actions involved, we see that its coefficients are homogeneous forms of degree $2n-2$ in the coefficients of the entries of $\Omega_{F_{\bu}}$.
\end{proof}

\begin{lemma} \label{discriminant form change of coord lemma}
	 Suppose we are given two resolution models $F_{\bu}$ and $F'_{\bu}$, of curves $C \subset \mb{P}^{n-1}$ and $C'\subset \mb{P}^{n-1}$ respectively, with $g \cdot F_{\bu}=F'_{\bu} $ for some  $g \in \mathrm{SL}_n(K)$. Let $x'_j=\sum^{n}_{i=1} g_{ij} x_{i}$, and let $u'_{j}=\sum^{n}_{i=1} g^{T}_{ij} u_i$. We then have \[D_{F_{\bu}}(u'_1,\ldots,u'_{n})=D_{F'_{\bu}}(u_1,\ldots,u_{n}).\]
\end{lemma}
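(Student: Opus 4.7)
The plan is to reduce both sides of the claimed identity to the same underlying trace-form computation, exploiting the intrinsic nature of the discriminant form established in Lemma~\ref{dform well-defined} together with the composition law of the $\mathcal{G}_n$-action derived from $\phi'_r(x)=\phi_r(x')$ with $x'_j=\sum_i g_{ij}x_i$.

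To evaluate $D_{F'_\bu}(u)$, I would fix $\gamma'\in \mathrm{SL}_n(K)$ with first row $(u_1,\dots,u_n)$ and form $\gamma'^{-1}\cdot F'_\bu$. Using the composition property for the action (a direct consequence of the substitution formula for $g\cdot F_\bu$), this may be rewritten as $(\gamma'^{-1}g)\cdot F_\bu$, and equivalently as $\widetilde\gamma^{-1}\cdot F_\bu$ for $\widetilde\gamma := (\gamma'^{-1}g)^{-1} \in \mathrm{SL}_n(K)$. I would then show, by writing out the entries of the relevant matrix products, that $\widetilde\gamma$ has first row equal to $(u'_1,\dots,u'_n)$ as defined in the lemma; the transpose in the definition $u'_j=\sum_i g^T_{ij}u_i$ is precisely what is needed to cancel the transpose in the substitution $x'=g^Tx$ when computing the first row of $\widetilde\gamma$.

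With $\widetilde\gamma$ so constructed, $\widetilde\gamma^{-1}\cdot F_\bu$ is an admissible auxiliary resolution for evaluating $D_{F_\bu}(u')$ in the sense of the construction preceding Lemma~\ref{dform well-defined}. Applying Lemma~\ref{hyperplane slice} to set $x_1=0$ yields a sliced resolution model of $n$ points in $\{x_1=0\}\cong \mathbb{P}^{n-2}$, from which Theorem~\ref{algebra constructed from n points} produces an $n$-dimensional commutative algebra $A$ together with the canonical basis $1,\alpha_1,\dots,\alpha_{n-1}$. Since $\gamma'^{-1}\cdot F'_\bu = \widetilde\gamma^{-1}\cdot F_\bu$, the two computations produce the \emph{same} algebra and the \emph{same} basis, and consequently the determinants of their trace pairings are equal. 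This gives $D_{F_\bu}(u')=D_{F'_\bu}(u)$. In case the chosen $\widetilde\gamma$ does not exactly coincide with the matrix prescribed by a particular choice in the definition of $D_{F_\bu}(u')$, one invokes the well-definedness statement of Lemma~\ref{dform well-defined} itself to compare the two outcomes through an $\mathrm{SL}_{n-1}$-change of coordinates, which preserves the trace-form determinant.

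The main technical obstacle is the matrix bookkeeping in the second paragraph, namely verifying that the first row of $\widetilde\gamma=g^{-1}\gamma'$ agrees with $u'$ as specified in the lemma. This amounts to a careful pairing of the two transpose conventions present in the setup (one in the action definition, one in the definition of $u'$) and is otherwise routine. Once this identification is confirmed, the rest of the argument is formal: the composition law for the $\mathcal{G}_n$-action on resolution models together with the intrinsic character of $D$ established in Lemma~\ref{dform well-defined} yields the desired equality.
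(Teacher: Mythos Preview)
Your approach is correct and is essentially a direct unwinding of the definition, whereas the paper's proof is a one-liner: it simply cites Proposition~\ref{Omega quad change of coordinates} (the transformation law for the $\Omega$-matrix) together with the statement in Lemma~\ref{dform well-defined} that $D_{F_\bullet}$ depends only on $\Omega_{F_\bullet}$. Since the construction $\Omega\mapsto D$ is built out of the $\mathrm{SL}_n$-equivariant operations described in Lemma~\ref{hyperplane slice} and Theorem~\ref{algebra constructed from n points}, the covariance of $\Omega$ immediately yields the covariance of $D$. Your argument bypasses $\Omega$ altogether and works directly with the $\mathrm{GL}_n$-action on resolution models, which is a perfectly good alternative and arguably more transparent.

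One point deserves more care than you give it. You assert that the first row of $\widetilde\gamma=g^{-1}\gamma'$ is $u'$. If one literally takes the paper's textual convention that $\gamma$ has \emph{first row} equal to $u$, then the first row of $g^{-1}\gamma'$ is $(e_1^T g^{-1})\gamma'$, which involves \emph{all} rows of $\gamma'$, not just $u$; your claim would then be false as stated, and the hedge via Lemma~\ref{dform well-defined} does not rescue it (that lemma only compares choices of $\gamma$ with the \emph{same} first row). In fact, the explicit matrix displayed in the proof of Lemma~\ref{d is poly} shows $u$ as the first \emph{column} of $\gamma$, and with that convention the first column of $\widetilde\gamma=g^{-1}\gamma'$ is exactly $g^{-1}u$, which is the dual transformation law your argument needs. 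So your strategy works cleanly once the column convention is adopted, but you should carry out this computation explicitly rather than asserting that ``the transposes cancel.''
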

\begin{proof}
Follows immediately from Proposition \ref{Omega quad change of coordinates}. Note that the definition of $u_j'$ comes from identifying $u_1,\ldots,u_{n}$ with the dual basis to $x_1,\ldots,x_{n}$, and considering the action of the group $\mathrm{SL}_n$ on the dual of the standard representation. 
\end{proof}

\begin{lemma} \label{Z-order}
	Suppose $K=\mb{Q}$ and the model $F_{\bullet}$ is $\mb{Z}$-integral. Then $D_{F_\bu} \in \mb{Z}[u_1,\ldots,u_{n}]$, and for any $(u_1,\ldots,u_{n}) \in \mb{Z}^{n}$, where the $u_i$ are not all zero, $D_{F_{\bu}}(u_1,\ldots,u_{n})$ is the discriminant of an order in the algebra $A$. 
\end{lemma}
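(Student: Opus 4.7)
The integrality of $D_{F_\bu}$ is immediate from Lemma \ref{d is poly}: its coefficients are integer polynomials in the entries of $\Omega_{F_\bu}$, which are themselves integer polynomials in the coefficients of the differentials $\phi_r$ of $F_\bu$, and the latter are integers by $\mb{Z}$-integrality of $F_\bu$.

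For the second assertion, the plan is to first handle the primitive case $\gcd(u_1,\ldots,u_n)=1$ and then reduce the general case to it. In the primitive case, we can extend $(u_1,\ldots,u_n)$ to a $\mb{Z}$-basis of $\mb{Z}^n$, producing $\gamma\in \mathrm{SL}_n(\mb{Z})$ whose first column is $(u_1,\ldots,u_n)^T$. Then $G_\bu=\gamma^{-1}\cdot F_\bu$ remains $\mb{Z}$-integral, and by Lemma \ref{hyperplane slice} the restriction $G^0_\bu$ obtained by setting $x_1=0$ is a $\mb{Z}$-integral resolution model of the hyperplane section $X=C\cap U$, whose $\Omega$-quadrics have integer coefficients. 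Theorem \ref{algebra constructed from n points} then yields a $\mb{Z}$-basis $1,\alpha_1,\ldots,\alpha_{n-1}$ of $A=A_U$ with integer multiplication constants $c^0_{ij}, c^k_{ij}$, so $\mc{O}=\mb{Z}+\sum_i \mb{Z}\alpha_i$ is an order in $A$; unwinding the definition of the discriminant form in Section \ref{discriminant form section}, the determinant of the trace form of $\mc{O}$ in this basis is precisely $D_{F_\bu}(u_1,\ldots,u_n)$.

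In the general case, write $(u_1,\ldots,u_n)=d(u'_1,\ldots,u'_n)$ with $d=\gcd(u_i)$, apply the primitive case to obtain an order $\mc{O}'\subset A$ with $\mathrm{disc}(\mc{O}')=\Delta':=D_{F_\bu}(u'_1,\ldots,u'_n)$, and invoke the homogeneity of $D_{F_\bu}$ from Lemma \ref{dform well-defined} to get $D_{F_\bu}(u_1,\ldots,u_n)=d^{2n}\Delta'$. It then suffices to exhibit a sub-order of $\mc{O}'$ of index $d^n$. With $1,\alpha_1,\ldots,\alpha_{n-1}$ the basis of $\mc{O}'$ above, I will take
\[
\mc{O} = \mb{Z} + \mb{Z}\,d^2\alpha_1 + \mb{Z}\,d\alpha_2 + \cdots + \mb{Z}\,d\alpha_{n-1},
\]
which has index $d^n$ in $\mc{O}'$ via the diagonal change of basis $\mathrm{diag}(1,d^2,d,\ldots,d)$ of determinant $d^n$, so $\mathrm{disc}(\mc{O})=d^{2n}\Delta'=D_{F_\bu}(u_1,\ldots,u_n)$.

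The one nonroutine step is verifying that $\mc{O}$ is closed under multiplication. This is a case-by-case computation over the three product types $(d^2\alpha_1)^2$, $(d^2\alpha_1)(d\alpha_j)$ and $(d\alpha_i)(d\alpha_j)$ (with $i,j\geq 2$), using the integer multiplication constants of $\mc{O}'$: in each case the overall factor $d^{a_i+a_j}$ on the left is large enough to absorb the single factor $d^{-a_k}$ needed to rewrite each $\alpha_k$ appearing on the right in the scaled basis. This check is the main (but elementary) obstacle; the rest is bookkeeping built on the machinery of Sections \ref{Omega quad section}--\ref{discriminant form section}.
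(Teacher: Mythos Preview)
Your argument is correct and matches the paper's for the core step: in the primitive case you pick $\gamma\in\mathrm{SL}_n(\mb{Z})$ (the paper phrases this as ``$\mathrm{SL}_n(\mb{Z})$ acts transitively on $\mb{P}^{n-1}(\mb{Q})$''), so that $G^0_\bu$ is $\mb{Z}$-integral and the structure constants from Theorem~\ref{algebra constructed from n points} define an order whose discriminant is $D_{F_\bu}(u)$. Two small points: the paper's convention is that the first \emph{row} of $\gamma$ equals $(u_1,\ldots,u_n)$, not the first column; and Lemma~\ref{d is poly} as stated only asserts that the coefficients are homogeneous polynomials in the entries of $\Omega$, not that these polynomials have integer coefficients --- the integrality really comes from the explicit formulas for $c^0_{ij}$ and $c^k_{ij}$ in Theorem~\ref{algebra constructed from n points}, which visibly have integer coefficients.

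Where you go beyond the paper is the non-primitive case. The paper simply writes ``we may assume that the integers $u_1,\ldots,u_n$ are coprime'' and leaves it there, whereas you actually construct a sub-order of index $d^n$ realising $d^{2n}\Delta'$ as a discriminant. Your choice $\mc{O}=\mb{Z}+\mb{Z}d^2\alpha_1+\mb{Z}d\alpha_2+\cdots+\mb{Z}d\alpha_{n-1}$ does work: in the only delicate case $(d\alpha_i)(d\alpha_j)$ with $i,j\geq 2$, the $\alpha_1$-component is $d^2c^1_{ij}\alpha_1=c^1_{ij}\cdot(d^2\alpha_1)\in\mc{O}$, and the other components are even easier. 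This is a clean way to fill what is, strictly speaking, a gap in the paper's proof (though for the application in Theorem~\ref{Main theorem I} one could alternatively just replace the Minkowski lattice point by its primitive part).
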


\begin{proof}
We may assume that the integers $u_1,\ldots,u_{n}$ are coprime. Then, in the definition of the discriminant $D_{F_{\bu}} (u_1,\ldots,u_{n})$ we may take $\gamma$ to be an element of $\mr{SL}_n(\mb{Z})$---this follows from the standard fact that $\mathrm{SL}_n(\mb{Z})$ acts transitively on $\mb{P}^{n-1}(\mb{Q})$, see for example Lemma 3.13 of \cite{minred234}. Then $G^{0}_{\bu}$ is an integral resolution model, and so the structure constants that define the algebra $A$ also define an order $B$ in $A$, of discriminant $D_{F_{\bullet}}(u_1,\ldots,u_{n})$.	
\end{proof}

\begin{example}
	Let us take $n=3$, and let $C \subset \mb{P}^2$ be a plane cubic curve. A resolution model of $C$ is a ternary cubic $F(x_0,x_1,x_2)$ defining $F$. Let $\gamma$ be as defined in Lemma \ref{d is poly}. We have
	\[
	\gamma^{-1}=\begin{pmatrix}
		1/u_1 &0 &0 \\
		-u_2 & u_1 & 0\\
		-u_3/u_1 & 0 &1
	\end{pmatrix}
	\]
	 Set $G(x_0,x_1,x_2)=\gamma^{-1} \cdot F=F(x_1/u_1-u_2 x_2-u_3/u_1 x_3,u_1x_2,x_3)$. Next, we set $x_1=0$ in $G$, and obtain $G^{0}(x_2,x_3)=G(0,x_2,x_3)=F(-u_2x_2-u_3/u_1 x_3,x_3)$, a binary cubic form in $x_2$ and $x_3$, with coefficients rational functions of $u_1,u_2$ and $u_3$. Then we see that  $D_{F_{\bu}}(u_1,u_2,u_3)$ is the discriminant of $G^{0}$. Recall, as mentioned in the Chaper \ref{intro}, that the discriminant of a form  \[ax^3+bx^2y+cxy^2+dy^3\] is given by \[
	b^2c^2-4ac^3-4b^3d-27a^2d^2+18abcd.\] 
	By Lemma \ref{d is poly} we see that $D_{F_{\bu}}(u_1,u_2,u_3)$ is a degree 6 polynomial in $u_1,u_2$ and $u_3$---even though the coefficients of the binary form $G^{0}$ are rational functions, with denominators powers of $u_1$. There is no difficulty in using computer algebra software to compute formulas for the coefficients of $D_{F_{\bu}}$. These expressions are fairly involved, and so we omit them here.
\end{example}

\section{The unprojection construction} \label{unprojection chapter}

In this Section we prove the following theorem.
\begin{theorem}[Minimization theorem] \label{global minimization theorem}
Let $C \subset \mb{P}^{n-1}$ be a genus one normal curve of degree $n$, defined over the field $
\mb{Q}$. Suppose that the set $C(\mb{Q}_p)$ is non-empty for every prime $p$. Let $E$ be the Jacobian of $E$, defined by a minimal Weierstrass equation $W$ 
	\[
	y^2 + a_1xy + a_3y = x^3 + a_2x^2 + a_4x + a_6.
	\]
	Then one can choose coordinates on $\mb{P}^{n-1}$, and then a $\mb{Z}$-integral genus one model $F_{\bu}$ of $C$ with the associated invariant differential $\omega$, such that, for every prime $p$, there exists an isomorphism $\gamma : E \xrightarrow{} C$, defined over $\mb{Q}_p$, with 
	\[
	\omega=\gamma^{*}\left( \frac{dx}{2y + a_1x + a_3} \right).
	\]
 We say such a genus one model is a \textit{minimal} model of $C$. Furthermore, the invariants $c_4(F_{\bu})$ and $c_6(F_{\bu})$, as defined in Section 2, are equal to $c_4(W)$ and $c_6(W)$, where $c_4(W)$ and $c_6(W)$ are invariants of the equation $W$, as defined in Chapter III of \cite{silverman2009arithmetic}.
\end{theorem}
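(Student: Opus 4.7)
The plan is to reduce the global minimization to a local problem at each prime and then patch the local data using strong approximation on $\mc{G}_n$. First I would fix any $\Z$-integral resolution model $F_{\bu}$ of $C$, which exists by clearing denominators in a $\Q$-rational model supplied by Theorem \ref{min res theorem}. The invariants $c_4(F_{\bu}), c_6(F_{\bu})$ of this starting model differ from $c_4(W), c_6(W)$ by fourth and sixth powers of some rational scalar $\lambda$, so the task reduces to eliminating these discrepancies prime-by-prime.

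Second, I would invoke a local minimization statement (Theorem \ref{local minimization theorem}, which is established separately): for every prime $p$ there exists $g_p \in \mc{G}_n(\Q_p)$ such that $g_p \cdot F_{\bu}$ is $\Z_p$-integral with $c_k(g_p \cdot F_{\bu}) = c_k(W)$ for $k=4,6$. The local statement is proved by induction on $n$ via the unprojection construction developed in the next section: unprojection manufactures resolution models of degree $n$ from smaller data, allowing one to bootstrap the local minimization problem down to degrees $n \le 5$, which are handled by \cite{minred234} and \cite{minred5}. The local solubility hypothesis is precisely what is needed to start the induction at each $p$.

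Third, I would patch the local minimizations globally. Because the discriminant of the initial model is a nonzero integer, the model is already $p$-minimal at all but finitely many primes, so outside a finite set $S$ of bad primes one may take $g_p \in \mc{G}_n(\Z_p)$. At the primes of $S$, one applies strong approximation for $\mc{G}_n = \mc{G}_n^{res} \times \mathrm{GL}_n$: the $\mathrm{GL}$-factors do not themselves satisfy strong approximation, but the rational characters of $\mc{G}_n$ record precisely how $c_4$ and $c_6$ transform under scaling, so the scalar discrepancies can be stripped off and the remaining adjustment carried out inside the $\mathrm{SL}$-factors, where strong approximation (Kneser--Platonov) applies. This yields a single $g \in \mc{G}_n(\Q)$ which $p$-adically approximates each $g_p$ closely enough that $g \cdot F_{\bu}$ is $\Z_p$-integral with the minimal $p$-adic invariants at every prime; hence $g \cdot F_{\bu}$ is $\Z$-integral and satisfies $c_k(g \cdot F_{\bu}) = c_k(W)$ for $k=4,6$.

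Finally, the compatibility $\omega = \gamma^{*}\bigl(dx/(2y + a_1 x + a_3)\bigr)$ at each prime $p$ follows from the second part of Theorem \ref{formula for Jacobian}: that theorem produces a $\overline{\Q}_p$-isomorphism $\gamma : C \to E$ pulling back the Weierstrass invariant differential (up to the universal change of variables that converts the short form $y^2 = x^3 - 27 c_4 x + 54 c_6$ into the general minimal form); the everywhere local solubility of $C$ guarantees that such a $\gamma$ descends to $\Q_p$ after a suitable translation by a $\Q_p$-point of $C$. The technical heart of the argument, and therefore the main obstacle, is the local minimization step, whose inductive proof via unprojection requires delicate manipulation of minimal free resolutions over $\Z_p$ and the verification that the $\Omega$-quadrics transform compatibly under each inductive step.
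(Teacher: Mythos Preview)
Your proposal is correct and follows essentially the same strategy as the paper: start from an arbitrary $\Z$-integral model, invoke the local minimization theorem (proved by the unprojection induction) at each bad prime, and patch the local transformations by strong approximation. Two minor discrepancies in execution are worth noting. First, the paper's unprojection induction runs all the way down to $n=3$ (with the base case supplied directly by \cite{minred234}) rather than stopping at $n\le 5$; your variant would also work, but the paper does not need \cite{minred5} as an input. Second, rather than invoking abstract Kneser--Platonov strong approximation for the $\mathrm{SL}$-factors, the paper carries out the patching by hand: it first rescales the local transformations $g^p$ so that each factor lies in $\mathrm{Mat}(\Z_p)$ with determinant equal to a fixed global integer, and then applies elementary approximation lemmas for $\mathrm{SL}_n(\Z)\to\prod_{p\in S}\mathrm{SL}_n(\Z_p)$ together with a Smith-normal-form argument. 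Finally, in the paper the differential compatibility at each $p$ is the \emph{content} of the local minimization theorem itself (built into the unprojection step via Proposition~\ref{quadrics and unprojection}), and the equality $c_k(F_\bullet)=c_k(W)$ is then read off from Theorem~\ref{formula for Jacobian}; your last paragraph reverses this order, but the logic closes either way.
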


The theorem is proved by an induction on $n$. The base case of the induction, $n=3$, is Theorem 1.1 of \cite{minred234}. Our induction step can be viewed as a generalization of the method used to deduce cases $n=4$ and $n=5$ from the case $n=3$ in \cite{minred234} and \cite{minred5}. 

 Our main tool will be the technique of \textit{unprojection}, introduced in \cite{kustin1983constructing}. Unprojection was used to obtain minimal free resolutions of genus one normal curves by Fisher in \cite{fisher2006higher}, and our results can be viewed as refinements of his. Reid and Papadakis have also studied unprojection extensively, and they give a more intrinsic point of view in \cite{papadakis2000kustin}.

The first section consists of an informal discussion motivating and explaining our results. We then prove a technical result in homological algebra that will serve as the induction step in our proofs. We then prove Lemma \ref{first lemma differential}, i.e. that the differential form on a genus one normal curve $C$ constructed from its $\Omega$-matrix is regular.  We then prove, assuming Theorem \ref{formula for Jacobian}, the local version of the minimization theorem and from it deduce the global version.
\begin{remark}
 Note that the proof of the formula for the Jacobian, i.e. Theorem \ref{formula for Jacobian}, which we give in Section \ref{analytic chapter} uses Lemma \ref{first lemma differential}. Thus, to clarify, formally the proof of Theorem \ref{global minimization theorem}  proceeds as follows: in this Section we prove, using unprojection, Lemma \ref{first lemma differential}. In Section \ref{analytic chapter} we deduce Theorem \ref{formula for Jacobian} from Lemma \ref{first lemma differential} and Lemma \ref{analytic set pts}, via a complex analytic calculation. Lemma \ref{analytic set pts} does not depend on the methods of unprojection, and is proved using only the results of Section \ref{resolution model section}. Then using Theorem \ref{formula for Jacobian} and unprojection we deduce Theorem \ref{local minimization theorem}, and from this result we deduce Theorem \ref{global minimization theorem}. 
\end{remark}

\subsection{The mapping cone construction} \label{mapping cone section}
In this section we give an algebraic construction that will provide the induction step for our later arguments. Our work is a refinement of the results of Section 9 of \cite{fisher2006higher}, and our notation and exposition will follow \cite{fisher2006higher} and \cite{kustin1983constructing} closely. 
 
Let $R=S[x_1,\ldots,x_{m-1}]$ be a graded ring. Throughout we consider graded $R$-modules. 

\subsection{Mapping cones.} We recall some basic facts about mapping cones, as defined in Section 1.5 of \cite{weibel1995introduction}.
\begin{definition} \label{mapping cone definition}
	Let $\alpha : A_{\bullet} \xrightarrow[]{} B_{\bullet}$ be a map of chain complexes $(A_{\bullet},d_A)$ and $(B_{\bullet},d_B)$. The mapping cone $M(\alpha)$ of $\alpha$ is the chain complex with $M(\alpha)_n=B_{n}\oplus A_{n-1}$ and the differential $d^n$ represented by the matrix
	\[
	d^n=
	\begin{pmatrix}
	d^{n}_B & -\alpha_{n-1}\\
	0 & -d^{n-1}_{A} \\
	\end{pmatrix},
	\]
	i.e. $d^n(a_{n-1},b_{n})=(-d^{n-1}_A(a_{n-1}),-\alpha_{n-1}(a_{n-1})+d^{n}_B(b_{n}))$.
\end{definition}	

\begin{lemma} \label{mapping cone homotopy lemma}
Suppose that $\alpha,\beta : A_{\bullet} \xrightarrow{} B_{\bullet}$ are chain homotopic morphisms of chain complexes. The mapping cones $M(\alpha)$ and $M(\beta)$ are isomorphic, with an isomorphism determined by a choice of homotopy between $\alpha$ and $\beta$.
\end{lemma}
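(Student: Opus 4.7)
The plan is to write down the isomorphism explicitly using the chain homotopy as the off-diagonal entry in an upper triangular matrix, and then verify directly from the formula in Definition~\ref{mapping cone definition} that this defines a chain map. The inverse will be given by the same formula with the sign of the homotopy flipped, so no separate invertibility argument is needed.

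In more detail, let $h : A_\bullet \to B_{\bullet+1}$ be a chain homotopy realising $\alpha \simeq \beta$, so that $\alpha_{n-1} - \beta_{n-1} = d^{n}_B h_{n-1} + h_{n-2} d^{n-1}_A$ (the exact sign will depend on the convention; the computation below dictates which choice is forced by Definition~\ref{mapping cone definition}). Define $\phi_n : M(\alpha)_n \to M(\beta)_n$ on $B_n \oplus A_{n-1}$ by the block matrix
\[
\phi_n = \begin{pmatrix} \mathrm{id}_{B_n} & \epsilon\, h_{n-1} \\ 0 & \mathrm{id}_{A_{n-1}} \end{pmatrix},
\]
for an appropriate sign $\epsilon \in \{\pm 1\}$.

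First I would verify that $\phi$ is a chain map. Writing out the two block products $d^n_\beta \phi_n$ and $\phi_{n-1} d^n_\alpha$ using the matrix description of the cone differential in Definition~\ref{mapping cone definition}, the diagonal blocks agree automatically, while equating the upper right blocks yields exactly the chain-homotopy identity relating $\alpha_{n-1}$ and $\beta_{n-1}$, provided $\epsilon$ is chosen compatibly with the sign conventions in the definition. This is a direct bookkeeping calculation and is the only place where the homotopy is used.

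Second, I would observe that $\phi$ is upper triangular with identity on the diagonal, so it is visibly invertible with inverse
\[
\phi_n^{-1} = \begin{pmatrix} \mathrm{id}_{B_n} & -\epsilon\, h_{n-1} \\ 0 & \mathrm{id}_{A_{n-1}} \end{pmatrix},
\]
which is itself a chain map by the same computation applied to $-h$ (equivalently by functoriality of inverses). Hence $\phi$ is an isomorphism of chain complexes, and it is manifestly determined by the choice of $h$. There is no real obstacle: the only mildly delicate point is matching signs against the author's nonstandard sign convention in Definition~\ref{mapping cone definition}, which just fixes $\epsilon$ once and for all.
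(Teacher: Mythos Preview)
Your proposal is correct and follows essentially the same approach as the paper: define the isomorphism by an upper-triangular block matrix with identities on the diagonal and the homotopy in the off-diagonal slot, verify the chain-map condition by a block-matrix computation that reduces to the homotopy identity, and invert by flipping the sign of $h$. The paper fixes $\epsilon = +1$ with the convention $\beta_n - \alpha_n = h_{n-1}d^n_A + d^{n+1}_B h_n$, but otherwise the argument is identical.
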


\begin{proof}
 Let $h_i : A_i : \xrightarrow{} B_{i+1}$ be a chain homotopy, with  $\beta_n-\alpha_n=h_{n-1}d^{n}_A + d^{n+1}_{B}h_n$. We define maps $u_n : M(\alpha)_n \xrightarrow{} M(\beta)_n$ by
 \[
  u_n=\begin{pmatrix} id_{B_n} & h_{n-1}\\ 0 & id_{A_{n-1}}
 	\end{pmatrix}
 \]
 We verify that the $u_n$ define a chain map:
 \[
 d^{n}_{M(\beta)}u_n=\begin{pmatrix}
 	d^{n}_B & -\beta_{n-1}\\
 	0 & -d^{n-1}_{A} \\
 \end{pmatrix}
\begin{pmatrix} id_{B_n} & h_{n-1}\\ 0 & id_{A_{n-1}}
\end{pmatrix}=\begin{pmatrix}
d^{n}_B & d^{n}_{B}h_{n-1}-\beta_{n-1}\\
0 & -d^{n-1}_{A} \\
\end{pmatrix},
 \]
which is equal to
 \[
 u_{n-1}d^{n}_{M(\alpha)}=\begin{pmatrix} id_{B_{n-1}} & h_{n-2}\\ 0 & id_{A_{n-2}}
 \end{pmatrix}\begin{pmatrix}
 	d^{n}_B & -\alpha_{n-1}\\
 	0 & -d^{n-1}_{A} \\
 \end{pmatrix}=\begin{pmatrix}
 d^{n}_B & -\alpha_{n-1}-h_{n-2}d^{n-1}_{A}\\
 0 & -d^{n-1}_{A} \\
\end{pmatrix}.
 \]
An inverse is provided by the maps $v_n : M(\beta)_n \xrightarrow{} M(\alpha)_n$
 \[
v_n=\begin{pmatrix} id_{B_n} & -h_{n-1}\\ 0 & id_{A_{n-1}}
\end{pmatrix}.
\]
\end{proof}

Now suppose we are given the following data. 
\begin{definition}[Unprojection data] \label{unprojection data}
	\leavevmode
\begin{enumerate}[label=(\roman*)]
	\item  $(F_{\bullet},\phi)$ and $(G_{\bullet},\psi)$ be chain complexes of graded $R$-modules, of length $n-2$ and $n-1$ respectively, with $F_0=G_0=R$.
	\item Chain maps $\alpha : G_{\bullet}(-1) \xrightarrow{} F_{\bullet}[-1] $ and $\beta : F_{\bullet}(-1)  \xrightarrow{} G_{\bullet}(-1)$, with $\beta_0 : F_0 \xrightarrow{} G_0$ equal to the identity map.
	\item A null homotopy $\gamma$ for the map $\alpha \circ \beta$. In other words, maps  $\gamma_{i+1}: F_{i} \xrightarrow{} F_{i}$ that satisfy the identity
	\[
		\alpha_i\beta_{i}=\phi_i \gamma_{i+1}+\gamma_{i} \phi_{i},
	\]
	for all $i$. We furthermore require that $\gamma_1=0$.
\end{enumerate}
\end{definition}
We summarise this information in the following commutative diagram.
\begin{equation} \label{big diagram}
\begin{tikzcd}
 0 \ar[r] \ar[d] & F_{n-2}(-1)\ar[r, "\phi_{n-2}"] \ar[ddl,"\gamma_{n-2}", pos=0.7] \ar[d,"\beta_{n-2}"] & \ldots \ar[r, "\phi_{3}"] & F_{2}(-1) \ar[r, "\phi_{2}"] \ar[d,"\beta_{2}"] & F_{1}(-1) \ar[r, "\phi_{1}"] \ar[d,"\beta_1"] \ar[ddl, "\gamma_2", pos=0.7] & F_{0}(-1) \ar[d,"\beta_0"] \ar[ddl, "\gamma_1", pos=0.7]   \\
 G_{n-1}(-1)\ar[r, "\psi_{n-1}"]  \ar[d,"\alpha_{n-1}"] & G_{n-2}(-1)\ar[r, "\psi_{n-2}"]  \ar[d,"\alpha_{n-2}"] &\ar[r, "\psi_{3}"]  \ldots& G_{2}(-1) \ar[r, "\psi_{2}"] \ar[d,"\alpha_{2}"] & G_{1}(-1) \ar[r, "\psi_{1}"]  \ar[d,"\alpha_1"] & G_{0}(-1) \ar[d] \\
 F_{n-2} \ar[r,"\phi_{n-2}"] & F_{n-3} \ar[r,"\phi_{n-3}"] &\ldots\ar[r,"\phi_2"] & F_1 \ar[r,"\phi_{1}"] & F_0 \ar[r] & 0
\end{tikzcd} \tag{$\ast$}
\end{equation}

\begin{lemma} \label{homotopy chain map} \begin{enumerate}[label=(\roman*)]
		\item 
	
	The maps $h_i : M(\beta)_i:=G_{i}(-1)\oplus F_{i-1} \xrightarrow{} F_{i-1}$ given by $h(g,f)=-\alpha_i(g)+\gamma_{i}(f)$ define a map of chain complexes $h: M(\beta)[1] \xrightarrow[]{} F_{\bullet}$.
	
	\item If we replace the null homotopy $\gamma$ by a different null homotopy $\gamma'$, the resulting map $h': M(\beta)[1] \xrightarrow[]{} F_{\bullet}$ is chain homotopic to $h$. The same conclusion holds if we replace $\alpha$ by a chain homotopic map $\alpha'$.
\end{enumerate}
\end{lemma}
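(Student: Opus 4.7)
The plan is to verify both parts of the lemma by direct calculation with the structure maps listed in Definition \ref{unprojection data}. Part (i) amounts to checking that $h$ commutes with differentials, which unwinds into the chain-map identities for $\alpha$ and $\beta$ together with the null-homotopy identity for $\gamma$. Part (ii) requires building explicit chain homotopies from the ``difference data'' of the two choices.

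For part (i), I would evaluate $\phi_{i-1} \circ h_i$ and $h_{i-1} \circ d^{M(\beta)[1]}_i$ on a pair $(g, f) \in G_i(-1) \oplus F_{i-1}$. After expanding with the mapping-cone formula of Definition \ref{mapping cone definition} and the shift convention for $[1]$, the desired equality decouples into two matching calculations: the contributions involving $g$ cancel because $\alpha$ is a chain map $G_{\bullet}(-1) \to F_{\bullet}[-1]$ (so that $\phi_{i-1}\alpha_i$ equals $\pm \alpha_{i-1}\psi_i$), while the contributions involving $f$ cancel precisely because of the relation $\alpha_{i-1}\beta_{i-1} = \phi_{i-1}\gamma_i + \gamma_{i-1}\phi_{i-1}$ from Definition \ref{unprojection data}(iii). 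The boundary case $i = 1$ is what the extra hypothesis $\gamma_1 = 0$ is designed to handle: it forces $h_1(g, f) = -\alpha_1(g)$, which is exactly the map needed so that $h$ extends consistently to the bottom of the complexes.

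For part (ii), first suppose $\gamma$ is replaced by another null homotopy $\gamma'$. Both satisfy the same defining equation, so the difference $\delta_i := \gamma_i - \gamma'_i$ anti-commutes with $\phi$, i.e., $\phi_i\delta_{i+1} + \delta_i\phi_i = 0$, and $\delta_1 = 0$. I would take the candidate homotopy $s_i : M(\beta)[1]_i \to F_i$ given by $s_i(g, f) = \delta_{i+1}(f)$ (ignoring the $g$-coordinate) and verify directly, using only the anti-commutation identity for $\delta$, that $h - h'$ is the boundary of $s$ in the chain-homotopy sense. For the case in which $\alpha$ is replaced by a chain-homotopic $\alpha'$ with homotopy $\eta_i : G_i(-1) \to F_i$, observe that $\eta\beta$ is a null homotopy of $(\alpha - \alpha')\beta$ (using that $\beta$ is a chain map), so $\gamma + \eta\beta$ is a valid null homotopy of $\alpha'\beta$; the map $(g, f) \mapsto \eta_i(g)$ then gives a chain homotopy between $h$ and the corresponding $h'$, possibly composed with the previous case to absorb a further change of null homotopy.

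The main obstacle, as with many unprojection arguments, will be coherent bookkeeping of signs coming from three interacting shifts: the internal grading shift $(-1)$, the homological shift $[-1]$ in the target of $\alpha$, and the shift $[1]$ on the mapping cone. Once the sign conventions fixed by Definition \ref{mapping cone definition} and the paper's convention for the dual complex are applied consistently, the verifications reduce to routine algebra. The homological content of the lemma is standard: coherent null-homotopy data for a composition of chain maps produces a well-defined map out of the associated mapping cone, depending only on the chain-homotopy class of the input data.
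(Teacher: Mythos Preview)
Your approach to part (i) is correct and matches the paper's proof: a direct verification that $h$ intertwines the differentials, using that $\alpha$ is a chain map and the null-homotopy identity for $\gamma$.

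However, your argument for the first half of part (ii) has a genuine gap. You propose that the difference $\delta_i = \gamma_i - \gamma'_i$ can serve \emph{directly} as the chain homotopy via $s_i(g,f) = \delta_{i+1}(f)$. This cannot work: each $\delta_i$ is a map $F_{i-1} \to F_{i-1}$ (it preserves homological degree), whereas a chain homotopy between maps $M(\beta)[1] \to F_\bullet$ must raise degree by one in $F_\bullet$. There is no way to manufacture a map $F_j \to F_{j+1}$ out of the anti-commutation identity $\phi_i\delta_{i+1} + \delta_i\phi_i = 0$ alone. What the paper does instead is observe that the maps $(-1)^i\delta_i$ assemble into a chain map $F_\bullet(-1) \to F_\bullet$ which vanishes in degree zero (since $\delta_1 = 0$); because $F_\bullet$ is an acyclic resolution, this chain map is null-homotopic, yielding maps $\rho_i : F_i(-1) \to F_{i+1}$ with $\delta_i = -\rho_{i-1}\phi_i + \phi_{i+1}\rho_i$. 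It is $\rho$, not $\delta$, that furnishes the required homotopy $(g,f) \mapsto \rho(f)$. The acyclicity of $F_\bullet$ is not optional here.

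Your treatment of the $\alpha$ case is closer to the mark --- using the homotopy $\eta$ between $\alpha$ and $\alpha'$ on the $G$-coordinate is exactly what the paper does --- but since you route the residual change of null homotopy back through the flawed $\gamma$ case, that reduction does not close the argument.
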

\begin{proof}
\begin{enumerate}[label=(\roman*)]
\item We need to check that $h_{i-1} d_{i}^M=\phi_i h_i$ for all $i$. We have
	\begin{equation*}
	h_{i-1} d_{i}^M=
	\begin{pmatrix}
	-\alpha_{i-1} & \gamma_{i-1}
	\end{pmatrix}
	\begin{pmatrix}
	\psi_i &  -\beta_{i-1}\\
	0 &  -\phi_{i-1} \\
	\end{pmatrix}=
	\begin{pmatrix}
	-\alpha_{i-1}\psi_i &  \alpha_{i-1}\beta_{i-1}-\gamma_{i-1}\phi_{i-1}
	\end{pmatrix},
	\end{equation*}
	and
	
	\begin{equation*}
	\phi_{i} h_i=
	\phi_{i}
	\begin{pmatrix}
	-\alpha_{i} &
	\gamma_{i}
	\end{pmatrix}
	=
	\begin{pmatrix}
	-\phi_{i}\alpha_i & \phi_{i}\gamma_{i}\\
	\end{pmatrix}.
	\end{equation*}
	As $\alpha$ is a map of chain complexes, we have $-\alpha_{i-1}\psi_i= -\phi_{i+1}\alpha_i$, and by assumption we have $\alpha_{i-1}\beta_{i}-\gamma_{i-1}\phi_{i-1}= \phi_{i+1}\gamma_{i}$.
	\item Let $\gamma''=\gamma-\gamma'$, then $(h_i-h'_i)(f,g)=\gamma''_{i-1}(f)$,  and $\phi_i \gamma''_{i+1}+\gamma''_{i} \phi_{i}=0$ for all $i$. From this, we deduce that the maps $(-1)^i\gamma''_i$ define a chain map $F_{\bu}(-1) \xrightarrow{} F_{\bu}$. As $\gamma''_1=0$, this chain map is null homotopic, and hence there exist maps $\rho_{i} : F_i(-1) \xrightarrow{} F_{i+1}$ with $\gamma''_i=-\rho_{i-1}\phi_{i}+\phi_{i+1}\rho_{i}$. Then $\rho$ defines a null homotopy for $h-h'$, as seen from the following identity
\[
\begin{pmatrix}
	0 & \gamma''_{i-1}
\end{pmatrix}= 	\begin{pmatrix}
0 & \rho_{i-2}
\end{pmatrix}
\begin{pmatrix}
\psi_i &  -\beta_{i-1}\\
0 &  -\phi_{i-1} \\
\end{pmatrix}+ \phi_{i}\begin{pmatrix}
0 & \rho_{i-1}
\end{pmatrix}.
\]
Now suppose $\alpha$ and $\alpha'$ are chain homotopic. Put $\alpha''=\alpha-\alpha'$, and let $h$ be a null homotopy for $\alpha$. Then $h$ defines a null homotopy for $h-h'$:
\[
\begin{pmatrix}
	\alpha''_{i-1} & 0
\end{pmatrix}= 	\begin{pmatrix}
	h_{i} & 0
\end{pmatrix}
\begin{pmatrix}
	\psi_i &  -\beta_{i-1}\\
	0 &  -\phi_{i-1} \\
\end{pmatrix}+ \phi_{i}\begin{pmatrix}
	h_{i-1} & 0
\end{pmatrix}.
\]
\end{enumerate}
\end{proof}

\begin{lemma} \label{independence of choice}
	For any triple of maps $\alpha,\beta,\gamma$ as above, we define $\mc{H}(\alpha,\beta,\gamma)$ to be the mapping cone of the map $h : M(\beta)[1] \xrightarrow{} F$. 
	
	The complex $\mc{H}(\alpha,\beta,\gamma)$ is independent (up to an isomorphism of chain complexes) of the chain homotopy classes of $\alpha$ and $\beta$, and independent of the choice of the homotopy $\gamma$. When there is no danger of confusion, we denote $\mc{H}(\alpha,\beta,\gamma)$ by $\mc{H}$.
\end{lemma}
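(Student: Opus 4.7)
The plan is to decompose the statement into three sub-claims: (a) independence of the choice of $\gamma$ with $\alpha, \beta$ fixed; (b) invariance under replacing $\alpha$ by a chain-homotopic $\alpha'$; and (c) invariance under replacing $\beta$ by a chain-homotopic $\beta'$. For (a) and (b) I would appeal directly to Lemma \ref{homotopy chain map}(ii), which supplies, in each case, a chain homotopy between the two chain maps $h, h' : M(\beta)[1] \to F_{\bu}$ associated to the two sets of unprojection data. Feeding this homotopy into Lemma \ref{mapping cone homotopy lemma}, now applied to $h$ and $h'$ in place of the $\alpha$, $\beta$ of its statement, produces an explicit isomorphism between the corresponding mapping cones and hence between the two versions of $\mc{H}$.

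The genuinely new case is (c), since there the source $M(\beta)$ of $h$ itself changes with $\beta$. Suppose $\beta'_i - \beta_i = \psi_{i+1} k_i + k_{i-1} \phi_i$ for some chain homotopy $k$. Using that $\alpha$ is a chain map, one checks by a direct computation that $\gamma'_i := \gamma_i + \alpha_i k_{i-1}$ is a null homotopy for $\alpha \circ \beta'$, so the triple $(\alpha, \beta', \gamma')$ is again valid unprojection data. Lemma \ref{mapping cone homotopy lemma}, applied to $k$, produces an explicit isomorphism $u : M(\beta) \to M(\beta')$ whose degree $i$ component sends $(g, f)$ to $(g + k_{i-1} f, f)$. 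Substituting this into the formula $h_i(g, f) = -\alpha_i(g) + \gamma_i(f)$ yields
\[
h'_i \circ u_i(g, f) = -\alpha_i(g + k_{i-1} f) + \gamma_i(f) + \alpha_i k_{i-1}(f) = -\alpha_i(g) + \gamma_i(f) = h_i(g, f),
\]
i.e.\ $h = h' \circ u[1]$ on the nose (not merely up to homotopy). Taking mapping cones then produces the desired isomorphism $\mc{H}(\alpha, \beta, \gamma) \cong \mc{H}(\alpha, \beta', \gamma')$. Combining (a), (b) and (c) in sequence gives the full statement, since one may compare any two allowable triples by first adjusting $\beta$, then $\alpha$, then $\gamma$.

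The main obstacle is bookkeeping rather than any new idea: one must keep careful track of the sign and shift conventions of Definition \ref{mapping cone definition}, the degree shifts introduced by $(-1)$ and by the translation $[\,\cdot\,]$, and verify that the normalisation $\gamma_1 = 0$ is preserved by the adjustment $\gamma \mapsto \gamma + \alpha k$ after replacing $k$, if necessary, by a homotopic chain homotopy (exactness of $G_{\bu}$ in low degrees allows us to arrange $k_0 = 0$). No further ingredient beyond Lemmas \ref{mapping cone homotopy lemma} and \ref{homotopy chain map}(ii) is required.
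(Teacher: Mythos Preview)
Your approach is correct and is precisely what the paper's one-sentence proof (``This follows from Lemma~\ref{mapping cone homotopy lemma} and Lemma~\ref{homotopy chain map}'') intends; your case~(c) even fills in a step the paper leaves implicit, since Lemma~\ref{homotopy chain map}(ii) only addresses varying $\alpha$ and $\gamma$, and transporting $h$ along the isomorphism $u:M(\beta)\cong M(\beta')$ of Lemma~\ref{mapping cone homotopy lemma} to obtain $h=h'\circ u$ is exactly what is needed there. Your remark about arranging $k_0=0$ invokes $H_1(G_\bullet)=0$, which is not a standing hypothesis of Definition~\ref{unprojection data} but does hold in every application the paper makes of the lemma.
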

\begin{proof}
This follows from Lemma \ref{mapping cone homotopy lemma} and Lemma \ref{homotopy chain map}.
\end{proof}

\begin{lemma} \label{exactness lemma}
	Suppose that we have $H_i(F_{\bullet})=0$ for $i \geq 1$ and $H_i(G_{\bullet})=0$ for $i \geq 2$. Then the homology groups $H_i(\mc{H})$ vanish for $i \geq 2$.
\end{lemma}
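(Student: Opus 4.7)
The proof plan is to apply the long exact sequence in homology of a mapping cone twice, once to $\beta$ and once to $h$. Recall that for any chain map $f : A_\bu \to B_\bu$, the mapping cone $M(f)$ sits in a distinguished triangle yielding
\[
\cdots \to H_i(A_\bu) \to H_i(B_\bu) \to H_i(M(f)) \to H_{i-1}(A_\bu) \to \cdots.
\]
The internal grading twist $(-1)$ on a complex leaves the homological degrees unchanged, so it plays no role in what follows.

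First I would apply this long exact sequence to the chain map $\beta : F_\bu(-1) \to G_\bu(-1)$, obtaining
\[
\cdots \to H_i(F_\bu) \to H_i(G_\bu) \to H_i(M(\beta)) \to H_{i-1}(F_\bu) \to \cdots.
\]
For $i \geq 2$, both $H_i(G_\bu)$ and $H_{i-1}(F_\bu)$ vanish by the hypotheses of the lemma, so $H_i(M(\beta)) = 0$ for every $i \geq 2$. Note that $H_1(M(\beta))$ is not controlled (it fits into a piece of the sequence involving $H_1(G_\bu)$, which is not assumed to vanish), and this is consistent with the lemma only claiming vanishing from degree $2$ on.

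Next, since $\mc{H}$ is by definition the mapping cone of $h : M(\beta)[1] \to F_\bu$, a second application of the mapping cone long exact sequence produces
\[
\cdots \to H_i(M(\beta)[1]) \to H_i(F_\bu) \to H_i(\mc{H}) \to H_{i-1}(M(\beta)[1]) \to \cdots.
\]
Using the paper's shift convention (so that $h_i : M(\beta)_i \to F_{i-1}$ of Lemma \ref{homotopy chain map} becomes a degree-zero chain map after shift, giving $H_j(M(\beta)[1]) = H_{j+1}(M(\beta))$), this rewrites as
\[
\cdots \to H_{i+1}(M(\beta)) \to H_i(F_\bu) \to H_i(\mc{H}) \to H_i(M(\beta)) \to \cdots.
\]
For $i \geq 2$, the middle term $H_i(F_\bu)$ vanishes by hypothesis, while the two outer terms vanish by the first step (since $i+1 \geq 3$ and $i \geq 2$). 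Therefore $H_i(\mc{H}) = 0$ for all $i \geq 2$, which is what we want.

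The argument is purely formal, and no step should present a genuine obstacle: the only care required is to track the paper's homological shift convention and the sign choices in Definition \ref{mapping cone definition} carefully enough to ensure that the connecting morphisms are the usual ones and that the exactness of the two mapping cone triangles is set up correctly. Once the bookkeeping is in order, the conclusion follows immediately from the two long exact sequences combined with the hypotheses on $F_\bu$ and $G_\bu$.
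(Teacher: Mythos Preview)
Your proof is correct and follows essentially the same approach as the paper: two applications of the mapping cone long exact sequence, first to $\beta$ to obtain $H_i(M(\beta))=0$ for $i\ge 2$, and then to $h$ to conclude $H_i(\mc{H})=0$ for $i\ge 2$. Your treatment is in fact slightly more careful than the paper's about the shift convention and about which hypothesis on $G_\bu$ is actually used.
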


\begin{proof}
	We have the mapping cone long exact sequences:
	\begin{equation*}
	\ldots\xrightarrow[]{}H_{i+1}(M(\beta)) \xrightarrow[]{}H_i (\mathcal{F}) \xrightarrow{\beta_*} H_i(\mathcal{G}) \xrightarrow{} H_i (M(\beta)) \xrightarrow[]{}\ldots
	\end{equation*}
	and 
	\begin{equation*}
	\ldots\xrightarrow[]{}H_{i+1}(\mathcal{H}) \xrightarrow[]{}H_{i+1} (M(\beta)) \xrightarrow{h_{*}} H_{i}(\mathcal{F}) \xrightarrow{} H_i (\mathcal{H}) \xrightarrow[]{}\ldots
	\end{equation*}
	As $H_i(F)=H_i(G)=0$ for $i \geq 1$, from the first sequence we deduce that $H_i(M(\beta))=0$ for $i \geq 2$, and then from the second one that $H_i(\mc{H})=0$ for $i\ge 2$.
\end{proof}

Let us now adjoin an indeterminate $x_m$ to $R$, and put $\mc{R}=R[x_m]=S[x_1,\ldots,x_m]$. We extend scalars, and work in the category of graded $\mc{R}$-modules, putting $\mathcal{F}=F \otimes \mc{R}$, $\mathcal{G}=G \otimes \mc{R}$.
Let 
\[\delta_i=\gamma_i+(-1)^i x_{m}: \mathcal{F}_{i-1}(-1) \xrightarrow{} \mathcal{F}_i
\]
It is simple to verify that this map satisfies
\begin{equation*}
\alpha_i\beta_{i} = \phi_i\delta_{i+1} + \delta_i\phi_i.
\end{equation*}
Thus $\alpha,\beta$ and $\delta$ also satisfy the conditions of Definition \ref{unprojection data}, and we may form the chain complex of graded $\mc{R}$-modules $\mc{H}$. Unwrapping the definitions, we have the following explicit description of $\mc{H}$:
\begin{align*}
&\mc{G}_{n-1}(-1)\oplus\mc{F}_{n-2}(-1) \xrightarrow{d_{n-1}}\ldots \\& \ldots\xrightarrow{d_{i+1}} \mathcal{F}_i \oplus \mathcal{G}_{i}(-1) \oplus \mathcal{F}_{i-1}(-1) \xrightarrow{d_i} \mathcal{F}_{i-1} \oplus \mathcal{G}_{i-1}(-1) \oplus \mathcal{F}_{i-2}(-1) \xrightarrow{d_{i-1}}\ldots\\&
\ldots\xrightarrow{d_3} \mathcal{F}_2 \oplus \mathcal{G}_{2}(-1) \oplus \mathcal{F}_{1}(-1) \xrightarrow{d_2} \mathcal{F}_1 \oplus \mathcal{G}_{1}(-1) \oplus \mc{F}_0(-1) \xrightarrow{d_1} \mathcal{F}_0 \oplus \mathcal{G}_0(-1) 
\end{align*}
with the differential given by the matrix
\begin{equation*} \label{differentialH}
d_i=
\begin{pmatrix}
\phi_i &  \alpha_i & -\delta_i\\
0 &  -\psi_i & \beta_{i-1}\\
0 &  0 & \phi_{i-1}\\
\end{pmatrix} \tag{$\dagger$}
\end{equation*}
where we regard the maps and modules labelled with negative indices as zero objects. When there is no danger of confusion, we refer to $\mc{H}(\alpha,\beta,\gamma)$ as $\mc{H}(\alpha,\beta)$ or simply as $\mc{H}$. One can visualise this chain complex by taking direct sums of the modules on each (right-to-left) diagonal of the big diagram (\ref{big diagram}) as modules in the complex, with the maps as specified by the diagram.

Let $I=\phi_1(F_1) \subset F_0$ and $J=\psi_1(G_1) \subset G_0$. As $F_0=G_0=R$, we can identify $I$ and $J$ with homogeneous ideals of $R$, and complexes $F_{\bullet}$ and $G_{\bullet}$ with their graded free resolutions. Fix a basis $a_1,\ldots,a_t$ of $F_1$ and a basis $b_1,\ldots,b_s$ of $G_1$. Then $I$ and $J$ are generated by the elements $\phi_1(a_i)=f_i$, $i=1,\ldots,t$ and $\psi_1(b_i)=g_i$, $i=1,\ldots,s$, respectively. Furthemore, let $h_i=\alpha_1(b_i) \in R$. Now define $\mc{I} \subset \mc{R}$ to be the ideal generated by $f_1,\ldots,f_t,x_mg_1+h_1,\ldots,x_mg_s+h_s$. 

Note that $\mc{I}$ is a homogeneous ideal, since the degrees of the elements $g_i$ are one less than the degrees of $h_i$, by the grading of the maps $\alpha$ and $\beta$.
\begin{proposition} \label{exactness lemma 2}
	With assumptions as in Lemma \ref{exactness lemma}, we have $H_1(\mc{H})=0$, and $H_0(\mc{H})=\mc{R}/\mc{I}$. Hence $\mc{H}$ is a graded free resolution of $\mc{R}/\mc{I}$. 
\end{proposition}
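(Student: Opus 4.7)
The plan is to realize $\mc{H}$ as the mapping cone $C(h)$ of the chain map $h \colon M(\beta)[1] \otimes_R \mc{R} \to \mc{F}$ constructed in Lemma \ref{homotopy chain map} (with $\gamma$ replaced by $\delta$), and to read off both $H_0(\mc{H})$ and $H_1(\mc{H})$ from the associated mapping cone long exact sequence
\[
\ldots \to H_n(\mc{F}) \to H_n(\mc{H}) \to H_n(M(\beta)) \otimes \mc{R} \xrightarrow{h_*} H_{n-1}(\mc{F}) \to \ldots.
\]
The proof of Lemma \ref{exactness lemma} already yielded $H_i(M(\beta)) = 0$ for $i \geq 2$. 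Because $\beta_0$ is the identity, the differential $d_1^{M(\beta)} \colon G_1(-1) \oplus F_0(-1) \to G_0(-1)$ is surjective, so $H_0(M(\beta)) = 0$ as well. Together with $H_i(\mc{F}) = 0$ for $i \geq 1$, the relevant stretch of the sequence collapses to
\[
0 \to H_1(\mc{H}) \to H_1(M(\beta)) \otimes \mc{R} \xrightarrow{h_*} \mc{R}/I\mc{R} \to H_0(\mc{H}) \to 0,
\]
so the proposition reduces to computing the image and kernel of $h_*$.

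Every cycle in $M(\beta)_1$ is of the form $(g, \psi_1(g))$ since $\beta_0 = \mathrm{id}$. Substituting $\delta_1 = -x_m$, which holds because $\gamma_1 = 0$, into the formula $h_1(g,u) = -\alpha_1(g) + \delta_1(u)$ of Lemma \ref{homotopy chain map} gives
\[
h_*([(g,\psi_1(g))]) \equiv -\big(\alpha_1(g) + x_m\,\psi_1(g)\big) \pmod{I\mc{R}}.
\]
Evaluated on the chosen basis element $b_j$ of $G_1$, this is $-(h_j + x_m g_j)$, and by $\mc{R}$-linearity the image of $h_*$ together with $I\mc{R}$ is precisely the ideal $\mc{I}$. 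Hence $H_0(\mc{H}) = \mc{R}/\mc{I}$.

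The heart of the proof is establishing $H_1(\mc{H}) = \ker h_* = 0$. Here I would exploit the decomposition $\mc{R} = R[x_m]$: every element of $H_1(M(\beta)) \otimes_R \mc{R}$ is a polynomial $\sum_{i=0}^{k} \xi_i x_m^i$ with $\xi_i \in H_1(M(\beta))$, and the target $\mc{R}/I\mc{R} \cong (R/I)[x_m]$ is itself a polynomial ring in which $x_m$ is a nonzerodivisor. Equating coefficients of $x_m^j$ in the vanishing condition $-\sum_i(\alpha_1(\xi_i) + x_m\,\psi_1(\xi_i))\,x_m^i \equiv 0 \pmod{I\mc{R}}$ yields the cascade $\psi_1(\xi_k) \equiv 0$, $\alpha_1(\xi_{i+1}) + \psi_1(\xi_i) \equiv 0$ for $0 \leq i \leq k-1$, and $\alpha_1(\xi_0) \equiv 0$, all modulo $I$. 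The top relation forces the image of $\xi_k$ in $J/I$, taken via the short exact sequence $0 \to H_1(\mc{G}) \to H_1(M(\beta)) \to J/I \to 0$ extracted from the mapping cone sequence for $\beta$, to vanish, so $\xi_k$ lifts from a class in $H_1(\mc{G})$. Applying the chain map identity $\phi_1\alpha_2 = \alpha_1\psi_2$ together with the null homotopy identity $\alpha_1\beta_1 = \phi_1\gamma_2$ shows that such a lift is realized by an honest boundary in $M(\beta)$, giving $\xi_k = 0$; descending induction on $k$ then disposes of the remaining coefficients. The main obstacle is this inductive step, since it requires delicate bookkeeping of how $\alpha$, $\beta$, and the null homotopy $\gamma$ interact, the essential point being that adjoining the regular element $x_m$ produces only the expected new relations $h_j + x_m g_j$ without introducing spurious syzygies.
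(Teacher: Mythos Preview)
Your overall strategy is the right one and is essentially how the cited sources (Fisher, Kustin--Miller) argue: realize $\mc{H}$ as the cone of $h$, run the long exact sequence, and reduce everything to computing the image and kernel of $h_*\colon H_1(M(\beta))\otimes\mc{R}\to \mc{R}/I\mc{R}$. Your computation of $H_0(\mc{H})=\mc{R}/\mc{I}$ is correct.

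The gap is in the inductive step for $H_1$. You correctly obtain $\psi_*(\xi_k)=0$ in $J/I$, so $\xi_k$ lies in the image of $H_1(G)\hookrightarrow H_1(M(\beta))$. You then assert that the identities $\phi_1\alpha_2=\alpha_1\psi_2$ and $\alpha_1\beta_1=\phi_1\gamma_2$ show ``such a lift is realized by an honest boundary in $M(\beta)$, giving $\xi_k=0$''. This cannot be right: the very short exact sequence you wrote,
\[
0 \to H_1(G) \to H_1(M(\beta)) \to J/I \to 0,
\]
says that $H_1(G)$ \emph{injects} into $H_1(M(\beta))$, so a nonzero class coming from $H_1(G)$ is never a boundary there. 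The two identities you invoke only witness that $h_*$ is well-defined on homology (i.e.\ $h_1$ sends boundaries of $M(\beta)$ into $I$); they do not kill $\xi_k$. In fact, under the bare hypotheses of Lemma~\ref{exactness lemma} alone, a nonzero $\eta\in H_1(G)$ with $\alpha_*(\eta)=0$ would give a nonzero element of $\ker h_*$, so injectivity of $h_*$ genuinely fails without further input.

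The missing input is the extra standing hypothesis in the paragraph immediately preceding the proposition: the complexes $F_\bullet$ and $G_\bullet$ are identified with graded free \emph{resolutions} of $R/I$ and $R/J$, so in particular $H_1(G)=0$. (The references the paper cites for this proposition both work under that assumption.) With $H_1(G)=0$ your short exact sequence collapses to an isomorphism $\psi_*\colon H_1(M(\beta))\xrightarrow{\ \sim\ } J/I$, and then $\psi_*(\xi_k)=0$ gives $\xi_k=0$ immediately; your descending induction on $k$ goes through without any appeal to $\alpha$ or $\gamma$ at this step.
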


\begin{proof}
	As stated, this is Lemma 9.3 of \cite{fisher2006higher}. See also Theorem 1.3 of \cite{kustin1983constructing}.
\end{proof}

\begin{proposition} \label{quadrics and unprojection}
	Let $1\leq a_1,a_2,\ldots,a_{n-2} \leq m$ be integers, and suppose that $a_k=m$ for $2\leq k \leq n-3$. Then 
	\[
	[a_1,a_2,\ldots,a_{n-2}]_{\mathcal{H}_{\alpha,\beta,\delta}}=(-1)^{k+1}[a_1,a_2,\ldots,a_{k-1},a_{k+1},\ldots,a_{n-2}]_{F}.
	\]
	where the square bracket symbols are as defined in Section \ref{Omega quad section}.
\end{proposition}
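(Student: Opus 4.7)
The plan is to verify the identity by a direct computation from the explicit block form $(\dagger)$ of the differentials $d_i$ of $\mc{H}(\alpha,\beta,\delta)$. First, note that the entries of $\phi_i, \psi_i, \alpha_i, \beta_{i-1}, \gamma_i$ all live in $R = S[x_1,\ldots,x_{m-1}]$ and so are independent of $x_m$; the only $x_m$-dependence in $d_i$ is the summand $(-1)^i x_m \cdot \mathrm{Id}$ inside the top-right block $-\delta_i$. Consequently, for any $a \in \{1,\ldots,m-1\}$ the partial derivative $\partial d_i / \partial x_a$ is block upper triangular with diagonal blocks $\partial\phi_i/\partial x_a$, $-\partial\psi_i/\partial x_a$, $\partial\phi_{i-1}/\partial x_a$, while
\[
\frac{\partial d_i}{\partial x_m} \;=\; \begin{pmatrix} 0 & 0 & (-1)^{i+1}\,\mathrm{Id} \\ 0 & 0 & 0 \\ 0 & 0 & 0 \end{pmatrix}.
\]

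The main step is then purely linear-algebraic. Multiplying the factors in order $i=1,\ldots,n-2$, the accumulated product $P_{k-1}:=\prod_{i=1}^{k-1}\partial d_i/\partial x_{a_i}$ is block upper triangular, with top-left diagonal block $\prod_{i=1}^{k-1}\partial\phi_i/\partial x_{a_i}$ and bottom-right diagonal block $\prod_{i=1}^{k-1}\partial\phi_{i-1}/\partial x_{a_i}$ (the middle diagonal block and the super-diagonal entries will play no role). Right-multiplication of $P_{k-1}$ by $\partial d_k/\partial x_m$ annihilates every block except the top-right corner, which becomes $(-1)^{k+1}$ times the preceding top-left block. Right-multiplying successively by the remaining block upper triangular factors $\partial d_i/\partial x_{a_i}$ for $i>k$ simply propagates the surviving top-right block through by the bottom-right diagonal entries $\partial\phi_{i-1}/\partial x_{a_i}$.

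Reading off the single non-zero block of the final product and reindexing $j=i-1$ in the second product yields
\[
(-1)^{k+1}\prod_{i=1}^{k-1}\frac{\partial\phi_i}{\partial x_{a_i}}\cdot\prod_{j=k}^{n-3}\frac{\partial\phi_j}{\partial x_{a_{j+1}}},
\]
which is exactly $(-1)^{k+1}[a_1,\ldots,a_{k-1},a_{k+1},\ldots,a_{n-2}]_{F}$, as required.

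The one bookkeeping subtlety is that the differential $d_1$ and the terminal differential of $\mc{H}$ fail to be square in the neat $3\times 3$ block sense, because $\mc{F}_{-1}$ and the top modules of $F_{\bu}$ and $G_{\bu}$ are zero and the corresponding blocks are dropped. The hypothesis $2 \le k \le n-3$ places the collapsing factor $\partial d_k/\partial x_m$ strictly in the interior of the product, so the boundary factors contribute only through the top-left or bottom-right diagonal blocks that in fact survive in the final formula; the missing row or column at either end never enters into the surviving top-right entry, and the block-multiplication argument goes through uniformly.
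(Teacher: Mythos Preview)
Your proof is correct and follows essentially the same approach as the paper: both compute $\partial d_k/\partial x_m$ explicitly as a block matrix with a single nonzero corner entry $(-1)^{k+1}\mathrm{Id}$, observe that the remaining factors $\partial d_i/\partial x_{a_i}$ are block upper triangular, and then track the surviving top-left and bottom-right diagonal blocks through the product. Your closing remark about the degenerate block shapes of the extremal differentials $d_1$ and $d_{n-2}$ is a point the paper leaves implicit, and your observation that the hypothesis $2\le k\le n-3$ keeps the collapsing factor in the interior is exactly the right justification.
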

\begin{proof}
	Consider the formula (\ref{differentialH}) describing the differential of the complex $\mc{H}(\alpha,\beta,\delta)$. Notice that, as $\delta$ is the only map whose definition directly involves the monomial $x_m$, we have:
	
	\begin{equation*}
	\frac{\partial d_k}{\partial x_{m}}=
	\begin{pmatrix}
	0 & 0 & (-1)^{k+1}\\
	0 &  0 & 0\\
	0 &  0 & 0\\
	\end{pmatrix}.
	\end{equation*}
	But for any upper-triangular matrix, we have
	\[
	\begin{pmatrix}
	a & b & c\\
	0 &  d & e\\
	0 &  0 & f\\
	\end{pmatrix}
	\begin{pmatrix}
	0 & 0 & 1\\
	0 &  0 & 0\\
	0 &  0 & 0\\
	\end{pmatrix}
	=
	\begin{pmatrix}
	0 & 0 & a\\
	0 &  0 & 0\\
	0 &  0 & 0\\
	\end{pmatrix},
	\]
	and
	\[
	\begin{pmatrix}
	0 & 0 & 1\\
	0 &  0 & 0\\
	0 &  0 & 0\\
	\end{pmatrix}
	\begin{pmatrix}
	a & b & c\\
	0 &  d & e\\
	0 &  0 & f\\
	\end{pmatrix}
	=
	\begin{pmatrix}
	0 & 0 & f\\
	0 &  0 & 0\\
	0 &  0 & 0\\
	\end{pmatrix}.
	\]
	Thus $[a_1,a_2,\ldots,a_{n-2}]=\frac{\partial d_{1}}{\partial x_{a_1}} \frac{\partial d_{2}}{\partial x_{a_2}} \cdot \cdot \cdot \frac{\partial d_{n-2}}{\partial x_{a_{n-2}}}$ will be the product of top-leftmost entries of $ \frac{\partial d_{i}}{\partial x_{a_i}}$ for $i<k$, multiplied by the product of the bottom-rightmost entries of $ \frac{\partial d_{i}}{\partial x_{a_i}}$ for $i>k$, with the sign $(-1)^{k+1}$ in front. Examining what these entries are, we find that 
	
	\[[a_1,a_2,\ldots,a_{n-2}]=(-1)^{k+1}[a_1,a_2,\ldots,\hat{a_k},\ldots,a_{n-2}].
	\]
\end{proof}
\subsection{Self-duality.} Now suppose that $F_{\bullet}$ and $G_{\bullet}$ are self-dual. Fix isomorphisms $\eta : {F_{\bullet}^*} \xrightarrow[]{} F_{\bullet}$ and $\theta : G_{\bullet} \xrightarrow[]{} G^*_{\bullet}$, and define maps $\beta^T : {G_{\bullet}}(-1) \xrightarrow{} {F_{\bullet}}[-1]$ and $ \delta^{T} : {F_{\bullet}} \xrightarrow{} {F_{\bullet}}[-1]$ by setting
\[ \beta^T= \eta \circ \beta^*\circ \theta, \]
and 
\[ \delta^{T}=\eta \circ \delta^*\circ \theta. \]
\begin{lemma}
Suppose that we have $\alpha=\beta^T$. Then there exists a null homotopy $\delta$ for the map $\alpha\circ\beta$, with $\delta=\delta^T$.
\end{lemma}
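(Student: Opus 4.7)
The plan is to first produce some null homotopy by standard homological-algebra means, then upgrade it to a self-dual one by a symmetrization argument.

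First, I would observe that $\alpha \circ \beta$ defines a chain map $F_{\bullet}(-1) \to F_{\bullet}[-1]$ whose degree-zero component is automatically zero, since $\alpha_0 : G_0(-1) \to F_{-1} = 0$ vanishes identically. Because $F_{\bullet}$ is a free resolution (so $H_i(F_{\bullet})=0$ for $i \geq 1$), a null homotopy $\delta^{(0)}$ exists by the usual inductive lifting through the differentials $\phi_i$: starting from $\delta^{(0)}_1 = 0$, at each degree $i$ the obstruction $\alpha_i\beta_i - \delta^{(0)}_i\phi_i$ is annihilated by $\phi_{i-1}$ (by the Leibniz-type identity satisfied by terms already constructed), hence lifts through $\phi_i$ to define $\delta^{(0)}_{i+1}$.

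Next, I would verify that $(\delta^{(0)})^T := \eta \circ (\delta^{(0)})^* \circ \theta$ is also a null homotopy of $\alpha\beta$. Applying the operation $X \mapsto \eta \circ X^* \circ \theta$ to the defining equation $\alpha_i\beta_i = \phi_i\delta^{(0)}_{i+1} + \delta^{(0)}_i\phi_i$ and using that this operation reverses composition, is an involution (Lemma \ref{uniqueness of duality lemma}), takes each $\phi_i$ to itself up to the sign convention fixed for the dual complex, and swaps $\alpha$ and $\beta$ by the hypothesis $\alpha = \beta^T$, we obtain the same null-homotopy identity for $(\delta^{(0)})^T$. In particular, the map $\alpha\beta$ is itself self-dual, i.e.\ $(\alpha\beta)^T = \beta^T\alpha^T = \alpha\beta$.

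The main obstacle is step three: replacing $\delta^{(0)}$ by a genuinely self-dual $\delta$. When $2$ is invertible in the base ring $S$, one simply sets $\delta = \tfrac{1}{2}\bigl(\delta^{(0)} + (\delta^{(0)})^T\bigr)$, which is manifestly a self-dual null homotopy. In general one constructs $\delta$ inductively, pairing each index $i$ with its mirror index $n-1-i$ via the self-duality $F_i^* \cong F_{n-2-i}$, so that the required relation is $\delta_i = (\delta_{n-1-i})^T$. The boundary case $\delta_1 = 0$ is compatible with its dual partner being zero because the relevant free module is of rank one. Working inward from the outer indices, one uses the freedom to modify any candidate $\delta_i$ by a map landing in $\ker \phi_i$ (available thanks to the exactness of $F_{\bullet}$) to enforce the symmetry while preserving the null-homotopy identity. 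The subtle case is a middle index fixed by the mirror involution, where the constraint reduces to self-adjointness of a single map; this is solvable precisely because the target $\alpha\beta$ is itself self-dual, so that the obstruction to symmetrization lies in a self-adjoint summand and can be canceled by an appropriate choice at that index.
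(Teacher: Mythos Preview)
Your proposal is correct and follows essentially the same three-step strategy as the paper: construct an arbitrary null homotopy $\delta^{(0)}$, observe that its transpose $(\delta^{(0)})^T$ is again a null homotopy of $\alpha\beta$ (because dualizing the identity $\alpha_i\beta_i=\phi_i\delta_{i+1}+\delta_i\phi_i$ and using $\alpha=\beta^T$ reproduces the same identity at the mirror index), and then symmetrize.

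The only difference is in how the symmetrization is carried out. The paper does it in one stroke: it simply replaces $\delta_i$ for indices in the lower half of the range by the dual of the corresponding $\delta$ from the upper half, and asserts that the result is still a null homotopy and now satisfies $\delta=\delta^T$. You instead describe the averaging $\tfrac12(\delta^{(0)}+(\delta^{(0)})^T)$ when $2\in S^\times$, and in general an inductive adjustment working inward from the outer indices, using exactness of $F_\bullet$ to absorb discrepancies into $\ker\phi_i$. Your version is more explicit about the possible middle-index subtlety, which the paper's terse replacement argument leaves implicit; but the content is the same.
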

\begin{proof}
	We identify  $(F_{\bu},\phi) \cong (F_{\bu}^*,\phi^{*})$ via $\eta$ and  $(G_{\bu},\psi) \cong (G_{\bu}^*,\psi^{*})$ via $\theta$. We have $\phi_{n-2-i}=\phi_i^*$, and $\beta^*_i=\beta^{T}_{n-2-i}$. Let $\delta$ be any null homotopy for $\beta^{T}\circ\beta$. Dualizing the identity
	\[
	\beta^{T}_i\beta_{i}=\phi_{i}\delta_{i+1}+\delta_{i}\phi_i,
	\]
	we find that
	\[
	\beta^{T}_{n-2-i}\beta_{n-2-i}=\phi_{n-2-i}\delta^{*}_{i}+\delta^*_{i+1}\phi_{n-2-i},
	\]
	For $2i<n-2$, we replace $\delta_{i}$ with $\delta^{*}_{n-2-i}$. The resulting $\delta$ is still a null-homotopy for $\beta^{T}\circ\beta$ and satisfies $\delta=\delta^{T}$. 
\end{proof}

\begin{lemma} \label{H is self-dual}
 Assume that we have $\alpha=\beta^{T}$ and $\delta=\delta^{T}$. Then the chain complex $\mc{H}(\alpha,\beta,\delta)$ is self-dual. 
\end{lemma}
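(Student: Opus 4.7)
The plan is to construct an explicit isomorphism of chain complexes $\Xi : \mc{H} \ra \mc{H}^*$ (up to a suitable grading shift) built out of the self-duality maps $\eta$ and $\theta$, and then verify that it commutes with the differentials by a block matrix calculation. Since $\eta$ and $\theta$ are themselves isomorphisms, once $\Xi$ is shown to be a chain map it is automatically an isomorphism of graded chain complexes, yielding the self-duality of $\mc{H}$.

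First I would match up the summands. Recall $\mc{H}_i = \mc{F}_i \oplus \mc{G}_i(-1) \oplus \mc{F}_{i-1}(-1)$ for $1 \leq i \leq n-2$, with the obvious truncations at $i=0$ and $i=n-1$. Using the self-dualities $F_j \cong F_{n-2-j}^*$ and $G_j \cong G_{n-1-j}^*$ (up to the appropriate grading shifts supplied by $\eta$ and $\theta$), the three summands of $\mc{H}_i$ match the three summands of $\mc{H}^*_{n-1-i}$ after an antidiagonal swap: $\mc{F}_i$ pairs with the dual of $\mc{F}_{n-2-i}(-1)$, the middle term $\mc{G}_i(-1)$ pairs with the dual of $\mc{G}_{n-1-i}(-1)$, and $\mc{F}_{i-1}(-1)$ pairs with the dual of $\mc{F}_{n-1-i}$. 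Correspondingly I would define $\Xi_i : \mc{H}_i \ra \mc{H}^*_{n-1-i}$ as the antidiagonal block matrix
\[
\Xi_i = \begin{pmatrix} 0 & 0 & \epsilon_1 \eta \\ 0 & \epsilon_2 \theta & 0 \\ \epsilon_3 \eta & 0 & 0 \end{pmatrix},
\]
where the signs $\epsilon_j \in \{\pm 1\}$ are dictated by the sign convention for the dual complex (the $(-1)^i \phi^*_{n-i}$ rule recalled in Section \ref{resolution model section}) together with the symmetry clause of Proposition \ref{DGC induced duality}.

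Second, I would verify that $\Xi_{i-1} d_i = d^{\mc{H}^*}_{n-i} \Xi_i$. Expanding both sides of this identity of $3 \times 3$ block matrices using the explicit formula ($\dagger$) for $d_i$, the off-diagonal entries reduce to three families of compatibilities: (i) the intertwining $\eta \circ \phi^* = \phi \circ \eta$ (the self-duality of $F_\bu$), (ii) the intertwining $\theta \circ \psi = \psi^* \circ \theta$ (the self-duality of $G_\bu$), and (iii) the coupling relations that express how the corners of the matrix $d_i$ transform under duality. These latter relations are precisely the hypotheses $\alpha = \beta^T$ and $\delta = \delta^T$ -- the first is what is needed to make the $\alpha$-entries of $d_i$ transform correctly into the $\beta$-entries of $d^*_{n-i}$ (and vice versa), and the second ensures that the twisted entry $-\delta_i$, after passing through the dual, returns to $-\delta$ rather than to something like $-\delta^T$ with an extra sign.

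Third, once these identities are checked, $\Xi$ is a chain map, and since its blocks are $\eta$, $\theta$ and $\eta$ (all isomorphisms), $\Xi$ is an isomorphism. The main obstacle is bookkeeping: the three potential sources of signs -- the $(-1)^i$ in our convention for the differential of a dual complex, the grading shifts present in the self-dualities of $F_\bu$ and $G_\bu$ (which have different lengths $n-2$ and $n-1$), and the explicit $(-1)^i x_m$ term in the definition $\delta_i = \gamma_i + (-1)^i x_m$ -- must be reconciled and the $\epsilon_j$ chosen consistently across all $i$. Once the parities are tabulated, the chain map verification is a routine $3\times 3$ block calculation and the lemma follows.
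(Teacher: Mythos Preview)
Your proposal is correct and follows essentially the same approach as the paper: both construct the self-duality isomorphism as an antidiagonal block matrix (swapping the two $\mc{F}$-summands and applying $\eta$, $\theta^{\pm 1}$, $\eta$ on the blocks), then verify by a $3\times 3$ block calculation that after this swap the dual differential $d_i^*$ matches $d_{n-1-i}$, using $\alpha=\beta^T$ and $\delta=\delta^T$ for the off-diagonal entries. The paper's explicit map is $(f_1,g,f_2)\mapsto(\eta(f_2),\theta^{-1}(g),\eta(f_1))$, and it leaves the sign-tracking just as implicit as you do.
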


\begin{proof}
	The dual complex $\mc{H}(\alpha,\beta,\delta)^*$ is given by
	
	\begin{align*}
	&\mc{G}^*_{n-1}(-1)\oplus\mc{F}^*_{n-2}(-1) \xleftarrow{d^*_{n-1}}\ldots\xleftarrow{d^*_{i+1}} \mathcal{F}^*_i \oplus \mathcal{G}^*_{i}(-1) \oplus \mathcal{F}^*_{i-1}(-1) \xleftarrow{d^*_i} \ldots
	\\&
	\ldots\xleftarrow{d^*_3} \mathcal{F}^*_2 \oplus \mathcal{G}^*_{2}(-1) \oplus \mathcal{F}^*_{1}(-1) \xleftarrow{d^*_2} \mathcal{F}^*_1 \oplus \mathcal{G}^*_{1}(-1) \xleftarrow{d^*_1} \mathcal{F}^*_0 \oplus \mathcal{G}^*_0(-1) 
	\end{align*}
	with $d_i^*$ given by
	\begin{equation*}
	d_i^*=
	\begin{pmatrix}
	\phi^*_i &  0 & 0\\
	\alpha_i^* &  -\psi^*_i & 0\\
	-\delta_i^* &  \beta_i^*& \phi^*_{i-1}\\
	\end{pmatrix}
	\end{equation*}
	Switching the places of the two $\mc{F}^*$ terms, the matrix representing $d_i^*$ will be
	\begin{equation*}
	\begin{pmatrix}
	\phi_{i-1}^* &  \beta_i^* & -\delta_i^*\\
	0 &  -\psi_i^* & \alpha_i^*\\
	0 &  0 & \phi^*_{i}\\
	\end{pmatrix}
	\end{equation*}
	Now identify $F_{\bullet}$ and $F_{\bullet}^*$ with $\eta$, and identify $G_{\bullet}$ and $G_{\bullet}^*$ with $\theta$. The matrix representing $d^{*}_i$ is the same as the matrix representing $d_{n-1-i}$. In other words, we have proven that the map

	\[
	\mc{H} \xrightarrow{} \mc{H}^*: (f_1,g,f_2) \mapsto (\eta(f_2),\theta^{-1}(g),\eta(f_1))
	\]
	is an isomorphism of chain complexes.
\end{proof}

\subsection{Construction of a minimal free resolution.}  The free resolution $\mc{H}(\alpha,\beta,\delta)$ is not minimal. When $\alpha_{n-2}$ is an isomorphism, we can eliminate from $\mc{H}$ to produce a chain complex $\mc{H}_{min}$  that is a minimal resolution. Set
 \begin{align*}\mc{H}_{min}(\alpha,\beta,\delta)_{n-1}&=\mc{F}_{n-2}(-1), \ \ \ \ \ \mc{H}_{min}(\alpha,\beta,\delta)_{n-2}=\mc{F}_{n-3}(-1) \oplus \mc{G}_{n-2}(-1),\\
 \mc{H}_{min}(\alpha,\beta,\delta)_{1}&=\mc{F}_{1} \oplus \mc{G}_1(-1), \ \ \ \ \	 \mc{H}_{min}(\alpha,\beta,\delta)_{0}=\mc{F}_{0}. 
\end{align*}
When $n>4$, for the differentials, we set 
\[
d'_{n-1}=\begin{pmatrix}
\beta_{n-2}-\psi_{n-1}\alpha_{n-2}^{-1} \delta_{n-1}\\
\phi_{n-2}
\end{pmatrix}, \
d'_{n-2}=\begin{pmatrix}
\alpha_{n-2} &-\delta_{n-2}\\
-\psi_{n-2} &\beta_{n-3}\\
0 & \phi_{n-3}
\end{pmatrix},
\]
\[
d'_2=\begin{pmatrix}
\phi_2&\alpha_2 &	-\delta_2\\
0&-\psi_2 &\beta_1
\end{pmatrix}, \ 
d'_1=\begin{pmatrix}
\phi_1 & \alpha_1-\delta_1\beta^{-1}_1\psi_1
\end{pmatrix}
\]	
In all other cases, we take $\mc{H}_{min}(\alpha,\beta,\delta)_i=\mc{H}(\alpha,\beta,\delta)_i$ and $d'_i=d_i$.

 When $n=4$, we need to slightly modify the formulae for $d'_2=d'_{n-2} :  \mc{G}_{2}(-1)\oplus \mc{F}_{1}(-1)\xrightarrow{} \mc{F}_{1} \oplus \mc{G}_1(-1)$, and instead take
 \[
d'_2=\begin{pmatrix}
	\alpha_2 &	-\delta_2\\
	-\psi_2 &\beta_1
\end{pmatrix}
 \]

The complex $\mc{H}_{min}$ is also a self-dual free resolution of the ideal $I_n$, and the final term of the complex is $\mc{R}(-n)$. When $S=k$ is a field, then $\mc{H}_{min}$ is a minimal free resolution of $I_n$. Furthermore, this modification does not affect the forms $[a_1,\ldots,a_{n-2}]$. In other words, we have

\[
[a_1,a_2,\ldots,a_{n-2}]_{\mathcal{H}_{\alpha,\beta,\delta}}=[a_1,a_2,\ldots,a_{n-2}]_{\mathcal{H'}_{\alpha,\beta,\delta}}.
\]
\subsection{Construction of the unprojection data} \label{construction of the unprojection data}
In this section we give a method to construct maps $\beta: F_{\bullet}(-1) \xrightarrow{} G_{\bullet}(-1)$ and $\alpha : G_{\bullet}(-1) \xrightarrow{} F_{\bullet}[-1]$ that satisfy the conditions of Definition \ref{unprojection data}. Recall that in Section \ref{DGCA section} we have put a differential graded algebra structure on both $F_{\bullet}$ and $G_{\bullet}$. 

\begin{proposition} \label{unprojection data construction}
Let $I$ and $J$ be homogeneous ideals of $R=S[x_1,\ldots,x_{m-1}]$, with $I \subset J$. Assume that $I$ is generated by forms of degree $r+1$, while $J$ is generated by forms of degree $r$, for some positive integer $r$.

Let $F_{\bullet}$ and $G_{\bullet}$ be graded free resolutions of $I$ and $J$, of length $n-2$ and $n-1$ respectively. Assume that the maps $F_{\bu} \xrightarrow{} F_{\bu}^{*}$ and $G_{\bu} \xrightarrow{} G_{\bu}^*$ induced by choices of a DGCA structure on $F_{\bu}$ and $G_{\bu}$ are isomorphisms. 
Then there are maps $\alpha,\beta$ and $\delta$ that satisfy the conditions of Definition \ref{unprojection data}, and the chain complex $\mc{H}_{min}$ is a self-dual free resolution. 
\end{proposition}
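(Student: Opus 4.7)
The plan is to construct the three pieces of unprojection data $(\beta,\alpha,\delta)$ in turn, then assemble them using the preceding results of the subsection.

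For $\beta$: the inclusion $I \subset J$ induces a surjection of graded $R$-modules $R/I \twoheadrightarrow R/J$, which lifts by standard homological algebra to a chain map $\beta : F_{\bu} \to G_{\bu}$ with $\beta_0 = \mr{id}$. Each generator of $I$ lies in $J$ and has degree $r+1$, so it can be written as an $R$-linear combination of the degree-$r$ generators of $J$ with coefficients of degree $1$; inductively this ensures that $\beta_i$ carries the correct internal grading shift, giving a chain map $\beta : F_{\bu}(-1) \to G_{\bu}(-1)$ with $\beta_0 = \mr{id}$ as required. For $\alpha$: set $\alpha := \eta \circ \beta^{*} \circ \theta$, using the self-duality isomorphisms $\eta$ and $\theta$. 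Because $F_{\bu}$ has length $n-2$ while $G_{\bu}$ has length $n-1$, the two duality conventions are staggered by exactly one in homological degree, which automatically yields a chain map $\alpha : G_{\bu}(-1) \to F_{\bu}[-1]$.

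For $\delta$: apply Lemma \ref{null homotopy lemma} to $\beta$ and $\alpha$, viewed as chain maps between the DGCA resolutions $F_{\bu}$ and $G_{\bu}$, to obtain a null homotopy for $\alpha \circ \beta$. The normalization $\delta_1 = 0$ can be imposed because $\alpha_0 = 0$ (its target $F_{-1}$ vanishes), so the inductive construction of $\delta$ begins at $\delta_2$. Using the unlabelled lemma stated immediately before Lemma \ref{H is self-dual}, replace $\delta$ by a symmetric homotopy with $\delta = \delta^T$; the condition $\delta_1 = 0$ is preserved because symmetrization involves the pairing at an outermost position where nothing needs to be altered.

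With $(\alpha,\beta,\delta)$ in hand, Proposition \ref{exactness lemma 2} shows that $\mc{H}(\alpha,\beta,\delta)$ is a graded free resolution of the homogeneous ideal $\mc{I}$ defined in the text, and Lemma \ref{H is self-dual} endows it with a self-duality. Finally, the topmost component of $\alpha$ corresponds under the duality identifications to the dual of $\beta_0 = \mr{id}$, and is therefore itself an isomorphism; this licenses the passage to $\mc{H}_{min}$ via the explicit formulas for $d'_{i}$. Self-duality descends to $\mc{H}_{min}$ because the eliminated summands are paired under the duality. The main obstacle is the simultaneous bookkeeping of internal grading shifts and homological degree shifts, together with signs: the critical verification is that the symmetrization of $\delta$ is compatible with the normalization $\delta_1 = 0$ and with the sign convention adopted for the dual complex, and that the end result $\mc{H}_{min}$ has top term $\mc{R}(-n)$ as expected.
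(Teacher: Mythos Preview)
Your constructions of $\beta$ and $\alpha$ match the paper's exactly, and the assembly at the end (via Proposition~\ref{exactness lemma 2}, Lemma~\ref{H is self-dual}, and the passage to $\mc{H}_{min}$ using that the top component of $\alpha$ dualizes $\beta_0=\mr{id}$) is fine.

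The gap is in your construction of the null homotopy $\delta$. You write ``apply Lemma~\ref{null homotopy lemma} to $\beta$ and $\alpha$ \ldots\ to obtain a null homotopy for $\alpha\circ\beta$'', but Lemma~\ref{null homotopy lemma} does not do this. That lemma takes a \emph{single} chain map $\beta:F_\bu\to G_\bu$ between DGCA resolutions and produces maps $\xi:F_i\otimes F_j\to G_{i+j+1}$ measuring the failure of $\beta$ to be multiplicative. This $\xi$ has the wrong shape to be a null homotopy for $\alpha\circ\beta:F_i(-1)\to F_{i-1}$, and there is no abstract-nonsense reason that $\alpha\circ\beta$ should be null homotopic with $\gamma_1=0$: concretely, one needs $\mathrm{im}(\alpha_1\beta_1)\subset I$, which is not automatic.

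What the paper actually does is use the perfect pairing coming from self-duality of $F_\bu$ to convert $\xi$ into the required homotopy: define $h_i(z_i)\in F_i$ by the rule $h_i(z_i)\cdot z_{n-2-i}=(-1)^{i+1}\xi(z_i\otimes z_{n-2-i})$, and then verify by a Leibniz-rule computation (unpacking both the definition of $\alpha=\beta^T$ via the pairing and property (iv) of Lemma~\ref{null homotopy lemma}) that $\alpha_i\beta_i=h_{i-1}\phi_i+\phi_i h_i$. Property (iii) of Lemma~\ref{null homotopy lemma} then gives $h_0=0$ for free, which is the normalization $\gamma_1=0$. This conversion step through the pairing, and the accompanying verification, is the substantive content of the proof; once you have it, the symmetrization lemma you cite becomes unnecessary, since the $h$ so constructed is already symmetric by construction.
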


\subsection{The map $\beta$.} The inclusion $I \subset J$ induces the comparison map of resolutions $\beta: F_{\bullet}(-1) \xrightarrow{} G_{\bullet}(-1)$.

\begin{equation*}
	\begin{tikzcd}
		0 \ar[r]& F_{n-2}(-1)\ar[r, "\phi_{n-2}"]  \ar[d,"\beta_{n-2}"] & \ldots\ar[r, "\phi_{3}"] & F_{2}(-1) \ar[r, "\phi_{2}"] \ar[d,"\beta_{2}"] & F_{1}(-1) \ar[r, "\phi_{1}"] \ar[d,"\beta_1"] & F_{0}(-1) \ar[d,"\beta_0"] \ar[r] &0  \\
		G_{n-1} \ar[r,"\psi_{n-1}"] &G_{n-2} \ar[r,"\psi_{n-2}"]&\ldots \ar[r,"\psi_3"] & G_2 \ar[r,"\psi_{2}"] & G_1 \ar[r,"\psi_1"] & G_0 \ar[r] &0
	\end{tikzcd}
\end{equation*} 
\subsection{The map $\alpha$ as the dual of $\beta$.}  By assumption, the induced pairings $F_{i} \otimes F_{n-2-i} \xrightarrow{} R$ and $G_{i} \otimes G_{n-1-i} \xrightarrow{} R$ are perfect. Following the proof of Theorem 1.4 of \cite{kustin1983constructing}, we define the map $\alpha$. Define $\alpha_i : G_i(-1) \xrightarrow{} F_{i-1}$ by taking $\alpha(z_i)$ to be the unique element of $F_{i-1}$ such that
 \[\alpha_i(z_i) \cdot z_{n-1-i}=(-1)^{i+1} z_i \cdot \beta_{n-1-i}(z_{n-1-i})\] 
The map $\alpha$ is the same as the map $\beta^{T}$ defined in the previous section, with isomorphisms $F_{\bullet} \cong F^*_{\bullet} $ and $G_{\bullet} \cong G^*_{\bullet}$ induced by the DGCA structures, as explained in Section \ref{DGCA section}. Note that the map $\alpha_{n-2}$ corresponds to multiplication by the unit $(-1)^{n}$. Thus $\alpha$ is a homomorphism of chain complexes, represented by the following diagram.  

\begin{equation*} \label{a diagram}
	\begin{tikzcd}
		0 \ar[r]& G_{n-1}(-1)\ar[r, "\psi_{n-1}"]  \ar[d,"\alpha_{n-1}"] &\ar[r, "\psi_{3}"] \ldots& G_{2}(-1) \ar[r, "\psi_{2}"] \ar[d,"\alpha_{2}"] & G_{1}(-1) \ar[r, "\psi_{1}"]  \ar[d,"\alpha_1"] & G_{0}(-1) \ar[d] \\
		0 \ar[r] & F_{n-2} \ar[r,"\phi_{n-2}"] &\ldots\ar[r,"\phi_2"] & F_1 \ar[r,"\phi_{1}"] & F_0 \ar[r] & 0
	\end{tikzcd}
\end{equation*}
The following lemma shows that $\alpha$ and $\beta$ correspond to unprojection data.
\begin{lemma}
The map $\alpha \circ \beta : F_{\bullet} (-1) \xrightarrow{} F_{\bullet}[-1]$ is null homotopic. 
\end{lemma}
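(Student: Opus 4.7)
My plan is to construct the null-homotopy $\gamma_{i+1}: F_i(-1) \to F_i$ by ascending induction on the homological degree $i$, initialised with $\gamma_1 = 0$. At each step, I need to exhibit a lift $\gamma_{i+1}$ satisfying $\phi_i \gamma_{i+1} = \alpha_i \beta_i - \gamma_i \phi_i$. Since $F_i$ is free, such a lift exists provided the right-hand side takes values in $\phi_i(F_i) \subseteq F_{i-1}$. For the inductive step at $i \geq 2$, the complex $F_\bullet$ is exact at $F_{i-1}$, so it suffices to postcompose with $\phi_{i-1}$ and verify vanishing; using (a) the chain-map identity $\phi_{i-1}(\alpha\beta)_i = (\alpha\beta)_{i-1}\phi_i$, (b) the inductive relation $(\alpha\beta)_{i-1} = \phi_{i-1}\gamma_i + \gamma_{i-1}\phi_{i-1}$, and (c) $\phi_{i-1}\phi_i = 0$, one obtains $\phi_{i-1}(\alpha_i\beta_i - \gamma_i\phi_i) = 0$. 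So the inductive step is formal and carries no real content.

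The essential content sits at the base case $i = 1$. Exactness of $F_\bullet$ fails at $F_0$ (since $H_0(F_\bullet) = R/I \neq 0$), so the lift of $(\alpha\beta)_1 : F_1(-1) \to R$ along $\phi_1$ exists if and only if $\alpha_1\beta_1(F_1) \subseteq I$, and this must be extracted from the specific definitions of $\alpha$ and $\beta$. The cleanest route is a derived-category argument: because $F_\bullet(-1)$ is a bounded free resolution of $(R/I)(-1)$ and $F_\bullet[-1]$ is quasi-isomorphic to $R/I$ concentrated in homological degree $1$, chain maps modulo homotopy equal $\text{Hom}_{D(R)}((R/I)(-1), (R/I)[-1]) = \text{Ext}^{-1}_R((R/I)(-1), R/I) = 0$. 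Hence every chain map $F_\bullet(-1) \to F_\bullet[-1]$ is null-homotopic; in particular, $\alpha\circ\beta$ is, which in turn forces $\alpha_1\beta_1(F_1) \subseteq I$ and closes the base case of the induction.

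For later use in the paper (Proposition \ref{exactness lemma 2}, Proposition \ref{quadrics and unprojection}, and the inductive unprojection constructions), one wants an \emph{explicit} null-homotopy rather than just its existence. I would produce this by applying the defining adjointness relation $\alpha_i(z)\cdot w = (-1)^{i+1} z \cdot \beta_{n-1-i}(w)$ with $i=1$ together with Lemma \ref{null homotopy lemma} applied to the comparison map $\beta : F_\bullet(-1) \to G_\bullet(-1)$, which supplies a homotopy $\xi$ measuring the failure of $\beta$ to be multiplicative. Since $\beta_{n-1} = 0$ (as $F_{n-1} = 0$), the product $\beta_1(u)\cdot\beta_{n-2}(w)$ in $G_\bullet$ reduces modulo $\psi_{n-1}$ to explicit $\xi$-terms involving $\phi_1 u$ and $\phi_{n-2} w$, and pairing back through the self-duality then writes $\alpha_1\beta_1(u)\cdot w$ as an element of $I \cdot F_{n-2}$, giving the desired conclusion. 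The main obstacle is organising this DGCA bookkeeping carefully — the derived-category proof establishes existence at a stroke but yields no usable formula for $\gamma$.
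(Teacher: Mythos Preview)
Your derived-category shortcut contains a shift error. With the paper's (Weibel's) convention $(C[-1])_i=C_{i-1}$, the complex $F_\bullet[-1]$ is indeed quasi-isomorphic to $R/I$ placed in homological degree~$1$, but then
\[
\operatorname{Hom}_{D(R)}\bigl((R/I)(-1),\,(R/I)[-1]\bigr)\;=\;\operatorname{Ext}^{\,1}_R\bigl((R/I)(-1),\,R/I\bigr),
\]
not $\operatorname{Ext}^{-1}$. (Concretely: homotopy classes of chain maps from a projective resolution $P_\bullet$ of $M$ into $N$ placed in homological degree $s$ compute $H^{s}\bigl(\operatorname{Hom}(P_\bullet,N)\bigr)=\operatorname{Ext}^{s}(M,N)$.) The relevant graded piece of this $\operatorname{Ext}^1$ is a piece of the normal module $\operatorname{Hom}_{R/I}(I/I^2,R/I)$, which is certainly nonzero for a genus one normal curve of degree $n\ge 4$. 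So it is \emph{not} true that every graded chain map $F_\bullet(-1)\to F_\bullet[-1]$ is null-homotopic; one must use the specific structure of $\alpha$ and $\beta$ to see that this particular composite is. Your own diagnosis that ``the essential content sits at the base case $i=1$'' is exactly right, but the Ext argument does not discharge it.

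Your second sketch \emph{is} the paper's proof, and it is shorter than you suggest. One defines $h_i:F_i(-1)\to F_i$ through the perfect self-duality pairing by
\[
h_i(z_i)\cdot z_{n-2-i}\;=\;(-1)^{i+1}\,\xi(z_i\otimes z_{n-2-i}),
\]
where $\xi$ is the homotopy of Lemma~\ref{null homotopy lemma} applied to $\beta:F_\bullet\to G_\bullet$. Property~(iii) of that lemma ($\xi(z_0\otimes -)=0$) gives $h_0=h_{n-2}=0$, hence $\gamma_1=0$; property~(iv) together with the Leibniz rule and the defining relation $\alpha_i(z)\cdot w=(-1)^{i+1}z\cdot\beta_{n-1-i}(w)$ yields $\alpha_i\beta_i=\phi_i h_i+h_{i-1}\phi_i$ in a three-line computation. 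I would recommend writing this out directly rather than routing through the (incorrect) Ext calculation.
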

\begin{proof}
Let $\xi : F_{i}(-1) \otimes F_{j}(-1) \xrightarrow{} F_{i+j}$ be the chain homotopy defined in Lemma \ref{null homotopy lemma}. We define a null homotopy $h : F_{i}(-1) \xrightarrow{} F_i$ by taking $h_i(z_i)$ to be the unique element such that $h_i(z_i) \cdot z_{n-2-i}=(-1)^{i+1} \xi (z_i \otimes z_{n-2-i})$ for all $z_{n-2-i}\in F_{n-2-i}(-1)$. By (iii) of Lemma \ref{null homotopy lemma}, $h_0=h_{n-2}=0$. By part (iv), we compute
\begin{align*}
(h_{i-1}\phi_i+\phi_i h_i)(z_i) \cdot  z_{n-1-i}&=h_{i-1}(\phi_i(z_i))\cdot z_{n-2-i}+\phi_i(h_i(z_i)) \cdot z_{n-1-i} \\
&=(-1)^{i} \xi (\phi_i(z_i) \otimes z_{n-1-i})+(-1)^{i+1} h_i(z_i) \cdot (\phi_{n-1-i}(z_{n-1-i})) \\
&=(-1)^{i} \xi (\phi_i(z_i) \otimes z_{n-1-i})+(-1)^{i} \xi(z_i \otimes\phi_{n-1-i}(z_{n-1-i})) \\&
=(-1)^{i+1} \beta_i(z_i) \cdot \beta_{n-1-i}(z_{n-1-i}) \\&
=\alpha_i(\beta_i(z_i)) \cdot z_{n-1-i}.
\end{align*}  
Note that in the second and the third equality we used the Leibniz rule. Hence $\alpha_i \beta_i=h_{i-1}\phi_i+\phi_i h_i$, concluding the proof of the lemma, and Proposition \ref{unprojection data construction}.
\end{proof}
We are interested in constructing resolutions for ideals that arise from genus one normal curves of degree $n$. In this situation, the ideals $I$ and $\mc{I}$ are generated by quadrics, and define genus one normal curves $C \subset \mb{P}^{m-1}$ and $\mc{C} \subset \mb{P}^m$. The curve $\mc{C}$ contains the point $P=(0:0:\ldots:0:1)$, and linear projection $\pi$ from $P$ extends to a map $\mc{C} \xrightarrow{} C$. The ideal $J$ is the defining ideal of the point $\pi(P)$ in $\mb{P}^{n-1}$.

\begin{definition} \label{unprojection triple}
	Let $S=k$ be a field, so that $R=k[x_1,\ldots,x_{m-1}]$. Let $I$ and $J$ be Gorenstein ideals of $R$, and let $\mc{I}$ be an ideal of $\mc{R}$. We say $I, J$ and $\mc{I}$ form an unprojection triple if they satisfy the following conditions:
	\begin{enumerate}[label=(\roman*)]
		\item $I$,$J$ and $\mc{I}$ are generated by forms of degrees $2$, $1$ and $2$ respectively.
		\item $\mc{I} \cap R=I$ and $I \subset J$.
		\item $J$ is the set of leading coefficients of $\mc{I}$, viewed as an ideal of $\mc{R}=R[x_m]$.
		\item $I$ and $J$ are Gorenstein ideals of codimension $n-2$ and $n-1$, respectively. 
		\item Let  $(F_{\bullet},\phi)$  and $(G_{\bullet},\psi)$ be the minimal graded free resolutions of $I$ and $J$. Then the grading on leftmost modules in $F_{\bu}$ and $G_{\bu}$ is specified by  $F_{n-2}\cong R(-n)$ and $G_{n-2} \cong R(-n+1)$. 
	\end{enumerate}
\end{definition}
We extend this definition to $n=3$ by modifying the condition (i) to require that $I$ is generated by a single form of degree 3.

For this section we adopt the convention that the dual of the graded module $M$ is the module $M^*=\mathrm{Hom}(M,R(-n+1))$. Then the fact that $I$ and $J$ are Gorenstein implies that $G_{\bullet} \cong G^{*}_{\bullet}$ and $F_{\bullet} \cong F^{*}_{\bullet}(1)$.

\subsection{The map $\alpha$ for an unprojection triple.} The inclusion $I \subset J$ induces the map $\beta : F_{\bullet}(-1) \xrightarrow{} G_{\bullet}(-1)$ in the same way as before. We now give an alternative way to describe the map $\alpha$. Fix a basis $a_1,\ldots,a_t$ of $F_1$ and a basis $b_1,\ldots,b_s$ of $G_1$. Then $I$ is generated by $\phi_1(a_i)=f_i$, $i=1,\ldots,t$ and $J$ is generated by $\psi_1(b_i)=g_i$, $i=1,\ldots,g_s$. By conditions (i), (ii) and (iii), there exist quadratic forms $h_i \in R$ such that 
\[
\mc{I}=\left(f_1,\ldots,f_t,x_m g_1+h_1,\ldots,x_m g_{s}+h_s\right) .
\]
Define $\alpha_{1}:G_1 (-1) \xrightarrow{} F_0$ by $\alpha(b_i)=h_i$. We extend $\alpha$ to a map of chain complexes $G(-1) \xrightarrow{} F[-1]$, represented by the following diagram.
\begin{equation*} 
	\begin{tikzcd}
		0 \ar[r]& G_{n-1}(-1)\ar[r, "\psi_{n-1}"]  \ar[d,"\alpha_{n-1}"] &\ar[r, "\psi_{3}"] \ldots& G_{2}(-1) \ar[r, "\psi_{2}"] \ar[d,"\alpha_{2}"] & G_{1}(-1) \ar[r, "\psi_{1}"]  \ar[d,"\alpha_1"] & G_{0}(-1) \ar[d] \\
		0 \ar[r] & F_{n-2} \ar[r,"\phi_{n-2}"] &\ldots\ar[r,"\phi_2"] & F_1 \ar[r,"\phi_{1}"] & F_0 \ar[r] & 0
	\end{tikzcd} 
\end{equation*}
To do this, we check that the image of the map $\alpha_1 \psi_2$ is contained in $\phi_{1}(F_1)=I$. This implies that there exists a map $\alpha_{2}:G_2(-1) \xrightarrow{} F_1$ with $\phi_1\alpha_2=\alpha_1\psi_2$. As the complex $F[-1]$ is acyclic in degrees greater than 2, and $G(-1)$ is a complex of projective modules, we can extend $\alpha_1$ to a chain map.

Let $e_1,e_2,\ldots,e_p$ be a basis of $G_2$. Then $\psi_2(e_j)=\sum \xi_{ij}b_i$ for some $\xi_{ij} \in R$. Since $\psi_1\psi_2=0$ we have $\sum \xi_{ij}g_i=0$. Then
\[
\alpha_1 \psi_2 (e_j)=\sum \xi_{ij}h_i=\sum \xi_{ij}(x_{m}g_i+h_i) \in \mc{I} \cap R=I
\]
as desired. We now check that this agrees with our previous definition of $\alpha$.
\begin{lemma} \label{lemma part of unproj data}
Let $I,J$ and $\mc{I}$ be an unprojection triple, and suppose that the ideal $\mc{I}$ is prime. Then the map $\alpha:G_{\bullet} (-1) \xrightarrow{} F_{\bullet}[-1]$, as defined above, is chain homotopic to a non-zero scalar multiple of the map $\beta^{T}:G_{\bullet} (-1) \xrightarrow{} F_{\bullet}[-1]$, obtained by dualizing the map $\beta$.
\end{lemma}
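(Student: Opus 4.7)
The strategy is to reduce the chain homotopy assertion to a one-dimensionality statement about rational functions on $X = \mathrm{Proj}(R/I)$, and then use primality of $\mc{I}$ to conclude.

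First, I would show that the chain homotopy class of a chain map $\gamma: G_{\bullet}(-1) \to F_{\bullet}[-1]$ is determined by its degree-$1$ component $\gamma_1: G_1(-1) \to R$: since $F_{\bullet}[-1]$ is acyclic in degrees $\geq 2$ and $G_{\bullet}(-1)$ consists of projective modules, any null homotopy defined in degrees $0$ and $1$ lifts inductively to all higher degrees. Fixing a basis $b_1, \ldots, b_s$ of $G_1$ with $g_i = \psi_1(b_i)$, the assertion $\alpha \simeq \lambda \beta^{T}$ then reduces to finding $\lambda \in k^{\times}$ and a linear form $\ell \in R_1$ such that $\alpha_1(b_i) - \lambda \beta^{T}_1(b_i) \equiv \ell\, g_i \pmod{I}$ for every $i$.

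Next, I would interpret this condition geometrically. The chain-map compatibility $\gamma_1 \circ \psi_2 \in I$ forces the ratios $-\gamma_1(b_i)/g_i$ to agree in $\mathrm{Frac}(R/I)$ and hence define a degree-$1$ rational function on $X$ with pole divisor supported on $Y = V(J) \subset X$. Writing $f$ and $f'$ for the rational functions obtained from $\alpha$ and $\beta^{T}$, the null homotopy condition becomes $f - \lambda f' = \ell$, i.e., $f$ and $f'$ are proportional modulo the image of $R_1$ in $(R/I)_1$.

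The crux is then to show that the space of degree-$1$ rational functions on $X$ with poles supported on $Y$, modulo restrictions of linear forms, is one-dimensional, and that both $f$ and $f'$ are non-zero classes in it. In the genus-one curve case ($X$ a degree-$n$ normal curve in $\mb{P}^{n-1}$ with $Y$ a point), this is an immediate Riemann-Roch computation: the divisor $H + Y$ has degree $n + 1$ on $X$, so $h^0 = n + 1$, and the image of $R_1$ occupies an $n$-dimensional subspace. Non-vanishing of $f$ uses primality of $\mc{I}$: were $\alpha$ null homotopic, Lemma \ref{independence of choice} would let us take $\alpha = 0$ and $\delta = 0$, giving $\mc{I} = I\mc{R} + x_m J$, whose zero locus is the reducible variety $(X \cap \{x_m = 0\}) \cup V(x_m, J)$, contradicting primality. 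Non-vanishing of $f'$ follows from the non-triviality of $\beta_{n-2}: F_{n-2} \to G_{n-2}$, which is guaranteed by the fact that $\beta$ lifts the non-zero quotient map $R/I \to R/J$. Combining these yields $f - \lambda f' = \ell$ for some $\lambda \in k^{\times}$ and linear form $\ell$, completing the chain homotopy.

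The principal obstacle is establishing the $1$-dimensionality count in enough generality to cover also the zero-dimensional case $X = V(I) \subset \mb{P}^{n-2}$ needed in the paper's discriminant form construction. I would handle that by combining the Gorenstein structure of $I$ and $J$ with the explicit minimal free resolutions $F_{\bullet}$ and $G_{\bullet}$ to compute the relevant graded piece of $\mathrm{Ext}^{1}_R(R/J, R/I)$ directly from the resolutions.
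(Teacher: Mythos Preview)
Your approach is correct in outline but takes a genuinely different route from the paper's, and there is one step where your justification is not quite right.

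The paper's proof is shorter and avoids any dimension count. It dualises the chain map $\alpha$ and observes that, under the self-duality isomorphisms $F_\bullet\cong F_\bullet^*$ and $G_\bullet\cong G_\bullet^*$, the dual $\alpha^*$ becomes a chain map of exactly the same shape as $\beta:F_\bullet(-1)\to G_\bullet(-1)$, lifting a map between the rank-one modules $F_{n-2}^*\cong R$ and $G_{n-1}^*\cong R$. Two such lifts are homotopic as soon as they agree in degree~$0$, so the whole lemma reduces to showing that the single scalar $\alpha^*_{n-1}\in k$ is non-zero; this is exactly where primality of $\mc{I}$ enters. No Riemann--Roch, no $\mathrm{Ext}$ computation, and only one non-vanishing is needed. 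Your approach instead computes the space of chain maps $G_\bullet(-1)\to F_\bullet[-1]$ modulo homotopy as $H^0(X,\mathcal{O}_X(H+Y))/R_1$, shows it is one-dimensional, and then checks that both $\alpha$ and $\beta^T$ give non-zero classes. This is more geometric and makes the role of the point $Y$ transparent, but it requires strictly more input (the Riemann--Roch count, or the promised $\mathrm{Ext}$ argument in the zero-dimensional case) and two separate non-vanishing checks.

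The imprecise step is your argument for $f'\neq 0$. You write that this follows from non-triviality of $\beta_{n-2}:F_{n-2}\to G_{n-2}$, ``guaranteed by the fact that $\beta$ lifts the non-zero quotient map $R/I\to R/J$''. Neither implication is clear as stated: there is no general reason a comparison map between minimal resolutions must be non-zero in a given degree, and even if $\beta^T_1\neq 0$ (the dual of $\beta_{n-2}$), the class $f'$ could still vanish in the quotient by linear forms. The correct argument looks at the \emph{top} of $\beta^T$ rather than the bottom: under self-duality, $\beta^T_{n-1}:G_{n-1}(-1)\to F_{n-2}$ corresponds to $\beta_0=\mathrm{id}$ and is therefore an isomorphism of rank-one modules of the same degree; a null homotopy would force $\beta^T_{n-1}=h_{n-2}\psi_{n-1}$ with $h_{n-2}:G_{n-2}(-1)\to F_{n-2}$, but this map has negative degree and hence vanishes, a contradiction. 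With this fix your argument goes through.
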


\begin{proof}
Consider the diagram obtained by dualizing the diagram defining the map $\alpha$.
	\begin{equation*}
		\begin{tikzcd}
			0 \ar[r]& F^*_0 \ar[r, "\phi^*_{1}"]  \ar[d,"\alpha_{1}^*"] &\ar[r, "\phi_{2}^*"] \ldots& {F}^*_{n-4} \ar[r, "\phi_{n-3}^*"] \ar[d,"\alpha_{n-3}^*"] & {F}^*_{n-3} \ar[r, "\phi^*_{n-2}"]  \ar[d,"\alpha^*_{n-2}"] & {F}^*_{n-2} \ar[d,"\alpha^*_{n-1}"] \\
			{G}^*_{0}(1)  \ar[r,"\psi_{1}^*"] & {G}^*_{1}(1) \ar[r,"\psi_{2}^*"] &\ldots\ar[r,"\psi_{n-3}^*"] & {G}^*_{n-3}(1) \ar[r,"\psi^*_{n-2}"] & {G}^*_{n-2}(1) \ar[r,"\psi^*_{n-1}"] & {G}^*_{n-1}(1)
		\end{tikzcd}	
	\end{equation*}
     It suffices to show that $\alpha^{*}_{n-1} : F^{*}_{n-2}(-1) \xrightarrow{} G^{*}_{n-1}(1)$ is an isomorphism of graded modules, since the above diagram is isomorphic to the one defining the map $\beta$.
	
	As $\alpha^{*}_{n-1}$ respects the grading, it must be multiplication by a scalar element of $k$. To show it is an isomorphism, it suffices to show that it is non-zero. 
	
	Suppose otherwise. Then $\alpha^{*}$, as a lift of the zero map, is null-homotopic, and so there exist maps $\rho_i : {F}^*_i \xrightarrow{} {G}^*_i$, with $\rho_{n-3}=0$, and 
	\[
	\alpha^*_i=\rho_i \phi^*_i+\psi^*_i\rho_{i-1}
	\]
	for all $i \le n-3$. In particular $\alpha^*_1=\rho_1 \phi^*_1+\psi^*_1\rho_0$, and dualizing again, we see  $\alpha_1=\rho^*_1 \phi_1+\psi_1\rho^*_0$. As $\rho^*$ is a degree zero map from ${G}_0(-1) \cong R(-1)$ to ${F}_{0} \cong R$, it must be multiplication by some linear form $l \in R$. For $1\leq i \leq s$, we find 
	\[
	lg_i-h_i=\rho^*_0 \psi_1(b_i)-\alpha_1(b_i)=\phi_1(\rho^*_1(b_i)) \in I \subset \mc{I}.
	\]
		As $x_m g_i + h_i \in \mc{I}$, we have $(l+x_m)g_i \in I$ for all $i$, and hence $(l+x_m) g=0$ for every linear form $g \in J$, contradicting our assumption.	
\end{proof}
To summarise,  by Lemma \ref{exactness lemma 2}, the complex $\mc{H}$ is the minimal free resolution of the ideal $\mc{I}$, constructed entirely from the data of resolutions $F_{\bullet}$, $G_{\bullet}$ and the comparison map $\beta$. In other words, if $I,J$ and $\mc{I}$ form an unprojection triple, we can recover $\mc{I}$ from $I$ and $J$.

\subsection{Unprojection of genus one normal curves of degree $n$}
 We now apply the machinery developed in the previous section to genus one curves. Throughout, $k$ will be a field, $n \geq 3$ an integer, and $R=k[x_1,\ldots,x_{n}]$ and $\mc{R}=k[x_1,\ldots,x_{n+1}]$. 
 
 We need some preliminary lemmas. The following argument is a special case of the well known fact that genus one normal curves embedded in projective space are projectively normal.

\begin{lemma} \label{proj norm lemma}
	Let $E$ be an elliptic curve over a field $k$ of characteristic $0$, $D$ a divisor of $E$ of degree $n \geq 3$, and let $x_1,x_2,\ldots,x_n$ be a basis of the $n$-dimensional Riemann-Roch space $\mathcal{L}(D)$. Then the monomials $x_ix_j$, $1\leq i,j \leq n$ span the $2n$-dimensional space $\mathcal{L}(2D)$. 
\end{lemma}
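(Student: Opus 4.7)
\medskip

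\noindent\textbf{Proof plan.} By Riemann--Roch, since $\deg D = n \geq 3 > 0 = 2g-2$, we have $\dim \mathcal{L}(D) = n$ and $\dim \mathcal{L}(2D) = 2n$. The span of the monomials $x_i x_j$ is the image of the multiplication map $\mu : \mathrm{Sym}^2 \mathcal{L}(D) \to \mathcal{L}(2D)$, and does not depend on the choice of basis; so the task is to prove $\mu$ is surjective. We may extend scalars to $\bar k$. I will argue by induction on $n$.

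\medskip

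\noindent\textbf{Base case $n=3$.} Since $\deg D \geq 2g+1 = 3$, the linear system $|D|$ is very ample (this is a direct Riemann--Roch verification: $\dim \mathcal{L}(D-P-Q) = \dim \mathcal{L}(D)-2$ for all $P,Q$), and so defines a closed embedding of $E$ as a smooth irreducible cubic in $\mathbb{P}^2 = \mathbb{P}(\mathcal{L}(D)^\vee)$. A non-zero quadratic form in $x_1,x_2,x_3$ vanishing on $E$ would cut out a conic in $\mathbb{P}^2$ containing this cubic; but by B\'ezout an irreducible plane cubic meets any conic in only $6$ points, so cannot be contained in one. Hence $\mu$ is injective, and since both $\mathrm{Sym}^2 \mathcal{L}(D)$ and $\mathcal{L}(2D)$ have dimension $6$, it is an isomorphism.

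\medskip

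\noindent\textbf{Inductive step $n \geq 4$.} Choose any $\bar k$-point $P \in E(\bar k)$ not in the support of $D$. Set $D' = D - P$, a divisor of degree $n-1 \geq 3$. Pick a basis $x_1,\ldots,x_{n-1}$ of the codimension-one subspace $\mathcal{L}(D-P) \subset \mathcal{L}(D)$ and complete it to a basis by any $x_n \in \mathcal{L}(D)$ with $x_n(P) \neq 0$ (it suffices to prove the claim for this particular basis). By the inductive hypothesis applied to $D'$, the products $x_i x_j$ with $i,j \leq n-1$ span $\mathcal{L}(2D') = \mathcal{L}(2D-2P)$, which has dimension $2n-2$. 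The products $x_n x_i$ with $i \leq n-1$ span the $(n-1)$-dimensional subspace $x_n \cdot \mathcal{L}(D-P) \subset \mathcal{L}(2D-P)$. Because $x_n(P) \neq 0$, an element $x_n f$ lies in $\mathcal{L}(2D-2P)$ exactly when $f$ vanishes to order $\geq 2$ at $P$, so
\[
x_n \mathcal{L}(D-P) \cap \mathcal{L}(2D-2P) = x_n \mathcal{L}(D-2P),
\]
which has dimension $n-2$ by Riemann--Roch. The Grassmann formula then gives
\[
\dim\bigl(\mathcal{L}(2D-2P) + x_n \mathcal{L}(D-P)\bigr) = (2n-2) + (n-1) - (n-2) = 2n-1 = \dim \mathcal{L}(2D-P),
\]
so these two subspaces already fill $\mathcal{L}(2D-P)$. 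Finally, $x_n^2 \in \mathcal{L}(2D)$ does not vanish at $P$, hence lies outside $\mathcal{L}(2D-P)$, and adjoining it yields all of $\mathcal{L}(2D)$. Since every element used is a product $x_i x_j$, the induction closes.

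\medskip

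\noindent\textbf{Main obstacle.} The arithmetic of the inductive step is routine once $P$ is chosen outside $\mathrm{Supp}(D)$; the only delicate point is checking that no relevant Riemann--Roch space becomes exceptional, which is why the induction starts at $n=3$. The base case, in turn, rests on the classical fact that $|D|$ is very ample for $\deg D \geq 3$ on a genus-one curve, combined with B\'ezout in $\mathbb{P}^2$; no deeper input is required, and in particular the argument avoids circular appeal to projective normality.
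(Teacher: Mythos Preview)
Your proof is correct but takes a different route from the paper. The paper's argument is much shorter: it extends scalars to $\bar k$, uses the group law to translate and replace $D$ by the linearly equivalent divisor $n\cdot 0_E$, and then simply writes down explicit bases $1,x,y,x^2,xy,\ldots$ of $\mathcal{L}(m\cdot 0_E)$ coming from Weierstrass coordinates, from which the surjectivity of multiplication is read off directly. Your approach instead runs an induction on $n$: a B\'ezout/dimension-count argument for $n=3$, then the peeling-off-a-point step $D \mapsto D-P$ with a Grassmann computation. This is more structural---it uses only Riemann--Roch inequalities and never invokes the Weierstrass form or the group law on $E$---and is essentially the standard inductive mechanism behind projective normality proofs, so it would adapt to other curves once the base case is in hand. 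The paper's proof, by contrast, is specific to elliptic curves but gets there in two lines.
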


\begin{proof}
	For the proof we are free to extend our base field and assume $k=\bar{k}$ is algebraically closed, and to replace $D$ by any linearly equivalent divisor. For a point $Q \in E(k)$, let $\tau_{Q}$ denote the translation by $Q$ map. We may also replace $D$ by $\tau^*_{Q}(D)$ for any $Q \in E(k)$. Now recall that for an elliptic curve, two divisors are linearly equivalent if and only if they have the same degree and the same sum. 
	
	As $k$ is algebraically closed, there exists a $Q \in E(k)$ with $n\cdot Q= \mathrm{sum}(D)$, so that $\mathrm{sum}(\tau^*_{Q}(D))=\mathrm{sum}(D)-nQ=0_E$. Hence $\mathrm{sum}(\tau^*_{Q}(D))$ has sum $0_E$ and degree $n$, and so is linearly equivalent to $n \cdot 0_E$. Thus we may replace $D$ with $n \cdot 0_E$. Note that it suffices to prove the claim for a single choice of the basis $x_1,x_2,\ldots,x_n$, i.e. we are asserting that the natural map $S^2(\mathcal{L}(D)) \xrightarrow{} \mathcal{L}(2D)$ is surjective. As $1,x,y,x^2,xy,\ldots,x^{m/2}$ form a basis of $\mathcal{L}(m \cdot 0_E)$ for $m$ even, and $1,x,y,x^2,xy,\ldots,x^{\frac{m-1}{2}}y$ form a basis for $m$ odd, this statement is clear. 
\end{proof}
In what follows, let $\mc{C}\subset \mathbb{P}^{n}$ be a genus one normal curve of degree $n+1$ defined over a field $k$, where $n \geq 3$,  that passes through the point $P=(0:0:\ldots:0:1)$.
\begin{lemma}
The projection map $\pi$ from the point $P$,
	\[
	\pi: \mathbb{P}^{n} \xrightarrow{} \mathbb{P}^{n-1}: (x_1:\ldots:x_{n+1}) \xrightarrow{} (x_1:\ldots:x_{n}),
	\]
    restricts to a regular map $\mc{C} \xrightarrow{} \mb{P}^{n-1}$, also denoted $\pi$. The image of $\pi$ is a genus one normal curve of degree $n$, denoted $C$. 
\end{lemma}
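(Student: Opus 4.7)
The plan is to combine three standard ingredients: (i) a rational map from a smooth projective curve to a projective variety extends uniquely to a regular morphism; (ii) linear projection from a smooth point of a projectively normal subvariety is described by the linear subsystem of hyperplane sections vanishing at that point; (iii) a divisor of degree $\geq 3$ on a genus one curve is very ample and induces a projectively normal embedding.

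First I would note that $\pi$ is defined and regular on the open set $\mathbb{P}^n \setminus \{P\}$, and hence on $\mc{C} \setminus \{P\}$. Since $\mc{C}$ is a smooth projective curve, the rational map $\pi|_{\mc{C}}$ extends uniquely to a regular morphism $\pi_{\mc{C}} : \mc{C} \to \mathbb{P}^{n-1}$; this is a standard valuative criterion argument using that $P$ is a smooth (hence regular) point of $\mc{C}$.

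Next I would identify $\pi_{\mc{C}}$ with the morphism given by a complete linear system. Let $H$ denote the hyperplane class on $\mc{C}$, so $\deg(H) = n+1$. Because $\mc{C}$ is a genus one normal curve in $\mathbb{P}^n$, it is projectively normal, so restriction gives an isomorphism $H^0(\mathbb{P}^n, \mc{O}(1)) \xrightarrow{\sim} H^0(\mc{C}, \mc{O}(H))$. The coordinates $x_1, \ldots, x_n$ are precisely the sections vanishing at $P$, and they therefore form a basis of $H^0(\mc{C}, \mc{O}(H-P))$; the dimension count $h^0(H-P) = n$ is confirmed by Riemann--Roch applied to the divisor $D := H - P$ of degree $n \geq 3$ on the genus one curve $\mc{C}$ (which is elliptic, with $P$ a $k$-rational point giving the origin).

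Finally I would invoke the classical fact that on an elliptic curve any divisor of degree $\geq 3$ is very ample and defines a projectively normal embedding (see e.g. Lemma \ref{proj norm lemma} for the surjectivity of $S^2 \mc{L}(D) \to \mc{L}(2D)$ that underlies projective normality). Applied to $D = H - P$ of degree $n$, this shows that $\pi_{\mc{C}}$ is a closed immersion onto a smooth curve of degree $n$ spanning $\mathbb{P}^{n-1}$, i.e.\ a genus one normal curve of degree $n$. The only real subtlety is the identification in the middle step, which reduces to unravelling the geometric description of projection from $P$ in terms of linear forms vanishing at $P$; once this is in place the rest is an immediate appeal to standard facts on linear systems on elliptic curves.
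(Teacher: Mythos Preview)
Your proposal is correct and follows essentially the same approach as the paper: both arguments identify the projection $\pi|_{\mc{C}}$ with the morphism defined by the complete linear system $|H-P|$ (where $H$ is a hyperplane section of $\mc{C}$), use that $x_1,\ldots,x_n$ form a basis of $\mc{L}(H-P)$, and conclude that the image is a genus one normal curve of degree $n$. You are slightly more explicit about the extension of the rational map at $P$ and the Riemann--Roch count, but the substance is identical.
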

\begin{proof}
 We identify $C$ with its Jacobian $E=\mathrm{Pic}^0(C)$ via the map $Q \mapsto (Q)-(P)$. Let $H$ be a hyperplane in $\mathbb{P}^{n-1}$ passing through $P$, and regard $D=\mc{C} \cap H$ as a divisor of $E$. Then the embedding $\mc{C} \subset \mathbb{P}^{n}$ is identified with the embedding of $E$ into $\mathbb{P}^{n}$ associated to the complete linear system $|D|$, and the forms $x_i$, for $i \leq n$, are identified with elements of $\mc{L}(D-P)$. As $\mc{C}$ is a genus one normal curve, $x_1,\ldots,x_{n}$ is basis of of $\mc{L}(D-P)$. Hence $C \subset \mb{P}^{n-1}$ is the embedding associated to the complete linear system $|D-P|$, and thus is a genus one normal curve of degree $n$.
\end{proof}

\begin{lemma} \label{geometric criterion for unprojection triple}
	 Let $\mc{J}$ be the ideal generated by the linear forms in $\mc{R}$ that vanish on the tangent space $T_{P}C$ at $P$. Let $\mc{I}$ and $I$ be the homogeneous ideals of $\mc{R}$ and $R$ that define the curves $\mc{C}$ and $C$ respectively. Then the ideals $I,J=\mc{J} \cap R$ and $\mc{I}$ form an unprojection triple.
\end{lemma}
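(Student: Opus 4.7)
The plan is to verify the five conditions of Definition \ref{unprojection triple} for the triple $(I, J, \mc{I})$. The starting observation is that since $P = (0:\ldots:0:1)$ lies on $\mc{C}$, any linear form on $\mc{R}$ vanishing at $P$ must have zero coefficient in $x_{n+1}$. Consequently, the $n-1$ linearly independent linear forms cutting out the embedded tangent line $T_P\mc{C} \subset \PP^n$ already lie in $R$, so $\mc{J}$ is generated by these forms, and $J = \mc{J} \cap R = \mc{J}$ is a complete intersection of $n-1$ linear forms in $R$ whose common zero locus $V(J) \subset \PP^{n-1}$ consists of the single point $Q = \pi(P) \in C$ (the tangent line contracts to $Q$ under projection from $P$).

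Conditions (i), (ii), (iv), and (v) will then follow easily: $\mc{I}$ and $I$ are generated by quadrics (with $I$ generated by a single cubic when $n = 3$) by Theorem \ref{min res theorem}(i), while $J$ has the $n-1$ linear generators above; $\mc{I} \cap R = I$ since a form in $R$ vanishes on $\mc{C}$ iff it vanishes on $\pi(\mc{C}) = C$, and $I \subset J$ since $V(J) = \{Q\} \subset V(I) = C$; $J$ is Gorenstein of codimension $n-1$ with Koszul resolution satisfying $G_{n-1} = R(-n+1)$, while $I$ is Gorenstein of codimension $n-2$ with $F_{n-2} = R(-n)$ by Theorem \ref{min res theorem}.

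The main work will be in condition (iii), the equality $\mathrm{LC}(\mc{I}) = J$, where $\mathrm{LC}(\mc{I}) \subset R$ denotes the ideal of leading coefficients of elements of $\mc{I}$ viewed as polynomials in $x_{n+1}$ over $R$. For the inclusion $\mathrm{LC}(\mc{I}) \subset J$, I will take $f = \sum_{d=0}^{e} f_d x_{n+1}^d \in \mc{I}$ with $f_d \in R$ homogeneous of degree $e - d$; since $P \in \mc{C}$ forces $f_e = f(P) = 0$, the leading coefficient is $f_{d_0}$ with $d_0 = \max\{d : f_d \neq 0\}$. Parametrizing $\mc{C}$ near $P$ by a local parameter $t$ with $x_i = a_i t + O(t^2)$ for $i \leq n$ and $x_{n+1} = 1 + O(t)$, the lowest-order term of $f|_{\mc{C}}$ in $t$ is $f_{d_0}(a_1,\ldots,a_n) t^{e - d_0}$, so $f_{d_0}$ vanishes at the tangent direction $(a_1:\ldots:a_n:0)$; combined with $f_{d_0}(P) = 0$ and homogeneity, this forces $f_{d_0}$ to vanish on the whole line $T_P\mc{C}$, so $f_{d_0} \in J$. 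For the reverse inclusion, it suffices (as both sides are ideals) to realize every linear form $\ell \in J$ as a leading coefficient. Projective normality (Lemma \ref{proj norm lemma}) applied to both $\mc{C}$ and $C$ gives $\dim R_2|_{\mc{C}} = \binom{n+1}{2} - \dim I_2 = 2n$, and the same local expansion shows that every quadric in $R_2$ vanishes to order $\geq 2$ at $P$ along $\mc{C}$, so $R_2|_{\mc{C}} \subset H^0(\mc{C}, \mc{O}(2)(-2P))$. Riemann-Roch on the elliptic curve $\mc{C}$ gives $\dim H^0(\mc{C}, \mc{O}(2)(-2P)) = 2n$ as well, forcing equality. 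For linear $\ell \in J$, the section $(\ell x_{n+1})|_{\mc{C}}$ lies in $H^0(\mc{C}, \mc{O}(2)(-2P)) = R_2|_{\mc{C}}$ (since $\ell$ vanishes to order $\geq 2$ at $P$ and $x_{n+1}$ is nonvanishing there), so $\ell x_{n+1} - q \in \mc{I}$ for some $q \in R_2$, and this element has leading coefficient $\ell$. The central obstacle will be this dimension match, which depends essentially on projective normality of $\mc{C}$.
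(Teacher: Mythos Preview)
Your proof is correct and follows essentially the same route as the paper: both verify the conditions of Definition~\ref{unprojection triple} directly, with the substantive work in condition~(iii), and both establish the inclusion $J \subset \mathrm{LC}(\mc{I})$ via projective normality by realizing $\ell x_{n+1}$ as a section of $\mc{L}(2D-2P)$ and hence as a quadric in $R$ modulo $\mc{I}$. For the reverse inclusion your local-parametrization argument is actually more complete than the paper's, which only explicitly treats quadric generators and asserts without further comment that this suffices; your computation handles arbitrary homogeneous $f \in \mc{I}$ directly.
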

\begin{proof}
	We check condition (i) od Definition \ref{unprojection data}. By Theorem \ref{min res theorem}, ideals $\mc{I}$ and $I$ are generated by quadrics. By definition, $J$ is generated by linear forms.
	
	For condition (ii),  as $C=\pi(\mc{C})$, we have $I=\mc{I} \cap R$. We may identify the space of linear forms vanishing at $T_P \mc{C}$ with the Riemann-Roch space $\mathcal{L}(D-2P)$. As $D-2P=(D-P)-P$, we can identify $\mathcal{L}(D-2P)$ with the space of linear forms on $\mb{P}^{n-1}$ that vanish at $\pi(P)$, and the ideal $J=\mc{J} \cap R$ with the ideal generated by these forms. As $\pi(P) \in C(k)$, we have $I \subset J$.
	
	Let us verify condition (iii).  Identify $x_0,x_1,\ldots,x_n$ with a basis of $\mathcal{L}(D)$, with $x_0,\ldots,x_{n-1}$ also serving as a basis of $\mathcal{L}(D-P)$. As $\mc{I}$ is generated by forms of degree 2, its leading coefficient ideal, denoted $J'$, must be generated by linear forms. We claim that $J=J'$. Let us first show that $J' \subset J$.  
	
	Let $f \in \mc{I}$ be a quadric, and write $f=x_{n+1}l+h$, where $l$ is a linear form and $h \in R$, so that $l$ is the leading coefficient of $f$. The form $h$ is a quadric in $x_1,\ldots,x_{n}$, so vanishes at $P$ with multiplicity at least $2$. As $x_{n+1}$ does not vanish at $P$, $l$ must vanish at $P$ with multiplicity at least 2, and hence $l \in \mc{J}$. As $l$ is a linear form, it can be written as a linear combination of $x_1,\ldots,x_{n+1}$. Since $l(P)=0$, the coefficient of $x_{n+1}$ must be $0$, and thus $l \in R$, and hence $l \in J=\mc{J} \cap R$.
	
	By Lemma \ref{proj norm lemma}, the forms $x_ix_j$ for $1 \leq i,j \leq n$ span the space $\mathcal{L}(2D-2P)$. For any $l \in \mathcal{L}(D-2P)$, we have $x_{n+1}l \in \mathcal{L}(2D-2P)$. Thus by Lemma \ref{proj norm lemma}, we have $x_{n+1}l=\sum_{i,j} c_{i,j} x_i x_j$ for some constants $c_{i,j}$, and $x_{n+1}l-\sum_{i,j} c_{i,j} x_i x_j$ is a quadric in $\mc{I}$ with leading coefficient $l$. Thus $J'\subset J$, and hence $J=J'$.
	
	The condition (iv) of Definition \ref{unprojection data} follows from general results on the structure of minimal free resolutions of genus one normal curves, e.g. Theorem \ref{min res theorem}.
\end{proof}

We now give our first application, finishing the proof of Lemma \ref{first lemma differential}(ii). 

\begin{lemma} \label{unprojecting differential lemma}
Let $C_n \subset \mb{P}^{n-1}$ be a genus one normal curve of degree $n$, defined over a field $k$ of characteristic 0, let $I$ be the defining ideal of $C$, let $F^{n}_{\bu}$ be a resolution model of $C$ and let $\Omega$ be the $\Omega$-matrix associated to this data.

Then the rational differential form $\omega$, associated to $\Omega$, is regular on $C_n$. More precisely, for $1 \leq i<j \leq n$,  the differential 
\[
	\omega=(n-2)\frac{x_j^2 d(x_i/x_j)}{\Omega_{ij}(x_1,\ldots,x_{n})}
\]
is regular on $C_n$. 
\end{lemma}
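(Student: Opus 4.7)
My strategy is induction on $n$. The base case $n = 3$ is the classical fact that on a plane cubic $F = 0$ the Hesse differential $(x_j\,dx_i - x_i\,dx_j)/(\partial F/\partial x_k)$ is everywhere regular; this can be checked directly on each of the three affine charts using the relation $F = 0$ and its partial derivatives.

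For the inductive step, fix $n \geq 4$ and $C_n \subset \mb{P}^{n-1}$ with resolution model $F^n_\bullet$ and associated differential $\omega_n$. The plan is to exhibit $\omega_n$ as a nonzero scalar multiple of the pullback of the analogous differential on a genus one normal curve of degree $n-1$, to which the inductive hypothesis applies. Regularity of a rational differential descends under $k \subset \bar k$, so I may pick a smooth point $Q \in C_n(\bar k)$ and, by Lemma \ref{invariant differential change of coords}, change coordinates so that $Q = (0{:}\cdots{:}0{:}1)$. Projection from $Q$ via $\pi : (x_1{:}\cdots{:}x_n) \mapsto (x_1{:}\cdots{:}x_{n-1})$ is an isomorphism onto a genus one normal curve $C_{n-1} \subset \mb{P}^{n-2}$ of degree $n-1$, because for $n \geq 4$ the linear system $|D - Q|$ on $C_n$ is still very ample. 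By Lemma \ref{geometric criterion for unprojection triple} the ideals $I(C_{n-1})$, $J$ (the ideal of $\pi(Q)$ in $\mb{P}^{n-2}$) and $I(C_n)$ form an unprojection triple; by Proposition \ref{unprojection data construction} the minimized resolution $\mc{H}_{min}$, built from the minimal resolutions $F'_\bullet$ of $I(C_{n-1})$ and $G_\bullet$ of $J$, is a minimal graded free resolution of $I(C_n)$. By uniqueness of minimal graded free resolutions (Lemma \ref{uniqueness of duality lemma}) and Remark \ref{defined up to scalar}, the $\Omega$-matrix of $F^n_\bullet$ agrees with the one of $\mc{H}_{min}$ up to a single global scalar, so I may compute $\omega_n$ using $\mc{H}_{min}$ throughout.

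The crux of the argument is the identity
\[
\Omega^{\mc{H}_{min}}_{n, ij}(x_1, \ldots, x_n) \equiv \lambda \cdot \Omega^{F'_\bullet}_{n-1, ij}(x_1, \ldots, x_{n-1}) \pmod{I(C_n)}
\]
for $i, j \in \{1, \ldots, n-1\}$ and a common nonzero scalar $\lambda$. To prove it, I would unfold $\Omega^{\mc{H}_{min}}_{n, ij} = [[a_1, \ldots, a_{n-2}]]_{\mc{H}}$, noting that the complementary tuple $\{a_1, \ldots, a_{n-2}\} = \{1, \ldots, n\} \setminus \{i, j\}$ always contains the index $n$; then in each of the $n-2$ cyclic summands of $[[\cdot]]_{\mc{H}}$ use Lemma \ref{single bracket differential} (together with Lemmas \ref{swaplemma} and \ref{reverselemma}) to move $n$ into a position $k$ with $2 \leq k \leq n-3$ at which Proposition \ref{quadrics and unprojection} applies. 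That proposition rewrites each summand as a sign times $[a_1, \ldots, \widehat{n}, \ldots, a_{n-2}]_{F'}$, and reassembling the cyclic sum yields the claimed proportionality modulo $I(C_n)$. Combined with the $(n-2)$ versus $(n-3)$ normalisations in the definitions of $\omega_n$ and $\omega_{n-1}$, this gives $\omega_n = \mu \cdot \pi^* \omega_{n-1}$ as rational differentials on $C_n$ for a nonzero constant $\mu$; since $\pi$ is an isomorphism and $\omega_{n-1}$ is regular on $C_{n-1}$ by the inductive hypothesis, $\omega_n$ is then regular on $C_n$.

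The anticipated technical obstacle is the sign-and-permutation bookkeeping in the key identity: the $n-2$ cyclic summands of $[[\cdot]]_{\mc{H}}$ place the index $n$ at distinct positions, and Lemma \ref{single bracket differential}, Lemma \ref{swaplemma} and Proposition \ref{quadrics and unprojection} must be combined so that all of the accumulated signs line up and the contributions assemble, modulo $I(C_n)$, into a single scalar multiple of $[[\cdot]]_{F'}$. Once this is verified, the passage from $\omega_n = \mu\,\pi^*\omega_{n-1}$ to the regularity of $\omega_n$ is immediate from the inductive hypothesis and the fact that $\pi$ is an isomorphism of curves.
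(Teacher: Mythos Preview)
Your overall strategy---induction on $n$, base case $n=3$, and the inductive step via projection from a rational point together with the unprojection description of the resolution---is exactly the paper's. The difference is in how you compare the differentials.

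You aim for an identity $\Omega^{\mc{H}_{min}}_{n,ij} \equiv \lambda\,\Omega^{F'_\bullet}_{n-1,ij} \pmod{I(C_n)}$ by unpacking the $n-2$ cyclic summands of $[[\,\cdot\,]]_{\mc H}$ and applying Proposition~\ref{quadrics and unprojection} to each. The bookkeeping you flag as an obstacle is real: that proposition requires the distinguished index $n$ to sit at a position $2\le k\le n-3$, but the cyclic shifts put $n$ at every position, including $1$ and $n-2$; Lemmas~\ref{swaplemma} and~\ref{reverselemma} act only on the quadratic forms and cannot move $n$ out of the end positions, so the remaining summands force you back to the differential-level statement of Lemma~\ref{single bracket differential} anyway.

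The paper bypasses this entirely. It first invokes Lemma~\ref{single bracket double bracket rel} to rewrite
\[
\omega \;=\; \frac{x_j^2\,d(x_i/x_j)}{[a_1,\ldots,a_{n-2}]}
\]
with a \emph{single} bracket, then uses Lemma~\ref{single bracket differential} once to arrange $n\notin\{i,j\}$ (so $n$ occurs among the $a_k$, and in a middle slot). Proposition~\ref{quadrics and unprojection} then applies directly to that one bracket, giving $[a_1,\ldots,a_{n-2}]_{\mc H}=\pm[a_1,\ldots,\widehat{n},\ldots,a_{n-2}]_{F'}$, and hence $\omega_n=\pm\,\pi^*\omega_{n-1}$ with no further sign tracking. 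Replacing your $\Omega$-level congruence by this single-bracket reduction removes the obstacle you identified and completes the argument in one line.
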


\begin{proof}
The proof is by induction on $n$. The base case $n=3$, is just the statement that if $F$ is the ternary cubic that defines $C$, the differential $\frac{x_1^2 d(x_2/x_1)}{\partial F/ \partial x_3}$ is regular on $C$, and this is an elementary fact from the theory of plane curves. Let us suppose that lemma is true for $n-1$.	
	
For the proof of the statement we are free to enlarge the base field, and assume that $k=\bar{k}$ is algebraically closed, and thus that there exists a $k$-rational point $P \in C_n(k)$. By Lemma \ref{invariant differential change of coords}, we are also free to make a change of coordinates on $\mb{P}^{n-1}$, and thus assume that $P=(0:0:\ldots:0:1)$. Let $i,j,a_1,\ldots,a_{n-2}$ be a permutation of the set $\{1,2,\ldots,n\}$. From, Lemma \ref{single bracket double bracket rel } we have
	\[
\omega =\frac{x_j^2 d(x_i/x_j)}{[a_1,..,a_{n-2}]}.
\]
 By Lemma \ref{single bracket differential}, we are free to reorder  we can reorder $(i,j,a_1,\ldots,a_{n-2})$ as we like, since that can only affect the sign of $\omega$. Thus we choose an ordering so that $n-1 \not\in \{i,j\}$.

 Now let $\pi$ be the projection from point $P$, and let $C_{n-1}=\pi(C_{n})$. Let $I_{n-1}$ and $I_n$ be the ideals defining $C_{n-1}$ and $C_n$, and let $J$ be the ideal defining $\pi(P)$.  By Lemma \ref{geometric criterion for unprojection triple}, $I_{n-1},J$ and $I_n$ form an unprojection triple. Let $F^{n-1}_{\bullet}$ and $G_{\bu}$ be minimal graded free resolutions of $I_{n-1}$ and $J$. 
 
The matrix $\Omega$, and hence the differential $\omega$, depends on the choice of the resolution $F^{n}_{\bu}$ only up to a non-zero scalar multiple. Thus, we may assume $F^{n}_{\bu}$ is the chain complex $\mc{H}(\alpha,\beta,\delta)$ constructed from $F^{n-1}_{\bu}$ and $G_{\bu}$. By Proposition \ref{quadrics and unprojection}, 
	\[[a_1,a_2,\ldots,a_{n-2}]=\pm [a_1,a_2,\ldots,\widehat{n},\ldots,a_{n-2}],\]
since we assumed that $a_k=n$ for some $k$. As the projection $\pi$ is given by $(x_1:\ldots:x_{n-1} : x_{n}) \mapsto (x_0 : \ldots :x_{n-1})$, we have
\[
\frac{x_j^2 d(x_i/x_j)}{[a_1,\ldots,a_{n-2}]}=\pi^{*} \left( \pm  \frac{x_j^2 d(x_i/x_j)}{ [a_1,a_2,\ldots,\widehat{n},\ldots,a_{n-2}]} \right) 
\]
and we are done by the induction hypothesis, as $\pi : C_n \xrightarrow{} C_{n-1} $ is an isomorphism.
\end{proof}

\subsection{The local minimization theorem}
We now give our first arithmetic application, the local version of the minimization theorem. Throughout, we fix a prime $p$. Let $\mb{Q}_p$ be the field of $p$-adic numbers and let $\mb{Z}_p$ be the ring of $p$-adic integers. The following theorem is a generalization of Theorem 3.4 of \cite{minred234}. 
\begin{theorem} \label{local minimization theorem}
Let $p$ be a prime, and let $C \subset \mb{P}^{n-1}$ be a genus one normal curve of degree $n$, defined over the field of $p$-adic numbers $\mb{Q}_p$. Suppose that the set $C(\mb{Q}_p)$ is non-empty. Let $E$ be the Jacobian of $E$, defined by a minimal Weierstrass equation $W$ 
\[
y^2 + a_1xy + a_3y = x^3 + a_2x^2 + a_4x + a_6.
\]
Then one can choose coordinates on $\mb{P}^{n-1}$, and then a $\mb{Z}_p$-integral genus one model $F_{\bu}$ of $C$ with the associated invariant differential $\omega$, such that there exists an isomorphism $\gamma : E \xrightarrow{} C$ with 
\[
\omega=\gamma^{*}\left( \frac{dx}{2y + a_1x + a_3} \right)
\]
We say such a genus one model is a \textit{minimal} model of $C$. 
\end{theorem}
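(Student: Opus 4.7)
The plan is to proceed by induction on $n$, with base case $n=3$ supplied by Theorem 3.4 of \cite{minred234}. So assume $n \geq 4$ and the theorem holds in degree $n-1$. Given $C \subset \mathbb{P}^{n-1}$ over $\mathbb{Q}_p$ with a rational point $P$, first use Smith normal form on the $\mathbb{Z}_p$-lattice to choose $\mathbb{Z}_p$-integral homogeneous coordinates $x_1, \ldots, x_n$ with $P = (0:\cdots:0:1)$. Linear projection from $P$ restricts to an isomorphism $\pi : C \xrightarrow{\sim} C'$ onto a genus one normal curve $C' \subset \mathbb{P}^{n-2}$ of degree $n-1$, defined over $\mathbb{Q}_p$, with $\pi(P)$ a $\mathbb{Q}_p$-point. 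Since $\pi$ is an isomorphism of smooth genus one curves, the Jacobian of $C'$ is canonically identified with $E$ and so is governed by the same minimal Weierstrass equation $W$.

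By the inductive hypothesis, after a $\mathrm{GL}_{n-1}(\mathbb{Q}_p)$ adjustment of $x_1, \ldots, x_{n-1}$, the curve $C'$ admits a $\mathbb{Z}_p$-integral minimal resolution model $F'_{\bullet}$ with invariant differential $\omega'$ satisfying the theorem via some isomorphism $\gamma': E \to C'$. A further $\mathrm{SL}_{n-1}(\mathbb{Z}_p)$ change of basis — which preserves both integrality and all invariants of $F'_{\bullet}$ — lets us assume $\pi(P) = (0:\cdots:0:1)$ in $\mathbb{P}^{n-2}$, so the defining ideal $J$ of $\pi(P)$ is $(x_1, \ldots, x_{n-2})$, resolved by the $\mathbb{Z}_p$-integral Koszul complex $G_{\bullet}$ (manifestly Gorenstein and DGCA). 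Now apply Proposition \ref{unprojection data construction} over $R = \mathbb{Z}_p[x_1,\ldots,x_{n-1}]$: the inclusion $I(C') \subset J$ yields an integral comparison map $\beta : F'_{\bullet}(-1) \to G_{\bullet}(-1)$; its DGCA-dual $\alpha = \beta^T$ is integral because both DGCA structures are defined over $\mathbb{Z}_p$; and a null homotopy $\delta$ for $\alpha\circ\beta$ can be built integrally using the exactness of $F'_{\bullet}$ in positive degrees. By Lemma \ref{geometric criterion for unprojection triple} the triple $(I(C'), J, I(C))$ is an unprojection triple, and by Lemma \ref{lemma part of unproj data} this integrally constructed $\alpha$ agrees up to a $\mathbb{Z}_p^{\times}$-scalar and chain homotopy with the map that produces the actual ideal $I(C)$; minimizing $\mathcal{H}(\alpha, \beta, \delta)$ then yields a $\mathbb{Z}_p$-integral resolution model $F_{\bullet}$ of $C$.

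Finally, for the differential condition, pick $i \neq j$ in $\{1,\ldots,n-1\}$ and order the remaining indices so that $n = a_k$ for some $2 \leq k \leq n-3$. By Proposition \ref{quadrics and unprojection} the bracket quadric $[a_1, \ldots, a_{n-2}]_{F_{\bullet}}$ equals $\pm [a_1, \ldots, a_{k-1}, a_{k+1}, \ldots, a_{n-2}]_{F'_{\bullet}}$, and combined with Lemma \ref{single bracket double bracket rel } this identifies the rational differential $\omega$ of $F_{\bullet}$ with $\pi^*\omega'$ up to sign. Setting $\gamma = \pi^{-1}\circ\gamma'$ (composed with $[-1]$ if necessary) then produces the required pullback formula $\gamma^{*}(dx/(2y + a_1 x + a_3)) = \omega$. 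The main technical obstacle is securing integrality at every stage, most delicately that the $\mathbb{Z}_p^{\times}$-scalar identified in Lemma \ref{lemma part of unproj data} really is a unit rather than a multiple of $p$; this is controlled by the explicit Koszul form of $G_{\bullet}$ together with the inductive integrality and minimality of $F'_{\bullet}$, guaranteeing that the minimization step of Section \ref{mapping cone section} outputs a bona fide $\mathbb{Z}_p$-integral resolution model rather than one requiring further rescaling.
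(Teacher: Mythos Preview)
Your argument is correct and essentially matches the paper's: both proceed by induction on $n$, projecting from a $\mb{Q}_p$-point to reduce to degree $n-1$, and then using the unprojection machinery (Proposition~\ref{unprojection data construction} with $\alpha=\beta^T$ built from integral DGCA structures, together with Proposition~\ref{quadrics and unprojection} for the differential) to lift the minimal model back up. In the paper this inductive step is packaged as Lemma~\ref{induction step lemma}.

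The one genuine difference is the base case. You invoke the $n=3$ minimisation theorem of \cite{minred234} as a black box. The paper instead avoids this citation: after descending to an arbitrary degree~$3$ embedding of $E$ by a divisor $D$, it observes that $D \sim 4\cdot 0_E - Q$ for some $Q\in E(\mb{Q}_p)$, applies the unprojection equivalence (Lemma~\ref{induction step lemma}) to pass \emph{up} to the embedding by $|4\cdot 0_E|$, and then back \emph{down} to the embedding by $|3\cdot 0_E|$, i.e.\ the Weierstrass cubic itself, where minimality is trivial. Your route is shorter; the paper's is self-contained and makes explicit that only the Weierstrass embedding is needed as an anchor.

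A small remark on your last paragraph: the worry about whether the scalar in Lemma~\ref{lemma part of unproj data} is a $p$-adic unit is misplaced. Since you already take $\alpha=\beta^T$ via the integral DGCA structures, the complex $\mc{H}_{\min}$ is integral by construction; Lemma~\ref{lemma part of unproj data} is invoked only over $\mb{Q}_p$ to identify the ideal it resolves with $I(C)$, and by Lemma~\ref{independence of choice} the resulting complex is independent of the homotopy class of $\alpha$, so the scalar plays no role in integrality. (Also, for $n=4$ the range $2\le k\le n-3$ in Proposition~\ref{quadrics and unprojection} is empty; the paper's Lemma~\ref{induction step lemma}(ii) sidesteps this by appealing to the argument of Lemma~\ref{unprojecting differential lemma}, which only needs $a_k=n$ for \emph{some} $k$.)
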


Once we prove Theorem \ref{formula for Jacobian}, this theorem will immediately imply the following corollary.
\begin{corollary}
Let $n$ be an odd integer. If $\Omega$ is the $\Omega$-matrix of a minimal model of a genus one curve $C$ as above, we have $c_k(
\Omega)= c_k(W)$, for $k=4,6$, where $c_k(W)$ are the $c$-invariants of a minimal Weierstrass equation $W$ of $E$, as defined in Chapter III of \cite{silverman2009arithmetic}.
\end{corollary}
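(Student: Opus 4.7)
The plan is to deduce the corollary as an immediate consequence of Theorem \ref{formula for Jacobian} (which produces a Weierstrass equation for the Jacobian of $C$ whose coefficients are polynomials in the entries of $\Omega$) together with Theorem \ref{local minimization theorem} (which identifies the pair $(C,\omega)$ with the minimal Weierstrass model $(E_W,\omega_W)$ of the Jacobian over $\mb{Z}_p$). Set $W' : y^2 = x^3 - 27\,c_4(\Omega)\,x - 54\,c_6(\Omega)$. Theorem \ref{formula for Jacobian} supplies a $\bar{\mb{Q}}_p$-isomorphism $\gamma_2 : C \xrightarrow{\sim} E_{W'}$ with $\gamma_2^{*}(3\,dx/y) = \omega$, while Theorem \ref{local minimization theorem} supplies an isomorphism $\gamma_1 : E_W \xrightarrow{\sim} C$ with $\gamma_1^{*}(\omega) = \omega_W := dx/(2y + a_1 x + a_3)$. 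The composite $\phi := \gamma_2 \circ \gamma_1 : E_W \xrightarrow{\sim} E_{W'}$ then satisfies $\phi^{*}(3\,dx/y) = \omega_W$.

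Any isomorphism between two elliptic curves given in Weierstrass form is an admissible change of variables, $(x,y) = (u^2 X + r,\ u^3 Y + s u^2 X + t)$ for some $u \in \bar{\mb{Q}}_p^{\times}$ and $r,s,t \in \bar{\mb{Q}}_p$. Substituting into $3\,dx/y$ and using the relation between $y$ and $2Y + a_1 X + a_3$ forced by $W'$ having $a_1 = a_3 = 0$, one computes
\[
\phi^{*}(3\,dx/y) \;=\; \frac{6}{u}\cdot \frac{dX}{2Y + a_1 X + a_3} \;=\; \frac{6}{u}\,\omega_W.
\]
Consequently the identity $\phi^{*}(3\,dx/y) = \omega_W$ forces $u = 6$.

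By the standard transformation tables of \cite[III.1]{silverman2009arithmetic}, the $c$-invariants scale as $c_k(W') = u^{k}\,c_k(W)$ for $k = 4, 6$, so $c_4(W') = 6^{4}\, c_4(W)$ and $c_6(W') = 6^{6}\, c_6(W)$. On the other hand, computing directly from the equation $y^2 = x^3 + A x + B$ of $W'$, with $A = -27\,c_4(\Omega)$ and $B = -54\,c_6(\Omega)$, via the formulas $c_4 = -48 A$ and $c_6 = -864 B$, yields $c_4(W') = 6^{4}\,c_4(\Omega)$ and $c_6(W') = 6^{6}\,c_6(\Omega)$. Comparing and cancelling the common factor $6^{k}$ gives $c_k(W) = c_k(\Omega)$, as desired.

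The argument is essentially a bookkeeping exercise once the two theorems it invokes are in hand, so there is no substantive obstacle. The only point that needs care is the normalization of the invariant differential in Theorem \ref{formula for Jacobian}: the factor of $3$ in $3\,dx/y$ is precisely what forces the scaling $u = 6$, which in turn matches the factor of $6^{k}$ implicit in the normalization of $c_k(\Omega)$. This is also why the result is clean on the nose, rather than holding only up to a constant multiple; it explains the remark in Section \ref{resolution model section} that, for $n \leq 5$, $c_k(\Omega)$ agrees with the classical $c_k$-invariants of the corresponding genus one model.
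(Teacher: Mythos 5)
Your deduction is exactly the one the paper intends: the paper offers no separate argument, asserting that the corollary follows immediately from Theorem \ref{local minimization theorem} together with Theorem \ref{formula for Jacobian}, and your bookkeeping with the composite isomorphism and the factor $6$ coming from $3\,dx/y = 6\,\omega_{W'}$ is precisely how that immediate deduction is made explicit (the translation ambiguity in $\phi$ is harmless since invariant differentials are translation-invariant). The only caveat is that you wrote $W'$ as $y^2=x^3-27c_4(\Omega)x-54c_6(\Omega)$, silently replacing the ``$+54c_6(\Omega)$'' in the stated Theorem \ref{formula for Jacobian} by the standard minus sign; taking the theorem literally would give $c_6(\Omega)=-c_6(W)$, so the plus sign there is evidently a typo (the abstract and the analytic computation support your convention), and with that form your computation is correct.
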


The induction step in the proof of the theorem is provided by the following lemma, which may be viewed as a generalization of Lemma 3.14 of \cite{minred234}.

\begin{lemma} \label{induction step lemma}
	
	Let $E/\mb{Q}_p$ be an elliptic curve and let $D$ be a degree $n$ divisor on $E$, for $n \geq 3$. Let $i : E \xrightarrow{} \mb{P}^{n-1}$ be an embedding induced by a choice of basis of $\mc{L}(D)$, and let $C_n=i(E) \subset \mb{P}^{n-1}$. Suppose we are also given a point $P \in E(\mb{Q}_p)$.  
	
	Let $F_{\bullet}$ be a $\mb{Z}_p$-integral genus one model of $C_n$, with $I_n=\phi_{1}(F_1) \subset \mb{Z}_p[x_1,\ldots,x_{n}]$. Further, let $l_1,\ldots,l_{n-1}$ be a $\mb{Z}_p$-basis for the space of linear forms with integral coefficients that vanish at $i(P) \in \mb{P}^{n-1}$, and denote by $G_{\bu}$ the Koszul 
	complex associated to $J=(l_1,l_2,\ldots,l_{n-1})$. Then:
	\begin{enumerate}[label=(\roman*)]
		\item	The complexes $F_{\bu}$ and $G_{\bu}$ satisfy the hypothesis of Proposition \ref{unprojection data construction}, and the associated complex $\mc{H}_{min}$ is a $\mb{Z}_p$-integral genus one model of $C_{n+1} \subset \mb{P}^{n}$, where $C_{n+1}$ is the image of $E$ under an embedding induced by a choice of basis of $\mc{L}(D+P)$. 

\item The point $P$ maps to the point $(0:0:\ldots:0:1) \in C_{n+1}(\mb{Q}_p)$, and projection from $P$ defines an isomorphism $\pi : C_{n+1} \xrightarrow{} C_{n}$. Moreover, if $\omega_n$ and $\omega_{n+1}$ are differentials associated to models $F_{\bu}$ and $\mc{H}_{min}$, we have $\pi^{*}(\omega_{n})=\pm \omega_{n+1}$.
 
\end{enumerate} 

\end{lemma}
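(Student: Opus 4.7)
The plan is to apply the Kustin--Miller unprojection construction of Section \ref{mapping cone section} to $F_\bullet$ and $G_\bullet$, and identify the resulting integral complex $\mathcal{H}_{\min}$ with a genus one model of $C_{n+1}$. I would first verify the hypotheses of Proposition \ref{unprojection data construction}: the ideal $I = \phi_{1}(F_1)$ is generated by quadrics and $J$ is generated by the linear forms $l_1,\ldots,l_{n-1}$, so the degree conditions hold. The inclusion $I \subset J$ over $\mathbb{Z}_p$ follows because, after dehomogenizing with respect to a coordinate that is a unit at $i(P)$, an integral quadric vanishing at $i(P)$ has no constant term and therefore lies in the ideal generated by a $\mathbb{Z}_p$-basis of integral linear forms through $i(P)$. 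The self-duality isomorphisms $F_\bullet \cong F_\bullet^*$ and $G_\bullet \cong G_\bullet^*$ come respectively from the genus one model structure on $F_\bullet$ (Definition \ref{resolution model}, Lemma \ref{uniqueness of duality lemma}) and from the standard self-duality of the Koszul complex recalled in Section \ref{koszul def}.

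Applying the unprojection construction then produces a $\mathbb{Z}_p$-integral self-dual graded free resolution $\mathcal{H}_{\min}$ of some ideal $\mathcal{I} \subset \mathbb{Z}_p[x_1,\ldots,x_{n+1}]$, with graded Betti numbers matching the shape prescribed by Theorem \ref{min res theorem} in degree $n+1$. To identify $\mathcal{I}$ with $I(C_{n+1})$ on the generic fiber, I would realize $C_{n+1}$ explicitly by extending $x_1,\ldots,x_n \in \mathcal{L}(D) \subset \mathcal{L}(D+P)$ to a basis with any section $x_{n+1}$ not vanishing at $P$. By construction $i'(P)=(0{:}\ldots{:}0{:}1)$ and linear projection from this point recovers $C_n$, so by Lemma \ref{geometric criterion for unprojection triple} the triple $(I(C_n), J, I(C_{n+1}))$ over $\mathbb{Q}_p$ is an unprojection triple. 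By Lemma \ref{lemma part of unproj data} the DGCA-induced $\alpha$ used in our construction agrees with the geometric $\alpha$ of this triple up to a nonzero scalar and chain homotopy, so Lemma \ref{independence of choice} combined with Proposition \ref{exactness lemma 2} shows $\mathcal{I} \otimes \mathbb{Q}_p = I(C_{n+1})$ after a harmless rescaling of $x_{n+1}$. This, together with the integrality of $\mathcal{H}_{\min}$, proves Part (i).

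For Part (ii), the section $x_{n+1}$ is the unique basis element of $\mathcal{L}(D+P)$ that does not vanish at $P$, so $P$ maps to $(0{:}\ldots{:}0{:}1)$, and since $\mathcal{L}(D+P-P)=\mathcal{L}(D)$, projection from this point is an isomorphism onto $C_n$. To match the differentials, I would use Lemma \ref{single bracket double bracket rel } to rewrite $\omega_n$ and $\omega_{n+1}$ as single-bracket expressions. Using Lemma \ref{single bracket differential} to reorder indices freely (up to sign), I would choose a permutation $(i,j,a_1,\ldots,a_{n-1})$ of $(1,\ldots,n+1)$ in which $a_k = n+1$ for some interior $k$ with $2 \leq k \leq n-2$. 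Proposition \ref{quadrics and unprojection} then yields
\[
[a_1,\ldots,a_{n-1}]_{\mathcal{H}_{\min}} \;=\; (-1)^{k+1}[a_1,\ldots,\widehat{a_k},\ldots,a_{n-1}]_{F},
\]
and recognizing the right-hand side as the pullback of the single-bracket expression for $\omega_n$ under $\pi : (x_1{:}\ldots{:}x_n{:}x_{n+1}) \mapsto (x_1{:}\ldots{:}x_n)$ gives $\pi^{*}(\omega_n) = \pm \omega_{n+1}$.

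The main obstacle I anticipate is upgrading the field-coefficient unprojection machinery to $\mathbb{Z}_p$-coefficients: specifically, one must show that the DGCA-induced self-duality $s : F_\bullet \to F_\bullet^{*}$, constructed in Proposition \ref{DGC induced duality} only over a field, remains an isomorphism over $\mathbb{Z}_p$, and correspondingly that the component $\alpha_{n-2}$ appearing in the unprojection data is a unit in $\mathbb{Z}_p$. This unit property is precisely what licenses the passage from $\mathcal{H}$ to the minimized complex $\mathcal{H}_{\min}$ integrally. A separate direct calculation is also needed for the small case $n = 3$ in part (ii), where the allowed index range in Proposition \ref{quadrics and unprojection} is empty and the differential comparison must be made by hand using the formulas of Example \ref{example omega}.
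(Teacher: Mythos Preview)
Your approach matches the paper's. One small difference in execution: for the inclusion $I_n \subset J$ over $\mathbb{Z}_p$, the paper avoids dehomogenization and instead extends $l_1,\ldots,l_{n-1}$ via elementary divisors to a full $\mathbb{Z}_p$-basis $l_1,\ldots,l_n$ of integral linear forms, so that $\mathbb{Z}_p[x_1,\ldots,x_n]=\mathbb{Z}_p[l_1,\ldots,l_n]$; writing any integral quadric as $\sum c_{ij}l_il_j$ and evaluating at $P$ kills the $l_n^2$ coefficient. This same device simultaneously yields regularity of the sequence $l_1,\ldots,l_{n-1}$, which you need but do not mention for $G_\bullet$ to be a resolution.

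On the obstacles you flag: the DGCA self-duality over $\mathbb{Z}_p$ is handled by Lemma~\ref{uniqueness of duality lemma}, since the resolution-model axiom already supplies a symmetric isomorphism $\eta$ and uniqueness forces the DGCA-induced map $s$ (whose degree-zero component is a unit) to be a $\mathbb{Z}_p^\times$-multiple of $\eta$. The unit property of the top $\alpha$ component then follows directly from its definition as dual to $\beta_0=\mathrm{id}$: it is multiplication by $(-1)^n$, as noted just after the definition of $\alpha$ in Section~\ref{construction of the unprojection data}. Your caution about the $n=3$ boundary case in Proposition~\ref{quadrics and unprojection} is fair; the paper's proof does not treat it separately either, but the block-matrix computation in that proposition extends to the endpoint differentials without change (the $x_m$-dependence of $d_1$ sits entirely in the $\delta_1=-x_m$ block).
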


\begin{proof}
\begin{enumerate}[label=(\roman*)]
\item
We first prove that $l_1,\ldots,l_{n-1}$ is a regular sequence on $\mb{Z}_p[x_1,\ldots,x_{n}]$. By the definition of the forms $l_i$ and the theory of elementary divisors, we can find a linear form $l_{n} \in \mb{Z}_p[x_0,\ldots,x_{n-1}]$ so that $l_1,\ldots,l_{n}$ is a basis for the $\mb{Z}_p$-module of integral linear forms in $\Z_p[x_1,\ldots,x_{n}]$, so that we in fact have $\mb{Z}_p[x_1,\ldots,x_{n}]=\mb{Z}_p[l_1,\ldots,l_{n}]$, and hence $l_1,\ldots,l_{n-1}$ is a regular sequence. 

Recall from \ref{koszul def} that the Koszul complex $G_{\bu}$ is a free resolution of the ideal $J=(l_1,l_2,\ldots,l_{n-1})$ and that the DGC structure on $G_{\bu}$ induces an isomorphism $G_{\bu} \xrightarrow{} G_{\bu}^*$. By definition $F_{\bu}$ is a free resolution of $I_{n}$, with a DGC-structure inducing an isomorphism $F_{\bu} \xrightarrow{} F_{\bu}^*$ 

We next prove that $I_n \subset J$. Since we have $\mb{Z}_p[x_1,\ldots,x_{n}]=\mb{Z}_p[l_1,\ldots,l_{n}]$, every quadric $f \in I_n$ can be expressed a $\sum_{ij} c_{ij} l_i l_j$, with $c_{ij} \in \Z_p$.  As $f(P)=0$, the coefficient of $l^2_{n}$ is zero, and $f \in J$. Thus the conditions of Proposition \ref{unprojection data construction} are satisfied.
\item We are free to extend scalars from $\mb{Z}_p$ to $\mb{Q}_p$, and identify $I_n$ and $J$ with ideals of $\mb{Q}_p[x_1,\ldots,x_{n}]$. Let $I_{n+1}$ be the ideal of $\mb{Q}_p[x_1,\ldots,x_{n+1}]$ resolved by $\mc{H}^{min}(\alpha,\beta,\delta)$. There exists a choice of basis of $\mc{L}(D+P)$ so that the corresponding embedding $j:E \xrightarrow{} \mb{P}^{n}$ we have $j(P)=(0:\ldots:0:1)$, and projection $\pi$ from $P$ defines an isomorphism $C_{n+1} \xrightarrow{} C_n$.

 Let $I'_{n+1}$ be the ideal defining $C_{n+1}$. By Lemma \ref{geometric criterion for unprojection triple}, $I_n,J$ and $I_{n+1}$ are an unprojection triple, and so $I'_{n+1}$ is resolved by the complex $\mc{H}^{min}(\alpha',\beta',\delta')$. By Lemma \ref{independence of choice}, $\mc{H}^{min}(\alpha',\beta',\delta')$ is isomorphic to $\mc{H}^{min}(\alpha,\beta,\delta)$, and hence we have $I'_{n+1}=I_{n+1}$.

Finally, since the invariant differential $\omega_n$ associated to $F_{\bu}$ is
\[
\omega_n=\frac{(n-2)x_j^2 d(x_i/x_j)}{\Omega_{ij}(x_1,\ldots,x_{n})}=\frac{x_j^2 d(x_i/x_j)}{[a_1,\ldots,a_{n-2}]},
\]
 the assertion that $\pi^{*}(\omega_n)=\pm \omega_{n+1}$ follows from Lemma \ref{quadrics and unprojection}, in the same way as in the proof of Lemma \ref{unprojecting differential lemma}.                                                                       \end{enumerate}                                                              
\end{proof}
 
\begin{proof}[Proof of Theorem \ref{local minimization theorem}.]
The proof is by induction. Choose a point $P \in C_n(\mb{Q}_p)$. As $\mb{Z}_p$ is a principal ideal domain, by elementary divisor theory, the group $\mathrm{SL}_n(\mb{Z}_p)$ acts transitively on $\mb{P}^{n-1}(\mb{Q}_p)$. We may assume, making a $\mathrm{SL}_n(\mb{Z}_p)$-change of coordinates if necessary, that $P=(0:0:\ldots:0:1)$. Let $C_{n-1}=\pi(C_n) \subset \mb{P}^{n-2}$ be the projection of $C_n$ from $P$. By Lemma \ref{induction step lemma}, the assertion of the theorem is true for the curve $C_{n-1}$ if and only if it is true for the curve $C_{n}$.

We are reduced to the case $n=3$. Now let $D$ be a hyperplane section of $C_3$, identified with a degree 3 divisor on $E$. The description of the group law on $E$ implies that $D$ is linearly equivalent to a divisor $4 \cdot 0_E - Q$, for some $Q \in E(\mb{Q}_p)$. By Lemma \ref{induction step lemma}, it suffices to prove the theorem for the curve $C_4$, the image of $E$ embedded via the linear system $|4 \cdot 0_E|$. Applying Lemma \ref{induction step lemma} for the final time, we are reduced to the case of $E$ embedded in $\mb{P}^2$ by $|3 \cdot 0_E|$. Choose the basis $1,x,y$ of $\mc{L}(3\cdot 0_E)$. Then the image of $E$ in $\mb{P}^2$ is defined by the cubic
\[
F=y^2z+a_1xyz+a_3yz^2-x^3-a_2x^2z-a_4xz^2-a_6z^3
\]
and we compute directly that the differential $\omega$ associated to $F$ agrees with the Neron differential on $E$.
\end{proof}

\subsection{Proof of the global minimization theorem}

We now deduce Theorem \ref{global minimization theorem} from the local result, Theorem \ref{local minimization theorem}, using (basic) results on strong approximation. The idea is to begin by choosing arbitrarily a resolution model $F_{\bu}$ of $C \subset \mb{P}^{n-1}$ defined over $\mb{Q}$. Clearing denominators, we may assume $F_{\bu}$ is defined over $\mb{Z}$. It follows from Theorem \ref{formula for Jacobian} that $F_{\bu}$ is potentially not minimal for a finite set $S$ of primes, namely those that divide both $c_4(F_{\bu})$ and $c_6(F_{\bu})$. For every $p \in S$, we construct a minimal model using Theorem \ref{local minimization theorem}. We then use strong approximation to modify $F_{\bu}$, so that it is $\mb{Z}_p$-equivalent to this minimal model for every $p \in S$, while preserving minimality for $p \not\in S$, concluding the proof.  This is essentially the approach taken in \cite{fisher2007new} to prove a minimization theorem for binary quartics and ternary cubics.

 For a prime $p$, let $||\cdot||_p$ be the $p$-adic absolute value. For any $A=(a_{ij}) \in \mathrm{Mat}_{n}(\mb{Q}_p)$, let $||A||_{p}=\max_{1 \leq i,j \leq n}||a_{ij}||$.
\begin{lemma}\label{surjective SLn}
Let $S$ be a finite set of primes of $\mb{Q}$. Suppose we are given, for each $p\in S$, an element $A_{p} \in \mathrm{SL}_n(\mb{Z}_p)$. For any $\epsilon>0$, there exists $A \in \mathrm{SL}_n(\mb{Z})$ with $||A-A_p||_p < \epsilon$ for all $p \in S$.
\end{lemma}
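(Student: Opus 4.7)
This is a standard strong approximation statement for $\mathrm{SL}_n$, and the plan is to reduce it to the surjectivity of $\mathrm{SL}_n(\mathbb{Z}) \to \mathrm{SL}_n(\mathbb{Z}/N\mathbb{Z})$ via the Chinese Remainder Theorem. The translation between $p$-adic approximation and congruence conditions is immediate: for each $p \in S$, pick $k_p$ so that $p^{-k_p}<\epsilon$, and note that the inequality $\|A-A_p\|_p<\epsilon$ is equivalent to the congruence $A\equiv A_p \pmod{p^{k_p}}$ (entrywise, viewing both sides in $\mathrm{Mat}_n(\mathbb{Z}_p)$).

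Set $N=\prod_{p\in S}p^{k_p}$. By the Chinese Remainder Theorem there is a ring isomorphism $\mathbb{Z}/N\mathbb{Z}\cong \prod_{p\in S}\mathbb{Z}/p^{k_p}\mathbb{Z}$, hence a group isomorphism
\[
\mathrm{SL}_n(\mathbb{Z}/N\mathbb{Z})\;\cong\;\prod_{p\in S}\mathrm{SL}_n(\mathbb{Z}/p^{k_p}\mathbb{Z}).
\]
The reductions modulo $p^{k_p}$ of the given matrices $A_p\in\mathrm{SL}_n(\mathbb{Z}_p)$ therefore assemble into a single element $\bar{A}\in\mathrm{SL}_n(\mathbb{Z}/N\mathbb{Z})$. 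What we need is a lift of $\bar{A}$ to $\mathrm{SL}_n(\mathbb{Z})$: any such lift $A$ will satisfy $A\equiv A_p\pmod{p^{k_p}}$ for every $p\in S$, which is precisely the required approximation.

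The key fact is therefore that the reduction map $\mathrm{SL}_n(\mathbb{Z})\to \mathrm{SL}_n(\mathbb{Z}/N\mathbb{Z})$ is surjective. I would deduce this from the elementary matrix decomposition: $\mathrm{SL}_n$ of any commutative ring that is a quotient of a Euclidean domain (such as $\mathbb{Z}/N\mathbb{Z}$) is generated by the elementary matrices $I+tE_{ij}$ with $i\neq j$, as one sees by clearing columns via row operations using the Euclidean algorithm on $\mathbb{Z}/N\mathbb{Z}$ (at each stage a gcd computation in $\mathbb{Z}/N\mathbb{Z}$ produces a unit in the pivot position). Each such generator $I+tE_{ij}$ lifts tautologically to $I+\tilde{t}E_{ij}\in \mathrm{SL}_n(\mathbb{Z})$ for any integer $\tilde{t}$ reducing to $t$, so the reduction map is surjective.

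The only genuine content is the elementary-matrix generation statement; everything else is bookkeeping. Once that is in hand, lifting $\bar{A}$ factor by factor through the generators produces the desired $A\in \mathrm{SL}_n(\mathbb{Z})$, completing the proof.
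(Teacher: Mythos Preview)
Your proposal is correct and follows exactly the alternative route the paper itself indicates: the paper's proof merely cites a reference and notes that the lemma follows from the surjectivity of $\mathrm{SL}_n(\mathbb{Z})\to\mathrm{SL}_n(\mathbb{Z}/N\mathbb{Z})$, which is precisely what you reduce to via the Chinese Remainder Theorem and then justify with the elementary-matrix generation argument. In other words, you have supplied the details behind the one-line proof in the paper.
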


\begin{proof}
This is Lemma 3.2 of \cite{fisher2007new}. Alternatively, this follows from the well known fact that for any integer $N$, the map $\mathrm{SL}_n(\mb{Z}) \xrightarrow{} \mathrm{SL}_n(\mb{Z}/N\mb{Z})$ is surjective ---see Lemma 6.3.10 in \cite{cohen2008number}.  
\end{proof}
Note that the result is false if we replace $\mathrm{SL}_n$ with $\mathrm{GL}_n$, as can already be seen in the case $n=1$. We however have the following result.
\begin{lemma} \label{generalization of surjective SLn}
Let $S$ be a finite set of primes of $\mb{Q}$ and let $\delta \in \mb{Z}$ be an integer. Suppose we are given, for each $p\in S$, $A_{p} \in \mathrm{Mat}_{n}(\mb{Z})$ with $\mathrm{det}(A_{p})=\delta$. Let $\epsilon > 0$.  There exists $A \in \mathrm{Mat}_n(\mb{Z})$ such that $\mathrm{det}( A)=\delta$, and $||A-A_{p}||_{p} < \epsilon$ for each $p \in S$	.
\end{lemma}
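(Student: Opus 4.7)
The plan is to reduce to Lemma~\ref{surjective SLn} by producing a common pivot $A_0 \in \mathrm{Mat}_n(\mb{Z})$ with $\det A_0 = \delta$ such that, for each $p \in S$, $A_p = B_p A_0$ for some $B_p \in \mathrm{SL}_n(\mb{Z}_p)$. Given such an $A_0$, I would apply Lemma~\ref{surjective SLn} to obtain $B \in \mathrm{SL}_n(\mb{Z})$ with $||B - B_p||_p < \epsilon$ for every $p \in S$, and set $A := B A_0$. Then $\det A = \det A_0 = \delta$, and by submultiplicativity of the $p$-adic matrix norm together with $||A_0||_p \leq 1$ (since $A_0$ has integer entries), one obtains $||A - A_p||_p = ||(B - B_p)A_0||_p \leq ||B - B_p||_p < \epsilon$ for each $p \in S$.

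I would construct $A_0$ by a lattice gluing argument. The key observation is that two matrices in $\mathrm{Mat}_n(\mb{Z}_p)$ of equal non-zero determinant lie in the same left-$\mathrm{SL}_n(\mb{Z}_p)$-orbit if and only if their rows generate the same $\mb{Z}_p$-submodule of $\mb{Z}_p^n$. For each $p \in S$, let $R_p \subseteq \mb{Z}_p^n$ be the row lattice of $A_p$, which has index $p^{v_p(\delta)}$. For each $p \notin S$, choose any $R_p \subseteq \mb{Z}_p^n$ of index $p^{v_p(\delta)}$; in particular $R_p = \mb{Z}_p^n$ whenever $p \nmid \delta$. By the standard local-global correspondence for lattices, the intersection $R := \bigcap_p R_p$ taken inside $\mb{Q}^n$ is a sublattice of $\mb{Z}^n$ of index $|\delta|$ satisfying $R \otimes \mb{Z}_p = R_p$ for every prime $p$. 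Take $A_0$ to be a matrix whose rows form a $\mb{Z}$-basis of $R$; after possibly negating one row we may assume $\det A_0 = \delta$. For each $p \in S$, $A_p$ and $A_0$ then share the same row lattice over $\mb{Z}_p$, so there exists $B_p \in \mathrm{GL}_n(\mb{Z}_p)$ with $A_p = B_p A_0$; taking determinants forces $\det B_p = 1$, so $B_p \in \mathrm{SL}_n(\mb{Z}_p)$.

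The only step of substance is the gluing construction of $A_0$, and this is routine: it is exactly the standard dictionary between $\mb{Z}$-lattices in $\mb{Q}^n$ and compatible families of $\mb{Z}_p$-lattices inside $\mb{Q}_p^n$. I do not foresee a serious obstacle; the whole argument is essentially a repackaging that puts the problem into the shape already handled by Lemma~\ref{surjective SLn}.
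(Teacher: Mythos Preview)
Your proof is correct and takes essentially the same approach as the paper, whose own proof simply invokes ``a Smith normal form argument'' to reduce to Lemma~\ref{surjective SLn}; your lattice-gluing construction of the pivot $A_0$ is exactly that argument, phrased in the equivalent language of row lattices rather than elementary divisors. One minor caveat: your argument tacitly assumes $\delta \neq 0$, but this is the only case relevant to the paper's application and presumably the only one intended.
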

\begin{proof}
Follows from the previous lemma, together with a Smith normal form argument. See Lemma 3.3 of \cite{fisher2007new} for details.
\end{proof}

\begin{lemma} \label{approx lemma}
Let $A \in \mathrm{GL}_n(\mb{Q}_p)$. There exists an $\epsilon>0$, such that for every $B \in \mathrm{GL}_n(\mb{Q}_p)$ with $||A-B||_{p} < \epsilon$, we have $A=UB$, for some $U \in \mathrm{GL}_n(\mb{Z}_p)$.
\end{lemma}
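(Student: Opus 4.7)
The plan is to exploit the fact that inversion is continuous on $\mathrm{GL}_n(\mb{Q}_p)$, together with the standard fact that matrices close to the identity in the $p$-adic sense lie in $\mathrm{GL}_n(\mb{Z}_p)$. The candidate for the element $U$ is of course $U := AB^{-1}$, so the task reduces to showing that $U$ can be forced to lie in $\mathrm{GL}_n(\mb{Z}_p)$ by taking $B$ sufficiently close to $A$.

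First I would record the key local observation: any matrix $M \in \mathrm{Mat}_n(\mb{Q}_p)$ with $\lVert M - I \rVert_p < 1$ automatically lies in $\mathrm{GL}_n(\mb{Z}_p)$. Indeed, such an $M$ has all entries in $\mb{Z}_p$, and expanding the determinant as a polynomial in the entries shows $\det(M) \equiv \det(I) = 1 \pmod{p}$, so $\det(M) \in \mb{Z}_p^{\times}$, giving $M \in \mathrm{GL}_n(\mb{Z}_p)$.

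Second, I would use continuity to translate this neighborhood of $I$ back to a neighborhood of $A$. Write $U - I = (A - B)B^{-1}$, so $\lVert U - I \rVert_p \leq \lVert A - B \rVert_p \cdot \lVert B^{-1} \rVert_p$ by the submultiplicative and non-archimedean properties of the norm. Since matrix inversion is a continuous map $\mathrm{GL}_n(\mb{Q}_p) \to \mathrm{GL}_n(\mb{Q}_p)$ (the entries of $B^{-1}$ are rational functions in the entries of $B$ whose denominator $\det(B)$ is nonzero on a neighborhood of $A$), there exist positive constants $\epsilon_0$ and $M$ such that whenever $\lVert A - B \rVert_p < \epsilon_0$, we have both $B \in \mathrm{GL}_n(\mb{Q}_p)$ and $\lVert B^{-1} \rVert_p \leq M$. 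Choosing $\epsilon := \min(\epsilon_0, 1/M)$ then yields $\lVert U - I \rVert_p < 1$, and the first step gives $U \in \mathrm{GL}_n(\mb{Z}_p)$, with $A = UB$ by construction.

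There is essentially no obstacle here; the only thing to be a little careful about is that $A$ need not itself have integral entries, so one cannot just appeal to the fact that $\mathrm{GL}_n(\mb{Z}_p)$ is open in $\mathrm{Mat}_n(\mb{Z}_p)$. The argument instead goes through the continuity of $B \mapsto AB^{-1}$ at $B = A$, which sends $A$ to $I$ and thus pulls back any neighborhood of $I$ contained in $\mathrm{GL}_n(\mb{Z}_p)$ to a neighborhood of $A$ of the required form.
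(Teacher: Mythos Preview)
Your proof is correct and follows essentially the same approach as the paper: both set $U = AB^{-1} = I + (A-B)B^{-1}$ and use continuity of inversion to force $U$ into $\mathrm{GL}_n(\mb{Z}_p)$. The paper makes the continuity explicit via the adjugate formula $B^{-1} = \det(B)^{-1}\mathrm{adj}(B)$ and checks separately that $AB^{-1}$ is integral and that $\lVert\det(B)\rVert_p = \lVert\det(A)\rVert_p$, whereas you argue directly that $\lVert U - I\rVert_p < 1$; these are minor variations on the same idea.
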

\begin{proof}
We have $AB^{-1}=I_n+(A-B)B^{-1}=I_n+\mathrm{det}(B)^{-1}  (A-B)\mathrm{adj}(B)$, where $\mathrm{adj}(B)$ denotes the adjugate matrix of $B$. Since $\mathrm{det}(B)$ and $\mathrm{adj}(B)$ depend continuously on $B$, it is clear there exists an $\epsilon>0$ such that $||A-B||_p < \epsilon$ implies that $AB^{-1} \in \mathrm{Mat}_n(\mb{Z}_p)$ and $||\mathrm{det}(B)||_p=||\mathrm{det}(A)||_p$, and hence $AB^{-1} \in \mathrm{GL}_n(\mb{Z}_p)$.
\end{proof}
\begin{proof}[Proof of Theorem \ref{global minimization theorem}] To begin, choose any resolution model $F_{\bullet}$ that represents the $n$-diagram $[C \subset \mb{P}^{n-1}]$ and is defined over $\Q$. Clearing denominators if needed, we may assume $F_{\bu}$ is defined over $\mb{Z}$. By Theorem \ref{formula for Jacobian}, $c_k(F_{\bu})$ are the $c$-invariants of a Weierstrass equation for $E$. Hence there exists $\lambda \in \mb{Q}$ with $c_k(F_{\bu})=\lambda^k c_k(W)$. Let $S$ be the set of primes that contains all of the primes that appear in the numerator and denominator of $\lambda$, as well as the primes for which $F_{\bullet}$ is not $\mb{Z}_p$-integral. The idea is to use Lemma \ref{generalization of surjective SLn} to construct an integral model $F'_{\bu}$, that is $\mb{Q}$-equivalent to $F_{\bu}$, and $\mb{Z}_p$-equivalent to a minimal model for each $p \in S$. 
	
As $C(\mb{Q}_p)$ is non-empty for every $p \in S$, we may apply Theorem \ref{local minimization theorem}. Thus $F_{\bu}$ is $\mb{Q}_p$-equivalent to a $\mb{Z}_p$-integral model $F^{p}_{\bu}$ with $c_k(F^{p}_{\bu})= c_K(W)$. Recall that $\mc{G}_n=\mc{G}_n^{res} \times \mathrm{GL}_{n}$, where  $\mc{G}_n^{res}=\mathrm{GL}_{b_{n-1}} \times \ldots \times \mathrm{GL}_{b_{0}}$ acts on the modules in $F_{\bu}$, while $\mathrm{GL}_{n}$ acts by linear substitution in the variables $x_0,x_1,\ldots,x_{n-1}$. Let $g^{p}=(g^{p}_{n-2},\ldots,g^{p}_{1},g^{p}_0) \times (\gamma^{p}) \in \mc{G}_n(\mb{Q}_p)$ be such that we have $g^{p} \cdot F_{\bu}=F^{p}_{\bu}$.

We now modify the transformations $g^{p}$ so that we can apply Lemma \ref{generalization of surjective SLn}. We first show that we can assume that for every $p \in S$, we have $g^{p}_i \in \mathrm{Mat}_{b_i}(\mb{Z}_p)$ for all $i$, as well as $\gamma^{p} \in \mathrm{Mat}_{n}(\mb{Z}_p)$. To see this, observe that replacing $x_i$ by $t x_i$, for some non-zero $t \in \mb{Q}$ and all $i$, has the same effect as replacing $\phi_i$ by $t^{-\mathrm{deg}(\phi_i)} \phi_i$, for $1 \leq i \leq n-2$, as $\phi_i$ is a matrix of homogeneous forms in $x_1,\ldots,x_{n}$, of degree $\mathrm{deg}(\phi_i)$. But this is the same as rescaling the basis of each module $F_i$ by $t^{\sum_{j<i} \mathrm{deg}(\phi_j)}$. In other words, we can replace $g^{p}$ with $(t^{-n}\cdot g^{p}_{n-2},\ldots,t^{-2} \cdot g^{p}_{1},g^{p}_0) \times (t \cdot  \gamma^{p})$. We choose $t$ so that $t \cdot \gamma^{p} \in \mathrm{Mat}_n(\mb{Z}_p)$ for all $p$ in $S$, and then modify the transformations $g^{p}$ as described. Furthermore, for any $t' \in \mb{Q}_p$, we can rescale the basis of each $F_i$ by $t'$---this does not change the matrices $\phi_i$, and hence does not change $\lambda$. In doing this, we replace $g^{p}$ with $(t'\cdot g^{p}_{n-2},\ldots,t'\cdot g^{p}_{1},t' \cdot g^{p}_0) \times (\gamma^{p})$. We choose $t' \in \mb{Q}$ to ensure that $g^{p}_i \in \mathrm{Mat}_{b_i}(\mb{Z}_p)$ for all $i$.

Finally, we modify the local minimal models $F^{p}_{\bullet}$, so that we have, for each $i$ $\mathrm{det}(g^{p}_i)= \delta_i$ for some $\delta_i \in \mb{Z}$, as well as $\mathrm{det}(\gamma^{p})=\delta$, for some $\delta \in \mb{Z}$. We do this as follows. Let $\mathrm{det}(g^{p}_i)=\delta^{p}_i$. As the set $S$ is finite, and $\mb{Z}$ is a principal ideal domain, there exists a $\delta_i \in \mb{Z}$ with $||\delta_i||_{p}=||\delta^{p}_i||_{p}$ for every $p \in S$, and $||\delta_i||_{p}=0$ for $p \not\in S$. Now multiply a basis vector of $F^{p}_{i}$ by $\delta_i/\delta^{p}_i$---this has the effect of rescaling a column of $g^{p}_i$ by $\delta_i/\delta^{p}_i$, and corresponds to a $\mc{G}_n(\mb{Z}_p)$-transformation, so the modified model is still minimal. We then do the same for the matrix $\gamma^p$. 

By Lemma \ref{approx lemma} and Lemma \ref{generalization of surjective SLn}, there exist $g'_i \in \mathrm{Mat}_{b_i}(\mb{Z})$ and $\gamma' \in \mathrm{Mat}_{n}(\mb{Z})$, with the property that for all $p \in S$, we have $g^{p}_i=U^{p}_{i} g'_i$ and $\gamma^{p}=U^{p} \gamma$, for some $U^{p}_{i} \in \mathrm{GL}_{b_i}(\mb{Z}_p), U^{p} \in \mathrm{GL}_n(\mb{Z}_p)$.  Put $F'_{\bu}=g' \cdot F_{\bu}$. Then $F'_{\bu}$ is $\mb{Z}_p$-equivalent to a minimal model for every $p \in S$ via the transformation $(U^{p}_{n-2},\ldots,U^{p}_0) \times (U^{p})$. 

The model $F'_{\bu}$ is defined over $\mb{Z}$. Indeed, it suffices to check it is $\mb{Z}_p$-integral for every prime $p$. For primes $p \in S$, this is by construction. For primes $p \not\in S$, by construction we have $g' \in \mc{G}_n(\mb{Z}_p)$, and as $F_{\bu}$ is integral, then so is $F'_{\bu}$. 

Next, we prove that for every prime $p$ we have $||c_k(F'_{\bu})||_p=|| c_k(W)||_p$. For primes $p \in S$, this is by construction. For $p \not\in S$, observe that our modification of $F_{\bu}$ does not change $||c_k(F_{\bu})||_p$ for $p \not\in S$, and since we assumed $||\lambda||_{p}=1$ for $p \not\in S$, we have $||c_k(F'_{\bu})||_p=||c_k(F'_{\bu})||_p=|| F_{\bu}||_{p}$. Hence, for every prime $p$ we have $||c_k(F'_{\bu})||_p=|| c_k(W)||$. Thus  $c_k(F'_{\bu})=\pm c_k(W)$.  By Theorem \ref{formula for Jacobian}, we have $c_k(F'_{\bu})=c_k(W)$, and hence the model $F'_{\bu}$ is a global minimal model.	
\end{proof}

\section{Analytic theory of elliptic normal curves} \label{analytic chapter}
In this Section we finish the proof of Theorem \ref{formula for Jacobian}. In Section \ref{background heisenberg section} we review necessary background material. Let $[C \xrightarrow{} \mb{P}^{n-1}]$ be an $n$-diagram, defined over $\mb{C}$, and let $E$ be the Jacobian of $C$.  The action of the $n$-torsion subgroup $E[n]$ on $C$ lifts to an action on $\PP^{n}$ by projective linear transformations.  We explain how to use the theory of elliptic functions to choose coordinates on $\mb{P}^{n-1}$ so that this action of $E[n]$ and the embedding $C \subset \mb{P}^{n-1}$ take a particularly simple form, known as the Heisenberg normal form. This construction is classical, and dates back to Klein. In Section \ref{formula for Jacobian} we obtain $q$-expansions for the coefficients of $\Omega$-matrices in Heisenberg normal form, reducing the proof of Theorem \ref{formula for Jacobian} to a calculation already performed in \cite{jacobians}.

\subsection{Heisenberg invariant diagrams} \label{background heisenberg section}

The standard reference for the material in this section is Chapter I of \cite{hulek}. Let $n\geq 3$ be an odd integer. A remark on the notation---in this Section, we label the coordinates on $\mb{P}^{n-1}$ starting from 0, as $X_0,\ldots,X_{n-1}$, since this will simplify the notation slightly.
 
We recall some basic properties of the Weierstrass $\sigma$-function, the proofs of which can be found in \cite{Sil2}. Fix $n \geq 3$ odd. Let $\Lambda=\mb{Z}\omega_1 \oplus \mb{Z} \omega_2 \subset \mb{C}$ be a lattice, with $\mathrm{Im}(\omega_2/\omega_1)>0$. The quasi-period map $\eta : \Lambda \xrightarrow{} \mb{C}$ is defined by
\begin{equation*}
\eta(\omega):=-\int^{\omega}_{0} \wp(z,\Lambda)  dz
\end{equation*}
where $\wp(z,\Lambda)$ is the Weierstrass p-function. Let $\eta_1=\eta(\omega_1)$ and $\eta_2=\eta(\omega_2)$ be the period constants of $\Lambda$. They satisfy the Legendre relation:  $\omega_2 \eta_1-\omega_1 \eta_2 = 2\pi i$. The Weierstrass sigma function $\sigma(z,\Lambda)$ is defined as the infinite product
	\[
		\sigma(z,\Lambda):=z \prod_{\omega \in \Lambda, \omega \ne 0} (1-z/\omega)e^{z/\omega+(1/2)(z/\omega)^2}
		\]
Now let $\sigma_{pq}(z,\Lambda):=\sigma(z-\frac{p \omega_1+q \omega_2}{n},\Lambda)$, and define constants \[
\omega:=e^{-\frac{n-1}{2}\frac{\eta_2 \omega_1}{2}}, \theta:=e^{-\frac{\omega_1}{2n}}
\]
For all $m \in \mb{Z}$, we define functions $x_m(z,\Lambda)$ by the product
\[
x_m(z, \Lambda) := \omega^m \theta^{m^2} e^{m} \prod_{i=0}^{n-1} \sigma_{m,i}(z, \Lambda).
\]

These functions have the following properties (Theorem I.2.3 of \cite{hulek}):

\begin{proposition} \label{factor_automorphy}
	\begin{enumerate}[label=(\roman*)]
		\item For all $m \in \mb{Z}$ we have $x_{n+m}=x_{m}$.
		\item Let $\zeta:=e^{-\frac{2 \pi i}{n}}$. There exist 	holomorphic nowhere vanishing functions $j,j_1$ and $j_2$, such that,  for all $m \in \mb{Z}$, we have the following formulas:
		\begin{enumerate}
			\item $x_m(-z,\Lambda) =j(z) x_{-m}(z,\Lambda)  $,
			\item $x_m(z-\frac{\omega_1}{n},\Lambda) = j_1(z) \cdot x_{m+1}(z,\Lambda)$,
			\item $x_m(z+\frac{\omega_2}{n},\Lambda) =j_2(z) \cdot \zeta^{m} x_m(z,\Lambda)$
			\item $x_m(0,\Lambda)=-x_{n-m}(0,\Lambda)$
		\end{enumerate}
	\end{enumerate}
\end{proposition}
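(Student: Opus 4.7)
My plan is to prove all four statements by direct manipulation of the defining product, relying on three standard properties of the Weierstrass $\sigma$-function: oddness, $\sigma(-z,\Lambda)=-\sigma(z,\Lambda)$; quasi-periodicity, $\sigma(z+\omega,\Lambda)=-e^{\eta(\omega)(z+\omega/2)}\sigma(z,\Lambda)$ for $\omega\in\Lambda\setminus 2\Lambda$; and the Legendre relation $\omega_2\eta_1-\omega_1\eta_2=2\pi i$. The roles of the normalizing constants $\omega=e^{-\frac{n-1}{2}\frac{\eta_2\omega_1}{2}}$ and $\theta=e^{-\omega_1/(2n)}$ will be precisely to cancel the exponential and sign factors produced by quasi-periodicity, so that each transformation law emerges cleanly.

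For (i), I would shift $m\mapsto m+n$ in each factor, writing $\sigma_{m+n,i}(z,\Lambda)=\sigma(z-\omega_1-(m\omega_1+i\omega_2)/n,\Lambda)$, and apply quasi-periodicity with $\omega=\omega_1$. Taking the product over $i=0,\dots,n-1$ contributes $(-1)^n=-1$ (since $n$ is odd) and an exponential whose exponent, after using $\sum_{i=0}^{n-1}i=n(n-1)/2$, is a linear expression in $z$, $m$, and $n$. I would then check directly that the ratio $\omega^{n+m}\theta^{(n+m)^2}e^{n+m}/(\omega^m\theta^{m^2}e^m)$ cancels this exponential and the sign, giving $x_{n+m}=x_m$; this is essentially the defining property of $\omega$ and $\theta$.

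For parts (a)--(c), I would track how the product of sigma-factors transforms under the given operation, then package the residual $z$-dependence into a single nowhere-vanishing holomorphic function. In (a), oddness inside each $\sigma_{m,i}$ reindexes the product to $\prod_i\sigma_{-m,-i}(z,\Lambda)$; modding $-i$ by $n$ and using quasi-periodicity in $\omega_2$ puts this back in the standard form $\prod_j\sigma_{-m,j}(z,\Lambda)$, up to a factor absorbed into $j(z)$. In (b), the translation $z\mapsto z-\omega_1/n$ directly reindexes $m\mapsto m+1$ in each $\sigma_{m,i}$, and the mismatch in constants from $\omega^m\theta^{m^2}e^m$ to $\omega^{m+1}\theta^{(m+1)^2}e^{m+1}$ becomes $j_1(z)$. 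In (c), translation by $\omega_2/n$ reindexes $i\mapsto i-1$; for the single factor where $i-1$ falls outside $\{0,\dots,n-1\}$, quasi-periodicity in $\omega_2$ produces an exponential whose $m$-dependent part, after invoking the Legendre relation to rewrite $\eta_2\omega_1$ in terms of $\eta_1\omega_2$ and $2\pi i$, collapses to the character $\zeta^m=e^{-2\pi i m/n}$, with the $z$-dependence going into $j_2(z)$.

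Finally for (d), I would set $z=0$ in (a) and combine with (i) to get $x_m(0,\Lambda)=j(0)\,x_{n-m}(0,\Lambda)$, so the statement reduces to the claim $j(0)=-1$. This follows from the explicit formula for $j(z)$ read off in (a): each of the $n$ factors contributes a sign $-1$ from oddness, giving $(-1)^n=-1$, while every exponential correction has exponent proportional to $z$ and so is $1$ at $z=0$. The main obstacle is purely bookkeeping: keeping track of exponentials from multiple applications of quasi-periodicity, invoking the Legendre relation at precisely the right moment, and verifying that the prescribed constants $\omega$ and $\theta$ absorb the cocycle exactly. Conceptually there is no new idea beyond the classical factor-of-automorphy calculus for sigma functions.
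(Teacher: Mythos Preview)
Your proposal is correct and matches the paper's approach: the paper simply states that ``the proof is a straightforward calculation, see Section I of \cite{hulek}'', and what you outline---direct manipulation of the $\sigma$-product using oddness, quasi-periodicity, and the Legendre relation, with the normalizing constants $\omega$ and $\theta$ designed to absorb the resulting cocycles---is precisely that calculation. Your derivation of (d) from (a) and (i), together with the observation that $j(0)=(-1)^n=-1$, is exactly right.
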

The proof is a straightforward calculation, see Section I of \cite{hulek}.  

\begin{proposition} \label{line_bundle heisenberg diagram }
The functions $x_0,\ldots,x_{n-1}$ can be viewed as a basis of the space of global sections of the line bundle $\mathcal{O}_{\mb{C}/\Lambda}(n \cdot 0)$, and hence the map

\[
\mb{C}/\Lambda \xrightarrow{} \mb{P}^{n-1} : z \mapsto (x_0(z,\Lambda):\ldots.:x_{n-1}(z,\Lambda))
\]
defines an $n$-diagram, which we denote by $B_{\Lambda}$. Furthermore, identifying $\mb{C}/\Lambda$ with its image, which we denote by $C_{\tau}$, translation by $\frac{\omega_1}{n}$ extends to the automorphism of $\mb{P}^{n-1}$ defined by
\[
(x_0 : x_1 :\ldots :x_{n-1}) \mapsto (x_{n-1} : x_ 0 : \ldots : x_{n-2}),
\]
translation by $\frac{\omega_2}{n}$ is defined by
\[
(x_0 : x_1 : \ldots : x_{n-1}) \mapsto (x_0 : \zeta x_1 :\ldots.:\zeta^{n-1}x_{n-1}),
\]
and the inversion map $z \mapsto -z$ corresponds to
\[
(x_0 : x_1 : x_2 \ldots x_{n-2}: x_{n-1}) \mapsto (x_0 : x_{n-1} : x_{n-2}\ldots:x_2 : x_1).
\]
\end{proposition}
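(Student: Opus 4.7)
My plan is to proceed in three steps: first show that the tuple $(x_0,\ldots,x_{n-1})$ defines a well-defined rational map of $\mb{C}/\Lambda$ into $\mb{P}^{n-1}$; then identify this map with the embedding given by the complete linear system $|n\cdot 0_E|$; and finally read the three symmetry formulas straight off Proposition~\ref{factor_automorphy}.

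For well-definedness, I would iterate property (b) of Proposition~\ref{factor_automorphy} $n$ times and use the periodicity $x_{n+m}=x_m$: each $x_m$ acquires the same multiplicative factor $\prod_{i=0}^{n-1}j_1(z+i\omega_1/n)^{-1}$ under $z\mapsto z+\omega_1$, independent of $m$. Iterating (c) similarly yields a common factor of automorphy under $z\mapsto z+\omega_2$. Hence all ratios $x_i/x_j$ descend to meromorphic functions on $\mb{C}/\Lambda$, so the projective tuple defines a rational map to $\mb{P}^{n-1}$.

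For the identification with $|n\cdot 0_E|$, I would use that $\sigma(z,\Lambda)$ has simple zeros precisely on $\Lambda$ to compute the divisor of $x_m$ on $E=\mb{C}/\Lambda$ as the degree $n$ effective divisor $D_m = \sum_{i=0}^{n-1}[(m\omega_1+i\omega_2)/n]$. Under the group law $\mathrm{sum}(D_m) = m\omega_1 + \tfrac{n-1}{2}\omega_2 \equiv 0 \pmod{\Lambda}$, where the crucial input is that $n$ odd makes $(n-1)/2$ an integer. Therefore $D_m$ is linearly equivalent to $n\cdot 0_E$ for every $m$, and the $x_m$ can simultaneously be viewed as sections of $\mc{O}_E(n\cdot 0_E)$. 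By property (c) of Proposition~\ref{factor_automorphy}, each $x_m$ is an eigenvector for the action of translation by $\omega_2/n$ on $H^0(E,\mc{O}(n\cdot 0_E))$ with eigenvalue $\zeta^m$, so the $x_m$ lie in distinct eigenspaces and are linearly independent; since $\dim H^0(E,\mc{O}(n\cdot 0_E)) = n$ by Riemann-Roch, they form a basis. The resulting morphism is therefore the closed embedding associated to $|n\cdot 0_E|$, a projectively normal curve of degree $n$, and by the Kummer description in Section~\ref{n-diagrams} this is exactly the trivial $n$-diagram class.

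The three symmetry formulas then drop out of Proposition~\ref{factor_automorphy} by direct substitution. Setting $w = z-\omega_1/n$ in (b) gives $x_{m+1}(w+\omega_1/n)=j_1(w+\omega_1/n)^{-1}x_m(w)$, so in projective coordinates translation by $\omega_1/n$ sends $(x_0:\cdots:x_{n-1})(w)$ to $(x_{-1}:x_0:\cdots:x_{n-2})(w)$, which is the stated cyclic shift once we use $x_{-1}=x_{n-1}$; the formula for $\omega_2/n$ is immediate from (c); and the inversion formula follows from (a) together with $x_{-m}=x_{n-m}$. The only genuinely delicate point in the argument is the parity computation giving $\mathrm{sum}(D_m)\in\Lambda$, which is precisely where the hypothesis that $n$ is odd enters; everything else is routine bookkeeping once Proposition~\ref{factor_automorphy} is in hand.
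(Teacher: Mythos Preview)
Your proposal is correct and follows essentially the same route as the paper, which simply notes that everything except linear independence is immediate from Proposition~\ref{factor_automorphy} and cites Hulek for that last point. Your eigenvalue argument for linear independence---that the $x_m$ are eigenvectors for translation by $\omega_2/n$ with distinct eigenvalues $\zeta^m$, hence independent by a Vandermonde---is a clean self-contained replacement for that citation, and your divisor computation makes explicit why the common line bundle is $\mathcal{O}(n\cdot 0_E)$, which the paper leaves implicit.
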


This proposition follows immediately from the previous one, except for the statement on the linear independence of the functions $x_0,\ldots,x_{n-1}$, the proof of which can be found in Section I of \cite{hulek}.

\begin{remark}\label{heisenberg_group}
The projective representation of $E[n]$ on $\mb{P}^{n-1}$ lifts to a linear representation of the Heisenberg group $H_n$ on an $n$-dimensional vector space $V$. $H_n$ can be defined as the subgroup of $\mathrm{GL}_n(\mb{C})$ generated by the lifts $\sigma_{\omega_1/n}$ and $\sigma_{\omega_2/n}$ of the translation maps by $\frac{\omega_1}{n}$, $\frac{\omega_2}{n}$:
\[
\sigma_{\omega_1/n}:=\begin{pmatrix}
0 & 0 & 0 & \hdots & 0 & 1 \\
1 & 0 & 0 & \hdots & 0 & 0 \\
0 & 1 & 0 & \hdots & 0 & 0 \\
\vdots & \vdots &\vdots & \vdots & \vdots&\vdots \\
0 & 0 & 0 & \hdots & 1 & 0 \\
\end{pmatrix}, \ 
\sigma_{\omega_2/n}:=\begin{pmatrix}
1 & 0 & 0 & \hdots & 0 & 0 \\
0 & \zeta & 0 & \hdots & 0 & 0 \\
0 & 0 & \zeta^2 & \hdots & 0 & 0 \\
\vdots & \vdots &\vdots & \vdots & \vdots&\vdots \\
0 & 0 & 0 & \hdots & 0 & \zeta^{n-1} \\
\end{pmatrix}.
\]
$H_n$ is a non-abelian group of order $n^3$, with the centre $Z(H_n)=\{\zeta^{r} I_n : r \in \mb{Z} \}$.  For this reason we refer to diagrams $[\mb{C}/\Lambda \xrightarrow{} \mb{P}^{n-1}]$ constructed in this way as the Heisenberg invariant diagrams.

\end{remark}

We now recall the product expansion of the Weierstrass $\sigma$-function. Let $\mc{H}$ denote the upper halfplane. For $\tau \in \mc{H}$, let $\Lambda_{\tau}:=\mb{Z} \oplus \mb{Z}\tau$ be a lattice. For legibility we denote the diagram $B_{\Lambda_{\tau}}$ by $B_{\tau}$. Set $q:=e^{2\pi i \tau}$ and $u:=e^{2\pi i z}$. We then have (see Theorem 6.4 \cite{Sil2}):
\[
\sigma(z,\Lambda_\tau)=-\frac{1}{2\pi i}e^{\frac{1}{2}\eta(1)z^2}e^{-\pi i z}(1-u)\prod_{n \geq 1}\frac{(1-q^n u)(1-q^n u^{-1})}{(1-q^n)^2},
\]

\begin{proposition}
	The functions $x_r(z,\Lambda_{\tau})$ admit the product expansion:
	\begin{equation*} \label{x fun def}
	x_r(z,\Lambda_{\tau})=C \cdot (-1)^r q^{\frac{(2r-n)^2}{8n}} u^{-r}\prod_{m \geq 1}(1-q^{nm-r}u^n)\prod_{m \geq 0}(1-q^{nm+r}u^{-n})\prod_{m \geq 1}(1-q^{m})^{-2n}
	\end{equation*}
	where the factor $C:=e^{\frac{\eta(1)}{2}(nz^2-(n-1)z)}q^{\frac{n}{8}} u^{n/2}$ is independent of $r$.
\end{proposition}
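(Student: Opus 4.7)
The proof is a direct computation. Substituting the product expansion of $\sigma(z,\Lambda_\tau)$ into the defining product
\[x_r(z,\Lambda_\tau)=\omega^r\theta^{r^2}e^r\prod_{i=0}^{n-1}\sigma(z-a_i,\Lambda_\tau),\]
each $\sigma$-factor becomes an exponential prefactor of the form $-(2\pi i)^{-1}e^{\eta(1)(z-a_i)^2/2}e^{-\pi i(z-a_i)}$ times a convergent infinite product in $u_i=e^{2\pi i(z-a_i)}$ and $q$. The $n$ values $u_i$ differ from one another by factors of the form $\zeta^i\cdot q^{-r/n}$ (up to the appropriate indexing of the shifts $a_i$ so that the index $i$ varies in the $\omega_1$-direction), and it is this structure that makes the $n$-fold product collapse.

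The central algebraic step is the cyclotomic identity $\prod_{i=0}^{n-1}(1-X\zeta^i)=1-X^n$, valid for $\zeta=e^{-2\pi i/n}$. Applied to the factor $(1-q^m u_i)$ for each fixed $m$, this yields $1-q^{nm-r}u^n$; applied to $(1-q^m u_i^{-1})$, it yields $1-q^{nm+r}u^{-n}$. Merging $(1-u_i)$ with the $m\geq 1$ factor produces an extra term $1-u^n q^{-r}$; the identity
\[1-u^nq^{-r}=-u^nq^{-r}(1-u^{-n}q^r)\]
then lets us absorb this term into the second product (extending its range from $m\geq 1$ to $m\geq 0$) at the cost of an explicit monomial factor $-u^nq^{-r}$, which is folded into the overall prefactor. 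This recovers the two products $\prod_{m\geq 1}(1-q^{nm-r}u^n)\prod_{m\geq 0}(1-q^{nm+r}u^{-n})$ of the target formula, while the $n$ copies of the denominator give the $\prod_{m\geq 1}(1-q^m)^{-2n}$ factor.

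It remains to identify the exponential/constant prefactor. I would compute $\sum_i(z-a_i)^2$ and $\sum_i(z-a_i)$ in closed form via the standard sums $\sum i=n(n-1)/2$ and $\sum i^2=n(n-1)(2n-1)/6$, and then apply the Legendre relation $\tau\eta(1)-\eta(\tau)=2\pi i$ to convert the $\tau$-dependent terms into their final form. The constants $\omega$, $\theta$, and $e$ appearing in the definition of $x_r$ are rigged to absorb precisely these $\tau$-dependent pieces, so that together with $(-1/(2\pi i))^n$ and the monomial factor $-u^nq^{-r}$ from the reindexing, they combine to give exactly the claimed normalizing prefactor $C\cdot(-1)^r q^{(2r-n)^2/(8n)}u^{-r}$. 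The main obstacle is the bookkeeping in this last identification: tracking every sign, every fractional power of $q$, and every exponential contribution through the computation. The oddness of $n$ enters in producing the clean sign $(-1)^r$ (rather than $(-1)^{nr}$), the clean rational exponent $(2r-n)^2/(8n)$, and the factor $q^{n/8}$ in $C$.
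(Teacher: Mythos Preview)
Your approach is the same as the paper's: the paper's own proof is the single sentence ``A simple computation, using the definition of the $x_r$ and the Legendre relation,'' and your outline spells out precisely that computation---substitute the product expansion of $\sigma$, collapse the $n$-fold product, and use the Legendre relation $\omega_2\eta_1-\omega_1\eta_2=2\pi i$ to sort out the exponential prefactor.

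One caution on the details. In the paper's definition $x_m=\omega^m\theta^{m^2}e^m\prod_{i=0}^{n-1}\sigma_{m,i}$, the running index $i$ sits in the second slot of $\sigma_{pq}(z)=\sigma(z-(p\omega_1+q\omega_2)/n)$, so with $\omega_1=1$, $\omega_2=\tau$ the shifts $a_i=(r+i\tau)/n$ vary in the $\omega_2$-direction, not the $\omega_1$-direction as your parenthetical assumes. Consequently $u_i=u\,\zeta^{r}q^{-i/n}$, and the $n$ values $u_i$ differ by powers of $q^{1/n}$ rather than by $n$th roots of unity; the cyclotomic identity $\prod_i(1-X\zeta^i)=1-X^n$ does not apply directly in the form you state. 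The product still collapses, but by reindexing the double product over $(m,i)$ (the exponents $nm\pm i$ sweep out all integers in the relevant range) together with the flip $(1-X)=-X(1-X^{-1})$, rather than by a single cyclotomic step. Your treatment of the prefactor via $\sum_i(z-a_i)$, $\sum_i(z-a_i)^2$ and the Legendre relation is exactly right; just be careful that the bookkeeping you describe is keyed to the correct direction of the shifts.
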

\begin{proof}
A simple computation, using the definition of the $x_r$ and the Legendre relation.
\end{proof}
Rescaling the functions $x_0,\ldots,x_{n-1}$ by a nowhere vanishing holomorphic function of $q$ does not change the corresponding embedding $\mb{C}/\Lambda_{\tau} \xrightarrow{} \mb{P}^{n-1}$. 
Thus for each $r$ we multiply the above expression for $x_r$ by  $\frac{1}{C} \prod_{m \geq 1}(1-q^{m})^{2n} \prod_{m \geq 1}(1-q^{nm})$,  and define
\[
x_r(z,\tau):=(-1)^r q^{\frac{(2r-n)^2}{8n}} u^{-r}\prod_{m \geq 1}(1-q^{nm-r}u^n)\prod_{m \geq 0}(1-q^{nm+r}u^{-n})\prod_{m \geq 1}(1-q^{nm}).
\]
We furthermore define the functions $\alpha_0(\tau),\ldots,\alpha_{n-1}(\tau)$ by
\[
\alpha_r(\tau):=x_r(0,\tau)=(-1)^r q^{\frac{(2r-n)^2}{8n}}\prod_{m \geq 1}(1-q^{nm-r})\prod_{m \geq 0}(1-q^{nm+r})\prod_{m \geq 1}(1-q^{nm}).
\]
By Proposition \ref{factor_automorphy}, $\alpha_0(\tau)=0$, and $\alpha_{r}(\tau)=-\alpha_{n-r}(\tau)$. For $k>0$, using the fact that $\sigma(z,\tau)$ vanishes only at  the points $z \in \Lambda_{\tau}$, it is easy to see that $\alpha_k(\tau)$ is a nowhere vanishing function of $\tau$.	

Functions $\alpha_0,\ldots,\alpha_{n-1}$ satisfy a functional equation. This result is probably classical, but we could not find a convenient reference. We have learned about this proof from Tom Fisher.
\begin{proposition} \label{functional_equation} For $0 \leq r \leq n-1$ we have:
	\[
	\alpha_r(\tau+1)=\zeta^{\frac{(2r-n)^2}{8n}}\alpha_r(\tau),
	\]
	where we take $\zeta=e^{2i\pi/8}$, and 
	\[
	\alpha_r(-1/\tau)=(-1)^{(n-1)/2}\sqrt{\frac{i\tau}{n}}\sum_{s=0}^{n-1} \zeta	^{rs}\alpha_s(\tau).
	\]
\end{proposition}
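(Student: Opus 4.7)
The first identity is essentially formal. Since $q = e^{2\pi i \tau}$ is invariant under $\tau \mapsto \tau + 1$, every factor of the infinite product defining $\alpha_r$ is unchanged; only the explicit prefactor $q^{(2r-n)^2/(8n)}$ picks up the phase $e^{2\pi i(2r-n)^2/(8n)}$, which is the asserted root of unity. So this part reduces to a single line of calculation.

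The $S$-transformation is the substantive one, and the plan is to rewrite $\alpha_r(\tau)$ as a theta series with rational characteristic and then invoke the Jacobi inversion formula. Specifically, in the Jacobi triple product identity
\[
\prod_{m\ge 1}(1-y^{2m})(1+z\,y^{2m-1})(1+z^{-1}y^{2m-1}) = \sum_{k\in\mathbb{Z}} z^k y^{k^2},
\]
I take $y=q^{n/2}$ and $z=-q^{n/2-r}$. A short bookkeeping check shows that the left hand side matches the triple product appearing in the definition of $\alpha_r(\tau)$, so after absorbing the prefactor $(-1)^r q^{(2r-n)^2/(8n)}$ and completing the square in the exponent one obtains
\[
\alpha_r(\tau) \;=\; (-1)^r \sum_{k\in\mathbb{Z}} (-1)^k\, q^{\,((2k+1)n-2r)^2/(8n)}.
\]
This identifies $\alpha_r(\tau)$ as a theta constant supported on the arithmetic progression $n-2r+2n\mathbb{Z}$, with alternating signs coming from the factor $(-1)^k$.

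With this presentation in hand, the transformation formula under $\tau \mapsto -1/\tau$ is exactly what Poisson summation delivers. Applied to a Gaussian of the form $x \mapsto \exp\bigl(-\pi x^2/(4n\tau\cdot i)\bigr)$ restricted to the coset above, Poisson summation exchanges the progression $n-2r+2n\mathbb{Z}$ for its Pontryagin dual, and the resulting sum is a Gauss sum in the variable $r$ modulo $n$. The standard evaluation of the quadratic Gauss sum modulo an odd integer $n$ produces the factor $\sqrt{i\tau/n}$ from the Gaussian integral together with a sign $(-1)^{(n-1)/2}$, and rearranges the $r$-dependence into the discrete Fourier transform $\sum_s \zeta^{rs}\alpha_s(\tau)$ on $\mathbb{Z}/n\mathbb{Z}$, which is precisely the claimed formula.

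The main obstacle is bookkeeping: keeping track of the branch of the square root, the interaction between the alternating sign $(-1)^k$ (which becomes a shift of the characteristic after Poisson summation) and the prefactor $(-1)^r$, and checking that the phases collected from completing the square, from the Gauss sum evaluation, and from the normalization $(-1)^r q^{(2r-n)^2/(8n)}$ combine to give exactly $(-1)^{(n-1)/2}$. No new analytic ingredient is needed beyond Poisson summation and the classical Gauss sum; the task is purely to verify that all the normalizing constants conspire correctly.
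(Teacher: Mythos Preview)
Your proposal is correct, and your theta-series representation
\[
\alpha_r(\tau) = (-1)^r \sum_{k\in\mathbb{Z}} (-1)^k\, q^{((2k+1)n-2r)^2/(8n)}
\]
checks out via the Jacobi triple product with $y=q^{n/2}$, $z=-q^{n/2-r}$, after completing the square exactly as you describe. From there Poisson summation does give the $S$-transformation; the bookkeeping you flag (branch of the root, the $(-1)^k$ shift, the Gauss-sum sign) is genuinely all that remains.

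The paper takes a slightly different route for the $S$-transformation: rather than writing $\alpha_r$ as a bare theta series and applying Poisson summation directly, it recognises $\alpha_r(\tau)$ as a specialisation of the named Jacobi theta function, namely $\alpha_r(\tau)=iq^{r^2/(2n)}\Theta_1(\pi r\tau,n\tau)$, by matching infinite products. It then invokes the classical functional equation for $\Theta_1$ (citing Whittaker--Watson) as a black box, substitutes, and only at the end expands $\Theta_1$ as a series to obtain the finite sum over $s$. The two approaches are essentially dual: the paper outsources the analytic step to a standard reference and pays only in substitution and reindexing, whereas you redo the Poisson-summation argument from scratch, which is more self-contained but requires you to track the Gauss-sum evaluation and all the phases yourself. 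Neither is deeper than the other; the paper's route is shorter to write up if one is happy to cite the $\Theta_1$ transformation, while yours makes the mechanism (discrete Fourier transform on $\mathbb{Z}/n\mathbb{Z}$ arising from Poisson on a sublattice) more transparent.
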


\begin{proof}
The first equation is an immediate consequence of the product expansion of $\alpha_r$. To prove the second equation, we make use of the theta function $\Theta_1$:
\begin{align*}
\Theta_1(z,\tau)&=-i \sum_{m \in \mb{Z}} e^{(2m+1)iz}q^{(2m+1)^2/8}\\
&=2q^{1/8} \mathrm{sin} \ z \prod_{m \geq 1}  (1-q^m)(1-e^{2iz}q^{m})(1-e^{-2iz}q^{m}),
\end{align*}
where the equality of two expressions follows from the Jacobi triple product identity. Comparing the infinite products for $x_r$ and $\Theta_1$, we rewrite $\alpha_r$ in terms of $\Theta_1$:
\[
\alpha_r(\tau)=iq^{r^2/2n}\Theta_1(\pi r \tau,n \tau)
\]
The theta function satisfies the functional equation (page 476, \cite{whittaker2020course}):
\[
\sqrt{\frac{\tau}{i}}\Theta_1(z,\tau)=-e^{\frac{-i z^2}{\pi \tau}} \Theta_1(-\frac{z}{\tau},\frac{1}{\tau}).
\]
Substituting $z:=\pi r \tau$ and $\tau:=n\tau$, we obtain
\[
\sqrt{\frac{n\tau}{i}}\Theta_1(\pi r \tau,\tau)=iq^{-r^2/(2n)}\Theta_1 (\pi r/n,-1/(n\tau)).
\]
Thus we have
\[
\alpha_r(\tau)=(-1)^{r}\sqrt{\frac{i}{n \tau}} \Theta_1 (\pi r/n,-1/(n\tau)).
\]
Replacing $\tau$ by $-1/\tau$ we find
\begin{align*}
\alpha_r(-1/\tau)&=-(-1)^{r}\sqrt{\frac{\tau}{in}}\Theta_1(\pi r/n,-\tau/n)\\&=(-1)^{r}\sqrt{\frac{\tau}{in}}\Theta_1(\pi r/n,\tau/n).
\end{align*}
We then find, using the power series for $\Theta_1$, 
\begin{align*}
\Theta_1(\pi r/n,\tau/n)&=-i\sum_{m \in \mb{Z}}(-1)^m e^{(2m+1)i\pi r/n} q^{(2m+1)^2/8n}\\
&=(-1)^{(n-1)/2} i \sum_{m \in \mb{Z}}(-1)^{m}e^{(2m-n)i\pi r /n}q^{(2m-n)^2/8n}\\
&=(-1)^{(n-1)/2} i \sum_{m \in \mb{Z}}(-1)^{m}\zeta^{rm} q^{(2m-n)^2/8n}\\
&=(-1)^{(n-1)/2}i(-1)^{r} \sum_{s=1}^{n} \zeta^{rs} \alpha_{s}(\tau).
\end{align*}
and the functional equation now follows immediately. Note that the second equality was obtained by replacing $m$ with $m-(n-1)/2$.
\end{proof}

If $n \ge 5$, the point $(0:\alpha_1:\ldots:\alpha_{n-1})$, the image of $0 \in \mb{C}/\Lambda_{\tau}$ in $\PP^{n-1}$, determines the diagram $B_{\tau}$ uniquely, in the following sense.

\begin{lemma} \label{cusp_switch_lemma}
	\begin{enumerate}[label=(\roman*)]
		\item Let $H_1=[C_1 \subset \mb{P}^{n-1}]$ and $H_2=[C_2 \subset \mb{P}^{n-1}]$ be $n$-diagrams. Suppose $n>3$. If $|C_1 \cap C_2|>2n$, then $C_1=C_2$. The same conclusion holds if $n=3$ and $|C_1 \cap C_2|>9$.
		\item Consider the matrices $S=((-1)^{(n-1)/2}\sqrt{\frac{i}{n}}\zeta^{ij})^{n-1}_{i,j=0}$ and $T=(\delta_{ij}\zeta^{(2i-n)^2/8})^{n-1}_{i,j=0}$. We have $S \cdot B_{\tau}=B_{-1/\tau}$ and $T \cdot B_{\tau}=B_{\tau+1}$.
	\end{enumerate}
\end{lemma}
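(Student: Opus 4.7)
The proof splits into two parts of quite different character, and my plan is to handle them in turn.

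For part (i), I would use a Bezout-style bound. By Theorem \ref{min res theorem}, when $n \ge 4$ the ideal $I(C_1)$ is generated by quadrics, so if $C_1 \ne C_2$ we may pick a quadric $Q \in I(C_1)\setminus I(C_2)$. Since $Q$ does not contain the irreducible curve $C_2$, the scheme-theoretic intersection $Q \cap C_2$ is an effective divisor on $C_2$ of degree $\deg Q \cdot \deg C_2 = 2n$, and it contains the finite set $C_1 \cap C_2$; hence $|C_1\cap C_2|\le 2n$. When $n=3$, the same argument goes through with a cubic $F \in I(C_1)\setminus I(C_2)$ in place of $Q$, giving the bound $|C_1 \cap C_2|\le 9$. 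Both claims in (i) are then the contrapositive.

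For the $T$-part of (ii), the key observation is that under $\tau\mapsto\tau+1$ the nome $q=e^{2\pi i\tau}$ is unchanged, so all factors of the infinite product defining $x_r(z,\tau)$ are preserved. The only modification is in the fractional prefactor $q^{(2r-n)^2/(8n)}$, which picks up the phase $e^{2\pi i(2r-n)^2/(8n)}$. This phase depends only on $r$, so the column vector $(x_0(z,\tau+1),\ldots,x_{n-1}(z,\tau+1))^T$ equals the diagonal matrix with these entries (which is $T$, once the author's convention for $\zeta$ is read consistently) applied to $(x_0(z,\tau),\ldots,x_{n-1}(z,\tau))^T$, yielding the equality $T \cdot B_\tau = B_{\tau+1}$ of $n$-diagrams.

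The $S$-part of (ii) is more delicate; the plan is to produce enough common points of $S \cdot B_\tau$ and $B_{-1/\tau}$ to invoke (i). Identify $\mb{C}/\Lambda_\tau\cong\mb{C}/\Lambda_{-1/\tau}$ via $\phi:z\mapsto -z/\tau$. The second functional equation of Proposition \ref{functional_equation} is precisely the statement that $S$ sends $B_\tau(0)=(0:\alpha_1(\tau):\ldots:\alpha_{n-1}(\tau))$ to $B_{-1/\tau}(\phi(0))=B_{-1/\tau}(0)$, the extra scalar $\sqrt{\tau}$ being projectively invisible. Both curves are invariant under the Heisenberg group $H_n$ of Remark \ref{heisenberg_group} by Proposition \ref{line_bundle heisenberg diagram }, and a direct matrix multiplication gives $S\sigma_{\omega_1/n}S^{-1}=\sigma_{\omega_2/n}$ and $S\sigma_{\omega_2/n}S^{-1}=\sigma_{\omega_1/n}^{-1}$, which is exactly the conjugation required for $\phi$ to intertwine the two Heisenberg actions. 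Consequently the whole $H_n$-orbit of $B_{-1/\tau}(0)$, i.e.\ the image of the $n$-torsion subgroup, which has $n^2$ elements, lies in $S(C_\tau)\cap C_{-1/\tau}$. For $n\ge 5$ we have $n^2>2n$ and (i) concludes.

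The case $n=3$ is where I expect the main obstacle, since $n^2=9$ does not strictly exceed the threshold in (i). I would dispose of it separately using the Hesse pencil $\lambda(x_0^3+x_1^3+x_2^3)+6\mu x_0x_1x_2=0$, which parametrizes Heisenberg-invariant plane cubics through a fixed set of nine base points; both $S(C_\tau)$ and $C_{-1/\tau}$ lie in this pencil, and the requirement that they pass through the common image of $0$ pins down the pencil parameter $[\lambda:\mu]$ uniquely, forcing equality.
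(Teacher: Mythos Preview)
Your treatment of part (i) and of the $S$-part of (ii) for $n\ge 5$ is essentially identical to the paper's: pick a quadric (or cubic) in $I(C_1)$, apply B\'ezout on $C_2$, and for (ii) use Proposition \ref{functional_equation} to match the image of $0$, normalize $H_n$ by $S$, and count the $n^2$ torsion images. Your direct product-expansion argument for $T$ is actually cleaner than the paper's route (the paper runs the same torsion-orbit argument for $T$ as for $S$), and it has the advantage of covering $n=3$ without further work.

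The gap is in the $n=3$ case of the $S$-part. You correctly observe that both $S(C_\tau)$ and $C_{-1/\tau}$ are Heisenberg-invariant and hence lie in the Hesse pencil, but your claim that ``the requirement that they pass through the common image of $0$ pins down the pencil parameter'' fails: for $n=3$ the image of $0$ under $B_\tau$ is $(0:1:-1)$ \emph{independently of $\tau$}, and this is one of the nine base points of the Hesse pencil. Every member of the pencil passes through it, so no information is gained. The paper handles this by a different device: it extends the functional equation of Proposition \ref{functional_equation} from $\alpha_r(\tau)=x_r(0,\tau)$ to the full functions $x_r(z,\tau)$, obtaining
\[
x_r(z,-1/\tau)=k_S(z,\tau)\,(-1)^{(n-1)/2}\sqrt{\tfrac{i\tau}{n}}\sum_{s=0}^{n-1}\zeta^{rs}x_s(z\tau,\tau)
\]
with $k_S$ independent of $r$. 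This gives $S\cdot B_\tau=B_{-1/\tau}$ directly for all $n$, bypassing the counting argument entirely. If you want to repair your Hesse-pencil approach instead, you would need a second independent constraint on the pencil parameter (e.g.\ matching the image of a non-torsion point, or matching $j$-invariants together with the Heisenberg structure to cut the finite ambiguity down to one).
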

\begin{proof}
	\begin{enumerate}[label=(\roman*)]
		\item For $n>3$, both $C_1$ and $C_2$ are defined by quadrics. Let $f$ be a quadric vanishing on $C_1$, then $f$ vanishes on $C_1 \cap C_2$, and hence the zero set of $f$ meets $C_2$ in more than $2n$ points. By Bezout's theorem, it follows that $f$ vanishes on $C_2$, and hence $C_2 \subset C_1$. By the same argument, $C_1 \subset C_2$, and hence $C_1=C_2$. A similar argument applies in the case $n=3$.
		\item By Proposition \ref{functional_equation}, if we regard $S$ and $T$ as elements of $\mathrm{PGL}_n$,
		\[
		S \cdot (0:\alpha_1(\tau):\ldots:\alpha_{n-1}(\tau))=(0:\alpha_1(-1/\tau):\ldots:\alpha_{n-1}(-1/\tau)),
		\]
		\[
				T \cdot (0:\alpha_1(\tau):\ldots:\alpha_{n-1}(\tau))=(0:\alpha_1(\tau+1):\ldots:\alpha_{n-1}(\tau+1))
		\]
		
		Let $F_{\tau} \subset \mb{P}^{n-1}$ be the image of the set of $n$-torsion points of $\mb{C}/\Lambda_{\tau}$. We first show that $B_{-1/\tau}=\{S \cdot f \ | f \in F_{\tau}\}$ and $B_{\tau+1}=\{T \cdot f \ | f \in F_{\tau}\}$. By Remark \ref{heisenberg_group}, we see that $F_{\tau}=\{h \cdot (0:\alpha_1(\tau):\ldots:\alpha_{n-1}(\tau)) | \ h \in H_n \}$, and thus 
		\[
		\{S \cdot f \ | f \in F_{\tau}\}=\{S \cdot h \cdot (0:\alpha_1(\tau):\ldots:\alpha_{n-1}(\tau))\}
		\]
		\[
				=\{ShS^{-1} \cdot (0:\alpha_1(-1/\tau):\ldots:\alpha_{n-1}(-1/\tau) )\}
		\] 
		It is simple to check, using the description of $H_n$ in terms of generators $\sigma_{\omega_1/n}$ and $\sigma_{\omega_2/n}$, that $S\cdot H_n \cdot S^{-1}=H_n$, and hence the above set is equal to $F_{-1/\tau}$. The same argument applies for $T$ as well. Now note that $|B_{\tau}|=n^2$, and that $F_{-1/\tau}\subset S\cdot C_{\tau} \cap C_{-1/\tau}$ $F_{\tau+1}\subset T \cdot C_{\tau} \cap C_{\tau+1}$. For $n>3$, we are done by (i).
		
		For the case $n=3$, the image of $0$ is the point $(0:1:-1)$, independent of $\tau$, so we have to use a different method. One can generalize the functional equation proved in Proposition \ref{functional_equation} to a functional equation for the functions $x_r(z,\tau)$, of the form
		\[
				x_r(z,\tau+1)=k_{S}(z,\tau) \cdot \zeta^{\frac{(2r-n)^2}{8}}x_r(z,\tau),
		\]
		\[
	x_r(z,-1/\tau)=k_{T}(z,\tau)(-1)^{(n-1)/2}\sqrt{\frac{i\tau}{n}}\sum_{s=0}^{n-1} \zeta	^{rs}x_s(z\tau,\tau).
	\]
	where $k_S$ and $k_T$ are nowhere vanishing functions of $z$, independent of $r$, with $k_S(0,\tau)=1$. The proof is the same as before, with some added bookkeeping. These equations then imply (ii) directly, for all $n$.
	\end{enumerate}
\end{proof}
\subsection[Analytic description of the $\Omega$-matrix]{Analytic description of the $\Omega$-matrix, and proof of the formula for the Jacobian} \label{analytic description of Omega}
In this section we prove Theorem \ref{formula for Jacobian}. The first step is to prove an  analytic version of the theorem. Recall that, by Lemma \ref{unprojecting differential lemma} the differential on a genus one curve $C$ associated to an $\Omega$-matrix is regular, and that regular differentials on a complex torus $\mb{C}/\Lambda$ are scalar multiples of $dz$.
\begin{theorem}\label{analytic jacobian}
For any $\tau \in \mc{H}$, let $B_{\tau}=[\mb{C}/\Lambda_{\tau} \xrightarrow{\phi} \mb{P}^{n-1}]$ denote the $n$-diagram constructed in the previous section. Let $\Omega_{\tau}$ be the $\Omega$-matrix associated to this diagram, scaled so that if $\omega$ is the differential determined by $\Omega_{\tau}$, we have $\phi^{*} \omega = dz$. Let $c_4(\Omega_{\tau}),c_6(\Omega_{\tau})$ be the invariants of $\Omega_{\tau}$ defined in Section \ref{omega and curves section} and let $E_4(\tau)$ and $E_6(\tau)$ be the Eisenstein series. We then have
\[
c_4(\Omega_{\tau})=(2\pi)^4 E_4(\tau),
\]
and
\[
c_6(\Omega_{\tau})=(2\pi)^6 E_6(\tau).
\]	
\end{theorem}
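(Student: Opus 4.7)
The plan is to carry out an explicit $q$-expansion calculation and then invoke the one-dimensionality of the relevant spaces of modular forms. First, I would build a minimal graded free resolution of the homogeneous ideal of $C_\tau = \phi(\mathbb{C}/\Lambda_\tau)$ that is $H_n$-equivariant, using the fact that the Heisenberg group acts on the quadrics cutting out $C_\tau$ and that the isotypic decomposition of this representation determines the resolution up to scalars. Because of the symmetries encoded in Proposition \ref{line_bundle heisenberg diagram }, the entries $(\Omega_\tau)_{ij}$ are determined by very few independent holomorphic functions of $\tau$, each of which can be written down as an explicit $q$-series via the product expansions of $x_r(z,\tau)$. The normalization $\phi^{*}\omega = dz$ fixes the overall scalar, since the differential attached to $\Omega_\tau$ is regular on $C_\tau$ by Lemma \ref{unprojecting differential lemma} and hence a constant multiple of $dz$.

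Second, I would show that $c_4(\Omega_\tau)$ and $c_6(\Omega_\tau)$ are holomorphic modular forms on $\mathrm{SL}_2(\mathbb{Z})$ of weights $4$ and $6$, respectively. By Proposition \ref{omega invariants}, these quantities are $\mathrm{GL}_n$-invariant polynomials in the coefficients of $\Omega$ under the $\star$-action. By Lemma \ref{cusp_switch_lemma}(ii), the modular substitutions $\tau \mapsto \tau + 1$ and $\tau \mapsto -1/\tau$ correspond to acting on $B_\tau$ by the explicit $\mathrm{SL}_n$-matrices $T$ and $S$; the only remaining effect comes from the rescaling needed to restore the normalization $\phi^{*}\omega = dz$ under the change of uniformizer forced by $\tau \mapsto -1/\tau$. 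Tracking this rescaling through the transformation law in Proposition \ref{Omega quad change of coordinates} and the homogeneity degree of $c_k$ in the entries of $\Omega$ yields the weights $4$ and $6$.

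Third, since $M_4(\mathrm{SL}_2(\mathbb{Z})) = \mathbb{C}\,E_4$ and $M_6(\mathrm{SL}_2(\mathbb{Z})) = \mathbb{C}\,E_6$ are one-dimensional, the identities $c_4(\Omega_\tau) = (2\pi)^4 E_4(\tau)$ and $c_6(\Omega_\tau) = (2\pi)^6 E_6(\tau)$ follow once the constant terms of the $q$-expansions are matched. The constants $(2\pi)^k$ appear naturally: the product expansions are written in the variable $u = e^{2\pi i z}$, whereas the invariant differential is normalized against $dz = (2\pi i)^{-1}\,du/u$, so every derivative of $\Omega_\tau$ entering the formulas for $M_{ij}$ and $N_{ijk}$ contributes a factor of $2\pi i$. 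The resulting leading-coefficient computation is exactly parallel to the one performed in \cite{jacobians} for their version of the $\Omega$-matrix; because both constructions are characterized by Heisenberg equivariance and the normalization of the invariant differential, they must agree up to a known constant, and this identifies the constant terms without redoing the arithmetic.

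The hardest step will be the first one: pinning down a Heisenberg-equivariant resolution with coefficients that are unambiguous holomorphic functions of $\tau$, and then reading off the $q$-expansions of the entries of $\Omega_\tau$ cleanly enough that the comparison with \cite{jacobians} is transparent. Once this setup is in place, the modularity argument is essentially formal and the identification of constants reduces to the cusp calculation already in the literature.
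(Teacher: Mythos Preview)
Your overall strategy---establish weak modularity of $c_k(\Omega_\tau)$, then use the one-dimensionality of $M_k(\mathrm{SL}_2(\mathbb{Z}))$ for $k=4,6$, and finally match the constant term at the cusp---is exactly the paper's strategy, and your handling of the modularity step via Lemma~\ref{cusp_switch_lemma}(ii) together with Proposition~\ref{omega invariants} and the rescaling of the differential is correct and close to the paper's Lemma~\ref{fe} (the paper argues slightly more abstractly, using that any two $n$-diagrams over $\mathbb{C}$ with the same Jacobian are isomorphic, but the content is identical).

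The one place where you diverge from the paper, and where you correctly flag the difficulty, is the computation of the entries of $\Omega_\tau$. You propose to build an explicit $H_n$-equivariant minimal free resolution of $I(C_\tau)$ and read off the $\Omega$-quadrics from it. The paper avoids constructing any resolution at all. Instead it argues as follows: Heisenberg invariance (your Lemma~\ref{Heisenberg invariance omega}-type observation) reduces the problem to determining $(\Omega_\tau)_{0,k}$, and within this entry the only monomials that can occur are $X_iX_{k-i}$. Now slice $C_\tau$ by the hyperplane $X_0=0$: by Lemma~\ref{hyperplane slice}(ii), the forms $(\Omega_\tau)_{0,k}|_{X_0=0}$ are precisely the $\Omega$-quadrics of the resulting set of $n$ points, and that point set is an explicit Heisenberg orbit $\{(0:\zeta^i\alpha_1:\cdots:\zeta^{(n-1)i}\alpha_{n-1})\}$. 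A single change-of-coordinates computation (Lemma~\ref{analytic set pts}) gives the $\Omega$-quadrics of this point set in closed form, determining every coefficient of $(\Omega_\tau)_{0,k}$ except the coefficient $u_k$ of $X_0X_k$. That last coefficient is then pinned down by the normalization $\phi^*\omega=dz$: plugging $z=0$ into the identity $(n-2)\,x_j^2\,d(x_i/x_j)=\Omega_{ij}\,dz$ isolates $u_k$, and a direct $q$-expansion shows its constant term is $-2\pi i(n-2k)$. This is exactly what is needed to invoke the $\Omega^{\mathrm{const}}$ computation from \cite{jacobians}. The upshot is that the paper never needs an equivariant resolution; the hyperplane-slice trick together with Theorem~\ref{algebra constructed from n points} does all the work. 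Your route would succeed in principle, but the paper's is substantially shorter and sidesteps what you rightly identified as the hardest step.
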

We start by proving that $c_k(\Omega_{\tau})$ satisfy the functional equation.
\begin{lemma} \label{fe}
The functions $c_4(\Omega_{\tau})$ and $c_6(\Omega_{\tau})$ are weakly modular of weight $4$ and $6$, i.e. for $g=\begin{pmatrix} a & b \\ c & d  \end{pmatrix} \in  \mathrm{SL}_2(\mb{Z})$, we have
\[
c_k(\Omega_{\frac{a\tau+b}{c\tau+d}})=(c\tau+d)^{k} c_k(\Omega_{\tau}).
\]
\end{lemma}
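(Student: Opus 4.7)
My plan is to verify the weight-$k$ transformation law on the standard generators $\gamma_S = \left(\begin{smallmatrix} 0 & -1 \\ 1 & 0 \end{smallmatrix}\right)$ and $\gamma_T = \left(\begin{smallmatrix} 1 & 1 \\ 0 & 1 \end{smallmatrix}\right)$ of $\mathrm{SL}_2(\Z)$. The three ingredients I will combine are Lemma \ref{cusp_switch_lemma}(ii), which produces $n \times n$ matrices $S, T \in \mathrm{GL}_n(\mb{C})$ with $S \cdot B_\tau = B_{-1/\tau}$ and $T \cdot B_\tau = B_{\tau+1}$; Proposition \ref{omega invariants}, which states $c_k(g \star \Omega) = c_k(\Omega)$ for every $g \in \mathrm{GL}_n$; and the homogeneity $c_k(\lambda\Omega) = \lambda^k c_k(\Omega)$, which is immediate from the fact that $c_4$ and $c_6$ are of total degree $4$ and $6$ respectively in the matrix entries of $\Omega$.

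For $\gamma = \gamma_S$, the first step is to observe that the analytic isomorphism $\sigma : \mb{C}/\Lambda_\tau \to \mb{C}/\Lambda_{-1/\tau}$ given by $z \mapsto z/\tau$ fits into the commutative diagram $S \circ \phi_\tau = \phi_{-1/\tau} \circ \sigma$ of Heisenberg embeddings and pulls back the translation-invariant differential on the target to $dz/\tau$. Next, I substitute $x \mapsto Sx$ in a resolution model of $B_{-1/\tau}$ (whose normalized $\Omega$-matrix is $\Omega_{-1/\tau}$) to produce a resolution model $F'$ of $C_\tau$. Proposition \ref{Omega quad change of coordinates} computes its $\Omega$-matrix as $\det(S)\,(S \star \Omega_{-1/\tau})$, while Lemma \ref{invariant differential change of coords} identifies its associated differential as $S^*\omega_{-1/\tau}/\det(S)$. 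Pulling the latter back through $\phi_\tau$ gives $\phi_\tau^*\omega_{F'} = dz/(\tau\det(S))$, hence $\omega_{F'} = \omega_\tau/(\tau\det(S))$. Since rescaling $\omega$ by a scalar $c$ rescales the associated $\Omega$-matrix by $1/c$, equating the two descriptions of $F'$ gives
\[
\Omega_{-1/\tau} \,=\, \tau\,(S^{-1} \star \Omega_\tau).
\]
Applying homogeneity and Proposition \ref{omega invariants} in succession yields $c_k(\Omega_{-1/\tau}) = \tau^k c_k(\Omega_\tau)$, which is the desired transformation law since $c\tau+d = \tau$ for $\gamma_S$.

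The case $\gamma = \gamma_T$ runs along exactly the same lines but is strictly easier: since $\Lambda_{\tau+1} = \Lambda_\tau$ as subsets of $\mb{C}$, the analogue of $\sigma$ is the identity, so the rescaling factor $\tau$ is replaced by $c\tau+d = 1$, and one obtains $c_k(\Omega_{\tau+1}) = c_k(\Omega_\tau)$. The only delicate point in the argument is the bookkeeping of the $\det(g)$ factors produced by Lemma \ref{invariant differential change of coords} and Proposition \ref{Omega quad change of coordinates}; fortunately these drop out cleanly thanks to the $\star$-invariance of $c_k$, so the actual values of $\det(S)$ and $\det(T)$ never need to be computed.
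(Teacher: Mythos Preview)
Your proof is correct and follows essentially the same approach as the paper's: relate $\Omega_{\tau}$ and $\Omega_{g\tau}$ via a $\mathrm{GL}_n$-action (which preserves $c_k$ by Proposition~\ref{omega invariants}) composed with a scalar rescaling by $c\tau+d$ (which contributes $(c\tau+d)^k$ by homogeneity), with the scalar pinned down by comparing the associated invariant differentials through Lemma~\ref{invariant differential change of coords}. The only difference is organisational: you verify the transformation law on the generators $\gamma_S,\gamma_T$ and invoke the explicit matrices $S,T$ from Lemma~\ref{cusp_switch_lemma}(ii), whereas the paper treats an arbitrary $g\in\mathrm{SL}_2(\mathbb{Z})$ in one stroke by using only the abstract existence of an isomorphism of $n$-diagrams over~$\mathbb{C}$ --- so your appeal to Lemma~\ref{cusp_switch_lemma}(ii) is not actually needed, and the restriction to generators is a harmless detour rather than a simplification.
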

\begin{proof}
Write $\tau'=\frac{a\tau+b}{c\tau+d}$. Over an algebraically closed field, any two $n$-diagrams with the same Jacobian curve are isomorphic, and hence there exists a commutative diagram
					\[ \begin{tikzcd}
	\mb{C}/\Lambda_{\tau} \arrow{r}{\phi_{\tau}} \arrow[swap]{d}{\psi} & \mb{P}^{n-1} \arrow{d}{\rho} \\%
	\mb{C}/\Lambda_{\tau'} \arrow{r}{\phi_{\tau'}}& \mb{P}^{n-1}
	\end{tikzcd}
	\]
The left vertical arrow is $\psi: z \mapsto \frac{z}{c\tau+d}$, and the right vertical arrow is an automorphism $\rho$ of $\mb{P}^{n-1}$.Let $\Omega'$ be an $\Omega$-matrix for $\phi_{\tau'}$, scaled so that for the associated differential $\omega'$ we have $\phi_{\tau'}^{*} \omega'=(c\tau+d)dz$. We claim that $\rho^{*} (\omega')=\omega$.

Indeed, note that as $\psi^* ((c\tau+d)dz)=dz$, we have $(\phi_{\tau'}\circ \psi)^{*}(\omega')=dz$. As the above diagram commutes, $dz=(\phi_{\tau'}\circ \psi)^{*}(\omega')=(\phi_{\tau} \circ \rho )^{*}(\omega')$, i.e. $\rho^{*}(\omega')=\omega$.

 We may assume that $\rho$ is represented by a matrix $N \in \mathrm{GL}_n(\mb{C})$.  Now it follows, from our scaling of $\Omega'$, Proposition \ref{Omega quad change of coordinates} and Lemma \ref{invariant differential change of coords}, that we have $\Omega_{\tau}=N \star \Omega'$. Hence, by Lemma \ref{omega invariants}, we have $c_k(\Omega_{\tau})=c_k(\Omega')$. 
 
 By the definition of $\Omega'$, we have $(c\tau+d)\Omega'=\Omega_{\tau'}$. As $c_4(\Omega)$ and $c_6(\Omega)$ are homogeneous polynomials of degree 4 and 6 in the coefficients of $\Omega$, we find that $c_k(\Omega_{\frac{a\tau+b}{c\tau+d}})=(c\tau+d)^{k} c_k(\Omega_{\tau})$, as desired.
\end{proof}

Now it remains to prove that $ c_k(\Omega_{\tau})$ is holomorphic at the cusp, which forces it to be a scalar multiple of $E_k$, and then to check that this scalar is 1. To do this we need an explicit description of $\Omega_{\tau}$. Recall that $\Omega_{\tau}$ is an alternating matrix of homogenous quadratic forms.   

Throughout the Section, to avoid confusion with the functions $x_r(z,\tau)$, we label the standard coordinates on $\mb{P}^{n-1}$ by $X_0,\ldots,X_{n-1}$
\begin{proposition} \label{analytic coefficients}
There exist functions $u_i : \mc{H} \xrightarrow{} \mb{C}$, for $0\leq i \leq n-1$, such that for $0\leq,k,l \leq n-1$, we have 
\[
(\Omega_{\tau})_{k,l}=u_{l-k}(\tau)X_k X_{l-k}-\frac{\frac{\partial x_0}{\partial z}(0,\tau)}{n-2} \cdot \left( \sum_{1\leq i \leq n-1, i \neq k-l}  \frac{\alpha_{l-k}(\tau)}{\alpha_i(\tau) \alpha_{i+l-k}(\tau)} X_{i+k}X_{i+l-k}\right) ,
\]
where the subscripts are read mod $n$.
\end{proposition}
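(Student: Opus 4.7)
The plan is to derive the formula by exploiting two structural features of the Heisenberg embedding: the matrix $\Omega_\tau$ is $H_n$-invariant, and on the curve it equals a Wronskian in the coordinate functions.

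The starting point is Lemma \ref{unprojecting differential lemma} together with the normalization $\phi^*\omega = dz$ chosen in Theorem \ref{analytic jacobian}. Pulling back to $\mb{C}/\Lambda_\tau$ gives the defining identity
\[
(\Omega_\tau)_{k,l}\bigl(x_0(z,\tau),\ldots,x_{n-1}(z,\tau)\bigr) \;=\; (n-2)\bigl(x_l(z,\tau)\,\partial_z x_k(z,\tau) - x_k(z,\tau)\,\partial_z x_l(z,\tau)\bigr),
\]
so each entry $(\Omega_\tau)_{k,l}$ is pinned down modulo the ideal of $C_\tau$ by an explicit Wronskian expression. Because the generators of $H_n$ have determinant $1$ for odd $n$, translation by $\omega_1/n$ and $\omega_2/n$ preserves $dz$, and so by Lemma \ref{invariant differential change of coords} the matrix $\Omega_\tau$ is $H_n$-invariant under the $\star$-action.

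My first step is to unravel the $H_n$-invariance to identify the support. The diagonal generator $\sigma_{\omega_2/n}$ acts by $X_j\mapsto \zeta^j X_j$ and forces the monomials $X_aX_b$ occurring in $(\Omega_\tau)_{k,l}$ to satisfy an explicit index congruence in $k,l$; the cyclic generator $\sigma_{\omega_1/n}$ then relates entries at different $(k,l)$ by cyclic relabeling, reducing everything to the single case $(k,l)=(0,l-k)$. Enumerating the $H_n$-invariants inside $\wedge^2 V^*\otimes S^2 V$ then shows that $(\Omega_\tau)_{k,l}$ is supported on exactly the $n-1$ monomials listed in the proposition: the distinguished monomial $X_k X_{l-k}$ together with the $n-2$ monomials $X_{i+k}X_{i+l-k}$ for $i\in\{1,\ldots,n-1\}\setminus\{k-l\}$.

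My second step is to compute the coefficients of the monomials $X_{i+k}X_{i+l-k}$ with $i\neq k-l$ by evaluating the Wronskian identity near $z=0$ after shifting by multiples of $\omega_2/n$. From Proposition \ref{factor_automorphy}(b) we have $x_r(z+m\omega_2/n,\tau) = j_2(z)\,\zeta^{rm}\,x_r(z,\tau)$, while $x_r(0,\tau)=\alpha_r(\tau)$ with $\alpha_0(\tau)=0$ and $\alpha_r(\tau)\ne 0$ for $r\ne 0$. Setting $z=\epsilon+m\omega_2/n$ and extracting the leading $\epsilon$-behavior, all products $\alpha_0\alpha_*$ vanish except the single contribution in which $\partial_z x_0(0,\tau)$ replaces the missing $\alpha_0$. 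A discrete Fourier transform in the shift parameter $m\in\{0,\ldots,n-1\}$ then isolates the coefficient of each monomial, and the product of $\alpha_r$-values that appears collapses to the ratio $\alpha_{l-k}(\tau)/\bigl(\alpha_i(\tau)\alpha_{i+l-k}(\tau)\bigr)$ multiplied by the overall scalar $-\partial_z x_0(0,\tau)/(n-2)$.

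The coefficient $u_{l-k}(\tau)$ of the distinguished monomial $X_k X_{l-k}$ is the unique value making $(\Omega_\tau)_{k,l}$ satisfy the Wronskian identity; it is precisely the coefficient the $z=0$ expansion cannot see, because the distinguished monomial pulls back to a function vanishing at $z=0$. I package it as an undetermined function, since the $q$-expansion arguments for $c_4$ and $c_6$ in Section \ref{analytic description of Omega} depend only on the $\alpha$-ratio coefficients and on $\partial_z x_0(0,\tau)$. The main obstacle will be the careful $H_n$-decomposition identifying the support and the Fourier-inversion bookkeeping; both reduce to systematic use of Proposition \ref{factor_automorphy} and the product expansion for $x_r(z,\tau)$.
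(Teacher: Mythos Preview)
Your first step matches the paper's Lemma \ref{Heisenberg invariance omega}: since the generators of $H_n$ have determinant $1$ and act on $C_\tau$ by translations, Lemma \ref{invariant differential change of coords} gives $\sigma\star\Omega_\tau=\Omega_\tau$ for $\sigma\in H_n$, which forces the monomial support and the cyclic relabeling.

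Your second step has a genuine gap. Every monomial $X_iX_{k-i}$ appearing in $(\Omega_\tau)_{0,k}$ has the \emph{same} weight $k$ under the diagonal generator $\sigma_{\omega_2/n}$. Concretely, the shift $z\mapsto z+m\omega_2/n$ sends $x_r\mapsto j_2^{(m)}(0)\,\zeta^{rm}\,x_r$, so each monomial $X_iX_{k-i}$ picks up the common factor $(j_2^{(m)}(0))^2\zeta^{km}$, independent of $i$. Evaluating the Wronskian identity at the $n$ shifted points therefore yields $n$ copies of the single relation
\[
\sum_{i} c_{k,i}\,\alpha_i\,\alpha_{k-i} \;=\; (n-2)\,\alpha_k\,\partial_z x_0(0,\tau),
\]
and a discrete Fourier transform in $m$ cannot separate the individual $c_{k,i}$. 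Shifting by $\omega_1/n$ instead merely reproduces the cyclic relabeling already used in step one. More fundamentally, the Wronskian only pins down $(\Omega_\tau)_{0,k}$ modulo the ideal $I(C_\tau)$, whose weight-$k$ quadric component is nontrivial for $n\ge 5$; evaluation on the curve alone cannot recover the actual polynomial.

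The paper supplies the missing input via the hyperplane section. By Lemma \ref{hyperplane slice}(ii), setting $X_0=0$ in the resolution model gives a resolution model of the $n$-point set $Z_\tau=C_\tau\cap\{X_0=0\}$, and the $\Omega$-quadrics of that model are \emph{literally the polynomials} $(\Omega_\tau)_{0,k}|_{X_0=0}$, not merely their values on $Z_\tau$. Lemma \ref{analytic set pts} then computes these $\Omega$-quadrics for the explicit configuration $\{(\zeta^j\alpha_1:\cdots:\zeta^{j(n-1)}\alpha_{n-1})\}$ by a change of coordinates (Proposition \ref{Omega quad change of coordinates}) from a standard reference set, determining all the $\alpha$-ratio coefficients at once up to one overall scalar. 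Only then is your single Wronskian relation at $z=0$ used, and it fixes that scalar to $-\partial_z x_0(0,\tau)/(n-2)$.
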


We divide the proof into a sequence of lemmas.

\begin{lemma} \label{Heisenberg invariance omega}
For $0 \leq k,l \leq n-1$, we have
\begin{enumerate}[label=(\roman*)]
	\item $(\Omega_{\tau})_{k,l}(X_0,\zeta X_1\ldots,\zeta^{n-1}X_{n-1})=\zeta^{k+l}(\Omega_{\tau})_{k,l}(X_0,X_1,\ldots,X_{n-1})$
	\item $(\Omega_{\tau})_{k,l}(X_1,X_2,\ldots,X_{n-1},X_{0})=(\Omega_{\tau})_{k+1,l+1}(X_0,X_1,\ldots,X_{n-1})$
\end{enumerate}

\end{lemma}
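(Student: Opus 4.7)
The plan is to reduce both parts to the single identity $g \star \Omega_\tau = \Omega_\tau$ for $g \in \{S,T\}$, where $S = \sigma_{\omega_2/n}$ and $T = \sigma_{\omega_1/n}$ are the Heisenberg generators from Remark~\ref{heisenberg_group}. Both $S$ and $T$ induce automorphisms of $C_\tau$ by Proposition~\ref{line_bundle heisenberg diagram }, and for $n$ odd both lie in $\mathrm{SL}_n$: one has $\det S = \zeta^{0+1+\cdots+(n-1)} = \zeta^{n(n-1)/2} = 1$, while $\det T$ is the sign of an $n$-cycle, equal to $(-1)^{n-1}=1$.

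The next step is to show that $g \star \Omega_\tau = \lambda \Omega_\tau$ for some scalar $\lambda$. Since $g$ preserves $C_\tau$, the substitution $x'_j = \sum_i g_{ij} x_i$ applied to the differentials of $F_\bullet$ yields a second minimal free resolution $F'_\bullet$ of the same ideal $I(C_\tau)$. By uniqueness of minimal free resolutions and Remark~\ref{defined up to scalar}, we obtain $\Omega_{F'} = \lambda \Omega_\tau$ for some $\lambda \in \mathbb{C}^\times$. On the other hand, Proposition~\ref{Omega quad change of coordinates} gives $\Omega_{F'} = \det(g)(g \star \Omega_\tau) = g \star \Omega_\tau$, so $g \star \Omega_\tau = \lambda \Omega_\tau$.

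To pin $\lambda$ down I will use the normalization of $\Omega_\tau$ from Theorem~\ref{analytic jacobian}, which stipulates $\phi^* \omega = dz$ for the associated differential $\omega$. Since $g$ realizes a translation on $\mathbb{C}/\Lambda_\tau$ via $\phi$ and translations preserve $dz$, we have $g^* \omega = \omega$ on $C_\tau$. Applying Lemma~\ref{invariant differential change of coords} with $C = C' = C_\tau$ gives $g^* \omega = \det(g) \omega_{F'} = \omega/\lambda$, forcing $\lambda = 1$.

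It then remains only to unpack $g \star \Omega_\tau = \Omega_\tau$ entry-by-entry. For $g = S$ (diagonal, $S^{-T} = S^{-1}$), the formula $(S \star \Omega)(X) = S^{-1} \Omega(SX) S^{-1}$ yields $(S \star \Omega_\tau)_{k,l}(X) = \zeta^{-k-l} (\Omega_\tau)_{k,l}(X_0, \zeta X_1, \ldots, \zeta^{n-1} X_{n-1})$, and setting this equal to $(\Omega_\tau)_{k,l}(X)$ gives (i). For $g = T$ (a permutation matrix with $T^{-T} = T$), the substitution is $T^T X = T^{-1} X = (X_1, \ldots, X_{n-1}, X_0)$, and conjugation by $T$ on the left and $T^{-1}$ on the right shifts both matrix indices by $-1$; hence $(T \star \Omega_\tau)_{k,l}(X) = (\Omega_\tau)_{k-1, l-1}(X_1, \ldots, X_{n-1}, X_0)$, and reindexing $k \mapsto k+1$, $l \mapsto l+1$ yields (ii). The main obstacle is purely bookkeeping, namely carefully tracking transposes and the $\det(g)$ factor that distinguishes the $\star$-action from the change-of-variables action of Proposition~\ref{Omega quad change of coordinates}; conceptually the lemma simply expresses the fact that Heisenberg invariance of the pair $(C_\tau,\omega)$ forces Heisenberg invariance of $\Omega_\tau$.
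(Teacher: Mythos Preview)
Your proof is correct and follows essentially the same approach as the paper's: both argue that since the Heisenberg generators preserve $C_\tau$ and have determinant $1$, Lemma~\ref{invariant differential change of coords} together with the normalization $\phi^*\omega = dz$ forces $g \star \Omega_\tau = \Omega_\tau$, from which the two identities are read off entrywise. You have simply unpacked the argument more carefully—separating the step $g \star \Omega_\tau = \lambda \Omega_\tau$ (via uniqueness of minimal resolutions and Remark~\ref{defined up to scalar}) from the step $\lambda = 1$ (via the differential), and writing out the matrix bookkeeping that the paper leaves implicit in ``and the lemma follows.''
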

\begin{proof}
The automorphisms of $\mb{P}^{n-1}$ defined in Remark \ref{heisenberg_group}, $\sigma_{1/n}$ and $\sigma_{\tau/n}$, take the curve $C_{\tau}$ to itself. Since $\mathrm{det}(\sigma_{1/n})=\mathrm{det}(\sigma_{\tau/n})=1$, by Lemma \ref{invariant differential change of coords}, $\sigma_{1/n} \star \Omega_{\tau}$ and $\sigma_{\tau/n} \star \Omega_{\tau}$ all have the same associated differential on $C_{\tau}$. Thus we must have $\Omega_{\tau}=\sigma_{1/n} \star \Omega_{\tau}$ and $\Omega_{\tau}=\sigma_{\tau/n} \star \Omega_{\tau}$, and the lemma follows.
\end{proof}

To compute $q$-expansions of the coefficients of $\Omega_{\tau}$, we  intersect the curve $C_{\tau}$ with the hyperplane $X_0=0$. 
\begin{lemma}
The intersection of the curve $C_{\tau}$ and the hyperplane $X_0=0$ is the set 
\[ Z_{\tau}:=\{(0:\zeta^{i}\alpha_1(\tau):\zeta^{2i}\alpha_2(\tau):\ldots:\zeta^{i(n-1)}\alpha_{n-1}) : i \in \{0,1,\ldots,n-1\}\}\]
\end{lemma}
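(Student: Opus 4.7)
The plan is to identify the zero set of the global section $x_0(z,\tau)$ on $\mathbb{C}/\Lambda_\tau$, then use the quasi-periodicity formulas of Proposition \ref{factor_automorphy} to compute the images of these zeros under the embedding $\phi_\tau$.

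First I would unpack the definition $x_0(z,\Lambda) = \prod_{i=0}^{n-1} \sigma_{0,i}(z,\Lambda)$ (up to the nowhere-vanishing exponential prefactor). Since $\sigma_{0,i}(z,\Lambda) = \sigma(z-i\omega_2/n,\Lambda)$ and the Weierstrass $\sigma$-function has simple zeros precisely on $\Lambda$, it follows that $x_0(z,\tau)=0$ if and only if $z \equiv k\tau/n \pmod{\Lambda_\tau}$ for some $k \in \{0,1,\ldots,n-1\}$. These are $n$ distinct points of $\mathbb{C}/\Lambda_\tau$.

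Second, I would compute the image $\phi_\tau(k\tau/n) \in \mathbb{P}^{n-1}$ for each $k$. Iterating Proposition \ref{factor_automorphy}(ii)(c) (translation by $\omega_2/n = \tau/n$) applied at $z=0$ yields
\[
x_m(k\tau/n,\tau) \;=\; \Bigl(\prod_{j=0}^{k-1} j_2(j\tau/n)\Bigr)\, \zeta^{km}\, x_m(0,\tau) \;=\; J_k(\tau)\, \zeta^{km}\, \alpha_m(\tau),
\]
where the factor $J_k(\tau)$ is independent of $m$ and is non-zero since $j_2$ is nowhere vanishing. Passing to projective coordinates, this gives
\[
\phi_\tau(k\tau/n) \;=\; (0:\zeta^k\alpha_1(\tau):\zeta^{2k}\alpha_2(\tau):\cdots:\zeta^{k(n-1)}\alpha_{n-1}(\tau)),
\]
which is exactly the $k$th element of $Z_\tau$. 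In particular, each such point lies in $C_\tau \cap \{X_0=0\}$.

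Finally, I would show that these $n$ points exhaust the intersection. The curve $C_\tau$ has degree $n$ in $\mathbb{P}^{n-1}$, so by B\'ezout's theorem it meets the hyperplane $X_0=0$ in exactly $n$ points counted with multiplicity. The $n$ preimages $k\tau/n$ are distinct in $\mathbb{C}/\Lambda_\tau$, and since $\phi_\tau$ is an embedding (the complete linear system $|n\cdot 0|$ is very ample for $n\geq 3$), their images are distinct in $\mathbb{P}^{n-1}$. Thus $C_\tau \cap \{X_0=0\} = Z_\tau$. The only subtle point is checking that the $n$ images are genuinely distinct; this is immediate from the injectivity of $\phi_\tau$, but can also be seen directly from the fact that the Heisenberg element $\sigma_{\omega_2/n}$ acts faithfully on $C_\tau$ with orbit $Z_\tau$ through $(0:\alpha_1:\cdots:\alpha_{n-1})$.
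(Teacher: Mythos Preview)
Your proof is correct and takes a somewhat different route from the paper's. The paper plugs $z=j/n$ (for $j=0,\ldots,n-1$) directly into the explicit $q$-product formula for $x_r(z,\tau)$: at such $z$ one has $u^n=1$, so the infinite products collapse to those defining $\alpha_r(\tau)$ while the prefactor $u^{-r}$ supplies the power of $\zeta$. You instead work at the points $k\tau/n$, locating them as zeros of $x_0$ via the $\sigma$-function definition and then computing their images by iterating the translation formula of Proposition~\ref{factor_automorphy}(ii)(c). Your argument avoids the $q$-product entirely and is arguably cleaner; in fact your first step is even redundant, since the translation formula alone already gives $x_0(k\tau/n)=J_k\,\zeta^{0}\alpha_0=0$, so one can simply observe that the diagonal Heisenberg element $\sigma_{\omega_2/n}$ preserves $\{X_0=0\}$ and carries $\phi_\tau(0)$ through the orbit $Z_\tau$. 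The paper's route has the compensating advantage of staying inside the $q$-expansion framework used throughout Section~\ref{analytic chapter}.
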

\begin{proof}
First note that, for $0 \leq i \leq n-1$,  
\begin{align*}
x_r(i/n,\tau)&=(-1)^r q^{\frac{(2r-n)^2}{8n}} \zeta^{-i}\prod_{m \geq 1}(1-q^{nm-r}\zeta^n)\prod_{m \geq 0}(1-q^{nm+r}\zeta^{-n})\prod_{m \geq 1}(1-q^{nm})\\
&=\zeta^{-i}(-1)^r q^{\frac{(2r-n)^2}{8n}}\prod_{m \geq 1}(1-q^{nm-r})\prod_{m \geq 0}(1-q^{nm+r})\prod_{m \geq 1}(1-q^{nm})=\zeta^{-i} \alpha_r(\tau)
\end{align*}
and hence $\phi_{\tau}(i/n)=(0:\zeta^{-i}\alpha_1(\tau):\zeta^{-2i}\alpha_2(\tau):\ldots:\zeta^{-i(n-1)}\alpha_{n-1}) \in C_{\tau} \cap \{X_0=0\}$. As $C_{\tau}$ is a curve of degree $n$, $ C_{\tau} \cap \{X_0=0\}$ consists of $n$ points, hence the conclusion.
\end{proof}

\begin{lemma}\label{analytic coefficients lemma}
Let $F^{0}_{\bu}$ be a resolution model of the set $Z_{\tau} \subset \mb{P}^{n-2}$, and let $\{\Omega_1,\ldots,\Omega_{n-1}\}$ be the set of $\Omega$-quadrics associated to $F^{0}_{\bu}$. For $1 \leq k \leq n-1$, we have
\[
\Omega(X_1,X_2,\ldots,X_{n-1})=(\Omega_{\tau})_{0k}(0,X_1,X_2,\ldots,X_{n-1})
\]
and furthermore, we have
\[
(\Omega_{\tau})_{0k}(0,X_1,\ldots,X_{n-1})=\lambda \cdot \sum_{1\leq i \leq n-1, i \neq n-k}  \frac{\alpha_{k}(\tau)}{\alpha_i(\tau) \alpha_{k-i}(\tau)} X_{i}X_{k-i}.
\]
where $\lambda=-\frac{\frac{\partial x_0}{\partial z}(0,\tau)}{n-2}$.
\end{lemma}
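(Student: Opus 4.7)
The plan is to handle the two assertions in turn. Part (i) would be immediate from Lemma \ref{hyperplane slice}(ii): applying it to any resolution model $F_\bullet$ of $C_\tau$, setting $X_0 = 0$ in its differentials yields a minimal graded free resolution $F^0_\bullet$ of the coordinate ring of $Z_\tau = C_\tau \cap \{X_0 = 0\}$, which is therefore a resolution model of $Z_\tau$; the same lemma identifies its $\Omega$-quadrics with $(\Omega_\tau)_{0, k}(0, X_1, \ldots, X_{n-1})$.

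For part (ii), write $P_k = (\Omega_\tau)_{0,k}(0, X_1, \ldots, X_{n-1})$. My first step would be to invoke Lemma \ref{Heisenberg invariance omega}(i) to conclude that $P_k$ is a quadratic form in $X_1, \ldots, X_{n-1}$ of weight $\zeta^k$ under the diagonal action $X_j \mapsto \zeta^j X_j$, so that only monomials $X_a X_b$ with $a + b \equiv k \pmod{n}$ and $a, b \in \{1, \ldots, n-1\}$ can occur. Next I would compute the values of $P_k$ on the $n$ points of $Z_\tau$ using the invariant differential formula: since $\phi_\tau^*\omega = dz$ and $\omega = (n-2)\,X_k^2\,d(X_0/X_k)/(\Omega_\tau)_{0, k}$, on $C_\tau$ one has the identity
\[
(\Omega_\tau)_{0,k}\bigl(x_0(z,\tau), \ldots, x_{n-1}(z,\tau)\bigr) = (n-2)\bigl(x_k \partial_z x_0 - x_0 \partial_z x_k\bigr),
\]
and substituting $z = j/n$ (where $x_0 = 0$ and $x_r = \zeta^{jr}\alpha_r$) gives
\[
P_k\bigl(\phi_\tau(j/n)\bigr) = (n-2)\zeta^{jk}\alpha_k(\tau)\cdot \partial_z x_0(j/n,\tau).
\]
The product expansion of $x_0$, with the simple zero $(1 - u^{-n})$ at $u = \zeta^{-j}$ isolated, then shows that $\partial_z x_0(j/n,\tau)$ is in fact independent of $j$ and equals $-(n-2)\lambda$ by the definition of $\lambda$.

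To conclude, I would show that the claimed right-hand side also carries weight $\zeta^k$ and produces the same $n$ values on $Z_\tau$, and then promote this numerical agreement to a polynomial identity. The weight-$\zeta^k$ quadrics in $X_1, \ldots, X_{n-1}$ form a space of dimension $(n-1)/2$, while the weight-$\zeta^k$ component of $I(Z_\tau)_2$ has dimension $(n-3)/2$ by Heisenberg-equivariant dimension counting, so agreement on $Z_\tau$ forces the two forms to coincide modulo $I(Z_\tau)_2$; iterating the same argument for the cyclically related entries $(\Omega_\tau)_{j, j+k}$ via Lemma \ref{Heisenberg invariance omega}(ii) then removes the remaining ambiguity. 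The hard part will be this last promotion: exploiting the full Heisenberg equivariance of the $\Omega$-matrix to rule out corrections by weight-$\zeta^k$ elements of the ideal, so that numerical agreement on $Z_\tau$ upgrades to the claimed polynomial equality.
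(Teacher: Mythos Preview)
Your handling of part (i) matches the paper exactly.

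For part (ii), the paper's route is considerably shorter than yours, and your last step has a genuine gap. The paper does not try to reconstruct $P_k$ from its values on $Z_\tau$. Instead it observes that $Z_\tau$ is precisely a set of the cyclic form treated in Lemma~\ref{analytic set pts} (with $a_i=\alpha_i(\tau)$), so that lemma already gives the $\Omega$-quadrics of $Z_\tau$ explicitly, up to a single overall scalar $\lambda$. Since by part (i) the $P_k$ \emph{are} those $\Omega$-quadrics, the claimed formula holds up to $\lambda$, and a single evaluation at $z=0$ (using $x_0(0,\tau)=0$ and the normalisation $\phi_\tau^*\omega=dz$) determines $\lambda=-\tfrac{1}{n-2}\partial_z x_0(0,\tau)$.

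Your approach instead matches the two forms only on $Z_\tau$, which leaves an ambiguity by a weight-$\zeta^k$ element of $I(Z_\tau)_2$, and then proposes to ``remove the remaining ambiguity'' by cycling to $(\Omega_\tau)_{j,j+k}$ via Lemma~\ref{Heisenberg invariance omega}(ii). This is where the gap lies: the cyclic shift tells you that $(\Omega_\tau)_{j,j+k}(X_0,\ldots,X_{n-1})=(\Omega_\tau)_{0,k}(X_j,X_{j+1},\ldots)$, but restricting that identity to the hyperplane $X_j=0$ only gives you the values of a \emph{shifted} polynomial on a \emph{shifted} copy of $Z_\tau$; it does not give you any new linear condition on the original $P_k=(\Omega_\tau)_{0,k}|_{X_0=0}$ beyond what you already had. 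In other words, the Heisenberg symmetry is already baked into your weight constraint and cannot by itself upgrade agreement modulo $I(Z_\tau)_2$ to polynomial equality. The missing ingredient is exactly what Lemma~\ref{analytic set pts} supplies: an actual computation of the $\Omega$-quadrics (as polynomials, not just as functions on $Z_\tau$) for this class of point configurations.
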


The proof is based on a calculation of $\Omega$-quadrics for a set of $n$ points carried out in \cite{radfis}.

\begin{lemma} \label{analytic set pts}
	Let $n\geq4$ be an integer. Let $a_1,a_2,...,a_{n-1} \in \mathbb{C}^{\times }$ be non-zero complex numbers, and let $\zeta \in \mb{C}$ be a primitive $n$th root of unity. Let $Y=\{Q_1,\ldots,Q_n\} \subset \mathbb{P}^{n-2}$ be a set of $n$ points in general position, with coordinates given by $Q_i=(\zeta^i a_1:\zeta^{2i} a_2:...:\zeta^{(n-1)i}a_{n-1})$. Then the quadrics $\Omega_k$ associated to a resolution model of $X$ are given, up to a scalar, by
	\[
	\sum_{i=1, i \ne k}^{n-1} \frac{a_k}{a_i a_{n+k-i}}x_i x_{n+k-i},
	\]
	where the subscripts are read modulo $n$.
\end{lemma}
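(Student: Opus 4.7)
The configuration $Y$ carries a cyclic symmetry: the diagonal matrix $D=\mathrm{diag}(\zeta,\zeta^2,\ldots,\zeta^{n-1})\in\mathrm{SL}_{n-1}$ (since $\det D=\zeta^{n(n-1)/2}=1$ for $n$ odd) satisfies $D(Q_i)=Q_{i+1}$, and hence permutes $Y$ cyclically. My plan is to use this symmetry together with the uniqueness of the $\Omega$-tuple to force the claimed structure, and then perform one final identification to pin down the coefficients.

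First, I would reduce to the symmetric case by the change of variables $g=\mathrm{diag}(1/a_1,\ldots,1/a_{n-1})$, which converts $Y$ into $\tilde Y=\{(\zeta^i:\zeta^{2i}:\cdots:\zeta^{(n-1)i})\}$. Proposition \ref{Omega quad change of coordinates} transports the $\Omega$-tuple explicitly between $\tilde Y$ and $Y$, and the diagonal factors produced by the transport are precisely the prefactors $a_k/(a_ia_{n+k-i})$ appearing in the statement. Thus it suffices to establish the formula in the symmetric case $a_1=\cdots=a_{n-1}=1$.

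For $\tilde Y$, the uniqueness of the $\Omega$-tuple up to a single global scalar (Remark \ref{defined up to scalar}), combined with Proposition \ref{Omega quad change of coordinates} applied to the $D$-invariance of $\tilde Y$, forces $D\cdot\Omega=\lambda\Omega$ for some scalar $\lambda$. Extracting the $x_k^*$-component and using that $D\cdot x_k^*=\zeta^{-k}x_k^*$ gives $D\cdot\Omega_k=\lambda\zeta^k\Omega_k$; each $\Omega_k$ is therefore a $D$-eigenvector on $S^2V$, so it is supported on monomials $x_ix_j$ with $i+j$ in a single residue class modulo $n$. Applying the same equivariance argument to the involution $\tau:x_j\mapsto x_{n-j}$ (which preserves $\tilde Y$ via $\tilde Q_i\leftrightarrow \tilde Q_{-i}$) gives an analogous constraint, and combining the two pins the residue class to be exactly $k$ (rather than $k+c$ for some unknown $c$).

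Finally, to determine the coefficients within the weight-$k$ subspace I would use the explicit multiplication in the coordinate ring of $\tilde Y$: the restrictions of $x_1,\ldots,x_{n-1}$ to $\tilde Y$ are precisely the nontrivial characters $\chi_1,\ldots,\chi_{n-1}$ of $\mb{Z}/n\mb{Z}$, so $x_ix_j|_{\tilde Y}=\chi_{i+j}$. Feeding this into Theorem \ref{algebra constructed from n points}, with the correct normalization of the basis $\alpha_k$ of the trace-zero subspace of $A$, yields the structure constants and hence the second partial derivatives of $\Omega_k$, forcing the coefficients to match the claimed formula. The main obstacle is this final identification: pinning down the $\alpha_k$-normalization so that the resulting coefficients are genuinely equal (up to the factor of $2$ on the square monomial) is combinatorially delicate. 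The symmetry reductions above, however, reduce the problem to an essentially finite verification, which is the calculation carried out in \cite{radfis}.
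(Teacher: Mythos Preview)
Your approach is genuinely different from the paper's, but it does not close. The paper proceeds by a single change of variables from a configuration whose $\Omega$-quadrics are already known: it takes the standard set $X=\{P_1,\ldots,P_{n}\}$ of coordinate points together with the all-ones point, cites Lemma~5.3 of \cite{radfis} for $\Omega_{X,r}=nx_r^2-2x_r\sum_p x_p$, writes down the Vandermonde-type matrix $g_{ij}=a_j\zeta^{ij}$ carrying $X$ to $Y$, and then verifies directly (via Proposition~\ref{Omega quad change of coordinates}) that $\sum_r g_{rk}\,\Omega_{X,r}$ equals the claimed expression for $\Omega_{Y,k}$. No symmetry argument is used; it is a straight computation.

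Your symmetry reductions are largely correct but do not finish the job. The $D$-eigenvector argument is fine and shows that each $\Omega_k$ is supported on monomials $x_ix_j$ with $i+j\equiv k+c\pmod n$ for a single constant $c$. However, the $\tau$-argument only yields $2c\equiv 0\pmod n$; this forces $c=0$ when $n$ is odd, but for even $n$ (which the lemma allows) the possibility $c=n/2$ is not excluded, so step~3 has a gap. More importantly, step~4 is circular as written: Theorem~\ref{algebra constructed from n points} expresses the structure constants of $A$ in the basis $\alpha_1,\ldots,\alpha_{n-1}$, but that basis is \emph{defined} through the resolution model, i.e.\ through the $\Omega_k$ themselves. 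To run the theorem backwards you must first identify $\alpha_i$ with the character $\chi_i$, and that identification is precisely what requires a direct computation of the $\Omega$-quadrics for some reference configuration --- which is exactly the input from \cite{radfis} that the paper uses. Once you invoke that input, the Vandermonde transformation already gives the answer for $Y$, and your symmetry steps become superfluous rather than a genuine alternative route.
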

\begin{proof}
    Define a set $X$ of $n$ points in general position in $\PP^{n-2}$ by $X=\{P_1,P_2,\ldots,P_n)$ where $P_1=(1:0\ldots:0),P_2=(0:1:0\ldots:0),\ldots,P_{n-1}=(0:0:\ldots:0:1)$ and $P_{n}=(1:1:\ldots:1)$
 By Lemma 5.3 of \cite{radfis}, the $\Omega-$quadrics associated to $X$ are given by $\Omega_i=nx_i^2-\sum_{i=1}^{n-1}x_i^2$.
 
 	Define $g \in \mathrm{GL}_{n-1}(\mb{C})$ by $g_{ij}=a_{j}\zeta_{ij}$. Then $g$ takes the set $X$ to $Y$. Let $x_j'=\sum_{i=1}^{n-1} g_{ij}x_i$. By Proposition \ref{Omega quad change of coordinates}, it suffices to check that 
	\[
	\Omega_{Y,k}(x_1',x_2',...,x_{n-1}')=\sum_{r=1}^{n-1} g_{rk} \Omega_{X,r}(x_1,x_2,...,x_{n-1}),
	\]
	holds for all $k$. We first compute the right hand side:
	\begin{align*}
		\sum_{r=1}^{n-1} g_{rk} \Omega_{X,r}&=
		\sum_{r=1}^{n-1} a_k \zeta^{rk}(n x_r^2-2x_r\sum_{p=1}^{n-1}x_p)\\
		&=\sum_{r=1}^{n-1} (n-2)a_k \zeta^{rk} x_r^2- \sum_{1\leq p<r \leq n-1}2(\zeta^{rk}+\zeta^{pk})a_k x_r x_p  .
	\end{align*}
	The left hand side is given by
	\begin{align*}
		\sum_{i=1, i \ne k}^{n-1} \frac{a_k}{a_i a_{n+k-i}}x_i' x_{n+k-i}',
		=-\sum_{i=1, i \ne k}^{n-1} \frac{a_k}{a_i a_{i-k}}(\sum_{p=1}^{n-1} a_i \zeta^{pi} x_{p}  x_{n+k-i}) (\sum_{r=1}^{n-1} a_{n+k-i} \zeta^{(n+k-i)r} x_r)
	\end{align*}
	The coefficient of $x_r^2$ is $(n-2)a_k \zeta^{rk}$ and the coefficient of $x_r x_p$, for $r$ and $p$ distinct, is given by
	\begin{align*}
		a_{k}(\sum_{i=1, i \ne k}^{n-1} \zeta^{pi+r(n+k-i)}+ \zeta^{ri+p(n+k-i)})
		&=a_k\left(  (-1-\zeta^{k(p-r)})\zeta^{rk}+(-1-\zeta^{k(r-p)})\zeta^{pk}\right) 
		\\&=2(\zeta^{rk}+\zeta^{pk})a_k,
	\end{align*}
	as desired.
\end{proof}

\begin{proof}[Proof of Lemma \ref{analytic coefficients lemma}]
The first part of the lemma follows from Lemma \ref{hyperplane slice}(ii). The existence of the constant $\lambda$ follows from Lemma \ref{analytic set pts}, where we computed the $\Omega$-quadrics for the set $Z_{\tau}$. To compute $\lambda$ exactly, we need to take the scaling of the matrix $\Omega_{\tau}$ into account.

The differential $\omega$ associated to $\Omega_{\tau}$ is given by
\[
\omega=(n-2)\frac{X_0^2 d(X_k/X_0) }{(\Omega_{\tau})_{0,k}}
\]
for any $1 \leq k \leq n-1$. Hence, we have
\begin{equation} \label{eq:dif}
dz=\phi^{*}\omega=(n-2)\frac{x_0(z,\tau)\frac{\partial x_k}{\partial z}(z,\tau)-x_k(z,\tau)\frac{\partial x_0}{\partial z}(z,\tau) }{(\Omega_{\tau})_{0,k}(x_0(z,\tau),\ldots,x_{n-1}(z,\tau) )} \cdot dz .
\end{equation}
The expression in front of $dz$ must be identically equal to 1. Setting $z=0$, and noting that $x_i(0,\tau)=\alpha_i(\tau)$, as well as $x_0(0,\tau)=0$, we find
\[
x_0(0,\tau)\frac{\partial x_k}{\partial z}(z,\tau)-x_k(z,\tau)\frac{\partial x_0}{\partial z}(0,\tau)=-\alpha_k(\tau)\frac{\partial x_0}{\partial z}(0,\tau),
\]
and 
\begin{align*}
(\Omega_{\tau})_{0,k}(x_0(0,\tau),\ldots,x_{n-1}(0,\tau) )&=(\Omega_{\tau})_{0,k}(0,\alpha_1(\tau)\ldots,\alpha_{n-1}(\tau) )\\&
=\lambda \cdot \sum_{1\leq i \leq n-1, i \neq n-k}  \frac{\alpha_{k}(\tau)}{\alpha_i(\tau) \alpha_{k-i}(\tau)} \alpha_{i}(\tau)\alpha_{k-i}(\tau)\\&
=\lambda(n-2)\alpha_k(\tau).
\end{align*}
The functions $\alpha_{k}(\tau)$ are nowhere vanishing for $k>0$, and so we may divide them out and find that $\lambda=-\frac{\partial x_0}{\partial z}(0,\tau)$.
\end{proof}
We can now finish the proof of Proposition \ref{analytic coefficients}. By Lemma \ref{Heisenberg invariance omega}(ii), it suffices to prove the identity for the entries $(\Omega_{\tau})_{0,k}$, and by Lemma \ref{Heisenberg invariance omega}(i), the monomials occurring in $(\Omega_{\tau})_{0,k}$ are $X_iX_{k-i}$, as $i$ ranges over $\mb{Z}/N\mb{Z}$. Lemma \ref{analytic coefficients lemma} determines the coefficients of all of these monomials, except for the coefficient in front of $X_0X_k$, which we define to be $u_{k}(\tau)$, concluding the proof.

To prove Theorem 2.1, we need to compute the beginning of the $q$-expansion of the coefficients of $(\Omega_{\tau})_{k,l}$.  

\begin{lemma} \label{q-expansions and constant term lemma}
Coefficients of $(\Omega_{\tau})_{0,k}$ are holomorphic functions of $q^{1/(8n)}$. In other words, they can be expressed as power series in the variable $q^{1/(8n)}$. Furthermore the only coefficient of $(\Omega_{\tau})_{0,k}$ with a non-zero constant term is $u_k$, with the constant term equal to $-2\pi i (n-2k)$.
\end{lemma}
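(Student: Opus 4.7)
The plan is a direct $q$-expansion computation, based on the identity $(\Omega_\tau)_{0,k}(x_0,\dots,x_{n-1}) = (n-2)(x_0 x_k' - x_k x_0')$ established in the proof of Lemma \ref{analytic coefficients lemma}. First I would unpack the structure of $(\Omega_\tau)_{0,k}$: by Heisenberg invariance (Lemma \ref{Heisenberg invariance omega}), every monomial $X_p X_q$ appearing in $(\Omega_\tau)_{0,k}$ satisfies $p+q \equiv k \pmod n$. Combining Proposition \ref{analytic coefficients} with Lemma \ref{analytic set pts} (via Lemma \ref{analytic coefficients lemma}) identifies the nonzero coefficients as $u_k$ (the coefficient of $X_0 X_k$) and a small-integer multiple of $\lambda\,\alpha_k/(\alpha_p\,\alpha_{(k-p)\bmod n})$ for each $p\in\{1,\dots,n-1\}\setminus\{k\}$, where by direct differentiation of the product expansion of $x_0$ one finds $\lambda = -\partial x_0/\partial z(0,\tau) = -2\pi i n\, q^{n/8}(1+O(q^n))$, using $\partial_z(1-u^{-n})|_{u=1} = 2\pi i n$.

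Next I would show that every non-$u_k$ coefficient lies in $q^{1/(8n)}\cdot \mathbb{C}[[q^{1/(8n)}]]$. From the product formula one reads $\alpha_r(\tau) = (-1)^r q^{(2r-n)^2/(8n)}(1+O(q^{\min(r,n-r)}))$, so the $q$-order of $\lambda\,\alpha_k/(\alpha_p\,\alpha_{(k-p)\bmod n})$ is computed as follows. For $p\in\{1,\dots,k-1\}$ (so $(k-p)\bmod n = k-p$), the identity $(2k-n)^2-(2p-n)^2-(2(k-p)-n)^2 = -n^2+8p(k-p)$ gives $q$-order $p(k-p)/n$; for $p\in\{k+1,\dots,n-1\}$ (so $(k-p)\bmod n = k-p+n$), the analogous expansion yields $(p-k)(n-p)/n$. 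Both exponents are strictly positive and, when written as fractions over $8n$, have positive integer numerators; hence the corresponding coefficients are holomorphic in $q^{1/(8n)}$ with vanishing constant term.

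To pin down $u_k^{(0)}$ I would compare leading $q$-coefficients (at order $q^{(n^2+(2k-n)^2)/(8n)}$) in the identity above. On the right, using $x_0 \sim q^{n/8}(1-u^{-n})$, $x_k \sim (-1)^k q^{(2k-n)^2/(8n)} u^{-k}$, and $\partial_z u^{-r} = -2\pi i r\, u^{-r}$, one finds the Laurent polynomial
\[
(-1)^k \cdot 2\pi i (n-2)\bigl(-k\, u^{-k} + (k-n)\, u^{-(k+n)}\bigr).
\]
On the left, $u_k\, x_0 x_k$ contributes $u_k^{(0)}\,(-1)^k(u^{-k}-u^{-(k+n)})$, while a careful sign calculation shows that each sum term with $p<k$ contributes $-2\pi i n\,(-1)^k u^{-k}$ (``Case A'', $k-1$ such terms) and each term with $p>k$ contributes $-2\pi i n\,(-1)^k u^{-(k+n)}$ (``Case B'', $n-k-1$ such terms); in Case B an extra $(-1)^n = -1$ from $\alpha_{k-p+n}$ cancels against the same factor in $x_p x_{k-p+n}$. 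Matching the coefficient of $u^{-k}$ gives
\[
u_k^{(0)} - (k-1)\cdot 2\pi i n = -2\pi i(n-2)k,
\]
which simplifies to $u_k^{(0)} = 2\pi i(2k-n) = -2\pi i(n-2k)$; the independent equation coming from $u^{-(k+n)}$ yields the same value, a useful consistency check. Holomorphy of $u_k$ itself then follows because any putative $q$-term of negative order in $u_k$ would force $u_k\, x_0 x_k$ to contribute at a $q$-order strictly below the common leading order, whereas no other term on either side is supported there, and $x_0 x_k \not\equiv 0$.

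The main obstacle is the bookkeeping in Step four: correctly enumerating Case A/B (in particular noting that a ``squared'' monomial $X_p^2$ arises from a single value of the summation index while a ``mixed'' monomial $X_p X_q$ arises from two, yet the total term count is $n-2$), and tracking the signs coming from $\alpha_r = -\alpha_{n-r}$ together with $n$ being odd. Once those signs are sorted, matching coefficients of $u^{-k}$ and $u^{-(k+n)}$ collapses to the single identity $u_k^{(0)} = -2\pi i(n-2k)$.
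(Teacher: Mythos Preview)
Your proposal is correct and follows essentially the same approach as the paper: both compute the non-$u_k$ coefficients via Proposition~\ref{analytic coefficients}, evaluate $\partial x_0/\partial z(0,\tau)$ from the product expansion, check positivity of the $q$-order by the identical quadratic-form identity on the exponents, and then extract $u_k^{(0)}$ by comparing the leading Laurent polynomials in $u$ at $q$-order $(n^2+(2k-n)^2)/(8n)$ in the relation $u_k\,x_0x_k - \partial_z x_0(0,\tau)\sum(\cdots) = (n-2)(x_0x_k'-x_kx_0')$. The only cosmetic difference is that you split the sum contribution into Case~A ($p<k$, supported at $u^{-k}$) and Case~B ($p>k$, supported at $u^{-(k+n)}$) before matching, whereas the paper presents the combined simplified expression $(-1)^{k+1}2\pi i(n-2k)(1-u^{-n})u^{-k}$ directly; your bookkeeping and final equation $u_k^{(0)} - (k-1)\cdot 2\pi i n = -2\pi i(n-2)k$ are exactly what underlies that simplification.
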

\begin{proof}
By Proposition \ref{analytic coefficients}, for $1\leq i \leq n-1$, $i \neq k$, the coefficient of $X_iX_{k-i}$  is
\[-\frac{\partial x_0}{\partial z}(0,\tau)\frac{\alpha_{k}(\tau)}{\alpha_i(\tau) \alpha_{k-i}(\tau)}.\]
Differentiating and applying the product rule to the infinite product
\begin{equation*} 
x_r(z,\tau)=(-1)^r q^{\frac{(2r-n)^2}{8n}} u^{-r}\prod_{m \geq 1}(1-q^{nm-r}u^n)\prod_{m \geq 0}(1-q^{nm+r}u^{-n})\prod_{m \geq 1}(1-q^{nm}). 
\end{equation*}
we obtain
\begin{equation} \label{equation dif}
\frac{\partial x_r}{\partial z}(z,\tau)=2\pi i x_r\left( -r + n \sum_{m \geq 0} \frac{q^{nm+r}u^{-n}}{1-q^{nm+r}u^{-n}}-n \sum_{m \geq 1} \frac{q^{nm-r}u^{n}}{1-q^{nm-r}u^{n}}\right) 	,
\end{equation}
and so in particular
\[
\frac{\partial x_0}{\partial z}(u,\tau)=2\pi i x_0\left( - n \sum_{m \geq 1} \frac{q^{nm}u^{-n}}{1-q^{nm}u^{-n}}-n \sum_{m \geq 1} \frac{q^{nm}u^{n}}{1-q^{nm}u^{n}}\right) +\frac{ 2 \pi n x_0}{1-u^{-n}}.
\]
Since $x_0(0,\tau)=0$, the first summand vanishes at $u=0$. For the second one, we have
\[
\frac{ 2 \pi n x_0}{1-u^{-n}}=2\pi n \cdot q^{\frac{n}{8}} \prod_{m \geq 1}(1-q^{nm}u^n)(1-q^{nm}u^{-n})(1-q^{nm}),
\]
and hence we find
\[
\frac{\partial x_0}{\partial z}(0,\tau)=2\pi n \cdot q^{\frac{n}{8}} \prod_{m \geq 1}(1-q^{nm})^3.
\]
From these formulas it is clear that the coefficient of $X_iX_{k-i}$ is a meromorphic function of $q^{1/(8n)}$. We compute, reminding the reader that subscripts are read mod $n$, that the valuation of the coefficient is equal to \[\frac{(2k-n)^2+n^2-(2i-n)^2-(n-2i+2k)^2}{8n}=\frac{(2i-2k)(4n-4i)}{8n}>0,\]
 in the case that $i>k$, and equal to
 
  \[\frac{(2k-n)^2+n^2-(2i-n)^2-(2k-2i-n))^2}{8n}=\frac{(2k-2i)4i}{8n}>0\]
   otherwise. Hence these coefficients are holomorphic functions of $q^{1/(8n)}$, with zero constant term. 

We now deal with the coefficient $u_k$. The equation \eqref{eq:dif} can be restated as
\begin{align*}
(n-2)(x_0\frac{\partial x_k}{\partial z}-x_k\frac{\partial x_0}{\partial z})=u_{k}(\tau)x_0 x_{k}-\frac{\partial x_0}{\partial z}(0,\tau) \sum_{1\leq i \leq n-1, i \neq n-k}  \frac{\alpha_{k}(\tau)}{\alpha_i(\tau) \alpha_{k-i}(\tau)} x_{i}x_{k-i},
\end{align*}
or
\[
u_{k}(\tau)x_0 x_{k}=(n-2)(x_0\frac{\partial x_k}{\partial z}-x_k\frac{\partial x_0}{\partial z})+\frac{\partial x_0}{\partial z}(0,\tau) \sum_{1\leq i \leq n-1, i \neq n-k}  \frac{\alpha_{k}(\tau)}{\alpha_i(\tau) \alpha_{k-i}(\tau)} x_{i}x_{k-i}
\]
Using this formula, it is easy to see that $u_k(\tau+n)=u_k(\tau)$, and that $u_k$ can be expressed as a Laurent series in $q^{1/n}$. Regard both sides of the above equation as power series in $q^{1/(8n)}$ with coefficients power series in $u$. The smallest power of $q$ apperaing in $x_0x_k$ is $q^{((2k-n)^2+n^2)/(8n))}$, with the coefficient equal to
\begin{equation*}
(-1)^k(1-u^{-n})u^{-k}.
\end{equation*}
On the other hand, using the formula (\ref{equation dif}), we can compute that the smallest power of $q$ appearing on the right is $q^{((2k-n)^2+n^2)/8n)}$, with the coefficient equal to
 \begin{equation*}
(-1)^{k+1} 2\pi i (n(n-2) u^{-k-n}k(1-u^{-n})u^{-k}-n((k-1)u^{-k}+(n-1-k)u^{-k-n})).
\end{equation*}
\[
=(-1)^{k+1} 2\pi i (n-2k) (1-u^{-n})u^{-k}
\]
There are no negative powers of $q$ appearing in the $q$-expansion of $u_k$, and comparing the above expressions we find that the constant coefficient is equal to $-2\pi i (n-2k)$, concluding the proof of the lemma.
\end{proof}
To finish the proof of Theorem \ref{analytic jacobian} we need Lemma 8.4 of \cite{jacobians}:
\begin{lemma}
	The alternating matrix of quadratic forms 
	\[
	\Omega^{const}=\begin{pmatrix} 0& (n-2)X_0X_1& (n-4)X_0 X_2&(n-6) X_0X_3 &\hdots &(2-n)X_0X_{n-1}\\
	&0&(n-2)X_1X_2&(n-4)X_1 X_3 & \hdots &(4-n)X_1X_{n-1}\\
	&&0&(n-2)X_2X_3&\hdots&(6-n)X_2X_{n-1}\\
	&-&&&&\vdots\\
	&&&&\ddots&(n-2)X_{n-2}X_{n-1}\\
	&&&&&0
	\end{pmatrix}	
	\]
	has invariants $c_4(\Omega^{const})=1$ and $c_6(\Omega^{const})=-1$.	
\end{lemma}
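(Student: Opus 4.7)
The plan is a direct polynomial calculation from the defining formulas for $c_4$ and $c_6$. The crucial structural feature of $\Omega^{const}$ is that every entry is a scalar multiple of a single degree-two monomial: $\Omega^{const}_{ij} = (n + 2i - 2j)\,X_iX_j$ for all $0\le i,j\le n-1$. This monomial-support property propagates through every subsequent step of the calculation and collapses the long index sums in the formulas for $c_4(\Omega)$ and $c_6(\Omega)$ to finite sums of polynomial expressions in the indices over $\mathbb{Z}/n\mathbb{Z}$.

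First I would evaluate $M_{ij}$ explicitly. Since $\partial \Omega^{const}_{ir}/\partial x_s$ vanishes unless $s\in\{i,r\}$, a short case analysis (splitting on whether $s=i$, $r=j$, or $s=r\notin\{i,j\}$) yields
\[
M_{ij} = c_{ij}(n)\,X_iX_j, \qquad c_{ij}(n) = \sum_{r=0}^{n-1}(n+2i-2r)(n+2j-2r) - n^2 - 4(i-j)^2,
\]
which the standard power-sum identities put in closed form as a polynomial in $(n,i,j)$. Because $M_{ij}$ is a scalar multiple of one monomial, $\partial^2 M_{ij}/\partial x_r \partial x_s$ is supported on $\{r,s\}=\{i,j\}$, so the quadruple sum defining $c_4(\Omega^{const})$ collapses to $2\sum_{i\ne j} c_{ij}(n)^2 + 4\sum_i c_{ii}(n)^2$ taken over $\mathbb{Z}/n\mathbb{Z}$. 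I would then evaluate this polynomial sum in closed form and check that after dividing by the prefactor $3/\bigl(2^4 n(n-2)^2\binom{n+3}{5}\bigr)$ the result is identically $1$.

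The calculation of $c_6(\Omega^{const})$ proceeds along the same lines. Expanding $N_{ijk} = \sum_r (\partial M_{ij}/\partial x_r)\,\Omega^{const}_{rk}$ and using the single-monomial support of $M_{ij}$, one finds that $N_{ijk}$ is supported on at most one or two degree-three monomials (for instance, $N_{ijk} = c_{ij}(n)(2n+2i+2j-4k)\,X_iX_jX_k$ when $i,j,k$ are all distinct), and hence $\partial^3 N_{ijk}/\partial x_r\partial x_s\partial x_t$ is supported only on the few triples $(r,s,t)$ obtained by permuting the multiset $\{i,j,k\}$. The sextuple sum defining $c_6(\Omega^{const})$ thus reduces to a polynomial sum over $(i,j,k)\in(\mathbb{Z}/n\mathbb{Z})^3$, which I would evaluate in closed form and then simplify to $-1$ after dividing by the prefactor $-1/\bigl(2^4 n(n-2)^3\binom{n+5}{7}\bigr)$.

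The main obstacle is bookkeeping rather than conceptual: the sign conventions, the various special subcases where indices coincide, and the combinatorial prefactors must all be tracked carefully. A practical way to organise the work is to verify $c_4(\Omega^{const})=1$ and $c_6(\Omega^{const})=-1$ symbolically on a computer algebra system for the first several odd $n$, and then to deduce the general identities by exhibiting the relevant sums as explicit rational functions of $n$ and simplifying. As a sanity check (though not a self-contained proof, since it would be circular with Theorem~\ref{analytic jacobian}), the constant terms of the $q$-expansions of $E_4(\tau)$ and $E_6(\tau)$, together with Lemma~\ref{q-expansions and constant term lemma} and the scaling factor $-2\pi i$, force exactly these two values.
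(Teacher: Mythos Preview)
Your approach is essentially what the paper does: it gives no self-contained argument but simply cites the computation on pages 20--21 of \cite{jacobians}, noting only the $(n-2)^k$ normalisation difference. Your structural observation that each $M_{ij}$ is a scalar multiple of the single monomial $X_iX_j$ is exactly the mechanism that makes the computation tractable, and the reduction of the $c_4$ and $c_6$ sums to closed-form polynomial identities in $n$ over $\mathbb{Z}/n\mathbb{Z}$ is the right organising principle. One small point: your displayed formula for $c_{ij}(n)$ drops the cross term $4n(j-i)$ coming from $-a_{ij}^2=-(n-2(j-i))^2$, and the entries $a_{ij}=n-2((j-i)\bmod n)$ must be read with the residue in $\{0,\dots,n-1\}$ rather than as the literal integer $n+2i-2j$; once those bookkeeping details are corrected the rest of your outline goes through.
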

The proof is a computation, and can be found on pages 20 and 21 of \cite{jacobians}. Note that our normalisation of invariants $c_k(\Omega)$ differs from the one used in \cite{jacobians} by a factor of $(n-2)^k$.
\begin{proof}[Proof of Theorem \ref{analytic jacobian}]
By Lemma \ref{fe}, $c_k(\Omega_{\tau})$ are invariant under $\tau \mapsto \tau+1$, and so are holomorphic functions of $q$. By Lemma \ref{fe} again, they are modular forms of weight $k$. The space of modular forms of weight $k$ for $\mathrm{SL}_2(\mb{Z})$ is 1-dimensional, and spanned by the Eisenstein series $E_k$, for $k=4$ and $k=6$.

Write $\Omega_{\tau}=-2\pi i\Omega^{const} + \Omega'$, where the coefficients of the entries of $\Omega'$ have no constant term, viewed as power series of $q^{1/(8n)}$. As $c_k(\Omega)$ are homogenous of degree $k$, we have
 \[
 c_k(\Omega_{\tau})=(2\pi)^k(1 +o(q))
 \]
 and therefore we conclude  $ c_k(\Omega_{\tau})=(2\pi)^k E_k(\tau)$.
\end{proof}

Theorem \ref{analytic jacobian} is related to the formula for the Jacobian given in Theorem \ref{formula for Jacobian} via the following standard result on uniformization of elliptic curves. As stated, this is Lemma 3.5 of \cite{ncovbds}.
\begin{lemma} \label{uniformization}
	Let $E$ be an elliptic curve over $\mb{C}$ with Weierstrass equation $W$
	\begin{equation*} \label{w eq}
		y^2 + a_1xy + a_3y = x^3 + a_2x^2 + a_4x + a_6
	\end{equation*}
	Let $\Lambda$ be the period lattice obtained by integrating the differential $dx/(2y + a_1x + a_3)$. Then the map
	\[
	\rho : E(\mb{C}) \xrightarrow{} \mb{C}/\Lambda :(x,y) \mapsto \int_{\gamma} \frac{dx}{2y + a_1x + a_3} 
	\]
	where $\gamma$ is an arbitrary path from $0_E$ to $(x,y)$, is a $\mb{C}$-analytic isomorphism, with $\rho^{*}(dz)= \frac{dx}{2y + a_1x + a_3}$. Furthermore, choose a basis $\omega_1,\omega_2$ for $\Lambda$ so that $\tau = \omega_2/\omega_1 \in \mc{H}$. Then the invariants $c_4$ and $c_6$ of the Weierstrass equation $W$ are given by $c_k = (\frac{2\pi}{\omega_1})^k E_k(\tau)$.
\end{lemma}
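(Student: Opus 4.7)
The plan is to reduce the equation $W$ to a short Weierstrass form and then invoke the classical Weierstrass $\wp$-function uniformization. First I would perform the standard affine substitution $y \mapsto y - (a_1 x + a_3)/2$ followed by $x \mapsto x - b_2/12$, which transforms $W$ into the depressed cubic $Y^2 = X^3 - (c_4/48)X - (c_6/864)$; a direct computation (cf.\ Chapter~III of \cite{silverman2009arithmetic}) shows the invariant differential $dx/(2y + a_1x + a_3)$ is carried to $dX/(2Y)$, so the period lattice $\Lambda$ is unaffected. After the rescaling $X = \wp(z)$, $Y = \wp'(z)/2$, this becomes the standard form $(\wp')^2 = 4\wp^3 - g_2 \wp - g_3$ with $g_2 = c_4/12$ and $g_3 = c_6/216$.

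Second, for the standard form the classical uniformization theorem (see Chapter~VI of \cite{silverman2009arithmetic}) supplies, for any lattice $\Lambda \subset \mathbb{C}$, a $\mathbb{C}$-analytic isomorphism $\mathbb{C}/\Lambda \to E_\Lambda(\mathbb{C})$ given by $z \mapsto (\wp(z,\Lambda), \wp'(z,\Lambda))$, onto the curve $Y^2 = 4X^3 - g_2(\Lambda)X - g_3(\Lambda)$; the identity $\wp'(z)\,dz = d\wp$ then shows that $dX/Y$ pulls back to $dz$. Running this isomorphism backward by path integration of $dX/Y$ recovers exactly the map $\rho$, so $\rho$ is a $\mathbb{C}$-analytic isomorphism with $\rho^*(dz)$ as stated. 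To verify that the integration lattice $\Lambda$ agrees with the unique lattice $\Lambda'$ satisfying $g_k(\Lambda') = g_k$, note that the $\wp$-map for $\Lambda'$ already gives an isomorphism $\mathbb{C}/\Lambda' \to E(\mathbb{C})$ under which $dX/Y$ corresponds to $dz$; hence the period lattice of $dX/Y$ on $E(\mathbb{C})$ must equal $\Lambda'$.

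Finally, for the formula $c_k = (2\pi/\omega_1)^k E_k(\tau)$, I would start from the series definitions $g_2(\Lambda) = 60 \sum_{\omega \in \Lambda \setminus \{0\}} \omega^{-4}$ and $g_3(\Lambda) = 140 \sum_{\omega \in \Lambda \setminus \{0\}} \omega^{-6}$. Factoring out $\omega_1^{-2k}$ yields $g_{2k}(\Lambda) = \omega_1^{-2k} g_{2k}(\tau)$, where $g_{2k}(\tau)$ is the standard lattice-sum in $\tau$. Using $2\zeta(4) = \pi^4/45$ and $2\zeta(6) = 2\pi^6/945$, one computes $60 G_4(\tau) = (4\pi^4/3) E_4(\tau)$ and $140 G_6(\tau) = (8\pi^6/27) E_6(\tau)$; combined with $c_4 = 12 g_2$ and $c_6 = 216 g_3$ these give the claimed formulas $c_4 = (2\pi/\omega_1)^4 E_4(\tau)$ and $c_6 = (2\pi/\omega_1)^6 E_6(\tau)$. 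The result is really a direct assembly of classical facts and there is no substantive obstacle; the only care required is in tracking the normalization conventions for $g_2, g_3, c_4, c_6$, and the Eisenstein series.
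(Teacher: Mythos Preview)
Your proposal is correct. The paper itself does not supply a proof of this lemma at all: immediately after the statement it simply says ``As stated, this is Lemma 3.5 of \cite{ncovbds}.'' So there is no paper-side argument to compare against; the author treats this as a standard classical fact and defers to the literature.

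What you have written is exactly the standard derivation one would give if asked to unpack that citation: reduce to short Weierstrass form, invoke the $\wp$-uniformization from Silverman, and then match $g_2,g_3$ against the normalized Eisenstein series. The numerics in your last paragraph check out (e.g.\ $12\cdot 60\cdot 2\zeta(4)=16\pi^4$ and $216\cdot 140\cdot 2\zeta(6)=64\pi^6$). One small cosmetic point: between your first and second paragraphs you silently change conventions on $Y$ (from $Y=\wp'/2$ on the depressed cubic to $Y=\wp'$ on $Y^2=4X^3-g_2X-g_3$), which makes the line ``$dX/Y$ pulls back to $dz$'' momentarily confusing; it would read more cleanly to stay with the depressed form throughout and write $dX/(2Y)=d\wp/\wp'=dz$ directly. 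But this is purely notational and does not affect the argument.
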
 

\begin{proof}[Proof of Theorem \ref{formula for Jacobian}]
Note that it suffices to prove the theorem with the ground field replaced by $\mb{C}$. Thus let $[C \xrightarrow{} \mb{P}^{n-1}]$ be an $n$-diagram defined over $\mb{C}$, let $\Omega$ be an $\Omega$-matrix for  $[C \xrightarrow{} \mb{P}^{n-1}]$, with the associated invariant differential $\omega$. We identify $C$ with its Jacobian $E$. Fix a Weierstrass equation for $E$ such that $\omega$ is identified with $dx/(2y + a_1x + a_3)$.

Let $\Lambda=\mb{Z}\omega_1\oplus \mb{Z}\omega_2$ be as in the above lemma, and let $\Lambda_{\tau}=\frac{1}{\omega_1}\Lambda$. Let $\psi: \mb{C}/\Lambda_{\tau} \xrightarrow{} \mb{C}/\Lambda$ be the isomorhism $z \mapsto \omega_1 z$. By composing with $\rho^{-1}\circ\psi$, we obtain an $n$-diagram $[\mb{C}/\Lambda_{\tau} \xrightarrow{\phi} \mb{P}^{n-1}]$, with $\phi^{*} \omega=\omega_1 dz$. As $\mb{C}$ is algebraically closed, this diagram is isomorphic to the Heisenberg invariant diagram, and thus by Theorem \ref{analytic jacobian} and the above lemma, we conclude $c_k(\Omega)=(\frac{2\pi}{\omega_1})^k E_k(\tau)=c_k$, as required.
\end{proof}
\subsection{Analytic theory over the real numbers}\label{analytic over real}
We now collect some standard results about elliptic curves over $\mb{R}$ that we need in Section \ref{explicit bounds chapter}. 
\begin{proposition}\label{real_uniformization}
Let $E/\mb{R}$ be an elliptic curve, defined by a Weierstrass equation $W$. Let $\Lambda$ be the period lattice obtained by integrating $\omega=dx/(2y + a_1x + a_3)$. Then
\begin{enumerate}[label=(\roman*)]
	\item There exists a basis $\omega_1,\omega_2$ of $\Lambda$ with $\tau=\omega_2/\omega_1 \in \mc{H}$ and $\mathrm{Re}(\tau) \in \{0,1/2\}$. If the discriminant of $E$ is positive, then $\mathrm{Re}(\tau)=0$, otherwise $\mathrm{Re}(\tau)=1/2$.
	\item For such a $\tau$, we have $q=e^{2i\pi\tau} \in \mb{R}$, with $|q|<1$. Furthermore, the unifomization $\mb{C}$-analytic isomorphism $\mb{C}/\Lambda_{\tau}\cong\mb{C}^{*}/q^{\mb{Z}}\xrightarrow{} E(\mb{C})$ commutes with complex conjugation, and hence restricts to an $\mb{R}$-analytic isomorphism $\mb{R}^{*}/q^{\mb{Z}} \xrightarrow{} E(\mb{R})$.
\end{enumerate}
\end{proposition}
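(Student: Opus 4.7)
The plan is to exploit the fact that $E$ is defined over $\mathbb{R}$, which means complex conjugation acts as a holomorphic involution on $E(\mathbb{C})$, and the differential $\omega$ has real coefficients, so its integrals transform by conjugation under the conjugation of paths. This forces the period lattice $\Lambda$ to be stable under complex conjugation: $\bar{\Lambda}=\Lambda$. The problem then reduces to classifying conjugation-stable lattices $\Lambda\subset\mathbb{C}$ in a manner compatible with the geometry of $E(\mathbb{R})$.

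For part (i), I would first produce a real period. The identity component of $E(\mathbb{R})$ is a real $1$-cycle, and integrating $\omega$ around it gives a nonzero real number $\omega_1\in\Lambda\cap\mathbb{R}$; normalizing, I can take $\omega_1$ to be a generator of $\Lambda\cap\mathbb{R}\cong\mathbb{Z}$. Now split into cases by the sign of $\Delta$, which controls the number of real components of $E(\mathbb{R})$:
\begin{itemize}
\item If $\Delta>0$, then $E(\mathbb{R})$ has two connected components. A path on $E(\mathbb{C})$ joining the two components, integrated against $\omega$, is purely imaginary by symmetry under $z\mapsto -\bar z$, so this produces a purely imaginary element $\omega_2\in\Lambda$. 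Together with $\omega_1$ it forms a basis (the lattice is rectangular), and $\tau=\omega_2/\omega_1\in i\mathbb{R}_{>0}$, so $\mathrm{Re}(\tau)=0$.
\item If $\Delta<0$, then $E(\mathbb{R})$ is connected, and $\Lambda$ is conjugation-stable but has no purely imaginary element, so any second basis vector $\omega_2'$ satisfies $\bar\omega_2'=a\omega_1-\omega_2'$ for some $a\in\mathbb{Z}$ (because $\omega_2'+\bar\omega_2'\in\Lambda\cap\mathbb{R}=\mathbb{Z}\omega_1$). Replacing $\omega_2'$ by $\omega_2=\omega_2'+\lfloor a/2\rfloor\omega_1$ or similar, I can arrange $a=1$, i.e.\ $\omega_2+\bar\omega_2=\omega_1$, giving $\mathrm{Re}(\omega_2/\omega_1)=1/2$, and flipping sign if necessary ensures $\tau\in\mathcal{H}$.
\end{itemize}

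For part (ii), the two cases of part (i) give $\tau=it$ (so $q=e^{-2\pi t}\in(0,1)$) or $\tau=1/2+it$ (so $q=-e^{-2\pi t}\in(-1,0)$); in either case $q$ is real with $|q|<1$. For the uniformization, I would use the Tate curve description: the isomorphism $\mathbb{C}^*/q^{\mathbb{Z}}\xrightarrow{}E(\mathbb{C})$ can be given explicitly by $u\mapsto(X(u,q),Y(u,q))$ where $X$ and $Y$ are the standard power series in $u$ and $q$ with integer coefficients (see e.g.\ Theorem V.3.1 of \cite{Sil2}). Since these series have real (in fact rational integer) coefficients and $q\in\mathbb{R}$, the map commutes with complex conjugation $u\mapsto\bar u$. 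Therefore the fixed locus $\mathbb{R}^*/q^{\mathbb{Z}}$ of conjugation on $\mathbb{C}^*/q^{\mathbb{Z}}$ maps into the fixed locus $E(\mathbb{R})$ of conjugation on $E(\mathbb{C})$, and a dimension/topology count (both sides are real $1$-manifolds, connected when $\Delta<0$ and with two components when $\Delta>0$, matching $\mathbb{R}^*/q^{\mathbb{Z}}$ in each case) shows that the restriction is a bijection, hence an $\mathbb{R}$-analytic isomorphism.

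The main obstacle is the case $\Delta<0$ in part (i): getting the normalization $\mathrm{Re}(\tau)=1/2$ exactly (as opposed to an arbitrary half-integer) requires carefully choosing the second basis vector modulo $\mathbb{Z}\omega_1$, which is really a bookkeeping exercise with the action of conjugation on $\Lambda/\mathbb{Z}\omega_1\cong\mathbb{Z}$. Everything else is standard once the conjugation-equivariance of the uniformization is set up.
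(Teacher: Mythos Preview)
The paper does not give a proof of this proposition; it simply records it as a standard fact and cites Proposition~V.2.3 of \cite{Sil2}. Your sketch is precisely the standard argument one finds there: use conjugation-stability of the period lattice to reduce to the dichotomy rectangular/rhombic, match this to the component count of $E(\mathbb{R})$, and then use the explicit $q$-series for the Tate uniformization to check equivariance. So your approach is correct and is essentially what the paper is deferring to.

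One small point worth tightening in the $\Delta<0$ case: you assert that $\Lambda$ has no purely imaginary element, but the clean way to see this is the contrapositive --- if $\Lambda$ were rectangular, the fixed locus of $z\mapsto\bar z$ on $\mathbb{C}/\Lambda$ would have two components, contradicting connectedness of $E(\mathbb{R})$. With that in hand, the integer $a$ in $\bar\omega_2'+\omega_2'=a\omega_1$ is forced to be odd, and your translation trick then gives $a=1$ exactly, not just ``$a=1$ or similar.''
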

This is a standard fact, e.g. see Proposition VI.2.3 of \cite{Sil2}. As a consequence, note that as $q \in \mb{R}$, and the functions $x_i(z,q)$ were given in terms of $q$-expansions, it is easy to show that the Heisenberg invariant embedding $\mb{C}/\Lambda_{\tau} \xrightarrow{} \mb{P}^{n-1}$ is commutes with complex conjugation and is hence defined over $\mb{R}$.

\begin{lemma} \label{archimedean place trivial lemma}
	Let $E/\mb{R}$ be an elliptic curve, and let $n$ be an odd integer. Then the Galois cohomology group $H^1(\mb{R},E[n])$ is trivial.
\end{lemma}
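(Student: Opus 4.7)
The plan is to invoke the standard fact that for a finite group $G$ acting on a finite abelian group $M$, the cohomology $H^i(G, M)$ is annihilated both by $|G|$ (restriction-corestriction argument) and by $|M|$ (since every element is killed by multiplication by $|M|$), hence by $\gcd(|G|, |M|)$. Here $G = \mathrm{Gal}(\mb{C}/\mb{R})$ has order $2$, while $E[n](\mb{C}) \cong (\Z/n\Z)^2$ has order $n^2$, which is odd by hypothesis, so $\gcd(2, n^2) = 1$ and $H^1(\mb{R}, E[n]) = 0$.

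Alternatively, one can argue directly. The Galois group $G = \langle \sigma \rangle$ is cyclic of order $2$, so Tate cohomology is $2$-periodic and $H^1(G, E[n])$ is computed as $\ker(1 + \sigma)/\mathrm{Im}(1 - \sigma)$ acting on $E[n](\mb{C})$. Since $n$ is odd, multiplication by $2$ is an automorphism of $E[n]$; but every class in $H^1(G, E[n])$ is killed by $|G| = 2$, hence is trivial.

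The statement is genuinely elementary, and there is no real obstacle: the only observation required is that $n$ being odd makes the orders of $G$ and $E[n]$ coprime. No input from the preceding sections of the paper, such as Proposition \ref{real_uniformization} or the analytic uniformization, is needed here — the result is purely a statement about Galois cohomology of a finite module of odd order over $\mb{R}$.
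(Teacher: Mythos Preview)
Your proposal is correct and takes essentially the same approach as the paper: both argue that $H^1(G,A)$ is annihilated by $|G|=2$, which is coprime to $|E[n]|=n^2$, forcing the group to vanish. Your additional remarks (annihilation by $|M|$ and the explicit Tate cohomology computation) are sound elaborations but not needed beyond what the paper already invokes.
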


\begin{proof}
As $|\mathrm{Gal}(\mb{C}/\mb{R})|=2$ is coprime to $|E[n](\mb{C})|=n^2$, this is standard, a consequence of the fact that for any finite group $G$ and a $G$-module $A$, the group $H^1(G,A)$ is annihilated by multiplication by $|G|$, see  Chapter VII, Proposition 6 of \cite{serre2013local}.	
\end{proof}	

By the lemma, any two $n$-diagrams defined over  $\mb{R}$ with the same Jacobian curve are isomorphic.

\begin{lemma} \label{real heisenberg matrix}
Every $n$-diagram $[C \xrightarrow{} \mb{P}^{n-1}]$ , defined over $\mb{R}$, is $\mathrm{PGL}_n(\mb{R})$-equivalent to a Heisenberg invariant diagram $B_{\tau}$, for some $\tau \in \mc{H}$ with $\mathrm{Re}(\tau) \in \{0,n/2\}$. Let $E$ be the Jacobian of $C$. If the discriminant of $E$ is positive, then we have $\mathrm{Re}(\tau)=0$, and if it is negative, $\mathrm{Re}(\tau)=n/2$
\end{lemma}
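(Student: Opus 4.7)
The plan is to reduce to an explicit construction via triviality of Galois cohomology. By Lemma \ref{archimedean place trivial lemma}, $H^1(\mathbb{R}, E[n])$ vanishes because $n$ is odd, so any two real $n$-diagrams with Jacobian $E$ are already $\mathrm{PGL}_n(\mathbb{R})$-equivalent. It will therefore suffice to exhibit a single Heisenberg diagram $B_\tau$, with the prescribed $\mathrm{Re}(\tau)$, that is defined over $\mathbb{R}$ and whose real Jacobian is $E$.

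To choose $\tau$, I would begin with the period ratio $\tau_E$ provided by Proposition \ref{real_uniformization}, with $\mathrm{Re}(\tau_E)\in\{0,1/2\}$ according to the sign of the discriminant, and set $\tau := \tau_E$ in the positive case and $\tau := \tau_E + (n-1)/2$ in the negative case. Since $n$ is odd, $(n-1)/2\in\mathbb{Z}$, so $\tau\in\mathcal{H}$, $\Lambda_\tau = \Lambda_{\tau_E}$, and $\mathrm{Re}(\tau)\in\{0,n/2\}$. The core of the argument will be to check that $B_\tau$ is defined over $\mathbb{R}$. In both cases $q = e^{2\pi i\tau}$ is real, and in the product formula for $x_r(z,\tau)$ the only phase obstruction to reality comes from the prefactor $q^{(2r-n)^2/(8n)} := e^{2\pi i\tau(2r-n)^2/(8n)}$. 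A direct computation gives
\[
\overline{q^{(2r-n)^2/(8n)}}\big/q^{(2r-n)^2/(8n)} = e^{-4\pi i\,\mathrm{Re}(\tau)\,(2r-n)^2/(8n)}.
\]
For $\mathrm{Re}(\tau)=0$ this is trivially $1$; for $\mathrm{Re}(\tau)=n/2$ it equals $e^{-i\pi(2r-n)^2/4}$, which by the congruence $(2r-n)^2\equiv 1\pmod{8}$ (valid because $n$ and $2r-n$ are both odd) collapses to the $r$-independent value $e^{-i\pi/4}$. Combining this with $\bar u = e^{2\pi i(-\bar z)}$ and the reality of $q$ yields $\overline{x_r(z,\tau)} = \lambda\cdot x_r(-\bar z,\tau)$ for a common factor $\lambda\in\mathbb{C}^\times$ independent of $r$, so $B_\tau\subset\mathbb{P}^{n-1}$ is stable under complex conjugation and hence defined over $\mathbb{R}$.

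To finish, I would match the real structures. The Heisenberg parametrization identifies the anti-holomorphic involution on $B_\tau$ with $z\mapsto -\bar z$ on $\mathbb{C}/\Lambda_\tau$, and the Tate uniformization $z\mapsto e^{2\pi iz}$ of Proposition \ref{real_uniformization} likewise sends complex conjugation on $E(\mathbb{C})$ to $z\mapsto -\bar z$ on $\mathbb{C}/\Lambda_{\tau_E}$; since $\Lambda_\tau = \Lambda_{\tau_E}$, the Jacobian of $B_\tau$ coincides with $E$ as real elliptic curves. The $\mathrm{PGL}_n(\mathbb{R})$-equivalence with $C$ then follows from the first paragraph. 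The hard part of the plan is the reality computation in the second paragraph: without the integer shift by $(n-1)/2$, the conjugation phase would depend on $r$ and $B_\tau$ would fail to be real, which is precisely why $\mathrm{Re}(\tau) = n/2$ rather than $1/2$ is needed in the negative-discriminant case.
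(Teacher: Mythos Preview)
Your proposal is correct and follows essentially the same approach as the paper: reduce via Lemma \ref{archimedean place trivial lemma} to showing that $B_\tau$ is defined over $\mb{R}$, translate $\tau_E$ by the integer $(n-1)/2$ in the negative-discriminant case to land at $\mathrm{Re}(\tau)=n/2$, and then verify that the only possibly non-real factor $q^{(2r-n)^2/(8n)}$ contributes an $r$-independent phase. The paper obtains this last point by factoring $q^{(2r-n)^2/(8n)}=q^{n/8}\cdot (q^{1/n})^{(r^2-rn)/2}$ and noting that $q^{1/n}\in\mb{R}$ precisely when $\mathrm{Re}(\tau)\in\{0,n/2\}$, whereas you compute the conjugation ratio directly and invoke $(2r-n)^2\equiv 1\pmod 8$; these are equivalent computations.
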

\begin{proof}
By Lemma \ref{real_uniformization}, we can identify $E$ with the complex torus $\mb{C}^{\star}/q^{\mb{Z}}$, where $q=e^{2i\pi\tau}$ and $\mathrm{Re}(\tau)\in \{0,1/2\}$. We are free to translate $\tau$ by any integer in $\mb{Z}$, and so as $n$ is odd, if $\mathrm{Re}(\tau)=1/2$,  we replace $\tau$ by $\tau+(n-1)/2$.  Hence we may assume that $\mathrm{Re}(\tau)\in \{0,n/2\}$. By Lemma \ref{archimedean place trivial lemma}, it suffices to show that the diagram $B_{\tau}$ is defined over $\mb{R}$.
	
Recall the series that for the functions $x_r(z,q)$ that define the embedding $B_{\tau}$
\[
x_r(z,q)=(-1)^r q^{\frac{(2r-n)^2}{8n}} u^{-r}\prod_{m \geq 1}(1-q^{nm-r}u^n)\prod_{m \geq 0}(1-q^{nm+r}u^{-n})\prod_{m \geq 1}(1-q^{m})^{-2n}.
\]
Note that $q^{1/n}=e^{2i\pi\tau/n} \in \mb{R}$ precisely when $\mathrm{Re}(\tau)\in \{0,n/2\}$. For such $\tau$, all powers of $q$ in the above expression are real, except for the factor $q^{{(2r-n)^2}/{8n}}=q^{n/8}\cdot q^{{(r^2-rn)}/{(2n)}}$. As $n$ is odd, $2|r^2-rn$ for all $r$, and hence $q^{{(r^2-rn)}/{(2n)}} \in \mb{R}$ for all $r$. The non-real factor $q^{n/8}$ is independent of $r$. Thus the map $u \mapsto (x_0(u,q):\ldots:x_{n-1}(u,q))$ is compatible with complex conjugation, and hence the diagram $B_{\tau}=[\mb{C}^{*}/q^{\mb{Z}} \xrightarrow{} \mb{P}^{n-1}]$ is defined over $\mb{R}$.

\end{proof}

\section{Bounding the capitulation discriminant} \label{explicit bounds chapter}
In this section we prove Theorem \ref{Main theorem I} and Theorem \ref{Main theorem II}. We start by proving an inequality for elliptic curves defined over $\mb{R}$.

Let $n \geq 3$ be an odd integer and let $E/\mb{R}$ an elliptic curve, with a  Weierstrass equation $W$ 
\[
y^2+a_1xy+a_3y=x^3+a_2x^2+a_4x+a_6.
\]

Let $\omega=dx/(2y + a_1x + a_3)$ be the invariant differential associated to the equation $W$. Let $\Lambda \subset \mb{C}$ be the period lattice obtained by integrating $\omega$, and let $\phi : \mb{C}/\Lambda \xrightarrow{} E$ be the corresponding complex uniformization. Consider the  $n$-diagram $B_{\Lambda}=[E\cong \mb{C}/\Lambda \xrightarrow{} \mb{P}^{n-1}]$ defined in Proposition \ref{line_bundle heisenberg diagram }. Let $\Omega_{\Lambda}$ be the $\Omega$-matrix for $B_{\Lambda}$ scaled so that for the corresponding differential $\omega_{\Lambda}$ we have $\phi^{*} \omega_{\Lambda}=\omega$, and let $D_{\Lambda}$ be the discriminant form for the diagram $B_{\Lambda}$ associated to this scaling, as explained in Section \ref{discriminant form section}.

The following theorem, together with the minimization theorem and Minkowski's theorem, implies Theorem \ref{Main theorem I}. Set $H_W=\max(|c_4|^{1/4},|c_6|^{1/6})$.
\begin{theorem} \label{compact_inequality}
	Let $K$ be a compact subset of $\mb{R}^n$. There exists a constant $c(n,K)$, depending only on $n$ and $K$, such that, for all elliptic curves $E/\mb{R}$, with a Weierstrass equation $W$, we have
	\[
	\max_{(u_0,\ldots,u_{n-1}) \in K}|D_\Lambda(u_0,\ldots,u_{n-1})| \leq c(n,K) H_W^{2n-2}
	\]
	
\end{theorem}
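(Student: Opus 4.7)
The plan is to pass to the Heisenberg normal form, use the $q$-expansions of the entries of $\Omega_\tau$ from Section \ref{analytic chapter}, and exploit modularity to obtain a bound that is uniform in the elliptic curve.

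First, I would identify $\Omega_\Lambda$ with a rescaling of the Heisenberg matrix $\Omega_\tau$, where $\tau = \omega_2/\omega_1 \in \mc{H}$. Using the scaling property $\sigma(\omega_1 z', \Lambda) = \omega_1 \sigma(z', \Lambda_\tau)$ of the Weierstrass $\sigma$-function, the functions $x_m(z, \Lambda)$ and $x_m(z/\omega_1, \Lambda_\tau)$ differ by a scalar independent of $m$, so they define the same projective embedding in $\mb{P}^{n-1}$. Under this identification, the invariant differential $\omega$ pulls back to $dz$ on $\mb{C}/\Lambda$ and to $\omega_1\, dz'$ on $\mb{C}/\Lambda_\tau$, while $\omega_\tau$ pulls back to $dz'$. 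Since the differential is inversely proportional to $\Omega$ in Proposition \ref{first lemma differential}, I conclude $\Omega_\Lambda = \omega_1^{-1} \Omega_\tau$, and hence $D_\Lambda(u) = \omega_1^{-(2n-2)} D_\tau(u)$. Combining with Lemma \ref{uniformization}, which gives $H_W = (2\pi/\omega_1)\, \epsilon(\tau)$ for $\epsilon(\tau) := \max(|E_4(\tau)|^{1/4}, |E_6(\tau)|^{1/6})$, the theorem reduces to showing that $|D_\tau(u)|/\epsilon(\tau)^{2n-2}$ is bounded uniformly for $u \in K$ and $\tau$ in the real-elliptic region $\mc{R} := \{it : t > 0\} \cup \{n/2 + it : t > 0\}$ from Proposition \ref{real_uniformization} (the shift from $\mathrm{Re}(\tau) = 1/2$ to $\mathrm{Re}(\tau) = n/2$ being available via the integer translation $\tau \mapsto \tau + (n-1)/2$, since $n$ is odd).

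Next, I would reduce to the subregion $\mc{R}_\delta := \mc{R} \cap \{\mathrm{Im}(\tau) \geq 1/2\}$ using real-structure preserving modular transformations. On the imaginary strand the map $S : \tau \mapsto -1/\tau$ sends $it$ to $i/t$, so one application suffices; on the strand $\mathrm{Re}(\tau) = n/2$, conjugating $\tau \mapsto (\tau - 1)/(2\tau - 1)$ by the integer shift $\tau \mapsto \tau + (n-1)/2$ yields a real-preserving map in $\mathrm{SL}_2(\mb{Z})$ sending $n/2 + it$ to $n/2 + i/(4t)$, again achieving the reduction in one step. Each such modular transformation lifts to a $\mathrm{GL}_n(\mb{C})$-transformation on $\mb{P}^{n-1}$ via Lemma \ref{cusp_switch_lemma} and its translation analog; combining Proposition \ref{Omega quad change of coordinates}, Lemma \ref{discriminant form change of coord lemma}, and the weight-$k$ modularity $E_k(-1/\tau) = \tau^k E_k(\tau)$, I expect that the factor $|\tau|^{2n-2}$ arising from the transformation of $D_\tau$ (through the rescaling of the invariant differential $\omega_\tau = dz$) cancels precisely with the factor $|\tau|^{2n-2}$ in $\epsilon(-1/\tau)^{2n-2}$. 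The ratio $|D_\tau(u)|/\epsilon(\tau)^{2n-2}$ is therefore invariant, modulo replacing $u$ by $g^T u$ for an explicit matrix $g$ depending only on $n$. Since at most one such reduction is used, this amounts to enlarging $K$ to a fixed compact set $K'$ depending only on $n$ and $K$.

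Finally, on $K' \times \mc{R}_\delta$ the uniform bound follows from the $q$-expansions. By Lemma \ref{q-expansions and constant term lemma}, the coefficients of the entries of $\Omega_\tau$ are holomorphic functions of $q^{1/(8n)}$ in $|q| < 1$, hence uniformly bounded for $|q| \leq e^{-\pi}$; so $|D_\tau(u)|$ is uniformly bounded on $K' \times \mc{R}_\delta$. Moreover $\epsilon$ extends continuously to the closure of $\mc{R}_\delta$ in $\mc{H} \cup \{i\infty\}$ with $\epsilon(i\infty) = 1$, and since $E_4$ and $E_6$ do not vanish simultaneously on $\mc{H}$, $\epsilon$ is strictly positive on this compact closure, hence bounded below by a positive constant. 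The main obstacle will be the second step: carefully tracking the covariance of $D_\tau$ under the modular action, in particular verifying that all the $|\tau|$-factors coming from the normalization of the invariant differential $\omega_\tau$ combine precisely with those in the transformation of $\epsilon$, so that the ratio $|D_\tau(u)|/\epsilon(\tau)^{2n-2}$ is genuinely invariant up to a bounded $\mathrm{GL}_n$-action on $u$.
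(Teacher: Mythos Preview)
Your proposal is correct and follows essentially the same approach as the paper: reduce to the normalized Heisenberg form $D_\tau/H(\tau)^{2n-2}$, restrict to $\mathrm{Re}(\tau)\in\{0,n/2\}$, use the $q$-expansions of Lemma~\ref{q-expansions and constant term lemma} for large $\mathrm{Im}(\tau)$, apply an $\mathrm{SL}_2(\mb{Z})$-transformation (via Lemma~\ref{cusp_switch_lemma}) to handle small $\mathrm{Im}(\tau)$, and invoke compactness for the remaining bounded strip. The only organizational difference is that the paper splits into three regions $A_1,A_2,A_3$ rather than reducing directly to $\mathrm{Im}(\tau)\geq 1/2$, and your tracking of the $\omega_1$- and $|\tau|$-scalings is in fact more careful than the paper's (which contains compensating sign slips in the formulas for $\Omega_\Lambda$ and $H_W^{2n-2}$).
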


\begin{proof}
 By the Proposition $\ref{real_uniformization}$, we may write $\Lambda=\omega_1 \cdot (\mb{Z} \oplus \mb{Z}\tau)$, where $\mathrm{Re}(\tau) \in \{0,1/2\}$. By the Lemma \ref{uniformization}, we have $c_k(W)=(\frac{2\pi}{\omega_1})^k E_k(\tau)$, and hence
 
	  \[H_W^{2n-2}= (\frac{\omega_1}{2\pi})^{2n-2}\max(|E_4(\tau)^{1/4}|,|E_6(\tau)^{1/6}|)^{2n-2}=(\frac{\omega_1}{2\pi})^{2n-2} H(\tau)^{2n-2},
  \] 
  where $H(\tau)=\max(|E_4(\tau)|^{1/4},|E_6(\tau)^{1/6}|)$.
 	By comparing the differentials attached to $\Omega_{\Lambda}$ and $\Omega_{\tau}$, we see that $\Omega_{\Lambda}=\omega_1 \Omega_{\tau}$. As the coefficients of $D_{\Lambda}$ are homogenous polynomials of degree $2n-2$ in the entries of $\Omega_{\Lambda}$ we  see that $D_{\Lambda}(u_0,\ldots,u_{n-1})=\omega_1^{2n-2} D_{\tau}(u_0,\ldots,u_{n-1})$, where the coefficients of $D_{\tau}$ are power series in the variable $q^{1/n}$. Note also that, if we let $B_{\tau}$ denote the Heisenberg invariant diagram $B_{\langle1,\tau \rangle}$, the form $D_{\tau}$ is then the discriminant form associated to $B_{\tau}$.
 	
	We introduce the normalization $\mc{D}_{\tau}=\frac{D_{\tau}(u_0,\ldots,u_{n-1})}{H(\tau)^{2n-2}}$. We seek to prove that the expression
	 \[
	 \max_{(u_0,\ldots,u_{n-1}) \in K} \mc{D}_{\tau} (u_0,\ldots,u_{n-1})
	 \] 
	 is bounded as a function of $\tau$. As $\mc{D}_{\tau}$ is a polynomial of degree $2n$ and $K$ is compact, we can find a constant $M$ that is an upper bound on $K$ for each of the finitely many monomials that appear in $\mc{D}_{\tau}$.
	
	By Lemma \ref{real heisenberg matrix}, as $E$ is an elliptic curve defined over $\mb{R}$, we may restrict to $\tau$ with $\mathrm{Re}(\tau) \in \{0,n/2\}$. In other words, it suffices to show that the coefficients of $\mc{D}_{\tau}$ are bounded, as functions of $\tau$ on  the set $A:=\{\tau \in \mc{H} : \textrm{Re}(\tau) \in \{0,n/2\} \}$. Let us choose two arbitrary constants $C_1>C_2>0$. We split the set $A$ as $A=A_1 \cup A_2 \cup A_3$, where $A_1= \{\tau \in A : \mathrm{Im}(\tau) > C_1 \},A_2= \{\tau \in A : C_2 <  \mathrm{Im}(\tau) \le  C_1 \}$ and $A_3= \{\tau \in A : \mathrm{Im}(\tau) \le  C_2 \}$, and prove that coefficients are bounded on each set separately.

	  As $B_{\tau}$ does not vanish anywhere, since $E_4$ and $E_6$ have no common zeroes, these coefficients are continuous functions of $\tau$, and in fact, examining the $q$-expansions of the coefficients of $\Omega_{\tau}$ obtained in Lemma \ref{q-expansions and constant term lemma} holomorphic functions on the unit disc, in the variable $q^{1/(8n)}$. Thus they must be bounded in any neighbourhood of $0$. In term of the variable $\tau$, this means there exists a constant $M_1$ that is an upper bound for the absolute values of these coefficients on the set $A_1:=\{\tau \in A : \textrm{Im}(\tau) \ge C_1\}$. 
	
	Now observe that for any $g\in \mathrm{SL}_{2}(\mb{Z})$, there is an isomorphism of complex tori $\mb{C}/\langle 1, \tau \rangle \xrightarrow{} \mb{C}/\langle 1, g \cdot \tau \rangle$. Hence the Heisenberg invariant diagrams $B_{\tau}$ and $B_{g\cdot \tau}$ are isomorphic over $\mb{C}$, and in particular, there exists $M_g \in \mathrm{PGL}_{n}(\mb{C})$ such that $[C_{g \cdot \tau} \xrightarrow{} \mb{P}^{n-1}]=M_g\cdot [ C_{\tau} \xrightarrow{} \mb{P}^{n-1}]$. By Lemma \ref{discriminant form change of coord lemma}, we have $D_{\tau}(u_0,\ldots,u_{n-1})=D_{g \cdot \tau}((u_0,\ldots,u_{n-1}) \cdot M^{T}_g)$.
	
	 Thus, if the coefficients of $D_{\tau}$ are bounded on some subset of $\mc{H}$, they will be bounded, possibly by a different constant, on any $\mathrm{SL}_2(\mb{Z})$-translate of that subset. 
	 By choosing $g_1,g_2 \in \mathrm{SL}_2(\mb{Z})$ that map the cusp at $\infty$ to cusps at $0$ and $n/2$ respectively, and noting that $\textrm{Re}(\tau) \in \{0,n/2\}$, we deduce that the coefficients are bounded on the set $A_3:=\{\tau\in A : \textrm{Im}(\tau) \leq C_2 \}$
	
	As the set $A_2:=\{\tau \in \mc{H} : C_2 \leq \textrm{Im}(\tau) \leq C_1, \textrm{Re}(\tau) \in \{0,n/2\} \}$ consists of two closed and bounded intervals, it is compact, and hence the coefficients are uniformly bounded on $A_3$, concluding the proof.

\end{proof}

Recall now the setting of the Theorem \ref{Main theorem I}. Let $E/\mb{Q}$ be an elliptic curve, with a minimal Weierstrass equation $W$, and corresponding invariant differential $\omega$. The naive height $H_E$ is defined to be equal to $H_{W}$. Let $[C\xrightarrow{} \mb{P}^{n-1}]$ be an $n$-diagram that represents an element of the $n$-Selmer group $\mathrm{Sel}^{(n)}(E/\mb{Q})$. We wish to prove the existence of a constant $c(n)$, that depends only on $n$, such that $C$ admits a point defined over a $\mb{Q}$-algebra $A$, of degree $n$, and of discriminant at most $c(n) H_E^{2n-2}$.
\begin{proof}[Proof of Theorem \ref{Main theorem I}]
	By Theorem \ref{global minimization theorem}, we may assume that $[C \xrightarrow{} \mb{P}^{n-1}]$ admits a minimal integral model, i.e. a resolution model $F_{\bu}$, defined over $\mb{Z}$, with the  $c$-invariants of the corresponding $\Omega$-matrix given by $c_k(\Omega)=c_k(W)$.
	
	 Let $D_C$ be the discriminant form associated to the model $F_{\bu}$. By Lemma \ref{Z-order}, for any $n$ integers $u_0,\ldots,u_{n-1}$, not all zero, $D_C(u_0,\ldots,u_{n-1})$ is the discriminant of an order in the $n$-dimensional $\mb{Q}$-algebra $\Gamma(C\cap\{u_0x_0+\ldots+u_{n-1}x_{n-1}\})$. 
	 
	 As explained in Section \ref{analytic over real}, the diagram $[C \xrightarrow{} \mb{P}^{n-1}]$ and the diagram $B_{\Lambda}=[\mb{C}/\Lambda \xrightarrow{} \mb{P}^{n-1}]$ are isomorphic over $\mb{R}$. Hence there exists a $g \in \mathrm{SL}_n(\mb{R})$ with $g \cdot B_{\Lambda}=[C \xrightarrow{} \mb{P}^{n-1}]$. By Theorem \ref{analytic jacobian}, the matrices $\Omega_{\Lambda}$ and $\Omega_C$ have the same invariants, so we must have $\Omega_{C}=g\star \Omega_{\Lambda}$. As the discriminant form is contravariant, we have $D_{\Lambda}=D_{C}\circ g^{T}$, i.e. $D_{C}=D_{\Lambda}\circ g^{-T}$.
	 
	 Now let $K$ be the $n$-ball centered at $0 \in \mb{R}^n$ of volume $2^n$. The lattice $L=g^{-T}(\mb{Z}^n)$ has covolume one, so by Minkowski's theorem, $K$ contains a non-zero element $a$ of $L$. By Theorem \ref{compact_inequality}, we have $|D_{C}(g^{T} \cdot a )|= |D_{\Lambda} (a)| \leq c(n,K) H_E^{2n-2}$. As $g^{T} \cdot a \in \mb{Z}^n$, the value $D_{C}(g^{T} \cdot a )$ is the discriminant of an order in an  $n$-dimensional $\mb{Q}$-algebra $A$. 
\end{proof}

As a corollary, we obtain Theorem \ref{Main theorem II}. Assume that the curve $C$ has index $n$, i.e. has no positive divisors of degree less than $n$, that are defined over $\mb{Q}$. The algebra $A$ in the above proof is the ring of functions on a hyperplane section $C \cap H$. This section must consist of $n$ distinct points, as otherwise, $C$ would admit a $\mathrm{Gal}(\mb{\bar{Q}}/\mb{Q})$-invariant divisor of degree less than $n$. Hence $A$ must be étale, and can be decomposed as a product of number fields $A=K_1 \times K_2 \ldots \times K_{p}$. Using the assumption on the index again, we conclude that $A=K$ is a number field, and the theorem follows.

\bibliographystyle{amsalpha}
\bibliography{references}
\end{document}